\numberwithin{equation}{section}
\theoremstyle{plain}
\newtheorem{theorem}[equation]{Theorem}
\newtheorem{thm}{Theorem}
\newtheorem{cor}{Corollary}
\newtheorem{proposition}[equation]{Proposition}
\newtheorem{lemma}[equation]{Lemma}
\newtheorem{corollary}[equation]{Corollary}
\theoremstyle{definition}
\newtheorem{definition}[equation]{Definition}
\newtheorem{example}[equation]{Example}
\newtheorem{remark}[equation]{Remark}
\newcommand{\po}{\ar@{}[dr]|{\text{\pigpenfont R}}}
\newcommand{\pb}{\ar@{}[dr]|{\text{\pigpenfont J}}}
\DeclareMathOperator{\THR}{THR}
\DeclareMathOperator{\TCR}{TCR}
\DeclareMathOperator{\THH}{THH}
\DeclareMathOperator{\KR}{KR}
\DeclareMathOperator{\KH}{KH}
\DeclareMathOperator{\K}{K}
\DeclareMathOperator{\tr}{tr}
\DeclareMathOperator{\id}{id}
\DeclareMathOperator{\sd}{sd}
\DeclareMathOperator*{\hocolim}{hocolim}
\DeclareMathOperator{\Top}{Top}
\DeclareMathOperator{\Hom}{Hom}
\DeclareMathOperator{\Herm}{Herm}
\DeclareMathOperator{\sym}{Sym}
\DeclareMathOperator{\conn}{conn}
\DeclareMathOperator{\Map}{Map}
\DeclareMathOperator{\ev}{ev}
\DeclareMathOperator{\Z}{\mathbb{Z}}
\DeclareMathOperator{\Lq}{L^q}
\DeclareMathOperator{\Lg}{L^g}
\newcommand{\Lqsub}[1]{\operatorname{L}^{\operatorname{q}}_{#1}}
\newcommand{\Lgsub}[1]{\operatorname{L}^{\operatorname{g}}_{#1}}
\begin{document}

\title{$K$-theory of Hermitian Mackey functors and a reformulation of the Novikov Conjecture}
\date{}

\author{
\begin{tabular}{cccccccccc}
Emanuele Dotto&&&&&&&&& Crichton Ogle\\
Dept.\ of Mathematics&&&&&&&&& Dept.\ of Mathematics\\
 University of Bonn&&&&&&&&& The Ohio State University
\end{tabular}
}

\maketitle

\abstract{
We define a genuine $\mathbb{Z}/2$-equivariant real algebraic $K$-theory spectrum $\KR(A)$ for every genuine $\mathbb{Z}/2$-equivariant spectrum $A$ equipped with a compatible multiplicative structure. This construction extends the real $K$-theory of Hesselholt-Madsen for discrete rings and the Hermitian $K$-theory of Burghelea-Fiedorowicz for simplicial rings. We construct a natural trace map of $\mathbb{Z}/2$-spectra $\tr\colon \KR(A)\to \THR(A)$ to the real topological Hochschild homology spectrum, which extends the $K$-theoretic trace of B\"{o}kstedt-Hsiang-Madsen.

The trace provides a splitting of the real $K$-theory of the spherical group-ring. We use this splitting on the geometric fixed points of $\KR$, which we regard as an $L$-theory of genuinely equivariant ring spectra, to reformulate the Novikov conjecture on the homotopy invariance of the higher signatures purely in terms of the module structure of the rational $L$-theory of the ``Burnside group-ring''.
}

\tableofcontents

\newpage

\section*{Introduction} 

In \cite{IbLars} Hesselholt and Madsen construct a $\mathbb{Z}/2$-equivariant spectrum ${\KR}(\mathscr{C})$ from an exact category with duality $\mathscr{C}$, whose underlying spectrum is the $K$-theory spectrum $\K(\mathscr{C})$ of \cite{Quillen} and whose fixed-points spectrum is the connective Hermitian $K$-theory spectrum $\KH(\mathscr{C})$ of \cite{Schlichting}. Specified to the category of free modules over a discrete ring with anti-involution $R$ this construction provides a $\mathbb{Z}/2$-equivariant spectrum ${\KR}(R)$ whose fixed points are the connective Hermitian $K$-theory $\KH(R)$ of \cite{Karoubi} (when $1/2\in R$) and \cite{BF}. The construction of $\KH(R)$ is extended in \cite{BF} from discrete rings to simplicial rings, and the homotopy type of $\KH(R)$  depends both on the homotopy types of $R$ and of the fixed points space $R^{\mathbb{Z}/2}$.

In this paper we propose a further extension of the real $K$-theory functor ${\KR}$ to the category of ring spectra with anti-involution. These are genuine $\mathbb{Z}/2$-equivariant spectra with a suitably compatible multiplication (see \S\ref{secRealSpectra}). The (derived) fixed-points spectrum and the geometric fixed-points spectrum 
\[
\KH(A):={\KR}(A)^{\mathbb{Z}/2}\ \ \ \ \ \ \ \ \ \ \ \mbox{and}\ \ \ \ \ \ \ \ \ \ \  \Lg(A):=\Phi^{\Z/2}\KR(A)
\]
behave, respectively, as a Hermitian $K$-theory and $L$-theory for ring spectra with anti-involution. They depend on the genuine equivariant homotopy type of $A$ and differ from other constructions in the literature (e.g. from the Hermitian $K$-theory of \cite{Spitzweck} and from the quadratic or symmetric $L$-theory of \cite{Lurie}).
The main application of this paper uses the trace map $\tr\colon \KR\to \THR$ to reformulate the Novikov conjecture in terms of the module structure of $\Lg$. 

There is an algebraic case of particular interest that lies between discrete rings and ring spectra: the ring spectra with anti-involution whose underlying $\mathbb{Z}/2$-spectrum is the Eilenberg-MacLane spectrum $HM$ of a $\Z/2$-Mackey functor $M$. In this case the multiplicative structure on $HM$ specifies to a ring structure on the underlying Abelian group $\pi_0HM$ and a multiplicative action of $\pi_0HM$ on $\pi_0(HM)^{\mathbb{Z}/2}$, suitably compatible with the restriction and the transfer maps. We call such an object a Hermitian Mackey functor. 
A class of examples comes from Tambara functors, where the underlying ring acts on the fixed points datum via the multiplicative transfer.
We start our paper by constructing the Hermitian $K$-theory of a Hermitian Mackey functor in \S\ref{secone}, as the group completion of a certain symmetric monoidal category of Hermitian forms $\Herm_M$ over $M$
\[\KH(M):=\Omega B(Bi\Herm_M,\oplus).\]
The key idea for the definition of $\Herm_M$ is that the fixed points datum of the Mackey functor specifies a refinement of the notion of ``symmetry'' used in the classical definition of Hermitian forms over a ring. In \S\ref{secthree} we extend these ideas to ring spectra and we give the full construction of the $\KR$ functor.

The main feature of our real $K$-theory construction is that it comes equipped with a natural trace map to the real topological Hochschild homology spectrum ${\THR}(A)$ of \cite{IbLars} (see also \cite{Thesis}, \cite{Amalie} and \cite{THRmodels}). The following is in \S\ref{sectrace}.

\begin{thm}
Let $A$ be a connective ring spectrum with anti-involution. There is a natural transformation of $\mathbb{Z}/2$-spectra
\[
\tr\colon {\KR}(A)\longrightarrow {\THR}(A)
\]
whose underlying map of spectra is the B\"{o}kstedt-Hsiang-Madsen trace map $\K(A)\to \THH(A)$ from \cite{BHM}.
\end{thm}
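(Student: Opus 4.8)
The plan is to construct the trace at the level of categories and then transport it along (real) Morita invariance, imitating the classical recipe for the Dennis trace but carried out $\mathbb{Z}/2$-equivariantly throughout. Recall the two objects in play. By \S\ref{secthree}, $\KR(A)$ is obtained by applying a real (i.e.\ $\mathbb{Z}/2$-equivariant) group-completion/$S_\bullet$-type machine to the symmetric monoidal category with duality $\mathcal{F}(A)$ of finitely generated free $A$-modules, where the $\mathbb{Z}/2$-action is induced by the duality $D=\Hom_A(-,A)$ together with the double-dual identification supplied by the anti-involution of $A$; its fixed points are Hermitian forms, i.e.\ objects of $\Herm_A$, and its underlying spectrum is $\K(A)$. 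On the other side, $\THR(A)$ is the geometric realization of the dihedral (real cyclic) bar construction on a B\"okstedt-type point-set model of $A$: a genuine $\mathbb{Z}/2$-spectrum whose underlying spectrum is $\THH(A)$ and whose $\mathbb{Z}/2$-action reverses the cyclic word and applies the anti-involution on smash factors. Both constructions make sense for a general spectral category with (strict) duality $\mathcal{C}$, and $A$ is the case where $\mathcal{C}$ has a single object; after a cofinality identification we may take $\KR(A)\simeq\KR(\mathcal{F}(A))$.

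First I would define a \emph{categorical real Dennis trace} $\KR(\mathcal{C})\to\THR(\mathcal{C})$ for every spectral category with duality $\mathcal{C}$. In the real $S_\bullet$- (or bar-) direction this is, levelwise, the usual ``cyclic insertion'' map of the Dennis/B\"okstedt--Hsiang--Madsen trace: a flag $c_0\to\dots\to c_q$ of composable morphisms is sent to the $q$-simplex of the cyclic nerve of the endomorphism spectra, the last coface being filled by the morphism that closes up the cycle. The genuinely new point is $\mathbb{Z}/2$-equivariance: the action on $\KR(\mathcal{C})$ sends such a flag to its dual flag $Dc_q\to\dots\to Dc_0$, while the dihedral action on $\THR(\mathcal{C})$ reverses the word and applies $D$ on the morphism spectra, and the duality datum $\eta\colon\mathrm{id}\Rightarrow DD$ — together with, on the fixed points, the symmetry condition built into $\Herm$ — is exactly what makes these two operations correspond. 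One then has to upgrade this from a levelwise map to a map of \emph{genuine} $\mathbb{Z}/2$-spectra, matching the ``twisted'' $S^{1,1}$-style deloopings produced by the real $S_\bullet$-construction with the genuine structure (twisted smash powers) of $\THR$; I would do this by working with an edgewise-subdivided/real-Segal model in which the relevant $\mathbb{Z}/2$-actions, and their fixed points, are strict.

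Next I would establish the equivariant Morita invariance needed to identify the target. The inclusion of the one-object spectral category $A$ (the rank-one free module with the standard anti-involution) into $\mathcal{F}(A)$ should induce an equivalence of $\mathbb{Z}/2$-spectra $\THR(A)\xrightarrow{\ \sim\ }\THR(\mathcal{F}(A))$. Nonequivariantly this is the classical statement that $\THH$ is unchanged under passing to finitely generated free (or projective) modules; equivariantly it follows from the same additivity/cofinality argument once one checks that the duality on $\mathcal{F}(A)$ restricts to the anti-involution of $A$ and is compatible with the filtration by rank, and it may also be phrased as an instance of the real Morita invariance of $\THR$. Composing the categorical real Dennis trace for $\mathcal{F}(A)$ with the inverse of this equivalence gives the natural transformation $\tr\colon\KR(A)\to\THR(A)$; naturality in $A$ is automatic since every step is functorial in the ring spectrum with anti-involution.

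Finally, forgetting the $\mathbb{Z}/2$-action collapses the real $S_\bullet$-construction to Waldhausen's ordinary one and the dihedral bar construction to the cyclic bar construction, under which the categorical real Dennis trace restricts to the ordinary categorical Dennis trace and the Morita equivalence to the usual one; hence the underlying map of $\tr$ is the Dennis trace $\K(A)\to\THH(A)$, which is the B\"okstedt--Hsiang--Madsen trace of \cite{BHM}. I expect the main obstacle to be the point-set coherence in the second step: reconciling B\"okstedt's model for the dihedral structure on $\THR$ — its twisted smash powers and the reflection action — with the duality structure carried by $\mathcal{F}(A)$ tightly enough that the levelwise trace assembles into an honest map of genuine $\mathbb{Z}/2$-spectra rather than merely a zig-zag. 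The equivariant Morita step is a secondary difficulty, being more delicate than its classical counterpart because one must control the duality, and not merely the underlying bimodule, through the cofinality argument.
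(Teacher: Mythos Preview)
Your strategy is genuinely different from the paper's, and the coherence obstacle you flag at the end is precisely the one the paper is designed to avoid.

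The paper does \emph{not} work with $\mathcal{F}(A)$ as a spectral category with duality $D=\Hom_A(-,A)$. As noted explicitly in the remark following the definition of Hermitian forms on $A$ in \S\ref{secRealSpectra}, that duality is only homotopy-coherent and produces a na\"ive (homotopy-fixed-point) theory. Instead the paper builds $\KR(A)$ from the non-unital matrix rings $M_n^\vee(A)=\bigvee_{n\times n}A$, on which conjugate-transposition is a \emph{strict} anti-involution. The price is that $M_n^\vee(A)$ is non-unital and block-sum is only partially defined, which is why \S\ref{KRspace}--\S\ref{secdeloopings} take the form they do.

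The trace in \S\ref{sectrace} then follows the B\"okstedt--Hsiang--Madsen pattern directly, not the categorical Dennis trace you sketch. At each level of the $\Gamma$-space, the composite (in the homotopy category) is
\[
B^\sigma\widehat{GL}^\vee_{\underline a}(A)\xrightarrow{\ c\ }\Lambda^\sigma\|N^\sigma\widehat{GL}^\vee_{\underline a}(A)\|\xleftarrow{\ \simeq\ } B^{di}\widehat{GL}^\vee_{\underline a}(A)\longrightarrow \THR(M^\vee_{\underline a}(A))\longrightarrow\THR(A),
\]
where $c$ is the inclusion of constant loops and the backwards equivalence is the identification of the dihedral nerve with the $\sigma$-free loop space for quasi-unital group-like monoids (Lemmas~\ref{lemmathicksub} and~\ref{lemmafreeloop}, Proposition~\ref{GLq1gpl}). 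The final arrow is an explicit pointed map on $\bigvee_{n\times n}A$-smash-factors (``keep only composable cycles''), not a Morita equivalence invoked abstractly. The substantive verification in \S\ref{sectrace} is that this last map is compatible with the $\Gamma$-structure, a concrete combinatorial check with the permutation data $\langle\underline a\rangle$.

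Your route---a categorical real Dennis trace for spectral categories with duality, followed by real Morita invariance---is a plausible alternative program. But carrying it out for the genuine equivariant theory requires exactly the strictification of $D$ on $\mathcal{F}(A)$ that the paper deliberately sidesteps; your ``cyclic insertion'' map also presupposes identities (or at least degeneracies), which the $M_n^\vee$-model lacks. The paper's approach buys a strict model throughout, at the cost of working with non-unital monoids and thick realizations; your approach would buy conceptual clarity and perhaps wider generality, at the cost of a substantial homotopy-coherence argument that is not carried out here.
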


In the case of a discrete ring with anti-involution $R$ the trace provides a map of spectra $\KH(R)\to \THR(R)^{\mathbb{Z}/2}$ which is a refinement of earlier constructions of the Chern Character from Hermitian $K$-theory to dihedral homology appearing in \cite{gc}. In \S\ref{secass} we define, for any topological group $\pi$ and ring spectrum with anti-involution $A$, an assembly map
\[
{\KR}(A)\wedge B^{\sigma}\pi_+\longrightarrow {\KR}(A[\pi])
\]
where $A[\pi]:=A\wedge\pi_+$ is the group-ring with the anti-involution induced by the inversion of $\pi$, and $B^{\sigma}\pi$ is a delooping of $\pi$ with respect to the sign-representation. 
 We define a map
\[
Q\colon {\KR}(\mathbb{S}[\pi])\stackrel{\tr}{\longrightarrow} {\THR}(\mathbb{S}[\pi])\simeq \mathbb{S}\wedge B^{di}\pi_+\longrightarrow \mathbb{S}\wedge B^{\sigma}\pi_+
\]
where $B^{di}\pi\to B^{\sigma}\pi$ is the projection from the dihedral nerve of $\pi$, and the equivalence is from \cite{Amalie}. The following is proved in \S\ref{secsplitting}.

\begin{thm}\label{splitKRintro}
The map $Q$ defines a natural retraction in the homotopy category of $\mathbb{Z}/2$-spectra for the restricted assembly map
\[
\mathbb{S}\wedge B^{\sigma}\pi_+\xrightarrow{\eta\wedge\id}{\KR}(\mathbb{S})\wedge B^{\sigma}\pi_+\longrightarrow {\KR}(\mathbb{S}[\pi]),
\]
where $\eta\colon \mathbb{S}\to {\KR}(\mathbb{S})$ is the unit map. Thus the real $K$-theory of the spherical group-ring splits off a copy of the equivariant suspension spectrum $\mathbb{S}\wedge B^{\sigma}\pi_+$. If $\pi$ is discrete, the Hermitian $K$-theory spectrum ${\KH}(\mathbb{S}[\pi]):={\KR}(\mathbb{S}[\pi])^{\Z/2}$ splits off a copy of
\[ (\mathbb{S}\wedge B^{\sigma}\pi_+)^{\Z/2}\simeq \mathbb{S}\wedge ((B\pi\times \mathbb{RP}^\infty)\amalg \coprod_{\{[g]\ |\ g^2=1\}}BZ_\pi\langle g\rangle)_+,\]
where the disjoint union runs through the conjugacy classes of the order two elements of $\pi$, and $Z_\pi\langle g\rangle$ is the centralizer of $g$ in $\pi$.
\end{thm}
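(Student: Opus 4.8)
The plan is to establish the two assertions in turn, deducing the statement about $\KH$ from the splitting of $\KR$ by passing to genuine $\mathbb{Z}/2$-fixed points. For the splitting it suffices, working throughout in the homotopy category of $\mathbb{Z}/2$-spectra, to show that the composite
\begin{multline*}
\mathbb{S}\wedge B^\sigma\pi_+ \xrightarrow{\eta\wedge\id} \KR(\mathbb{S})\wedge B^\sigma\pi_+ \longrightarrow \KR(\mathbb{S}[\pi]) \\ \xrightarrow{\tr} \THR(\mathbb{S}[\pi]) \xrightarrow{\simeq} \mathbb{S}\wedge B^{di}\pi_+ \longrightarrow \mathbb{S}\wedge B^\sigma\pi_+
\end{multline*}
is homotopic to the identity, naturally in the topological group $\pi$. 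I would reduce this to three inputs. First, the trace is compatible with assembly: the assembly map of \S\ref{secass} has an evident analogue $\THR(A)\wedge B^\sigma\pi_+\to\THR(A[\pi])$, both maps being built out of the $A$-bimodule structure on $A[\pi]=A\wedge\pi_+$, and because the trace of the quoted theorem is a natural transformation compatible with these module structures there is a homotopy-commutative square with horizontal edges the $\KR$- and $\THR$-assembly maps and vertical edges $\tr\wedge\id$ and $\tr$. Second, the trace is unital: since $\THR(\mathbb{S})\simeq\mathbb{S}$ (the case $\pi=1$ of the equivalence used above) and the trace respects multiplicative units, the composite $\mathbb{S}\xrightarrow{\eta}\KR(\mathbb{S})\xrightarrow{\tr}\THR(\mathbb{S})\simeq\mathbb{S}$ is the identity. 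Third, an identification on the $\THR$-side: under the equivalence $\THR(\mathbb{S}[\pi])\simeq\mathbb{S}\wedge B^{di}\pi_+$ of \cite{Amalie}, the $\THR$-assembly map $\mathbb{S}\wedge B^\sigma\pi_+\simeq\THR(\mathbb{S})\wedge B^\sigma\pi_+\to\THR(\mathbb{S}[\pi])$ is induced by the ``constant loop'' inclusion $B^\sigma\pi\hookrightarrow B^{di}\pi$, a section of the projection $B^{di}\pi\to B^\sigma\pi$ that defines $Q$.

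Granting these, the displayed composite is the map of suspension spectra induced by $B^\sigma\pi\hookrightarrow B^{di}\pi\twoheadrightarrow B^\sigma\pi$, that is by the identity, and naturality in $\pi$ is clear since every step is functorial; this proves the retraction and hence the first splitting. The main obstacle is the third input: one must trace the construction of the equivalence $\THR(\mathbb{S}[\pi])\simeq\mathbb{S}\wedge B^{di}\pi_+$ through a B\"okstedt-type model for real topological Hochschild homology of a spherical group ring and check that the $\pi_+$-coordinate created by the assembly map is precisely the ``loop'' coordinate that the projection onto $B^\sigma\pi$ collapses. The first two inputs are essentially formal, following from the way the trace is constructed in \S\ref{sectrace} and from its multiplicativity.

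For the statement about $\KH(\mathbb{S}[\pi])$, apply the genuine fixed-point functor to the retraction: $(-)^{\mathbb{Z}/2}$ preserves retracts, so $\KH(\mathbb{S}[\pi])=\KR(\mathbb{S}[\pi])^{\mathbb{Z}/2}$ splits off $(\mathbb{S}\wedge B^\sigma\pi_+)^{\mathbb{Z}/2}$, and it remains to identify the latter for a discrete group $\pi$. By the tom Dieck splitting of the genuine equivariant suspension spectrum of the $\mathbb{Z}/2$-space $B^\sigma\pi$,
\[
(\mathbb{S}\wedge B^\sigma\pi_+)^{\mathbb{Z}/2}\ \simeq\ \mathbb{S}\wedge\bigl(E\mathbb{Z}/2\times_{\mathbb{Z}/2}B^\sigma\pi\bigr)_+\ \vee\ \mathbb{S}\wedge\bigl((B^\sigma\pi)^{\mathbb{Z}/2}\bigr)_+ ,
\]
so the task splits into computing the homotopy orbit space and the genuine fixed points of $B^\sigma\pi$. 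For the fixed points I would use that $B^\sigma\pi$ is a delooping of $\pi$ (with the inversion action) with respect to the sign representation, together with the identification $(\Omega^\sigma Y)^{\mathbb{Z}/2}\simeq\operatorname{hofib}(Y^{\mathbb{Z}/2}\to Y)$: this exhibits $(B^\sigma\pi)^{\mathbb{Z}/2}\to B\pi$ as a map with discrete homotopy fibres $\pi^{\mathbb{Z}/2}=\{g\mid g^2=1\}$, on which $\pi_1(B\pi)=\pi$ acts by conjugation, hence as the covering $\coprod_{\{[g]\mid g^2=1\}}BZ_\pi\langle g\rangle$ (the class $g=1$ contributing $B\pi$). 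For the homotopy orbit space one reads off from the bar model that the underlying $\mathbb{Z}/2$-action on $B^\sigma\pi\simeq B\pi$ is Borel-trivial, so $E\mathbb{Z}/2\times_{\mathbb{Z}/2}B^\sigma\pi\simeq B\pi\times B\mathbb{Z}/2 = B\pi\times\mathbb{RP}^\infty$; combining the two summands gives the asserted decomposition.

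The delicate point in this second part is the genuine fixed-point computation --- pinning down which simplices of the sign-twisted bar construction are $\mathbb{Z}/2$-fixed, how the simplicial reversal interacts with the ``central bead'' of a fixed necklace, and why the residual identifications yield exactly $\coprod BZ_\pi\langle g\rangle$ --- but this is a standard edgewise-subdivision argument, parallel to the computation of the $\mathbb{Z}/2$-fixed points of the dihedral nerve that underlies the equivalence $\THR(\mathbb{S}[\pi])\simeq\mathbb{S}\wedge B^{di}\pi_+$ invoked in the first part.
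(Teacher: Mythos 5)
Your strategy for the splitting agrees with the paper's in its essentials: both reduce the problem to showing that, under the equivalence $\THR(\mathbb{S}[\pi])\simeq\mathbb{S}\wedge B^{di}\pi_+$, the trace of the assembled class is the constant-loop inclusion $B^{\sigma}\pi\hookrightarrow B^{di}\pi$, which is split by the projection $p$. The paper does not package this as three abstract inputs but instead writes one explicit commutative diagram of $\mathbb{Z}/2$-$\Gamma$-spaces built out of the bar models, with the constant-loop inclusion $c\colon B^{\sigma}\pi\to\Lambda^{\sigma}\|N^{\sigma}\pi\|$, the free-loop equivalence $l\colon B^{di}\to\Lambda^{\sigma}\|N^{\sigma}\|$ of Lemma~\ref{lemmafreeloop}, and the map $\tr^{cy}$ on the right. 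What you flag as the ``main obstacle'' (your input three) is exactly the content of that diagram: the paper avoids having to reidentify the $\pi_+$-coordinate inside B\"okstedt's construction by observing that $\tr^{cy}$ leaves the $k=1$ summand, namely the dihedral nerve of $\pi\to\widehat{GL}^{\vee}_1(\mathbb{S}[\pi])$, essentially untouched. So for this part you have the right idea, and the paper's proof just makes it concrete.

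The one place you take a genuinely different route is the computation of $(B^{\sigma}\pi)^{\mathbb{Z}/2}$. You argue via the fibration $(\Omega^{\sigma}Y)^{\mathbb{Z}/2}\simeq\operatorname{hofib}(Y^{\mathbb{Z}/2}\to Y)$ and the $\sigma$-delooping property $\Omega^{\sigma}B^{\sigma}\pi\simeq\pi$, exhibiting $(B^{\sigma}\pi)^{\mathbb{Z}/2}$ as a covering of $B\pi$ with discrete fibre $\{g\mid g^2=1\}$ and conjugation monodromy. The paper (in Lemma~\ref{funnymap}, cited by the corollary after Theorem~\ref{asssplitKR}) instead reads the answer off the explicit bar model: by Proposition~\ref{fixedB11}, $(B^{\sigma}\pi)^{\mathbb{Z}/2}$ is the classifying space of the category $\sym\pi$ whose objects are the $g$ with $g^2=1$ and whose morphisms $g\to g'$ are $h$ with $g=h^{-1}g'h$; each component has automorphism group $Z_\pi\langle g\rangle$. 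Your delooping argument is correct but relies on an equivalence $\Omega^{\sigma}B^{\sigma}\pi\simeq\pi$ that the paper asserts only in the introduction and does not prove; it also needs a few more words about components and basepoints (the homotopy fibre identification you use is over the base vertex, and the map is only a covering space component-by-component). The paper's model-level computation sidesteps both. For the homotopy-orbit summand you reach the same $B\pi\times\mathbb{RP}^\infty$ as the paper, which uses the equivariant map $\lambda\colon B\pi\to B^{\sigma}\pi$ of Lemma~\ref{funnymap} (trivial action on the source, non-equivariant equivalence) — this is what makes your ``Borel-trivial'' observation precise.
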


Non-equivariantly this is the splitting of \cite{WJR}. It is unclear at the moment if the other summand of ${\KR}(\mathbb{S}[\pi])$ directly relates to a geometric object. It was brought to our attention by Kristian Moi and Thomas Nikolaus that the fixed-points spectrum ${\KH}(\mathbb{S}[\pi])$ might relate to the spectrum $VLA^\bullet$ from \cite{WW3}. 

The geometric application of this splitting that we propose uses the rationalization of ${\KR}(\mathbb{S}[\pi])$ to reformulate the Novikov conjecture. The Novikov conjecture of \cite{Novikov} for a discrete group $\pi$ is equivalent to the injectivity of the assembly map
\[
\Lq(\mathbb{Z})\wedge B\pi_+\longrightarrow \Lq(\mathbb{Z}[\pi])
\]
on rational homotopy groups, where $\Lq$ denotes the quadratic $L$-theory spectrum and $\mathbb{Z}[\pi]$ is the integral group-ring with the anti-involution induced by the inversion in $\pi$ (see e.g. \cite{KL}). Burghelea and Fiedorowicz show in \cite{BF} that there is a rational decomposition 
\[\KH_\ast(R)\otimes\mathbb{Q}\cong (\Lqsub{\ast}(R)\otimes\mathbb{Q})\oplus (\K_\ast(R)\otimes\mathbb{Q})^{\mathbb{Z}/2}\]
for every discrete ring with anti-involution $R$, where $\K_\ast(R)$ are the algebraic $K$-theory groups. In Proposition \ref{Lgeom} we reinterpret this result as the splitting of the isotropy separation sequence of the rational $\KR$ spectrum, thus identifying rationally the connective $L$-theory spectrum with the geometric fixed-points spectrum $\Phi^{\mathbb{Z}/2}{\KR}(R)$ (see  \cite[7.6]{SchlDer} for a similar result). We then define
\[
\Lg(A):=\Phi^{\mathbb{Z}/2}{\KR}(A),
\]
the ``genuine'' $L$-theory spectrum of the ring spectrum with anti-involution $A$.
The trace induces a map on the rationalized geometric fixed-points spectra
\[
\tr\colon \Lq(\mathbb{Z}[\pi])\otimes\mathbb{Q}\simeq  \Lg(\mathbb{Z}[\pi])\otimes\mathbb{Q}\longrightarrow\Phi^{\mathbb{Z}/2}({\THR}(\mathbb{Z}[\pi]))\otimes\mathbb{Q},
\]
that one could try to exploit to detect the injectivity of the $L$-theoretic assembly. The rational geometric fixed points $\Phi^{\mathbb{Z}/2}{\THR}(R)\otimes\mathbb{Q}$ have  however been computed to be contractible in \cite{THRmodels}, as long as $R$ is a \textit{discrete} ring. This is in line with the results of \cite{gc}, where the Chern Character to dihedral homology factors through algebraic $K$-theory via the forgetful map, and therefore vanishes on the $L$-theory summand.

The rational geometric fixed points $\Phi^{\mathbb{Z}/2}{\THR}(A)\otimes\mathbb{Q}$ are generally non-trivial when the input $A$ is not the Eilenberg McLane spectrum of a discrete ring. The starting point of our analysis is to replace the ring of integers with the Burnside Mackey functor, much in the same way one replaces the integers with the sphere spectrum in the proof of the $K$-theoretic Novikov conjecture of \cite{BHMNovikov}. 
We define a Hermitian Mackey functor 
\[\mathbb{A}_{\frac{1}{2}}[\pi]:=\underline{\pi}^{\mathbb{Z}/2}_0(\mathbb{S}\wedge\pi_+)[\tfrac{1}{2}],\]
the ``Burnside group-ring'' of a discrete group $\pi$ (see Definition \ref{groupMackey}) with $2$ inverted. There is a restriction map $d\colon\mathbb{A}_{\frac{1}{2}}[\pi]\to \underline{\mathbb{Z}}_{\frac{1}{2}}[\pi]$ coming from the augmentation of the Burnside ring, where $\underline{\mathbb{Z}}_{\frac{1}{2}}[\pi]$ is the Mackey functor associated to the integral group-ring $\mathbb{Z}[\pi]$ with $2$ inverted.
The following is proved in \S \ref{secmain}.

\begin{thm}\label{mainintro} Let $\pi$ be a discrete group. There is a lift $\overline{\mathcal{A}}_{\mathbb{Z}[\pi]}$ of the $L$-theoretic connective assembly map of the integral group-ring
\[
\xymatrix@C=75pt@R=20pt{
&& \Lgsub{\ast}(\mathbb{A}_{\frac 12}[\pi])\otimes\mathbb{Q}\ar[d]^d
\\
\mathbb{Q}[\beta]\otimes H_\ast (B\pi;\mathbb{Q})\ar[r]_-{\mathcal{A}_{\mathbb{Z}[\pi]}}\ar[urr]^-{\overline{\mathcal{A}}_{\mathbb{Z}[\pi]}}&&\llap{$\Lqsub{\ast\geq 0}(\mathbb{Z}[\pi])\otimes\mathbb{Q}\cong$}   \Lgsub{\ast}(\underline{\mathbb{Z}}_{\frac 12}[\pi])\otimes\mathbb{Q}\rlap{\ .}
}
\]
For every polynomial $\underline{x}\in\mathbb{Q}[\beta]\otimes H_\ast (B\pi;\mathbb{Q})$ with non-zero constant term, $\overline{\mathcal{A}}_{\mathbb{Z}[\pi]}(\underline{x})\neq 0$.
It follows that the Novikov conjecture holds for $\pi$ if and only if the image of $\overline{\mathcal{A}}_{\mathbb{Z}[\pi]}$ intersects the kernel of $d$ trivially.
\end{thm}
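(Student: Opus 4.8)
The plan is to build $\overline{\mathcal A}_{\mathbb Z[\pi]}$ by transporting the splitting of Theorem~\ref{splitKRintro} to the Burnside group-ring and passing to geometric fixed points. The unit $\mathbb S\to H\mathbb{A}_{\frac{1}{2}}$ of the Eilenberg--MacLane construction is a map of ring spectra with anti-involution, inducing $\mathbb S[\pi]\to H(\mathbb{A}_{\frac{1}{2}}[\pi])$; composing with the split inclusion of Theorem~\ref{splitKRintro} gives
\[
\mathbb S\wedge B^\sigma\pi_+\longrightarrow \KR(\mathbb S[\pi])\longrightarrow \KR(\mathbb{A}_{\frac{1}{2}}[\pi]).
\]
Applying $\Phi^{\mathbb Z/2}$ we obtain $\mathbb S\wedge((B^\sigma\pi)^{\mathbb Z/2})_+\to\Lg(\mathbb{A}_{\frac{1}{2}}[\pi])$; precomposing with the inclusion of the $g=1$ summand $BZ_\pi\langle 1\rangle=B\pi$ of $(B^\sigma\pi)^{\mathbb Z/2}$ in the description of Theorem~\ref{splitKRintro} and rationalizing yields a degree-preserving map $\iota\colon H_\ast(B\pi;\mathbb Q)\to\Lgsub{\ast}(\mathbb{A}_{\frac{1}{2}}[\pi])\otimes\mathbb Q$. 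Since $\mathbb{A}_{\frac{1}{2}}$ is a Tambara functor, $\KR(\mathbb{A}_{\frac{1}{2}})$ is an $E_\infty$-ring, so $\Lg(\mathbb{A}_{\frac{1}{2}})=\Phi^{\mathbb Z/2}\KR(\mathbb{A}_{\frac{1}{2}})$ acts on $\Lg(\mathbb{A}_{\frac{1}{2}}[\pi])$ through the assembly of \S\ref{secass} and $\iota$ is a map of $\Lg(\mathbb{A}_{\frac{1}{2}})$-modules. Moreover $d\colon\mathbb{A}_{\frac{1}{2}}\to\underline{\mathbb{Z}}_{\frac{1}{2}}$ admits a section of Hermitian Mackey functors after inverting $2$, so the generator $\beta$ of $\Lgsub{4}(\underline{\mathbb{Z}}_{\frac{1}{2}})\otimes\mathbb Q\cong\Lqsub{4}(\mathbb Z)\otimes\mathbb Q$ lifts to a class in $\Lgsub{4}(\mathbb{A}_{\frac{1}{2}})\otimes\mathbb Q$, again denoted $\beta$. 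We then define
\[
\overline{\mathcal A}_{\mathbb Z[\pi]}\big(\textstyle\sum_k\beta^k\otimes h_k\big):=\textstyle\sum_k \beta^k\cdot\iota(h_k),
\]
a $\mathbb Q[\beta]$-linear map $\mathbb Q[\beta]\otimes H_\ast(B\pi;\mathbb Q)\to\Lgsub{\ast}(\mathbb{A}_{\frac{1}{2}}[\pi])\otimes\mathbb Q$.

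The identity $d\circ\overline{\mathcal A}_{\mathbb Z[\pi]}=\mathcal A_{\mathbb Z[\pi]}$ is a naturality statement. The augmentation $d\colon\mathbb{A}_{\frac{1}{2}}[\pi]\to\underline{\mathbb{Z}}_{\frac{1}{2}}[\pi]$ is a map of ring spectra with anti-involution, hence compatible with the unit, with the assembly of \S\ref{secass} (so with the split inclusion), with $\Phi^{\mathbb Z/2}$ and rationalization, and with the module structures; therefore $d\circ\overline{\mathcal A}_{\mathbb Z[\pi]}$ is the same construction carried out with $\underline{\mathbb{Z}}_{\frac{1}{2}}$ in place of $\mathbb{A}_{\frac{1}{2}}$. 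Under the identification $\Lg(\underline{\mathbb{Z}}_{\frac{1}{2}})\otimes\mathbb Q\simeq\Lqsub{\ast\geq 0}(\mathbb Z)\otimes\mathbb Q$ of Proposition~\ref{Lgeom} (which encodes the rational splitting of \cite{BF}), it then remains to recognize the $g=1$ part of the geometric-fixed-points assembly of \S\ref{secass} for $\underline{\mathbb{Z}}_{\frac{1}{2}}$ as the classical connective rational $L$-theory assembly over $B\pi$; this is the compatibility between the $B^\sigma$-delooped assembly and Ranicki's, and it simultaneously pins down the normalization of $\beta$.

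Non-vanishing comes from the trace. Apply $\Phi^{\mathbb Z/2}$ to $\tr$ and rationalize. Using naturality of $\tr$ and the definition of $Q$, the composite $\Phi^{\mathbb Z/2}(\tr)\circ\iota$ is identified with
\[
H_\ast(B\pi;\mathbb Q)\xrightarrow{\ s\ }\pi_\ast\big(\Phi^{\mathbb Z/2}\THR(\mathbb S[\pi])\big)\otimes\mathbb Q\xrightarrow{\ f_\ast\ }\pi_\ast\big(\Phi^{\mathbb Z/2}\THR(\mathbb{A}_{\frac{1}{2}}[\pi])\big)\otimes\mathbb Q,
\]
where $f$ is induced by $\mathbb S\to H\mathbb{A}_{\frac{1}{2}}$ and $s$ is the composite of the inclusion of the $g=1$ summand $B\pi\hookrightarrow(B^\sigma\pi)^{\mathbb Z/2}$ with the section of the projection $\Phi^{\mathbb Z/2}\THR(\mathbb S[\pi])\simeq\mathbb S\wedge((B^{di}\pi)^{\mathbb Z/2})_+\to\mathbb S\wedge((B^\sigma\pi)^{\mathbb Z/2})_+$ afforded by $Q$ (using \cite{Amalie}); in particular $s$ is injective. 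Consequently $\overline{\mathcal A}_{\mathbb Z[\pi]}(\underline x)\neq0$ as soon as $f_\ast$ does not annihilate $s$ of the constant term of $\underline x$. This is exactly the point of replacing $\mathbb Z$ by $\mathbb{A}_{\frac{1}{2}}$: in contrast with the contractibility of $\Phi^{\mathbb Z/2}\THR(R)\otimes\mathbb Q$ for discrete $R$ (\cite{THRmodels}), the target $\Phi^{\mathbb Z/2}\THR(\mathbb{A}_{\frac{1}{2}}[\pi])\otimes\mathbb Q$ is non-trivial, and the computation of \S\ref{secmain} shows both that $f_\ast$ is injective on the image of $s$ and that the trace of $\beta$ acts without torsion on the $\mathbb Q[\beta]$-submodule generated by that image. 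Hence $\overline{\mathcal A}_{\mathbb Z[\pi]}(\underline x)\neq0$ whenever $\underline x$ has non-zero constant term; writing a general non-zero element as $\beta^m$ times one with non-zero constant term, the same argument shows that $\overline{\mathcal A}_{\mathbb Z[\pi]}$ is rationally injective.

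Granting all of this, the reformulation is formal. By \cite{KL} the Novikov conjecture for $\pi$ is equivalent to the rational injectivity of $\mathcal A_{\mathbb Z[\pi]}=d\circ\overline{\mathcal A}_{\mathbb Z[\pi]}$; since $\overline{\mathcal A}_{\mathbb Z[\pi]}$ is injective, $\ker(d\circ\overline{\mathcal A}_{\mathbb Z[\pi]})=\overline{\mathcal A}_{\mathbb Z[\pi]}^{-1}(\ker d)$ is zero if and only if the image of $\overline{\mathcal A}_{\mathbb Z[\pi]}$ meets $\ker d$ only in $0$, which is the assertion. The step I expect to be the main obstacle is the rational computation of $\Phi^{\mathbb Z/2}\THR(\mathbb{A}_{\frac{1}{2}}[\pi])$ together with the verification that the trace does not annihilate the image of $\overline{\mathcal A}_{\mathbb Z[\pi]}$: this has no counterpart in the discrete setting and is what makes the Burnside functor the correct coefficient object. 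A secondary, bookkeeping-heavy point is matching the geometric-fixed-points assembly of \S\ref{secass} with Ranicki's $L$-theory assembly, needed so that $d\circ\overline{\mathcal A}_{\mathbb Z[\pi]}$ is precisely the map $\mathcal A_{\mathbb Z[\pi]}$ in the statement.
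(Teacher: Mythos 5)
Your construction of $\overline{\mathcal A}_{\mathbb Z[\pi]}$ agrees, up to reorganization, with the one used in the paper: a multiplicative lift of $\mathbb{Q}[\beta]$ through the additive section of $d$, combined with the $\Lg(\mathbb{A}_{\frac 12})$-module structure of the assembly map. The final formal reduction to the Novikov reformulation is also fine. However, the heart of the theorem -- that $\overline{\mathcal A}_{\mathbb Z[\pi]}(\underline x)\neq 0$ when $\underline x$ has non-zero constant term -- is not established by your argument, and the key mechanism is in fact the opposite of what you assert.

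After applying the trace-induced retraction (the map $T\colon\Lgsub{\ast}(\mathbb{A}_{\frac 12}[\pi])\otimes\mathbb{Q}\to H_\ast(B\pi;\mathbb{Q})$ of Corollary~\ref{resasssplitLBurn}) to $\overline{\mathcal A}_{\mathbb Z[\pi]}(\underline x)=\iota(x_n)+\beta\cdot\iota(x_{n-4})+\dots$, the constant-term contribution is $x_n\neq 0$, but you still need to know what $T$ does to the remaining summands; a priori they could cancel $x_n$. You claim that ``the trace of $\beta$ acts without torsion on the $\mathbb Q[\beta]$-submodule generated by the image'', but this is false: the class $s(1)\in\Lgsub{0}(\mathbb{A}_{\frac 12})$ (equivalently, the action of the lifted $\beta$ on each $\iota(x_{n-4k})$ for $k\geq 1$) is \emph{annihilated} by the trace composite; indeed the whole point of the argument is that $T\overline{\mathcal A}_{\mathbb Z[\pi]}$ vanishes on the positive powers of $\beta$, so that $T\overline{\mathcal A}_{\mathbb Z[\pi]}(\underline x)=x_n\neq 0$. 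The paper establishes this vanishing by a naturality argument: using Kan--Thurston, one produces a group homomorphism $\lambda\colon\pi_{\langle n-1\rangle}\to\pi$ which is surjective on $H_{<n}(-;\mathbb{Q})$ but with $H_n(B\pi_{\langle n-1\rangle};\mathbb{Q})=0$, and then degree-counting in the commutative square built from the naturality of $T$ and $\overline{\mathcal A}$ forces the positive-$\beta$ contribution to die. This step has no analogue in your write-up; replacing it with an appeal to ``the computation of \S\ref{secmain}'' is circular, since that is precisely the section being proved. Separately, your closing claim that $\overline{\mathcal A}_{\mathbb Z[\pi]}$ is rationally injective does not follow: factoring $\underline x=\beta^m\underline y$ only reduces to $s(\beta)^m\cdot\overline{\mathcal A}_{\mathbb Z[\pi]}(\underline y)\neq 0$, and nothing in the argument shows that multiplication by $s(\beta)$ is injective on $\Lgsub{\ast}(\mathbb{A}_{\frac 12}[\pi])\otimes\mathbb{Q}$; the theorem deliberately asserts non-vanishing only for polynomials with non-zero constant term.
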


We prove this theorem by detecting $\overline{\mathcal{A}}_{\mathbb{Z}[\pi]}(\underline{x})$ using the trace map.
We define a map
\[
T\colon \Lgsub{\ast}(\mathbb{A}_{\frac{1}{2}}[\pi])\otimes\mathbb{Q}\stackrel{\tr}{\longrightarrow}\Phi^{\mathbb{Z}/2}{\THR}_\ast(\mathbb{A}_{\frac{1}{2}}[\pi])\otimes\mathbb{Q}\cong 
H_\ast((B^{di}\pi)^{\mathbb{Z}/2};\mathbb{Q})\stackrel{p}{\longrightarrow}
H_\ast(B\pi;\mathbb{Q}),
\]
where $B^{di}\pi$ is the dihedral nerve of $\pi$, and $p\colon (B^{di}\pi)^{\Z/2}\to B\pi$ is a certain projection map. The image of the constant term $T\overline{\mathcal{A}}_{\mathbb{Z}[\pi]}(1\otimes x_n)$ of a polynomial 
\[\underline{x}=1\otimes x_n+\beta\otimes x_{n-4}+\dots+\beta^{k}\otimes x_{n-4k}\ \ \ \in \ \ (\mathbb{Q}[\beta]\otimes H_\ast(B\pi;\mathbb{Q}))_n\]
of total degree $n$,
with $x_n\neq 0$, is non-zero essentially by Theorem \ref{splitKRintro}. A naturality argument then shows that $T\overline{\mathcal{A}}_{\mathbb{Z}[\pi]}$ vanishes on the positive powers of $\beta$, and therefore that $\overline{\mathcal{A}}_{\mathbb{Z}[\pi]}(\underline{x})$ is not zero.
By the periodicity of $\Lqsub{\ast}(\Z[\pi])$, if $\mathcal{A}_{\mathbb{Z}[\pi]}$ does not annihilate the polynomials with non-zero constant terms it must be injective (see Remark \ref{remconnective}). Thus the Novikov conjecture holds precisely when $d$ does not kill $\overline{\mathcal{A}}_{\mathbb{Z}[\pi]}(\underline{x})$.

We can further reduce the Novikov conjecture to an algebraic property of $\Lgsub{\ast}(\mathbb{A}_{\frac 12}[\pi])$ as an $\Lgsub{0}(\mathbb{A}_{\frac 12})$-module. The rank map $d$ above admits a natural splitting $s_\pi$ which includes 
$\Lqsub{\ast\geq 0}(\Z[\pi])\otimes\mathbb{Q}$ as a summand of $\Lgsub{\ast}(\mathbb{A}_{\frac 12}[\pi])\otimes\mathbb{Q}$. In particular the unit of $\Lqsub{\ast\geq 0}(\Z)\otimes\mathbb{Q}\cong \mathbb{Q}[\beta]$ defines an element $a$ in the ring $\Lgsub{0}(\mathbb{A}_{\frac 12})\otimes\mathbb{Q}$.

\begin{cor}\label{cormainintro}
Every element $\underline{x}\in\mathbb{Q}[\beta]\otimes H_\ast (B\pi;\mathbb{Q})$ satisfies the identity
\[
s_\pi\mathcal{A}_{\mathbb{Z}[\pi]}(\underline{x})=a\cdot \overline{\mathcal{A}}_{\mathbb{Z}[\pi]}(\underline{x})\ \ \ \ \in \ \ \Lgsub{\ast}(\mathbb{A}_{\frac 12}[\pi])\otimes\mathbb{Q},
\]
where $s_\pi$ is injective and $\overline{\mathcal{A}}_{\mathbb{Z}[\pi]}(\underline{x})$ is non-zero when $\underline{x}$ has non-zero constant term.
It follows that the Novikov conjecture holds for $\pi$ if and only if the multiplication map
\[
a\cdot (-)\colon\Lgsub{\ast}(\mathbb{A}_{\frac 12}[\pi])\longrightarrow \Lgsub{\ast}(\mathbb{A}_{\frac 12}[\pi])
\]
is injective on the image of $\overline{\mathcal{A}}_{\mathbb{Z}[\pi]}$.
\end{cor}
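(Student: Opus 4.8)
The plan is to derive the corollary formally from Theorem~\ref{mainintro} and from the construction of the section $s_\pi$ and the element $a$ given in \S\ref{secmain}. The crucial point I would establish first is the identity of $\mathbb{Q}$-linear endomorphisms
\[
s_\pi\circ d \;=\; a\cdot(-)\qquad\text{on}\qquad \Lgsub{\ast}(\mathbb{A}_{\frac12}[\pi])\otimes\mathbb{Q}.
\]
To prove it I would use that inverting $2$ splits the Burnside Mackey functor as a product of Hermitian Mackey functors $\mathbb{A}_{\frac12}\cong\underline{\mathbb{Z}}_{\frac12}\times M$, where the factor $\underline{\mathbb{Z}}_{\frac12}$ is cut out by the idempotent concentrated at the trivial subgroup in the $2$-inverted Burnside ring of $\Z/2$ (which splits as $\Z[\tfrac12]\times\Z[\tfrac12]$), and $M$ is the complementary summand (the ``geometric'' one, with trivial underlying ring); smashing with $\pi_+$ gives $\mathbb{A}_{\frac12}[\pi]\cong\underline{\mathbb{Z}}_{\frac12}[\pi]\times M[\pi]$, naturally in $\pi$. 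Since $\KR$ sends a finite product of ring spectra with anti-involution to the product of their $\KR$-spectra, and since $\Phi^{\Z/2}$ and $-\otimes\mathbb{Q}$ preserve finite products, this produces a natural splitting
\[
\Lgsub{\ast}(\mathbb{A}_{\frac12}[\pi])\otimes\mathbb{Q}\;\cong\;\bigl(\Lgsub{\ast}(\underline{\mathbb{Z}}_{\frac12}[\pi])\otimes\mathbb{Q}\bigr)\oplus\bigl(\Lgsub{\ast}(M[\pi])\otimes\mathbb{Q}\bigr)
\]
under which $d$ is the projection to the first summand, $s_\pi$ the inclusion of the first summand, and $a=s_1(1)$ the central idempotent $(1,0)$ of the ring $\Lgsub{0}(\mathbb{A}_{\frac12})\otimes\mathbb{Q}\cong\bigl(\Lgsub{0}(\underline{\mathbb{Z}}_{\frac12})\otimes\mathbb{Q}\bigr)\times\bigl(\Lgsub{0}(M)\otimes\mathbb{Q}\bigr)$. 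As this idempotent is central and annihilates the $M$-summand, the module action of $\Lgsub{0}(\mathbb{A}_{\frac12})\otimes\mathbb{Q}$ on $\Lgsub{\ast}(\mathbb{A}_{\frac12}[\pi])\otimes\mathbb{Q}$ is diagonal for the decomposition, so both $s_\pi\circ d$ and $a\cdot(-)$ equal the idempotent projection onto the first summand; this is the displayed identity.

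Granting it, the corollary follows at once. Applying the endomorphism $s_\pi\circ d=a\cdot(-)$ to $\overline{\mathcal{A}}_{\mathbb{Z}[\pi]}(\underline{x})$ and using $d\circ\overline{\mathcal{A}}_{\mathbb{Z}[\pi]}=\mathcal{A}_{\mathbb{Z}[\pi]}$ from Theorem~\ref{mainintro} gives
\[
s_\pi\mathcal{A}_{\mathbb{Z}[\pi]}(\underline{x})=s_\pi\bigl(d\,\overline{\mathcal{A}}_{\mathbb{Z}[\pi]}(\underline{x})\bigr)=a\cdot\overline{\mathcal{A}}_{\mathbb{Z}[\pi]}(\underline{x}),
\]
which is the asserted identity. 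The map $s_\pi$ is injective, being the inclusion of a direct summand, and the non-vanishing of $\overline{\mathcal{A}}_{\mathbb{Z}[\pi]}(\underline{x})$ when $\underline{x}$ has non-zero constant term is precisely the second assertion of Theorem~\ref{mainintro} (whose proof detects this class via the trace map and Theorem~\ref{splitKRintro}).

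For the reformulation of the Novikov conjecture I would argue as follows. Since $a\cdot(-)=s_\pi\circ d$ with $s_\pi$ injective, $\ker\bigl(a\cdot(-)\bigr)=\ker d$ as $\mathbb{Q}$-subspaces of $\Lgsub{\ast}(\mathbb{A}_{\frac12}[\pi])\otimes\mathbb{Q}$. Hence for any $\mathbb{Q}$-subspace $U$ --- and $\operatorname{im}(\overline{\mathcal{A}}_{\mathbb{Z}[\pi]})$ is one, since $\overline{\mathcal{A}}_{\mathbb{Z}[\pi]}$ is $\mathbb{Q}$-linear --- the multiplication $a\cdot(-)$ is injective on $U$ if and only if $U\cap\ker d=0$. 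By Theorem~\ref{mainintro} the condition $\operatorname{im}(\overline{\mathcal{A}}_{\mathbb{Z}[\pi]})\cap\ker d=0$ is equivalent to the Novikov conjecture for $\pi$; taking $U=\operatorname{im}(\overline{\mathcal{A}}_{\mathbb{Z}[\pi]})$ then finishes the argument.

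I expect the only non-formal step to be the first paragraph: checking that the Burnside-ring idempotent splitting of $\mathbb{A}_{\frac12}$ is respected by $\KR$, by its geometric fixed points, and by rationalization, and that under this the maps $d$, $s_\pi$ and the element $a$ coincide with those constructed in \S\ref{secmain}. This should be routine --- it uses only the additivity of $\KR$ together with the fact that $\Phi^{\Z/2}$ and $-\otimes\mathbb{Q}$ are product-preserving --- but it is the step to be careful about, since everything else is a purely formal consequence of the identity $s_\pi\circ d=a\cdot(-)$ and Theorem~\ref{mainintro}.
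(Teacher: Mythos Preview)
Your argument hinges on the claim that ``$\KR$ sends a finite product of ring spectra with anti-involution to the product of their $\KR$-spectra'', and this is where it breaks. The $\KR$ of this paper is built from \emph{free} modules: its underlying infinite loop space is the group-completion of $\coprod_n B\widehat{GL}^{\vee}_n(A)$. For a product of rings $R\times S$ one has $GL_n(R\times S)\cong GL_n(R)\times GL_n(S)$, so only the diagonal copies $R^n\times S^n$ are seen; already on $\pi_0$ this gives one copy of $\mathbb{Z}$ rather than $\mathbb{Z}\times\mathbb{Z}$. In particular, your idempotent splitting $H\mathbb{A}_{\frac12}\simeq H\underline{\mathbb{Z}}_{\frac12}\times R_2$ does not induce a product decomposition of $\KR$, and the deduction $\Lgsub{\ast}(\mathbb{A}_{\frac12}[\pi])\cong \Lgsub{\ast}(\underline{\mathbb{Z}}_{\frac12}[\pi])\oplus \Lgsub{\ast}(M[\pi])$ with $s_\pi$, $d$, and $a$ matching the obvious projection/inclusion/idempotent is not justified. (The step you flagged as ``the one to be careful about'' is exactly the one that fails.)

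The paper does not attempt to prove the global identity $s_\pi\circ d=a\cdot(-)$. It establishes the displayed formula directly on each $\underline{x}$ via three ingredients: naturality of the assembly with respect to the Hermitian Mackey functor map $\tfrac{T}{2}$, giving $s_\pi\circ\mathcal{A}_{\mathbb{Z}[\pi]}=\mathcal{A}_{\mathbb{A}[\pi]}\circ(s\otimes\id)$; the algebraic relation $s=a\cdot u$ in $\Lgsub{\ast}(\mathbb{A}_{\frac12})\otimes\mathbb{Q}$, which follows because $s$ is multiplicative (Lemma~\ref{dsurj}) and $u$ is the unital ring map with $u(\beta)=s(\beta)$, so that $s(1)\cdot u(\beta^k)=s(1)s(\beta)^k=s(\beta^k)$; and $\Lgsub{\ast}(\mathbb{A}_{\frac12})$-linearity of $\mathcal{A}_{\mathbb{A}[\pi]}$. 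Chaining these,
\[
s_\pi\mathcal{A}_{\mathbb{Z}[\pi]}(\underline{x})=\mathcal{A}_{\mathbb{A}[\pi]}((a\cdot u)\otimes\id)(\underline{x})=a\cdot\mathcal{A}_{\mathbb{A}[\pi]}(u\otimes\id)(\underline{x})=a\cdot\overline{\mathcal{A}}_{\mathbb{Z}[\pi]}(\underline{x}).
\]
The ``if and only if'' then follows exactly as in your last paragraph, using injectivity of $s_\pi$; no global comparison of $s_\pi\circ d$ with $a\cdot(-)$ is needed. If you want to salvage your route, you would need an argument that avoids additivity of free $\KR$ --- for instance by showing that the Hermitian Mackey endomorphism $\tfrac{T}{2}\circ d$ of $\mathbb{A}_{\frac12}[\pi]$ induces multiplication by $a$ on $\Lg$ --- but this is a different (and non-trivial) statement from the one you wrote.
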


The element $a$ is sent to zero by the trace $\tr\colon \Lgsub{\ast}(\mathbb{A}_{\frac 12}[\pi])\to \Phi^{\mathbb{Z}/2}{\THR}_\ast(\mathbb{A}_{\frac 12}[\pi])$, and therefore $a\cdot \overline{\mathcal{A}}_{\mathbb{Z}[\pi]}(\underline{x})$ cannot be detected by the trace map to $\THR$.
In future work we hope to be able to show that
\[\overline{\tr}(a\cdot \overline{\mathcal{A}}_{\mathbb{Z}[\pi]}(\underline{x}))=\overline{\tr}(a)\cdot \mathcal{A}_{\TCR}(\overline{\tr}(\underline{x}))\]
is non-zero in $\Phi^{\Z/2}\TCR_\ast(\mathbb{A}_{\frac 12}[\pi])\otimes \mathbb{Q}$ by direct calculation, where $\overline{\tr}\colon \KR\to \TCR$ is a lift of the trace map to the real topological cyclic homology spectrum, and $\mathcal{A}_{\TCR}$ is the corresponding assembly map.

A brief outline of the paper follows. In \S\ref{secone} we define Hermitian Mackey functors, we construct some examples, and we define their Hermitian $K$-theory.
In \S\ref{secthree} we construct the real $K$-theory of a ring spectrum with anti-involution. We prove that its fixed points recover the connective Hermitian $K$-theory of simplicial rings of \cite{BF} and the Hermitian $K$-theory of Hermitian Mackey functors of \S\ref{secone}, and we prove that its geometric fixed points are rationally equivalent to the connective $L$-theory of discrete rings. Under these equivalences we recover the $L$-theoretic and Hermitian assembly maps from the $\KR$ assembly. In \S\ref{secfour} we recollect some of the basics on real topological Hochschild homology. We then construct the real trace map and the splitting of the real $K$-theory of the spherical group-ring. Finally in \S\ref{sectwo} we relate the trace map to the Novikov conjecture.

\section*{Acknowledgments}
We sincerely thank Ib Madsen and Lars Hesselholt for sharing so much of their current work on real $K$-theory with the first author, and for the guidance offered over several years. We thank Irakli Patchkoria for pointing out a missing condition in Definition \ref{defHermMackey}.
We also thank Markus Land, Wolfgang L\"{u}ck, Kristian Moi, Thomas Nikolaus, Irakli Patchkoria, Oscar Randal-Williams, Marco Schlichting, Stefan Schwede and Christian Wimmer for many valuable conversations.

\subsection*{Notation and conventions}

A space will always mean a compactly generated weak Hausdorff topological space. These form a category, which we denote by $\Top$. We let $\Top^G$ be its category of $G$-objects, where $G$ is a finite group, mostly $G=\Z/2$. An equivalence of $G$-spaces is a continuous $G$-equivariant map which induces a weak equivalence on $H$-fixed-points, for every subgroup $H$ of $G$.

By a spectrum, we will always mean an orthogonal spectrum. A $G$-spectrum is a $G$-object in the category of orthogonal spectra, and an equivalence of $G$-spectra is a stable equivalence with respect to a complete $G$-universe.


\section{Hermitian Mackey functors and their $K$-theory}\label{secone}

\subsection{Hermitian Mackey functors}\label{secMackey}

The standard input of Hermitian $K$-theory is a ring $R$ with an anti-involution $w\colon R^{op}\to R$, or in other words an Abelian group $R$ with a $\mathbb{Z}/2$-action and a ring structure
\[R\otimes_{\mathbb{Z}}R\longrightarrow R\]
which is equivariant with respect to the $\mathbb{Z}/2$-action on the tensor product that swaps the two factors and acts on both variables. In equivariant homotopy theory Abelian groups with $\mathbb{Z}/2$-actions are replaced by the more refined notion of $\mathbb{Z}/2$-Mackey functors. In what follows, we will define a suitable multiplicative structure on a Mackey functor which extends the notion of a ring with anti-involution.

We recall that a $\mathbb{Z}/2$-Mackey functor $L$ consists of two Abelian groups $L(\mathbb{Z}/2)$ and $L(\ast)$, a $\mathbb{Z}/2$-action $w$ on $L(\mathbb{Z}/2)$, and $\mathbb{Z}/2$-equivariant maps (with respect to the trivial action on $L(\ast)$)
\[R\colon L(\ast)\longrightarrow L(\mathbb{Z}/2)\ \ \ \ \ \ \ \ \ \ \ \ \  T\colon L(\mathbb{Z}/2)\longrightarrow L(\ast),\]
called respectively the restriction and the transfer, subject to the relation
\[
RT(a)=a+w(a)
\]
for every $a\in L(\mathbb{Z}/2)$.

\begin{definition}\label{defHermMackey}
A Hermitian Mackey functor is a $\mathbb{Z}/2$-Mackey functor $L$, together with a multiplication on $L(\mathbb{Z}/2)$ that makes it into a ring, and a multiplicative left action of $L(\mathbb{Z}/2)$ on the Abelian group $L(\ast)$ which satisfy the following conditions:
\begin{enumerate}[i)]
\item $w(aa')=w(a')w(a)$ for all $a,a'\in L(\mathbb{Z}/2)$, and $w(1)=1$,
\item $R(a\cdot b)=aR(b)w(a)$ for all $a\in L(\mathbb{Z}/2)$ and $b\in L(\ast)$,
\item $a\cdot T(c)=T(acw(a))$ for all $a,c\in L(\mathbb{Z}/2)$,
\item $(a+a')\cdot b=a\cdot b+a'\cdot b+T(aR(b)w(a'))$ for all $a,a'\in L(\mathbb{Z}/2)$ and $b\in L(\ast)$, and $0\cdot b=0$.
\end{enumerate}
\end{definition}

\begin{example}
Let $R$ be a ring with anti-involution $w\colon R^{op}\to R$. The Mackey functor $\underline{R}$ associated to $R$ has values $\underline{R}(\mathbb{Z}/2)=R$ and $\underline{R}(\ast)=R^{\mathbb{Z}/2}$, the Abelian subgroup of fixed points. The restriction map is the inclusion of fixed points $R^{\mathbb{Z}/2}\to R$, and the transfer is $T(a)=a+w(a)$.
The multiplication on $R$ defines an action of $R$ on $R^{\mathbb{Z}/2}$ by
\[a\cdot b=abw(a)\]
for $a\in R$ and $b\in R^{\mathbb{Z}/2}$. This gives $\underline{R}$ the structure of a Hermitian Mackey functor.
\end{example}

\begin{example}
Let $\mathbb{A}$ be the Burnside $\mathbb{Z}/2$-Mackey functor. The Abelian group $\mathbb{A}(\mathbb{Z}/2)$ is the group completion of the monoid of isomorphism classes of finite sets, and it has the trivial involution. The Abelian group $\mathbb{A}(\ast)$ is the group completion of the monoid of isomorphism classes of finite $\mathbb{Z}/2$-sets.
The restriction forgets the $\mathbb{Z}/2$-action, and the transfer sends a set $A$ to the free $\mathbb{Z}/2$-set $A\times\mathbb{Z}/2$.
The underlying Abelian group $\mathbb{A}(\mathbb{Z}/2)$ has a multiplication induced by the cartesian product, and it acts on $\mathbb{A}(\ast)$ by
\[A\cdot B=(\prod_{\mathbb{Z}/2}A)\times B.\]
Explicitly, $\mathbb{A}(\mathbb{Z}/2)$ is isomorphic to $\mathbb{Z}$ as a ring, $\mathbb{A}(\ast)$ is isomorphic to $\mathbb{Z}\oplus\mathbb{Z}$ with generators the trivial $\mathbb{Z}/2$-set with one element and the free $\mathbb{Z}$/2-set $\mathbb{Z}/2$. The restriction is the identity on the first summand and multiplication by $2$ on the second summand, and the transfer sends the generator of $\mathbb{Z}$ to the generator of the second $\mathbb{Z}$-summand.
The underlying ring $\mathbb{Z}$ then acts on $\mathbb{Z}\oplus\mathbb{Z}$ by
\[a\cdot (b,c)=(ab,\frac{ba(a-1)}{2}+a^2c).\]
\end{example}

The Hermitian structure on the  Burnside Mackey functor is a special case of the following construction. If the multiplication of a ring $R$ is commutative, then an anti-involution on $R$ is simply an action of $\mathbb{Z}/2$ by ring maps. The Mackey-version of a commutative ring is a Tambara functor, and we show that there is indeed a forgetful functor from $\mathbb{Z}/2$-Tambara functor to Hermitian Mackey functors. We recall that a $\mathbb{Z}/2$-Tambara functor is a Mackey functor where both $L(\mathbb{Z}/2)$ and $L(\ast)$ are commutative rings, and with an additional equivariant multiplicative transfer $N\colon L(\mathbb{Z}/2)\to L(\ast)$, called the norm, which satisfies the properties
\begin{enumerate}[i)]
\item $T(a)b=T(aR(b))$ for all $a\in L(\mathbb{Z}/2)$ and $b\in L(\ast)$,
\item $RN(a)=aw(a)$ for all $a\in L(\mathbb{Z}/2)$ ,
\item $N(a+a')=N(a)+N(a')+T(aw(a'))$ for all $a,a'\in L(\mathbb{Z}/2)$, and $N(0)=0$.
\end{enumerate}
see \cite{Tambara} and \cite{Strickland}.

\begin{example}\label{exTambara}
A Tambara functor $L$ has the structure of a Hermitian Mackey functor by defining the  $L(\mathbb{Z}/2)$-action on $L(\ast)$ as
\[a\cdot b=N(a)b,\]
where the right-hand product is the multiplication in $L(\ast)$, and then forgetting the multiplication of $L(\ast)$ and the norm.

Let us verify the axioms of a Hermitian Mackey functor. The first axiom is satisfied because the multiplication is commutative and equivariant. The second axiom is
\[R(a\cdot b)=R(N(a)b)=R(N(a))R(b)=aw(a)R(b)
=aR(b)w(a)\]
and the third is
\[a\cdot T(c)=N(a)T(c)=T(R(N(a))c)=T(aw(a)c)=T(acw(a)).\]
The last axiom is clear from the third condition of a Tambara functor.
\end{example}

We conclude the section by extending to Hermitian Mackey functors two standard constructions of rings with anti-involution: the matrix ring and the group-ring.

If $R$ is a ring with anti-involution and $n$ is a positive integer, the ring $M_n(R)$ of $n\times n$-matrices has a natural anti-involution defined by conjugate transposition $w(A)_{ij}:=w(A_{ji})$. A fixed point in $M_n(R)$ is a matrix whose diagonal entries belong to $R^{\mathbb{Z}/2}$, and where the entries $A_{i>j}$ are determined by the entries $A_{i<j}$, by $A_{i>j}=w(A_{j<i})$. Inspired by this example, we give the following definition.

\begin{definition}\label{MackeyMatrix}
Let $L$ be a Hermitian Mackey functor. The Hermitian Mackey functor $M_n(L)$ of $n\times n$-matrices in $L$ is defined by the Abelian groups
\[M_n(L)(\mathbb{Z}/2)=M_n(L(\mathbb{Z}/2)) \ \ \ \ \ \ \ \ \ \ \ \ M_n(L)(\ast)=(\bigoplus_{1\leq i<j\leq n}L(\mathbb{Z}/2))\oplus(\bigoplus_{1\leq i=j\leq n}L(\ast)).\]
The anti-involution on $M_n(L)(\mathbb{Z}/2)$ is the anti-involution of $L(\mathbb{Z}/2)$ applied entrywise followed by matrix transposition.
The restriction of an element $B$ of $M_n(L)(\ast)$ has entries
\[R(B)_{ij}=\left\{\begin{array}{ll}
B_{ij} & \mbox{if } i<j\\
w(B_{ji}) & \mbox{if } i>j\\
R(B_{ii}) & \mbox{if } i=j\ .\\
\end{array}\right.\] 
The transfer of an $n\times n$-matrix $A$ with coefficients in $L(\mathbb{Z}/2)$ has components
\[T(A)_{ij}=\left\{\begin{array}{ll}
A_{ij}+w(A_{ji}) & \mbox{if } i<j\\
T(A_{ii}) & \mbox{if } i=j\ .\\
\end{array}\right.\]
The multiplication on $M_n(L)(\mathbb{Z}/2)$ is the standard matrix multiplication. The action of $M_n(L)(\mathbb{Z}/2)$ on $M_n(L)(\ast)$ is defined by
\[(A\cdot B)_{ij}=\left\{\begin{array}{ll}
(AR(B)w(A))_{ij}
& \mbox{if } i<j\\
T(\sum\limits_{1\leq k<l\leq n} A_{ik}B_{kl}w(A_{il}))+\sum\limits_{1\leq k\leq n} A_{ik}\cdot B_{kk}& \mbox{if } i=j,\\
\end{array}\right.\]
that is, by the conjugation action on the off-diagonal entries, and through the Hermitian structure on the diagonal entries.
\end{definition}

\begin{lemma}\label{MatMackeywelldef}
The object $M_{n}(L)$ defined above is a Hermitian Mackey functor, and if $R$ is a ring with anti-involution $\underline{M_n(R)}\cong M_n(\underline{R})$.
\end{lemma}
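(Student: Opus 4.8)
The plan is to verify directly that $M_n(L)$ satisfies the four axioms of Definition \ref{defHermMackey}, and then to exhibit the claimed isomorphism with $M_n(\underline{R})$ when $L=\underline{R}$. Before tackling the axioms, I would first check the more basic Mackey-functor structure: that $R$ and $T$ as defined are $\mathbb{Z}/2$-equivariant (the target $M_n(L)(\ast)$ carries the trivial action, and on the source the involution is entrywise $w$ followed by transpose, so equivariance of $R$ amounts to $w(B_{ij})$ matching the $(j,i)$-entry of $R(B)$, which is immediate from the definition) and that the Mackey relation $RT(A)=A+w(A)$ holds. The latter is an entrywise check: on the diagonal it is $R(T(A_{ii}))=A_{ii}+w(A_{ii})$, which is the Mackey relation for $L$; off the diagonal, for $i<j$ one gets $R(T(A))_{ij}=T(A)_{ij}=A_{ij}+w(A_{ji})$, and for $i>j$ one gets $w(T(A)_{ji})=w(A_{ji}+w(A_{ij}))=w(A_{ji})+A_{ij}$, matching $(A+w(A))_{ij}$ in both cases.

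Next I would verify axioms (i)--(iv). Axiom (i), that $A\mapsto w(A)^{t}$ is a ring anti-involution on $M_n(L(\mathbb{Z}/2))$ fixing the identity, is the classical conjugate-transpose computation and uses only axiom (i) for $L$. For axiom (ii), $R(A\cdot B)=AR(B)w(A)$, I would compare the two sides entry by entry: for $i<j$ this holds by definition of $A\cdot B$; for $i>j$ it follows by applying $w$ to the $i<j$ case and using axiom (i) for $L$ together with equivariance of $R$; for $i=j$ it is the nontrivial case, where one expands $(AR(B)w(A))_{ii}=\sum_{k,l}A_{ik}R(B)_{kl}w(A_{il})$, splits the sum according to $k<l$, $k>l$, $k=l$, and recognizes the $k<l$ and $k>l$ parts as combining (via $R(B)_{lk}=w(B_{kl})$ and the Mackey relation/axiom (iv) for $L$) into $R\bigl(T(\sum_{k<l}A_{ik}B_{kl}w(A_{il}))\bigr)$, while the $k=l$ terms give $\sum_k A_{ik}R(B_{kk})w(A_{ik})=\sum_k R(A_{ik}\cdot B_{kk})$ by axiom (ii) for $L$; additivity of $R$ then assembles these into $R$ of the defining formula for $(A\cdot B)_{ii}$. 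Axiom (iii), $A\cdot T(C)=T(ACw(A))$, is handled the same way: off the diagonal it is the ordinary matrix identity $w(A)$-conjugating a transfer, reorganized using $T(x)+w(\cdot)$ bookkeeping, and on the diagonal one uses axiom (iii) for $L$ on the $k=l$ terms and axiom (i) of a Mackey functor (the relation between $T$, $R$ and $w$) to collapse the cross terms. Axiom (iv), bi-additivity of the action up to a transfer correction term, is the most bookkeeping-intensive: expanding $((A+A')\cdot B)_{ii}$ and subtracting $(A\cdot B)_{ii}+(A'\cdot B)_{ii}$ leaves exactly the cross terms $\sum_k \bigl(A_{ik}\cdot B_{kk}\bigr)$-defect plus the off-diagonal transfer defect, which must be matched against $T((AR(B)w(A'))_{ii})$; one invokes axiom (iv) for $L$ on each diagonal summand and collects the off-diagonal pieces, using $R(B)_{lk}=w(B_{kl})$ and commuting $T$ past sums.

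Finally, for the isomorphism $\underline{M_n(R)}\cong M_n(\underline{R})$: the underlying rings agree by construction, and the map on the $\ast$-level sends a fixed matrix $C\in M_n(R)^{\mathbb{Z}/2}$ to the tuple consisting of its strict-upper-triangular entries $(C_{ij})_{i<j}\in R$ together with its diagonal entries $(C_{ii})_{i}\in R^{\mathbb{Z}/2}$ (which lie in $R^{\mathbb{Z}/2}=\underline{R}(\ast)$ precisely because $C$ is a conjugate-transpose fixed point). This is a bijection with inverse reconstructing $C_{i>j}:=w(C_{j<i})$, and one checks it is compatible with $R$, $T$, and the conjugation action — each compatibility is exactly the defining formula in Definition \ref{MackeyMatrix} read off against the conjugation-action formula $A\cdot C=ACw(A)^{t}$ in $\underline{M_n(R)}$. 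I expect the main obstacle to be axiom (iv), and more generally the diagonal case of axioms (ii)--(iv): keeping the $T$-correction terms coming from off-diagonal products consistent with those coming from the non-additivity of the action on $L$ requires care, but no idea beyond repeated application of the $L$-axioms and the Mackey relation.
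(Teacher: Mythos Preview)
Your outline covers the Mackey relation, axioms (i)--(iv), and the comparison with $\underline{M_n(R)}$, and your sketches for those pieces are sound and match the paper's argument. However, you have omitted the step that in the paper's proof is by far the longest and most delicate: verifying that the formula in Definition~\ref{MackeyMatrix} actually defines a \emph{multiplicative action} of the monoid $M_n(L(\mathbb{Z}/2))$ on $M_n(L)(\ast)$. The conditions (i)--(iv) in Definition~\ref{defHermMackey} are compatibility conditions imposed on an action that is already assumed to exist; one must separately check that $I\cdot B=B$ and, crucially, that $(AC)\cdot B=A\cdot(C\cdot B)$.

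Unitality is immediate, but associativity on the diagonal entries is not: one must expand both $((AC)\cdot B)_{ii}$ and $(A\cdot(C\cdot B))_{ii}$ using the defining formula, and the comparison requires the iterated form of axiom (iv) for $L$, namely
\[
\bigl(\textstyle\sum_h a_h\bigr)\cdot b=\sum_h (a_h\cdot b)+\sum_{k<l}T\bigl(a_kR(b)w(a_l)\bigr),
\]
together with repeated use of the equivariance of $T$ to reindex the cross terms. The paper devotes roughly a page to this calculation, and it is where most of the work lies --- more so than axiom (iv), which you flagged as the main obstacle. Without this step the proof is incomplete, since nothing else in your outline guarantees that $A\cdot(-)$ composes correctly.
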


\begin{proof}
It is clearly a well-defined Mackey functor, since
\[RT(A)_{ij}=\left\{\begin{array}{ll}
T(A)_{ij} & \mbox{if } i<j\\
w(T(A)_{ji}) & \mbox{if } i>j\\
R(T(A)_{ii}) & \mbox{if } i=j\\
\end{array}\right.=
\left\{\begin{array}{ll}
A_{ij}+w(A_{ji})& \mbox{if } i<j\\
w(A_{ji}+w(A_{ij}))=w(A_{ji})+A_{ij}& \mbox{if } i>j\\
R(T(A_{ii}))=A_{ii}+w(A_{ii}) & \mbox{if } i=j\\
\end{array}\right.
\]
is equal to $(A+w(A))_{ij}$.
Let us verify that the formula above indeed defines a monoid action. This is immediate for the components $i<j$. For the diagonal components let us first verify that the identity matrix $I$ acts trivially. This is because
\[
(I\cdot B)_{ii}=T(\sum\limits_{1\leq k<l\leq n} I_{ik}B_{kl}w(I_{il}))+\sum\limits_{1\leq k\leq n} I_{ik}\cdot B_{kk}=0+B_{kk}.
\]
In order to show associativity we calculate the diagonal components of $(AC)\cdot B$ for matrices $A,C\in M_n(L(\mathbb{Z}/2))$ and $B\in  M_n(L)(\ast)$.
These are
\begin{align*}
((AC)\cdot B)_{ii}&=
T(\sum\limits_{p<q} (AC)_{ip}B_{pq}w((AC)_{iq}))+\sum\limits_{t} (AC)_{it}\cdot B_{tt}
\\&=
T(\sum\limits_{p<q}\sum\limits_{k,l} A_{ik}C_{kp}B_{pq}w(A_{il}C_{lq}))+\sum\limits_{t}(\sum_u A_{iu}C_{ut})\cdot B_{tt}.
\end{align*}
An easy induction argument on the fourth axiom of a Hermitian functor shows that
\[
(\sum_{1\leq h\leq n}a_h)\cdot b=\sum_{1\leq h\leq n}(a_h\cdot b)+\sum_{1\leq k<l\leq n}T(a_kR(b)w(a_l)),
\]
and the expression above becomes
\begin{align*}
((AC)\cdot B)_{ii}&=
T(\sum\limits_{p<q}\sum\limits_{k,l} A_{ik}C_{kp}B_{pq}w(A_{il}C_{lq}))+\sum\limits_{t}\sum_u (A_{iu}C_{ut}\cdot B_{tt})+\\
&+ \sum\limits_{t}\sum_{k<l} T(A_{ik}C_{kt}R(B_{tt})w(A_{il}C_{lt})).
\end{align*}
On the other hand the diagonal components of $(A\cdot (C\cdot B))_{ii}$ are
\begin{align*}
\ &T(\sum\limits_{k<l}  A_{ik}(C\cdot B)_{kl}w(A_{il}))+\sum\limits_{u} A_{iu}\cdot (C\cdot B)_{uu}
\\&=
T(\sum\limits_{k<l}\sum_{p,q}  A_{ik} C_{kp}R(B)_{pq}w(A_{il}C_{lq}))+\sum\limits_{u} A_{iu}\cdot (
T(\sum_{p<q}C_{up}B_{pq}w(C_{uq}))+\sum_{t}C_{ut}\cdot B_{tt})
\\&=
T(\sum\limits_{k<l}\sum_{p,q}  A_{ik} C_{kp}R(B)_{pq}w(A_{il}C_{lq}))+
\\
&+
T(\sum\limits_{u}\sum_{p<q}A_{iu}C_{up}B_{pq}w(A_{iu}C_{uq}))+\sum\limits_{u}\sum_{t} A_{iu}\cdot C_{ut}\cdot B_{tt}
\\&=
T(\sum\limits_{k<l}\sum_{p<q}  A_{ik} C_{kp}(B_{pq}+w(B_{pq}))w(A_{il}C_{lq}))+T(\sum\limits_{k<l}\sum_{t}  A_{ik} C_{kt}R(B_{tt})w(A_{il}C_{lt}))
\\
&+
T(\sum\limits_{u}\sum_{p<q}A_{iu}C_{up}B_{pq}w(A_{iu}C_{uq}))+\sum\limits_{u}\sum_{t} A_{iu}\cdot C_{ut}\cdot B_{tt}.
\end{align*}
We see that the second and the fourth term of this expression cancel respectively with the third and second term of the expression of $(AC)\cdot B$. Finally, by using that the transfer is equivariant we rewrite the sum of the first and third terms as
\begin{align*}
&T(\sum\limits_{k<l}\sum_{p<q}  A_{ik} C_{kp}(B_{pq}+w(B_{pq}))w(A_{il}C_{lq}))
+
T(\sum\limits_{u}\sum_{p<q}A_{iu}C_{up}B_{pq}w(A_{iu}C_{uq}))
\\
&=
T(\sum\limits_{k<l}\sum_{p<q}  A_{ik} C_{kp}B_{pq}w(A_{il}C_{lq}))
+
T(\sum\limits_{k<l}\sum_{p<q}A_{il}C_{lq}B_{pq} w(A_{ik} C_{kp}))
+
\\
&+
T(\sum\limits_{u}\sum_{p<q}A_{iu}C_{up}B_{pq}w(A_{iu}C_{uq}))
\\
&=
T(\sum\limits_{k<l}\sum_{p<q}  A_{ik} C_{kp}B_{pq}w(A_{il}C_{lq}))
+
T(\sum\limits_{k>l}\sum_{p<q}A_{ik}C_{kq}B_{pq} w(A_{il} C_{lp})
+
\\
&+T(\sum\limits_{u}\sum_{p<q}A_{iu}C_{up}B_{pq}w(A_{iu}C_{uq}))
=
T(\sum\limits_{k,l}\sum_{p<q}  A_{ik} C_{kp}B_{pq}w(A_{il}C_{lq})).
\end{align*}

Let us now verify the other axioms of a Hermitian Mackey functor. The compatibility between the action and the restriction holds since
\[R(A\cdot B)_{ij}=\left\{\begin{array}{ll}
(A\cdot B)_{ij} & \mbox{if } i<j\\
w((A\cdot B)_{ji})=w((AR(B)w(A))_{ji}) & \mbox{if } i>j\\
R((A\cdot B)_{ii})=\sum\limits_{1\leq k<l\leq n} RT(A_{ik}B_{kl}w(A_{il}))+\sum\limits_{1\leq k\leq n} R(A_{ik}\cdot B_{kk}) & \mbox{if } i=j\\
\end{array}\right.
\]
\[=\left\{\begin{array}{ll}
(AR(B)w(A))_{ij} & \mbox{if } i\neq j\\
\sum\limits_{1\leq k<l\leq n} (A_{ik}B_{kl}w(A_{il})+A_{il}w(B_{kl})w(A_{ik}))+\sum\limits_{1\leq k\leq n} A_{ik}R(B_{kk})w(A_{ik}) & \mbox{if } i=j\\
\end{array}\right.
\]
which is equal to $(AR(B)w(A))_{ij}$.
The compatibility between the action and the transfer is 
\[(A\cdot T(C))_{ij}=
\left\{\begin{array}{ll}
(AR(T(C))w(A))_{ij}=(A(C+w(C))w(A))_{ij}
& \mbox{if } i<j\\
T(\sum\limits_{1\leq k<l\leq n} A_{ik}T(C)_{kl}w(A_{il}))+\sum\limits_{1\leq k\leq n} A_{ik}\cdot T(C)_{kk}& \mbox{if } i=j\\
\end{array}\right.=
\]
\[=
\left\{\begin{array}{ll}
(ACw(A)+w(ACw(A)))_{ij}=T(ACw(A))_{ij}
& \mbox{if } i<j\\
T(\sum\limits_{1\leq k<l\leq n} A_{ik}(C_{kl}+w(C_{lk}))w(A_{il}))+\sum\limits_{1\leq k\leq n} A_{ik}\cdot T(C_{kk})& \mbox{if } i=j\\
\end{array}\right.
\]
\[=
\left\{\begin{array}{ll}
T(ACw(A))_{ij}
& \mbox{if } i<j\\
T(\sum\limits_{1\leq k<l\leq n} A_{ik}C_{kl}w(A_{il})+A_{ik}w(C_{lk})w(A_{il}))+\sum\limits_{1\leq k\leq n}T(A_{ik}C_{kk}w(A_{ik}))& \mbox{if } i=j\\
\end{array}\right.,
\]
and this is by definition $T(ACw(A))_{ij}$.
The distributivity of the action over the sum in $M_n(L)(\ast)$ is easy to verify for the components $i<j$. In the diagonal components we have that 
\begin{align*}
((A+A')\cdot B)_{ii}&=T(\sum\limits_{1\leq k<l\leq n} (A+A')_{ik}B_{kl}w((A+A')_{il}))+\sum\limits_{1\leq k\leq n} (A+A')_{ik}\cdot B_{kk}=
\\
&=
T(\sum\limits_{1\leq k<l\leq n} A_{ik}B_{kl}w(A_{il}))+T(\sum\limits_{1\leq k<l\leq n} A'_{ik}B_{kl}w(A'_{il}))+
\\&
+T(\sum\limits_{1\leq k<l\leq n} A_{ik}B_{kl}w(A'_{il}))+T(\sum\limits_{1\leq k<l\leq n} A'_{ik}B_{kl}w(A_{il}))+
\\&+
\sum\limits_{1\leq k\leq n} A_{ik}\cdot B_{kk}+\sum\limits_{1\leq k\leq n} A'_{ik}\cdot B_{kk}+\sum\limits_{1\leq k\leq n} T(A_{ik} R(B_{kk})w(A'_{ik}))=
\\&
=(A\cdot B)_{ii}+(A'\cdot B)_{ii}+T(\sum\limits_{1\leq k<l\leq n} A_{ik}B_{kl}w(A'_{il}))+\\
&+T(\sum\limits_{1\leq k<l\leq n} A'_{ik}B_{kl}w(A_{il}))+\sum\limits_{1\leq k\leq n} T(A_{ik} R(B_{kk})w(A'_{ik})).
\end{align*}
By using that the transfer is equivariant and by reindexing the sum we rewrite the fourth summand as
\[
T(\sum\limits_{1\leq k<l\leq n} A'_{ik}B_{kl}w(A_{il}))=T(\sum\limits_{1\leq k>l\leq n} A'_{il}B_{lk}w(A_{ik}))=T(\sum\limits_{1\leq k>l\leq n} A_{ik}w(B_{lk})w(A'_{il})).
\]
Thus the expression above is equal to
\[(A\cdot B)_{ii}+(A'\cdot B)_{ii}+T(\sum\limits_{1\leq k,l\leq n} A_{ik}R(B)_{kl}w(A'_{il}))=(A\cdot B)_{ii}+(A'\cdot B)_{ii}+T(AR(B)w(A'))_{ii}.\]
Finally, by inspection, we see that $\underline{M_n(R)}\cong M_n(\underline{R})$.
\end{proof}

Let $\pi$ be a discrete group with an anti-involution $\tau\colon \pi^{op}\to \pi$ (for example inversion). If $R$ is a ring with anti-involution, the group-ring $R[\pi]=\oplus_\pi R$ inherits an anti-involution
\[w(\sum_{g\in \pi}a_gg)=\sum_{g\in \pi}w(a_{\tau g})g.\]
A choice of section $s$ of the quotient map $\pi\to \pi/(\mathbb{Z}/2)$ determines an isomorphism
\[(R[\pi])^{\mathbb{Z}/2}\cong R^{\mathbb{Z}/2}[\pi^{\mathbb{Z}/2}]\oplus R[\pi^{free}/(\mathbb{Z}/2)]\]
where $\pi^{free}=\pi-\pi^{\mathbb{Z}/2}$ is the subset of $\pi$ on which $\mathbb{Z}/2$ acts freely. It is defined on the $R^{\mathbb{Z}/2}[\pi^{\mathbb{Z}/2}]$ summand by the inclusion, and on the second summand by sending $cx$ to $cs(x)+w(c)\tau(s(x))$.

\begin{definition}[Group-Mackey functor]\label{groupMackey}
Let $L$ be a Hermitian Mackey functor and $\pi$ a discrete group with anti-involution $\tau\colon \pi^{op}\to \pi$. The associated group-Mackey functor is the Hermitian Mackey functor $L[\pi]$ defined by the Abelian groups
\[L[\pi](\mathbb{Z}/2)=L(\mathbb{Z}/2)[\pi] \ \ \ \ \ \ \ \ \ \ \ \ \ \  L[\pi](\ast)=L(\ast)[\pi^{\mathbb{Z}/2}]\oplus L(\mathbb{Z}/2)[\pi^{free}/\mathbb{Z}/2].\]
The anti-involution on $L(\mathbb{Z}/2)[\pi]$ is the standard anti-involution on the group-ring. The restriction is induced by the restriction map $R\colon L(\ast)\to L(\mathbb{Z}/2)$ and by the inclusion of the fixed points of $\pi$ on the first summand, and by the map
\[R(cx)=cs(x)+w(c)\tau(s(x))\]
on the second summand. The transfer is defined by
\[T(ag)=\left\{\begin{array}{ll}
T(a)g
& \mbox{if } g\in \pi^{\mathbb{Z}/2}\\
a[g]
& \mbox{if } g\in \pi^{free}\mbox{ and }g=s[g]\\
w(a)[g]
& \mbox{if } g\in \pi^{free}\mbox{ and }g=\tau s[g].
\end{array}\right.\]
The multiplication on $L[\pi](\mathbb{Z}/2)$ is that of the group-ring $L(\mathbb{Z}/2)[\pi]$. The action of a generator $ag\in L[\pi](\mathbb{Z}/2)$ on $L[\pi](\ast)$ is extended linearly from
\[ag\cdot bh=(a\cdot b)(gh\tau(g))\]
for $bh\in L(\ast)[\pi^{\mathbb{Z}/2}]$, and
\[ag\cdot cx=\left\{\begin{array}{ll}
acw(a)[gs(x)\tau(g)]&\mbox{if }gs(x)\tau(g)=s[gs(x)\tau(g)]\\
aw(c)w(a)[gs(x)\tau(g)]&\mbox{if }gs(x)\tau(g)=\tau s[gs(x)\tau(g)]
\end{array}\right.\]
for $cx\in L(\mathbb{Z}/2)[\pi^{free}/(\mathbb{Z}/2)]$. It is then extended to the whole group-ring $L[\pi](\mathbb{Z}/2)$ by enforcing condition iv), namely by defining
\[
(\sum_{g\in \pi}a_gg)\cdot \xi=\sum_{g\in \pi}(a_gg\cdot\xi)+\sum_{g<g'}T(a_ggR(\xi)w(a_{g'})\tau(g'))
\]
for some choice of total order on the finite subset of $\pi$ on which $a_g\neq 0$.

When $L=\mathbb{A}$ is the Burnside Mackey functor, we will call $\mathbb{A}[\pi]$ the Burnside group-ring.
\end{definition}

\begin{remark}\label{functorialitygroupring}
The definition of $L[\pi]$ depends on the choice of section up to isomorphism, and it is therefore not strictly functorial in $\pi$. However, it is independent of such choice for the Mackey functors that have trivial action $w$. This will be the case for example for the Burnside Mackey functor $\mathbb{A}$. This construction is always functorial in $L$.
\end{remark}

\begin{lemma}
The functor $L[\pi]$ is a well-defined Hermitian Mackey functor, and if $R$ is a ring with anti-involution $\underline{R[\pi]}\cong \underline{R}[\pi]$.
\end{lemma}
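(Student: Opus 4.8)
The plan is to proceed exactly as in the proof of Lemma~\ref{MatMackeywelldef}, checking in order: (a) the Mackey relation $RT=\id+w$; (b) that the formula of Definition~\ref{groupMackey} yields a unital monoid action of $L[\pi](\mathbb{Z}/2)$ on $L[\pi](\ast)$; (c) axioms i)--iv) of Definition~\ref{defHermMackey}; and (d) the identification $\underline{R[\pi]}\cong\underline{R}[\pi]$ for a discrete ring with anti-involution $R$. For (a) one observes that, forgetting multiplications, $L[\pi]$ is just $L$ evaluated on $\pi$ regarded as a $\mathbb{Z}/2$-set via $\tau$, with $\pi$-indexed copies of the restriction and transfer of $L$; the relation $RT(ag)=ag+w(ag)$ then follows by a componentwise computation on each of the three types of generator $ag$ (namely $g\in\pi^{\mathbb{Z}/2}$; $g\in\pi^{free}$ with $g=s[g]$; $g\in\pi^{free}$ with $g=\tau s[g]$), using only $RT=\id+w$ for $L$. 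At the same time I would record that the generator formulas of Definition~\ref{groupMackey} do not depend on the choice of orbit representative: replacing $s(x)$ by $\tau s(x)$ replaces $g\,s(x)\,\tau(g)$ by its $\tau$-image while simultaneously turning $a\,c\,w(a)$ into $a\,w(c)\,w(a)$, so the resulting element of $L(\mathbb{Z}/2)[\pi^{free}/(\mathbb{Z}/2)]$ is unchanged, and $g\,h\,\tau(g)\in\pi^{\mathbb{Z}/2}$ whenever $h\in\pi^{\mathbb{Z}/2}$ since $\tau(g\,h\,\tau(g))=g\,\tau(h)\,\tau(g)=g\,h\,\tau(g)$. Functoriality in $L$ is immediate from this componentwise description, and additivity of the action in the $L[\pi](\ast)$-variable is inherited from that of $L$.

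For (b), unitality and associativity are trivial on single generators: $(ag)\cdot\big((a'g')\cdot\xi\big)=\big((aa')(gg')\big)\cdot\xi=\big((ag)(a'g')\big)\cdot\xi$, using associativity and unitality of the $L(\mathbb{Z}/2)$-action on $L(\ast)$ together with $\tau(gg')=\tau(g')\tau(g)$ and $w(aa')=w(a')w(a)$. To extend to arbitrary $\alpha,\alpha'\in L[\pi](\mathbb{Z}/2)$ I would invoke the extension formula of condition iv), in the same way the displayed induction identity is used in the proof of Lemma~\ref{MatMackeywelldef}: one expands $(\alpha\alpha')\cdot\xi$ and $\alpha\cdot(\alpha'\cdot\xi)$, cancels the single-generator contributions against one another, and matches the leftover terms---transfers of products in $L(\mathbb{Z}/2)[\pi]$---using equivariance of $T$, the fact that $R(\xi)$ is $w$-fixed (so the correction $\sum_{g<g'}T\big(a_g g\,R(\xi)\,w(a_{g'})\,\tau(g')\big)$ is symmetric under $g\leftrightarrow g'$ and hence independent of the chosen total order), and a reindexing of the summation indices compatible with the group law. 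I expect \emph{this} step to be the main obstacle: because a given element of $\pi$ can be written as a product $g g'$ in several ways, one must first re-collect the terms of the product $\alpha\alpha'$ into the standard form $\sum_h a_h h$ before the extension formula applies, and the ensuing transfer correction terms must then be reorganized carefully---precisely the phenomenon that makes the corresponding matrix computation lengthy.

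For (c), axiom i) is the familiar fact that the group-ring involution is an anti-involution, axiom iv) holds by construction, and axioms ii)--iii) are verified first on generators and then propagated to sums via the extension formula and equivariance of $T$, exactly as in the matrix case; for instance, for the generator $bh$ with $h\in\pi^{\mathbb{Z}/2}$, axiom ii) for $L$ gives $R\big((a\cdot b)(g h\tau(g))\big)=R(a\cdot b)\,g h\tau(g)=a\,R(b)\,w(a)\,g h\tau(g)=(ag)\,R(bh)\,w(ag)$, and the argument for the free generators and for axiom iii) is parallel. Finally, for (d) the isomorphism $\underline{R[\pi]}\cong\underline{R}[\pi]$ is the section-dependent splitting $(R[\pi])^{\mathbb{Z}/2}\cong R^{\mathbb{Z}/2}[\pi^{\mathbb{Z}/2}]\oplus R[\pi^{free}/(\mathbb{Z}/2)]$ recalled just before Definition~\ref{groupMackey}; I would check directly that it carries the inclusion of fixed points to the restriction of $L[\pi]$, the transfer $a\mapsto a+w(a)$ to the transfer of $L[\pi]$, and the conjugation action $b\mapsto a\,b\,w(a)$ on $(R[\pi])^{\mathbb{Z}/2}$ to the action of Definition~\ref{groupMackey}, so that the two Hermitian Mackey functors coincide.
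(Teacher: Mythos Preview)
Your proposal is correct and follows essentially the same approach as the paper: the paper likewise checks the Mackey relation $RT=\id+w$ by the three-case analysis on generators $ag$, defers associativity of the action to ``a calculation analogous to the one of Lemma~\ref{MatMackeywelldef}'' (exactly the extension-formula bookkeeping you outline and correctly flag as the main obstacle), verifies axioms ii) and iii) on generators by direct computation, notes that axiom iv) holds by construction, and concludes $\underline{R[\pi]}\cong\underline{R}[\pi]$ by inspection via the section-dependent splitting. Your additional remarks on representative-independence of the free-summand formula and order-independence of the correction term are not made explicit in the paper's proof but are implicit in its treatment, and your observation that axioms ii)--iii) must be propagated from generators to sums is a point the paper leaves tacit.
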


\begin{proof}
We see that $L[\pi]$ is a Mackey functor, since
\[
RT(ag)=\left\{\begin{array}{ll}
R(T(a)g)=(RT(a))g=(a+w(a))g
& \mbox{if } g\in \pi^{\mathbb{Z}/2}\\
R(a[g])=ag+w(a)\tau(g)
& \mbox{if } g\in \pi^{free}\mbox{ and }g=s[g]\\
R(w(a))[g]=w(a)\tau(g)+ag
& \mbox{if } g\in \pi^{free}\mbox{ and }g=\tau s[g]
\end{array}\right.
\]
is equal to $ag+w(a)\tau(g)$.
A calculation analogous to the one of Lemma \ref{MatMackeywelldef} shows that the action of $L[\pi](\mathbb{Z}/2)$ on $L[\pi](\ast)$ is indeed associative (this also follows from \ref{gpring} if $L$ is a Tambara functor, since it can be realized as the $\underline{\pi}_0$ of a commutative $\mathbb{Z}/2$-equivariant ring spectrum).
The compatibility between the action and the restriction is
\[R(ag\cdot bh)=R((a\cdot b)(gh\tau(g)))=R(a\cdot b)(gh\tau(g))=aR(b)w(a)(gh\tau(g))=(ag)R(bh)w(a)\tau(g)\]
for the action on the first summand. On the second summand, this is
\[R(ag\cdot cx)=
\left\{\begin{array}{ll}
R(acw(a)[gs(x)\tau(g)])&\mbox{if }gs(x)\tau(g)=s[gs(x)\tau(g)]\\
R(aw(c)w(a)[gs(x)\tau(g)])&\mbox{if }gs(x)\tau(g)=\tau s[gs(x)\tau(g)]
\end{array}\right.
\]
\[
=\left\{\begin{array}{ll}
acw(a)gs(x)\tau(g)+w(acw(a))\tau(gs(x)\tau(g))&\mbox{if }gs(x)\tau(g)=s[gs(x)\tau(g)]\\
aw(c)w(a)\tau(gs(x)\tau(g))+w(aw(c)w(a))gs(x)\tau(g)
&\mbox{if }gs(x)\tau(g)=\tau s[gs(x)\tau(g)]
\end{array}\right.
\]
which is equal to $(ag)R(cx)(w(a)\tau(g))$. Let us verify the compatibility between the action and the transfer. We have that $ag\cdot T(bh)$ is equal to
\[\left\{\begin{array}{ll}
(ag)\cdot(T(b)h)
=T(abw(a))gh\tau(g)
&\!\! \mbox{if } h\in \pi^{\mathbb{Z}/2}
\\
(ag)\cdot(b[h])=abw(a)[gh\tau(g)]
&\!\! \mbox{if } h\in \pi^{free}\!,h\!=\!s[h], gh\tau(g)\!=\!s[gh\tau(g)]
\\
(ag)\cdot(b[h])=aw(b)w(a)[gh\tau(g)]
&\!\! \mbox{if } h\in \pi^{free}\!,h\!=\!s[h], gh\tau(g)\!=\!\tau s[gh\tau(g)]
\\
(ag)\cdot (w(b)[h])=aw(b)w(a)[g\tau(h)\tau(g)]& \!\!\mbox{if } h\in \pi^{free}\!,h\!=\!\tau s[h], g\tau(h)\tau(g)\!=\!s[g\tau(h)\tau(g)]
\\
(ag)\cdot (w(b)[h])=aw^2(b)w(a)[g\tau(h)\tau(g)]
&\!\! \mbox{if } h\in \pi^{free}\!,h\!=\!\tau s[h], g\tau(h)\tau(g)\!=\!\tau s[g\tau(h)\tau(g)]
\end{array}\right.
\]
\[
=\left\{\begin{array}{ll}
T(abw(a))gh\tau(g)
& \mbox{if } gh\tau(g)\in \pi^{\mathbb{Z}/2}\\
abw(a)[gh\tau(g)]
& \mbox{if } h\in \pi^{free}\mbox{ and }gh\tau(g)=s[gh\tau(g)]\\
aw(b)w(a)[gh\tau(g)]
& \mbox{if } h\in \pi^{free}\mbox{ and }gh\tau(g)=\tau s[gh\tau(g)]
\end{array}\right.=T(abw(a)gh\tau(g)).
\]
The last axiom is satisfied by construction.
By inspection we see that  $\underline{R[\pi]}\cong \underline{R}[\pi]$.
\end{proof}

\subsection{The Hermitian K-theory of a Hermitian Mackey functor}\label{secHermMackey}

Let $L$ be a Hermitian Mackey functor. We use the Hermitian Mackey functors of matrices constructed in Definition \ref{MackeyMatrix} to define a symmetric monoidal category of Hermitian forms, whose group completion will be the Hermitian $K$-theory of $L$.

\begin{definition}\label{defHermform}
Let $L$ be a Hermitian Mackey functor. An $n$-dimensional Hermitian form on $L$ is an element of $M_n(L)(\ast)$ which restricts to an element of $GL_n(L(\mathbb{Z}/2))$ under the restriction map
\[
R\colon M_n(L)(\ast)\longrightarrow M_n(L)(\mathbb{Z}/2)=M_n(L(\mathbb{Z}/2)).
\]
We write $GL_n(L)(\ast)$ for the set of $n$-dimensional Hermitian forms.
A morphism $B\to B'$ of Hermitian forms is a matrix $\lambda$ in $M_n(L(\mathbb{Z}/2))$ which satisfies
\[B=w(\lambda)\cdot B',\]
where the operation is the action of $M_n(L)(\mathbb{Z}/2)$ on $M_n(L)(\ast)$. The multiplication of matrices defines a category of Hermitian forms, which we denote by $\Herm_L$.
\end{definition}

\begin{remark}
Let $R$ be a ring with anti-involution. An $n$-dimensional Hermitian form on the associated Hermitian Mackey functor $\underline{R}$ is an invertible matrix with entries in $R$ which is fixed by the involution $w(A)_{ij}=w(A_{ji})$. This is the same as the datum of an anti-symmetric non-degenerate bilinear pairing $R^{\oplus n}\otimes R^{\oplus n}\to R$, that is a Hermitian form on $R^{\oplus n}$. Since the action of $M_n(\underline{R})(\mathbb{Z}/2)$ on $M_n(\underline{R})(\ast)$ is by conjugation, a morphism of Hermitian forms in the sense of Definition \ref{defHermform} corresponds to the classical notion of isometry.
\end{remark}

The block-sum of matrices on objects and morphisms defines the structure of a permutative category on $\Herm_L$. The symmetry isomorphism from $B\oplus B'$ to $B'\oplus B$, where $B$ is $n$-dimensional and $B'$ is $m$-dimensional, is given by the standard permutation matrix of $GL_{n+m}(L(\mathbb{Z}/2))$ with blocks
\[
\tau_{n,m}:=\left(
\begin{array}{ll}
O_{mn}& I_n
\\
I_m&O_{nm}
\end{array}
\right).
\]
Here $O_{nm}$ is the null $n\times m$-matrix, where the diagonal zeros are those of $L(\ast)$ and the off-diagonal ones are in $L(\mathbb{Z}/2)$. The matrix $I_n$ is the $n\times n$-identity matrix of $L(\mathbb{Z}/2)$. The classifying space $Bi\Herm_L$ of the category of invertible morphisms is therefore an $E_\infty$-monoid.

\begin{definition}\label{defKHMackey}
Let $L$ be a Hermitian Mackey functor. The Hermitian $K$-theory space of $L$ is the group completion 
\[\KH(L):=\Omega B (Bi\Herm_L,\oplus).\]
Segal's $\Gamma$-space construction for the symmetric monoidal category $(i\Herm_L,\oplus)$ provides a spectrum whose infinite loop space is equivalent to $\KH(L)$, that we also denote by $\KH(L)$.
\end{definition}

\begin{remark}\label{KHBar}
If $\lambda\colon B\to B'$ is a morphism of Hermitian forms, the form $B$ is determined by $B'$ and the matrix $\lambda$. Thus a string of composable morphisms
\[
B_0\stackrel{\lambda_0}{\longrightarrow}B_1\stackrel{\lambda_1}{\longrightarrow}\dots \stackrel{\lambda_n}{\longrightarrow}B_n
\]
is determined by the sequence of matrices $\lambda_1,\dots,\lambda_n$, and by the form $B_n$. This gives an isomorphism
\[
Bi\Herm_L\cong  \coprod_{n\geq 0} B\big(\ast, GL_n(L(\mathbb{Z}/2)),GL_n(L)(\ast)\big)
\]
where $B(\ast, GL_n(L(\mathbb{Z}/2)),GL_n(L)(\ast))$ is the Bar construction of the right action of the group $GL_n(L(\mathbb{Z}/2))$ on the set of $n$-dimensional Hermitian forms $w(\lambda)\cdot B$, given by the Hermitian structure of the Mackey functor $M_n(L)$. The action indeed restricts to an action on $GL_n(L)(\ast)$ because if $\lambda$ is in $GL_n(L(\mathbb{Z}/2))$ and the restriction of $B\in M_n(L)(\ast)$ is invertible, then
\[R(w(\lambda)\cdot B)=w(\lambda) R(B)\lambda\]
is also invertible. For rings with anti-involution this is \cite[Rem.1.3]{BF}.
\end{remark}

\begin{remark}
Since the notion of Hermitian forms on Hermitian Mackey functors extends that of Hermitian forms on rings with anti-involution, it follows that our definition of Hermitian $K$-theory extends the Hermitian K-theory construction of \cite{BF}, of the category of free modules over a discrete ring with anti-involution.
\end{remark}

We now make our Hermitian $K$-theory construction functorial.

\begin{definition}
A morphism of Hermitian Mackey functors is a map of Mackey functors $f\colon L\to N$ such that $f_{\mathbb{Z}/2}\colon L(\mathbb{Z}/2)\to N(\mathbb{Z}/2)$ is a ring map, and such that $f_\ast\colon L(\ast)\to N(\ast)$ is $L(\mathbb{Z}/2)$-equivariant, where $N(\ast)$ is a $L(\mathbb{Z}/2)$-set via $f_{\mathbb{Z}/2}$.
\end{definition}

Clearly a map of Hermitian Mackey functors $f\colon L\to N$ induces a symmetric monoidal functor $f_\ast\colon \Herm_L\to \Herm_N$, by applying $f_{\mathbb{Z}/2}$ and $f_{\ast}$ to the matrices entrywise. Thus it induces a continuous map $f_\ast\colon \KH(L)\to \KH(N)$, and a map of spectra $f_\ast\colon {\KH}(L)\to {\KH}(N)$. We will be mostly interested in the following example.

\begin{example}\label{dimonKH}
Let $\mathbb{Z}$ be the ring of integers with the trivial anti-involution, and $\underline{\mathbb{Z}}$ the corresponding Hermitian Mackey functor. There is a morphism of Hermitian Mackey functors
\[d\colon \mathbb{A}\longrightarrow \underline{\mathbb{Z}}\]
from the Burnside Mackey functor. The map $d_{\mathbb{Z}/2}$ is the identity of $\mathbb{Z}$, and the map
\[
d_\ast\colon \mathbb{Z}\oplus \mathbb{Z}\longrightarrow \mathbb{Z}
\]
is the identity on the first summand and multiplication by $2$ on the second. In terms of finite $\mathbb{Z}/2$-sets, it sends a set to its cardinality. This is in fact a morphism of Tambara functors for the standard multiplicative structures on $\mathbb{A}$ and $\underline{\mathbb{Z}}$, and since the Hermitian structures are defined via the multiplicative norms it follows that $d$ is a map of Hermitian Mackey functors.

If moreover $\pi$ is a discrete group with anti-involution, the map $d$ induces a morphism on the associated group-Mackey functors $d\colon \mathbb{A}[\pi]\to \underline{\mathbb{Z}}[\pi]$. The underlying map $d_{\mathbb{Z}/2}$ is again the identity on $\mathbb{Z}[\pi]$, and the map
\[
d_{\ast}\colon \mathbb{A}[\pi](\ast)=(\mathbb{Z}\oplus\mathbb{Z})[\pi]\oplus \mathbb{Z}[\pi^{free}/(\mathbb{Z}/2)]\longrightarrow (\mathbb{Z}[\pi])^{\mathbb{Z}/2}=\mathbb{Z}[\pi]\oplus \mathbb{Z}[\pi^{free}/(\mathbb{Z}/2)]
\]
is $d[\pi]$ on the first summand and the identity on the second summand.
This map therefore induces a map on Hermitian $K$-theory spectra
\[
d\colon {\KH}(\mathbb{A}[\pi])\longrightarrow {\KH}(\underline{\mathbb{Z}}[\pi])={\KH}(\mathbb{Z}[\pi]).
\]
\end{example}

\subsection{Multiplicative structures}

We saw in Example \ref{exTambara} that $\Z/2$-Tambara functors provide a supply of Hermitian Mackey functors. In this section we show that the Hermitian $K$-theory spectrum of a Tambara functor is in fact a ring spectrum. We will generalize this construction when the input is a commutative $\Z/2$-equivariant ring spectrum in \S\ref{secpairings}, but the construction for Tambara functors will give us slightly more functoriality, which will come in handy in \S\ref{secmain}.

Let $L$ be a Tambara functor. We define a pairing of categories
\[
\otimes\colon \Herm_{L}\times \Herm_L\longrightarrow \Herm_L
\]
by means of an extension of the standard tensor product of matrices. On objects, we define the tensor product of two Hermitian forms $B$ and $B'$ on $L$ of respective dimensions $n$ and $m$ to be the $nm$-dimensional form $B\otimes B'$ with diagonal components
\[(B\otimes B')_{ii}=B_{kk}\cdot B'_{uu} \ \ \ \ \ \ \ \mbox{where} \ \ \ \ k=\lfloor\frac{i-1}{n}\rfloor+1\ \  ,\  \  u=i-n\lfloor\frac{i-1}{n}\rfloor\]
where the multiplication denotes the multiplication in the commutative ring $L(\ast)$. The off-diagonal term $1\leq i<j\leq nm$ of $B\otimes B'$ is defined by
\[(B\otimes B')_{ij}=
R(B)_{kl}\cdot R(B')_{uv}
\ \ \ \ 
\mbox{where} \ \ \ \begin{array}{ll}
k=\lfloor\frac{i-1}{n}\rfloor+1,&  l=\lfloor\frac{j-1}{n}\rfloor+1\\
u=i-n\lfloor\frac{i-1}{n}\rfloor,&  v=j-n\lfloor\frac{j-1}{n}\rfloor
\end{array}
\]
and $R\colon M_{n}(L(\Z/2))\to M_{n}(L)(\ast)$ is the restriction of the Mackey functor of matrices of Definition \ref{MackeyMatrix}.
This is the standard formula of the Kronecker product of matrices
, where the diagonal elements are lifted to the fixed points ring $L(\ast)$.

\begin{example}
In the case $m=n=2$ the product above is given by the matrix
\[
\left(
\begin{array}{ll}
B_{11}&B_{12}\\
&B_{22}
\end{array}
\right)\otimes 
\left(
\begin{array}{ll}
B'_{11}&B'_{12}\\
&B'_{22}
\end{array}
\right)
=
\left(
\begin{array}{llll}
B_{11}B'_{11}&R(B_{11})B'_{12}
&
B_{12}R(B'_{11})&B_{12}B'_{12}
\\
&B_{11}B'_{22}
&
B_{12}w(B'_{12})&B_{12}R(B'_{22})
\\
&
&
B_{22}B'_{11}&R(B_{22})B'_{12}
\\
&&&B_{22}B'_{22}
\end{array}
\right).
\]
\end{example}
Since the restriction map $R\colon L(\ast)\to L(\Z/2)$ is a ring map this operation lifts the standard Kronecker product of matrices, in the sense that
\[
R(B\otimes B')=R(B)\otimes R(B')
\]
as $nm\times nm$-matrices with coefficients in $L(\mathbb{Z}/2)$. We define the pairing $\otimes$ on morphisms by the standard Kronecker product of matrices.

\begin{lemma}
The pairing $\otimes\colon \Herm_{L}\times \Herm_L\to \Herm_L$ is a well-defined functor.
\end{lemma}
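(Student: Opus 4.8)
The plan is to verify that $\otimes$ lands in $\Herm_L$ on objects, that it is compatible with composition of morphisms, and that it is unital, the last point being essentially automatic. The only content is the statement on objects: given Hermitian forms $B\in GL_n(L)(\ast)$ and $B'\in GL_m(L)(\ast)$, we must check that $B\otimes B'$ as defined by the formulas above is a genuine element of $M_{nm}(L)(\ast)$ — which it is by construction, since the diagonal entries are defined in $L(\ast)$ and the off-diagonal entries are defined in $L(\mathbb{Z}/2)$ — and that its restriction $R(B\otimes B')$ lies in $GL_{nm}(L(\mathbb{Z}/2))$. For the latter, the key identity is the compatibility $R(B\otimes B')=R(B)\otimes R(B')$ of $nm\times nm$-matrices with coefficients in $L(\mathbb{Z}/2)$, which is already recorded just before the statement and follows from the fact that $R\colon L(\ast)\to L(\mathbb{Z}/2)$ is a ring homomorphism: on the diagonal $R((B\otimes B')_{ii})=R(B_{kk}\cdot B'_{uu})=R(B_{kk})R(B'_{uu})=R(B)_{kk}R(B')_{uu}$, and off the diagonal the entries of $B\otimes B'$ are already in $L(\mathbb{Z}/2)$ and visibly agree with those of the Kronecker product $R(B)\otimes R(B')$. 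Since $R(B)\in GL_n(L(\mathbb{Z}/2))$ and $R(B')\in GL_m(L(\mathbb{Z}/2))$, their Kronecker product is invertible with inverse $R(B)^{-1}\otimes R(B')^{-1}$, so $R(B\otimes B')\in GL_{nm}(L(\mathbb{Z}/2))$ and $B\otimes B'$ is a Hermitian form.

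Next I would check functoriality on morphisms. A morphism $\lambda\colon B_1\to B_2$ in $\Herm_L$ is a matrix $\lambda\in M_n(L(\mathbb{Z}/2))$ with $B_1=w(\lambda)\cdot B_2$; similarly $\mu\colon B_1'\to B_2'$ with $B_1'=w(\mu)\cdot B_2'$. We must show $\lambda\otimes\mu$ (ordinary Kronecker product of matrices over $L(\mathbb{Z}/2)$) defines a morphism $B_1\otimes B_1'\to B_2\otimes B_2'$, i.e. $B_1\otimes B_1'=w(\lambda\otimes\mu)\cdot(B_2\otimes B_2')$. On the off-diagonal this reduces, after applying $R$, to the standard matrix identity $(w(\lambda)R(B_2)\lambda)\otimes(w(\mu)R(B_2')\mu)=w(\lambda\otimes\mu)\,(R(B_2)\otimes R(B_2'))\,(\lambda\otimes\mu)$ together with $w(\lambda\otimes\mu)=w(\lambda)\otimes w(\mu)$, which holds since the anti-involution on $M_{nm}(L(\mathbb{Z}/2))$ is entrywise-$w$ followed by transposition and transposition commutes with Kronecker products up to the block reindexing already built into the formula. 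On the diagonal entry $i=i$ with indices $(k,u)$ one expands $(w(\lambda\otimes\mu)\cdot(B_2\otimes B_2'))_{ii}$ using the diagonal action formula of Definition \ref{MackeyMatrix} and the Hermitian-Mackey axioms, and matches it against $(w(\lambda)\cdot B_2)_{kk}\cdot(w(\mu)\cdot B_2')_{uu}=(B_1)_{kk}\cdot(B_1')_{uu}=(B_1\otimes B_1')_{ii}$; here one uses that for a Tambara functor the action is $a\cdot b=N(a)b$ with $N$ multiplicative, so $N$ of a sum-of-products unwinds compatibly with the transfer terms, exactly as in the associativity computation of Lemma \ref{MatMackeywelldef}. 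Compatibility with composition $\lambda'\circ\lambda$ then follows from $(\lambda'\lambda)\otimes(\mu'\mu)=(\lambda'\otimes\mu')(\lambda\otimes\mu)$, and the identity morphism goes to the identity since $I_n\otimes I_m=I_{nm}$.

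The main obstacle is the diagonal-entry bookkeeping: one has to expand the action of $M_{nm}(L)(\mathbb{Z}/2)$ on the diagonal of $B\otimes B'$ — which by Definition \ref{MackeyMatrix} involves a transfer of off-diagonal cross-terms plus a sum of Hermitian actions on diagonal terms — and show it is compatible with the product structure on $L(\ast)$ coming from the Tambara functor, using the norm identity $RN(a)=aw(a)$, the Frobenius relation $T(a)b=T(aR(b))$, and the additivity-up-to-transfer formula $N(a+a')=N(a)+N(a')+T(aw(a'))$. This is the same flavour of computation as in Lemma \ref{MatMackeywelldef}, just carried out through the Kronecker reindexing $i\mapsto(k,u)$, and I would reduce the verification to the $2\times 2$-by-$2\times 2$ case displayed in the Example above (which already exhibits every type of entry: $B_{kk}B'_{uu}$, $R(B_{kk})B'_{uv}$, $B_{kl}R(B'_{uu})$, $B_{kl}w(B'_{uv})$, $B_{kl}B'_{uv}$) and then note that the general case differs only by longer index ranges handled by the induction on axiom iv) already recorded in Lemma \ref{MatMackeywelldef}. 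Everything else — that the output is a well-formed element of $M_{nm}(L)(\ast)$, that $R$ is multiplicative, that Kronecker products of invertibles are invertible — is formal.
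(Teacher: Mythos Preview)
Your proposal is correct and follows essentially the same approach as the paper: the key step is the identity $(A\cdot B)\otimes(A'\cdot B')=(A\otimes A')\cdot(B\otimes B')$ for matrices $A,A'$ and forms $B,B'$, established via the Tambara identities $T(a)b=T(aR(b))$ and $RN(a)=aw(a)$, from which well-definedness on morphisms follows immediately using $w(\lambda\otimes\mu)=w(\lambda)\otimes w(\mu)$. The paper states this identity directly and calls its verification ``tedious but straightforward,'' whereas you organize the same computation by separating diagonal from off-diagonal entries and proposing a reduction to the $2\times 2$ case; these are minor expository differences, not a different argument.
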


\begin{proof}
A tedious but straightforward verification shows that for every pair of matrices $A\in M_{n}(L(\Z/2))$ and  $A'\in M_{m}(L(\Z/2))$, and forms $B\in M_{n}(L)(\ast)$ and  $B'\in M_{m}(L)(\ast)$ we have that
\[
(A\cdot B)\otimes (A'\cdot B')=(A\otimes A')\cdot (B\otimes B')
\]
where the dot is the action of the Mackey structure of $M_{n}(L)$. This uses the identities $T(a)b=T(aR(b))$ and $RN(a)=aw(a)$ of the Tambara structure.

Thus if $\lambda\colon B\to C$ and $\lambda'\colon B'\to C'$ are morphisms of Hermitian forms, we have that
\[
w(\lambda\otimes \lambda')\cdot (C\otimes C')=(w(\lambda)\otimes w(\lambda'))\cdot (C\otimes C')=(w(\lambda)\cdot C)\otimes (w(\lambda')\cdot  C')=B\otimes B'.
\]
Thus $\lambda\otimes \lambda'\colon B\otimes B'\to C\otimes C'$ is a well-defined morphism. The composition of morphisms happens in the matrix rings $M_{n}(L(\Z/2))$, and therefore it is respected by $\otimes$. Similarly, $\otimes$ preserves the identity morphisms.
\end{proof}

It is moreover immediate to verify that the standard compatibility conditions between $\otimes$ and the direct sum are satisfied for forms:
\begin{enumerate}[i)]
\item $(B\oplus B')\otimes B''=(B\otimes B'')\oplus (B'\otimes B'')$,
\item $B\otimes (B'\oplus B'')=\sigma (B\otimes B')\oplus(B\otimes B'')\sigma^{-1}$, where $\sigma$ is a permutation matrix,
\item $0\otimes B=0$ and $B\otimes 0=0$.
\item $1\otimes B=B\otimes 1=B$, where $1$ is the $1$-form with entry the unit of the ring $L(\ast)$.
\end{enumerate}
By property $ii)$ the permutation $\sigma$ defines an isomorphism of forms
\[B\otimes (B'\oplus B'')\cong (B\otimes B')\oplus(B\otimes B''),\]
and one can easily verify that this isomorphism satisfies the higher coherences required to give the following.

\begin{proposition}\label{multiMackey}
The pairing $\Herm_{L}\times \Herm_L\to \Herm_L$
is a pairing of permutative categories, thus inducing a morphism of spectra
\[
\otimes\colon {\KH}(L)\wedge {\KH}(L)\longrightarrow {\KH}(L)
\]
exhibiting ${\KH}(L)$ as a ring spectrum.\qed
\end{proposition}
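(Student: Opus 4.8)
The plan is to package $(\Herm_L,\oplus,\otimes,0,1)$ as bipermutative-type data and then feed it to a multiplicative infinite loop space machine. Three of the ingredients are already in place: the permutative structure $(\Herm_L,\oplus)$ with symmetry isomorphisms $\tau_{n,m}$; the bifunctor $\otimes$ of the preceding lemma; and the $\otimes$-unit $1$, the one-dimensional form whose entry is the unit of $L(\ast)$. First I would record that, on objects and morphisms, $\otimes$ is strictly associative and strictly unital, exactly as the Kronecker product of matrices is: the off-diagonal entries obey the usual Kronecker identities over $L(\Z/2)$, the diagonal entries multiply associatively and unitally in the commutative ring $L(\ast)$, and the two prescriptions are consistent because $R\colon L(\ast)\to L(\Z/2)$ is a ring map with $R(B\otimes B')=R(B)\otimes R(B')$. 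Combined with properties i)--iv), this yields the full bipermutative datum: $\otimes$ is strictly left-distributive over $\oplus$, right-distributive over $\oplus$ up to the explicit permutation isomorphism $\sigma$ of ii), absorbing at $0$, and unital at $1$.

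The substantive step is the coherence of the one non-strict piece, $\sigma$. One must check that $\sigma$ is compatible with the associativity of $\oplus$, with the symmetry $\tau$ of $\oplus$, with the strict associativity of $\otimes$, and with the left distributivity --- i.e. the distributivity pentagon and the two distributivity hexagons of a pairing of permutative categories. In each case the diagram in question is an identity between explicit block-permutation matrices in $GL_N(L(\Z/2))$, and these are precisely the permutations that occur in the classical bipermutative category of matrices over a commutative ring. So rather than computing directly, I would fix a model of that classical structure, match it to our block-indexing conventions for $\otimes$ and $\oplus$, and deduce the coherences from the known case; the lifts of the diagonal entries to $L(\ast)$ play no role here, since the coherence isomorphisms only reshuffle blocks and never act inside a single entry.

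With the bipermutative datum in hand I would invoke the multiplicative form of Segal's $\Gamma$-space construction --- equivalently the infinite loop space machine for bipermutative categories of May and of Elmendorf--Mandell --- which converts a pairing of permutative categories into a pairing of the associated $\Gamma$-spaces, hence into a map of spectra $\KH(L)\wedge\KH(L)\to\KH(L)$; the strict associativity and unitality of $\otimes$ then upgrade this to an associative, unital multiplication, so that $\KH(L)$ becomes a ring spectrum. (Recording in addition the shuffle isomorphism $A\otimes B\cong B\otimes A$ of the Kronecker product, together with its coherence, would make it an $E_\infty$-ring, but this is not needed for the statement.) The one place that requires genuine care is the coherence check of the second paragraph: it is a finite verification and is ``the same as the classical matrix case'', but making that reduction precise --- pinning down the indexing conventions and checking the distributivity pentagon and the two hexagons --- is the real content, everything else being formal once it is in place.
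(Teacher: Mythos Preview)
Your proposal is correct and matches the paper's approach: the paper gives no proof at all (the proposition is closed with a bare \qed), having set up exactly the ingredients you list and remarked that ``one can easily verify that this isomorphism satisfies the higher coherences required''. Your outline is a faithful expansion of what the paper leaves implicit --- reducing the coherence of $\sigma$ to the classical bipermutative category of matrices over a commutative ring, and then invoking a multiplicative infinite loop space machine for the passage from a pairing of permutative categories to a ring spectrum --- and is precisely the kind of verification the authors are gesturing at.
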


The morphism $f\colon \Herm_L\to \Herm_{N}$ induced by a morphism of Tambara functors $f\colon L\to N$ clearly commutes with the monoidal structure $\otimes$, thus inducing a morphism of ring spectra $f\colon {\KH}(L)\to {\KH}(N)$.

\begin{remark}\label{remsmult}
Let $L$ and $N$ be Tambara functors, and suppose that $f\colon L\to N$ is a morphism of Hermitian Mackey functors such that $f_\ast\colon L(\ast)\to N(\ast)$ is multiplicative, but not necessarily unital. Then the induced functor $f\colon \Herm_L\to \Herm_{N}$ preserves the tensor product, but not its unit, and the map $f\colon {\KH}(L)\to {\KH}(N)$ is a morphism of non-unital ring spectra. The example we will be interested in is the morphism $\frac{T}{2}\colon \underline{\mathbb{Z}}[\frac 12]\to\mathbb{A}[\frac 12]$, defined by the identity map on underlying rings, and by half the transfer $(0,\frac 12)\colon \mathbb{Z}[\frac 12]\to \mathbb{Z}[\frac 12]\oplus \mathbb{Z}[\frac 12]$ on fixed points.
\end{remark}


\section{Real $K$-theory}\label{secthree}

The aim of this section is to construct the free real $K$-theory $\mathbb{Z}/2$-spectrum of a ring spectrum with anti-involution and its assembly map, and relate these objects to the classical constructions in the case of discrete rings.

\subsection{Real semi-simplicial spaces and the real and dihedral Bar constructions}\label{secBar}

In this section we investigate the Bar construction and the cyclic Bar construction associated to a monoid with an anti-involution. This is essentially a recollection of materials from \cite{LodayDihedral}, \cite[\S 1]{BF} and \cite{IbLars},
but we need a context without degeneracies which requires particular care.

We let $\Delta_+$ be the subcategory of $\Delta$ of all objects and injective morphisms. The category $\Delta$ has an involution that fixes the objects and that sends a morphism $\alpha\colon [n]\to [k]$ to $\overline{\alpha}(i)=k-\alpha(n-i)$. This involution restricts to $\Delta_+$. We recall from \cite{IbLars} that a real simplicial space is a simplicial space $X$ together with levelwise involutions $w\colon X_n\to X_n$ which satisfy $w\circ\alpha^\ast=\overline{\alpha}^{\ast}\circ w$ for every morphism $\alpha\in \Delta$. This can be conveniently reformulated as a $\mathbb{Z}/2$-diagram $X\colon \Delta^{op}\to \Top$, in the sense of \cite[Def.1.1]{Gdiags}.
Similarly, we define a real semi-simplicial space to be a $\mathbb{Z}/2$-diagram $X\colon \Delta^{op}_+\to \Top$.

We will be mostly concerned with the following two examples. By a non-unital topological monoid we mean a possibly non-unital monoid in the monoidal category of spaces with respect to the cartesian product. Let $M$ be a non-unital topological monoid which is equipped with an anti-involution, that is a continuous map of monoids $w\colon M^{op}\to M$ that satisfies $w^2=\id$.

\begin{example}
The real nerve of $M$ is the semi-simplicial space $NM$, the nerve of $M$, with $n$-simplices $N_nM=M^{\times n}$,
and with the levelwise involution
\[(m_1,\dots,m_n)\longmapsto (w(m_n),\dots, w(m_1)).\]
The resulting real semi-simplicial space is denoted $N^{\sigma}M$ (cf. \cite[Def.1.12]{BF}).
\end{example}

\begin{example}
The dihedral nerve of $M$ is the cyclic nerve $N^{cy}M$, with $n$-simplices $N^{cy}_nM=M^{\times n+1}$,
and with the involution defined degreewise by
\[(m_0,m_1,\dots,m_n)\longmapsto (w(m_0),w(m_n),\dots, w(m_1)).\]
The resulting real semi-simplicial space is denoted $N^{di}M$. This involution combined with the semi-cyclic structure define a semi-dihedral object.
\end{example}

Segal's edgewise subdivision functor $\sd_e$ turns a real semi-simplicial space  $X$ into a semi-simplicial $\mathbb{Z}/2$-space. It is defined by precomposing a real semi-simplicial space $X\colon \Delta^{op}_+\to\Top$ with the endofunctor of $\Delta_+$ that sends $[n]=\{0,\dots,n\}$ to $[2n+1]=[n]\amalg [n]^{op}$, and a morphism $\alpha\colon [n]\to [k]$ to $\alpha\amalg\overline{\alpha}$. Since $\sd_e (X^{op})=\sd_e X$, the levelwise involution on $X$ defines a semi-simplicial involution on $\sd_e X$. Thus the thick geometric realization $\|\sd_e X\|$ inherits a $\mathbb{Z}/2$-action.

\begin{definition}\label{defB11}\label{defdy}
The real Bar construction of a non-unital topological monoid with anti-involution $M$ is the $\mathbb{Z}/2$-space $B^{\sigma}M$ defined as the geometric realization of the semi-simplicial space
\[B^{\sigma}M:=\|\sd_e N^{\sigma}M\|\]
with the involution induced by the semi-simplicial involution of $\sd_eN^{\sigma}M$. Similarly, the dihedral Bar construction of $M$ is the $\mathbb{Z}/2$-space $B^{di}M$ defined as the geometric realization of the semi-simplicial space
\[B^{di}M:=\|\sd_e  N^{di}M\|.\]
\end{definition}

\begin{example}\label{classifyingbundles}
Let $\pi$ be a discrete group with the anti-involution defined by inversion $w=(-)^{-1}\colon \pi^{op}\to \pi$. The $\mathbb{Z}/2$-space $B^{\sigma}\pi$ is a classifying space for principal $\pi$-bundles of $\mathbb{Z}/2$-spaces. A model for such a universal bundle is constructed in \cite{MayBdls} as the map
\[Map(E\mathbb{Z}/2,E\pi)\longrightarrow Map(E\mathbb{Z}/2,B\pi),\]
where $E\pi$ denotes the free and contractible $\pi$-space.
The base space is equivalent to the nerve of the functor category $Cat(\mathcal{E}\mathbb{Z}/2,\pi)$ where $\mathcal{E}\mathbb{Z}/2$ is the translation category of the left $\mathbb{Z}/2$-set $\mathbb{Z}/2$ (whose nerve is the classical model for $E\mathbb{Z}/2$), see \cite{MonaModels}. It is easy to see that the nerve of $Cat(\mathcal{E}\mathbb{Z}/2,\pi)$ and the edgewise subdivision of $N^{\sigma}\pi$ are equivariantly isomorphic.
\end{example}

\begin{remark}\label{thinvsthick}
In contrast with the simplicial case, the geometric realization of a semi-simplicial space is in general not equivalent to the geometric realization of its subdivision. However, this is the case if the semi-simplicial space $X$ admits a (levelwise) weak equivalence $X\stackrel{\simeq}{\to} Y$, where $Y$ is a semi-simplicial space which is the restriction of a proper simplicial space. This is because of the commutative diagram
\[\xymatrix{
\|\sd_eX\|\ar[d]\ar[r]^\sim&\|\sd_eY\|\ar[d]\ar[r]^\sim &|\sd_eY|\ar[d]^{\cong}
\\
\|X\|\ar[r]_\sim&\|Y\|\ar[r]_{\sim}&|Y|
}\]
where $|-|$ denotes the thin geometric realization. For the nerve and the cyclic nerve this condition holds if the monoid $M$ is weakly equivalent to a unital monoid. In the examples of interest in this paper we will always be in this situation. Thus the underlying non-equivariant homotopy types of $B^{\sigma}M$ and $B^{di}M$ are those of the Bar construction $BM$ and the cyclic Bar construction $B^{cy}M$, respectively.

If $X$ is a real semi-simplicial space,  $\|X\|$ also inherits an involution, by the formula
\[[x\in X_n,(t_0,\dots, t_n)\in \Delta^n]\longmapsto [w(x)\in X_n,(t_n,\dots, t_0)\in \Delta^n].\]
The map $\gamma\colon \|\sd_e X\|\to \|X\|$ that sends $[x,(t_0,\dots,t_n)]$ to $[x,\frac{1}{2}(t_0,\dots,t_n,t_n,\dots,t_0)]$ is equivariant.
If $X$ admits a levelwise equivariant equivalence $X\stackrel{\simeq}{\to}Y$, where $Y$ is the restriction of a proper real simplicial space, then $\gamma$ is an equivariant equivalence. This is again because of the above diagram, since in the presence of degeneracies the map $\gamma$ descends to an equivariant homeomorphism $|\sd_eY|\cong |Y|$. In general these two actions do not agree, and we choose to work with the subdivided version because it gives us control over the fixed points. We will give a weaker condition that guarantees that these actions are equivalent for nerves of monoids in Lemma \ref{lemmathicksub}.
\end{remark}

We now proceed by analyzing the fixed points of $B^{\sigma}M$ and $B^{di}M$.
The fixed points of $B^{\sigma}M$ are modeled not by a monoid, but by a category.
Let us define a topological category $\sym M$ (without identities) as follows. Its space of objects is the fixed points space $M^{\mathbb{Z}/2}$, and the morphisms $m\to n$ consist of the subspace of elements $l\in M$ with $m=w(l) n l$. Composition is defined by $l\circ k=l\cdot k$. The following is analogous to \cite[Prop.1.13]{BF}.

\begin{proposition}\label{fixedB11}
Let $M$ be a non-unital topological monoid with anti-involution. The $\mathbb{Z}/2$-fixed points of $B^{\sigma}M$ are naturally homeomorphic to the classifying space of $\sym M$, whose nerve is the Bar construction
\[N\sym M\cong N(M;M^{\mathbb{Z}/2})\]
of the right action of $M$ on $M^{\mathbb{Z}/2}$ given by $n\cdot l=w(l)nl$.
\end{proposition}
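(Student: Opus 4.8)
The plan is to compute the fixed points degreewise on the edgewise subdivision and then recognize the result as a nerve. Recall that $B^\sigma M = \|\sd_e N^\sigma M\|$, where $\sd_e N^\sigma M$ is a semi-simplicial $\mathbb{Z}/2$-space with $(\sd_e N^\sigma M)_n = (N^\sigma M)_{2n+1} = M^{\times(2n+1)}$. Since geometric realization commutes with fixed points for semi-simplicial $\mathbb{Z}/2$-spaces (the fixed-point functor preserves the relevant colimits and the simplices $\Delta^n$ carry trivial action), it suffices to identify $(\sd_e N^\sigma M)_n^{\mathbb{Z}/2}$ together with its semi-simplicial structure maps. First I would write out the involution on $M^{\times(2n+1)}$ explicitly: under the identification $[2n+1] = [n] \amalg [n]^{op}$, the edgewise subdivision involution takes $(m_1,\dots,m_{n+1}, m_{n+2}, \dots, m_{2n+2})$ — indexed so that the "middle" coordinate is distinguished — to the tuple obtained by applying $w$ and reversing, in the pattern dictated by $w$ on $N^\sigma M$. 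The key point is that a fixed tuple is determined by its first $n+1$ coordinates $(l_1, \dots, l_n; x)$ with $x \in M^{\mathbb{Z}/2}$ (the central coordinate, fixed by $w$) and $l_1, \dots, l_n \in M$ arbitrary, with the remaining $n$ coordinates forced to be $w(l_n), \dots, w(l_1)$ in the appropriate order.

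Next I would check that under this parametrization $(\sd_e N^\sigma M)_n^{\mathbb{Z}/2} \cong M^{\times n} \times M^{\mathbb{Z}/2}$, and that the face maps match those of the Bar construction $N(M; M^{\mathbb{Z}/2})$ of the right action $x \cdot l = w(l) x l$. Concretely, $N_n(M; M^{\mathbb{Z}/2}) = M^{\times n} \times M^{\mathbb{Z}/2}$ with the outer face maps multiplying $x$ on the right by $l_n$ (the action, producing $w(l_n) x l_n$) and dropping $l_1$ (composing into the trivial left slot), and inner faces multiplying adjacent $l_i$'s. One traces through the two interleaved copies of the face maps of $N^\sigma M$ that survive in the subdivision: the face maps $d_i$ and $d_{2n+1-i}$ of $M^{\times(2n+1)}$ get glued, and on the fixed locus the two are forced to agree precisely because of the $w$-twisted relation defining fixed tuples — this is where the identity $x = w(l)\,n\,l$ and the contravariance of $w$ on products (axiom i) for the monoid anti-involution, $w(mm') = w(m')w(m)$) enter. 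This identification is the semi-simplicial space $N\sym M$, since by definition of the category $\sym M$ an $n$-simplex of its nerve is a string of composable morphisms, which (by the same bookkeeping as in Remark \ref{KHBar}) is precisely a sequence $l_1, \dots, l_n \in M$ together with the target object $x \in M^{\mathbb{Z}/2}$ of the last morphism, the sources of the intermediate morphisms being determined by $x$ and the $l_i$.

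Finally, taking realizations, $(B^\sigma M)^{\mathbb{Z}/2} = \|(\sd_e N^\sigma M)^{\mathbb{Z}/2}\| \cong \|N\sym M\| = B\sym M$, and naturality in $M$ is clear since every step is natural. The main obstacle I anticipate is purely notational: getting the indexing conventions for the edgewise subdivision and the resulting interleaving of face maps exactly right, so that one can see cleanly that (a) a $\mathbb{Z}/2$-fixed $(2n+1)$-tuple is freely parametrized by $(l_1,\dots,l_n;x)$ with no further constraint beyond $x \in M^{\mathbb{Z}/2}$, and (b) the two "mirror-image" face operators that the subdivision identifies do in fact induce the same map on the fixed locus, yielding well-defined semi-simplicial structure maps matching $N(M; M^{\mathbb{Z}/2})$. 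This is exactly the content of \cite[Prop.~1.13]{BF} in the simplicial setting; the only new subtlety here is the absence of degeneracies, which does not affect the argument since we never needed them — the identification is already at the level of (semi-)simplicial objects.
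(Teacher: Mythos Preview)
Your proposal is correct and will work, but the paper takes a slightly different, more categorical route that sidesteps exactly the bookkeeping you flag as the main obstacle. Rather than computing the fixed points of $\sd_e N^\sigma M$ degreewise and then verifying by hand that the face maps match those of $N(M;M^{\mathbb{Z}/2})$, the paper first invokes the general isomorphism $\sd_e NM \cong N(\sd_e M)$, where $\sd_e M$ is the edgewise subdivision (twisted arrow category) of the one-object category $M$: objects are elements of $M$, a morphism $m\to n$ is a pair $(l,k)$ with $n=lmk$, and the duality sends $(l,k)$ to $(w(k),w(l))$. Since the nerve functor commutes with fixed points, one then only needs to identify the fixed subcategory $(\sd_e M)^{\mathbb{Z}/2}$, which is immediate: its objects are $M^{\mathbb{Z}/2}$ and its morphisms are pairs $(l,w(l))$, visibly isomorphic to $\sym M$. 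This packages all of your face-map checks into the single categorical identification $\sd_e NM \cong N(\sd_e M)$, so there is no need to trace the interleaved $d_i$ and $d_{2n+1-i}$ explicitly. Your direct approach has the advantage of being self-contained and not requiring the twisted arrow category, but the paper's approach is cleaner precisely because the face-map compatibility is absorbed into a standard fact about nerves and subdivision. (Minor note: you wrote $m_{2n+2}$ where there are only $2n+1$ coordinates; your subsequent count $(l_1,\dots,l_n;x)$ plus $n$ forced coordinates is correct.)
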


\begin{proof}
The geometric realization of semi-simplicial sets commutes with fixed points of finite groups. This can be easily proved by induction on the skeleton filtration, since fixed points commute with pushouts along closed inclusions and with filtered colimits along closed inclusions. Thus the fixed points space $(B^{\sigma}M)^{\mathbb{Z}/2}$ is homeomorphic to the geometric realization of the semi-simplicial space $(\sd_e N^\sigma M)^{\mathbb{Z}/2}$.

There is an equivariant isomorphism of semi-simplicial $\mathbb{Z}/2$-spaces between $\sd_e NM$ and $N\sd_e M$, where $\sd_e M$ is the edgewise subdivision of the category $M$ (a.k.a. the twisted arrow category). This is the topological category with $\mathbb{Z}/2$-action whose space of objects is $M$, and where the space of morphisms $m\to n$ is the subspace of $M\times M$ of pairs $(l,k)$ such that $n=lmk$. Composition is defined by
\[(l,k)\circ (l',k')=(l'l,kk').\]
The involution on $\sd_e  M$ sends an object $m$ to $w(m)$, and a morphism $(l,k)$ to $(w(k),w(l))$. Since the the nerve functor commute with fixed points, the fixed points of $N\sd_e M$ are isomorphic to the nerve of the fixed points category of $\sd_e M$. Its objects are the fixed objects $M^{\mathbb{Z}/2}$, and its morphisms the pairs $(l,k)$ where $k=w(l)$. This is isomorphic to the category $\sym M$.
\end{proof}

A similar argument shows that the fixed points of the subdivided dihedral nerve of $M$ are isomorphic to the two-sided Bar construction
\[
(N_{2n+1}^{di}M)^{\mathbb{Z}/2}\cong N_n(M^{\mathbb{Z}/2};M;M^{\mathbb{Z}/2})
\]
of the left action of $M$ on $M^{\mathbb{Z}/2}$ defined by $m\cdot n:=mnw(m)$ and the right action $n\cdot m:=w(m)nm$. Thus the semi-simplicial space $(\sd_e N^{di}M)^{\mathbb{Z}/2}$ is isomorphic to the nerve of a category $\sym^{cy}M$. Its objects are the pairs $(n_0,n_1)$ of fixed points of $M^{\mathbb{Z}/2}$. A morphism $m\colon (n_0,n_1)\to (n'_0,n'_1)$ is an element $m\in M$ such that $n_0'=m\cdot n_0$ and $n_1= n_1'\cdot m$. We then obtain the following.
\begin{proposition}\label{fixeddihedral}
Let $M$ be a non-unital topological monoid with anti-involution. The $\mathbb{Z}/2$-fixed points of $B^{di}M$ are naturally homeomorphic to the classifying space of $\sym^{cy} M$, whose nerve is the two-sided Bar construction
\[
N\sym^{cy}M\cong N(M^{\mathbb{Z}/2};M;M^{\mathbb{Z}/2}).\tag*{$\qed$}
\]
\end{proposition}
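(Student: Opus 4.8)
The plan is to run the proof of Proposition~\ref{fixedB11} with the cyclic nerve in place of the ordinary one. As recalled there, the geometric realization of a semi-simplicial space commutes with $\mathbb{Z}/2$-fixed points, since fixed points commute with the pushouts and filtered colimits along closed inclusions that build up the skeletal filtration. Hence
\[
(B^{di}M)^{\mathbb{Z}/2}=\bigl(\|\sd_e N^{di}M\|\bigr)^{\mathbb{Z}/2}\;\cong\;\bigl\|(\sd_e N^{di}M)^{\mathbb{Z}/2}\bigr\|,
\]
and it suffices to identify the semi-simplicial space $(\sd_e N^{di}M)^{\mathbb{Z}/2}$ with the nerve of $\sym^{cy}M$, restricted to $\Delta_+$.

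In degree $n$ we have $(\sd_e N^{di}M)_n=N^{di}_{2n+1}M=M^{\times 2n+2}$ with involution $(m_0,\dots,m_{2n+1})\mapsto(w(m_0),w(m_{2n+1}),\dots,w(m_1))$, so a tuple is fixed exactly when $m_0,m_{n+1}\in M^{\mathbb{Z}/2}$ and $m_{n+1+j}=w(m_{n+1-j})$ for $1\le j\le n$; the assignment $(m_0,\dots,m_{2n+1})\mapsto(m_0\mid m_1,\dots,m_n\mid m_{n+1})$ is then a homeomorphism
\[
(\sd_e N^{di}M)_n^{\mathbb{Z}/2}\;\cong\;M^{\mathbb{Z}/2}\times M^{\times n}\times M^{\mathbb{Z}/2}=N_n\bigl(M^{\mathbb{Z}/2};M;M^{\mathbb{Z}/2}\bigr),
\]
and the right-hand side is exactly the set of $n$-strings of composable morphisms in $\sym^{cy}M$ (the object data propagate along a string, the first coordinate being pushed forward and the second pulled back, so a string is determined by the first coordinate of its source, the $n$ morphisms, and the second coordinate of its target). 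It remains to check that these homeomorphisms assemble into an isomorphism of semi-simplicial spaces. Writing $d_i^{\sd_e}$ as the composite of the two face maps of the cyclic nerve that delete the vertices $i$ and $2n+1-i$, one sees that on fixed points the inner faces $0<i<n$ become the inner faces of the two-sided bar construction (multiplying $m_i$ with $m_{i+1}$, compatibly with their mirror entries $w(m_{n+1-i})$), while the two outer faces $d_0^{\sd_e}$ and $d_n^{\sd_e}$ become the face maps induced by the module actions $m\cdot n=mnw(m)$ and $n\cdot m=w(m)nm$ on the two copies of $M^{\mathbb{Z}/2}$ at the ends. This is precisely the structure of the category $\sym^{cy}M$ described before the statement, so $(\sd_e N^{di}M)^{\mathbb{Z}/2}\cong N_\bullet\sym^{cy}M$ as semi-simplicial spaces; realizing yields $(B^{di}M)^{\mathbb{Z}/2}\cong B\sym^{cy}M$, and $N_\bullet\sym^{cy}M\cong N_\bullet(M^{\mathbb{Z}/2};M;M^{\mathbb{Z}/2})$ by the same degreewise description. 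Naturality in $M$ is automatic, since $\sd_e$, $N^{di}$, $\sym^{cy}(-)$, fixed points and realization are all functorial.

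The only delicate point is the identification of the face maps above: unwinding the edgewise subdivision of the \emph{cyclic} (rather than the ordinary) nerve together with its involution, and checking that the two outer faces reproduce exactly the left and right actions $mnw(m)$ and $w(m)nm$, with the correct sides and with $w$ inserted in the right places. This is the dihedral analogue of the isomorphism $\sd_e NM\cong N\sd_e M$ used for Proposition~\ref{fixedB11}; it is a direct but somewhat intricate bookkeeping computation rather than a conceptual obstacle.
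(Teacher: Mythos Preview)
Your proof is correct and follows exactly the approach the paper intends: the paper itself gives no separate proof for this proposition (it is marked with a $\qed$), instead remarking just before the statement that ``a similar argument'' to Proposition~\ref{fixedB11} yields the degreewise identification $(N_{2n+1}^{di}M)^{\mathbb{Z}/2}\cong N_n(M^{\mathbb{Z}/2};M;M^{\mathbb{Z}/2})$, which is precisely what you have written out. Your verification that $d_0^{\sd_e}$ and $d_n^{\sd_e}$ reproduce the right action $n\cdot m=w(m)nm$ on the first fixed-point factor and the left action $m\cdot n=mnw(m)$ on the second is the bookkeeping the paper leaves implicit.
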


We are now able to determine the homotopy invariance property of $B^\sigma$ and $B^{di}$. Here and throughout the paper, we will call a $\mathbb{Z}/2$-equivariant map of $\mathbb{Z}/2$-spaces $f\colon X\to Y$ a weak equivalence if both $f$ and its restriction on fixed points $f\colon X^{\mathbb{Z}/2}\to Y^{\mathbb{Z}/2}$ induce isomorphisms on all homotopy groups.

\begin{lemma}
Let $f\colon M\to M'$ be a map of non-unital topological monoids with anti-involutions, and suppose that $f$ is a weak equivalence of $\mathbb{Z}/2$-spaces. Then
\[B^{\sigma}f\colon B^{\sigma}M\longrightarrow B^{\sigma}M'\ \ \ \ \ \ \mbox{and} \ \ \ \ \ \ \  B^{di}f\colon B^{di}M\longrightarrow B^{di}M'\]
are $\mathbb{Z}/2$-equivalences of spaces. 
\end{lemma}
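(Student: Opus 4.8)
The plan is to reduce the statement to a levelwise statement about the simplicial (semi-simplicial) spaces computing $B^\sigma$ and $B^{di}$, and then to combine a levelwise equivalence with the fact that thick geometric realization preserves levelwise equivalences and commutes with fixed points. First I would recall that $B^\sigma M = \|\sd_e N^\sigma M\|$ and $B^{di} M = \|\sd_e N^{di}M\|$, and that by Proposition~\ref{fixedB11} and Proposition~\ref{fixeddihedral} the fixed-point spaces are the thick realizations of the nerves of $\sym M$ (resp. $\sym^{cy} M$), which are the two-sided Bar constructions $N(M; M^{\mathbb Z/2})$ and $N(M^{\mathbb Z/2}; M; M^{\mathbb Z/2})$. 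So it suffices to check that $f$ induces a levelwise weak equivalence on each of the four relevant semi-simplicial spaces: $N^\sigma M \to N^\sigma M'$, $N^{di}M\to N^{di}M'$ and their fixed-point analogues $N(M;M^{\mathbb Z/2})\to N(M';M'^{\mathbb Z/2})$, $N(M^{\mathbb Z/2};M;M^{\mathbb Z/2})\to N(M'^{\mathbb Z/2};M';M'^{\mathbb Z/2})$.

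The key point is that in each level all four of these are finite products of copies of $M$ and $M^{\mathbb Z/2}$: indeed $N_n^\sigma M = M^{\times n}$, $N_n^{di}M = M^{\times n+1}$, while $N_n(M;M^{\mathbb Z/2}) = M^{\times n}\times M^{\mathbb Z/2}$ and $N_n(M^{\mathbb Z/2};M;M^{\mathbb Z/2}) = M^{\mathbb Z/2}\times M^{\times n}\times M^{\mathbb Z/2}$. Since $f\colon M\to M'$ is a weak equivalence of $\mathbb Z/2$-spaces, both $f\colon M\to M'$ and $f^{\mathbb Z/2}\colon M^{\mathbb Z/2}\to M'^{\mathbb Z/2}$ are weak equivalences of spaces, and a finite product of weak equivalences of spaces is again a weak equivalence (all spaces being compactly generated weak Hausdorff, so finite products are well-behaved). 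Hence $f$ induces a levelwise weak equivalence on each of the four semi-simplicial spaces.

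Now I would invoke the standard fact that the thick geometric realization $\|-\|$ of semi-simplicial spaces takes levelwise weak equivalences to weak equivalences — this holds with no cofibrancy/properness hypotheses because the thick realization is built only from the face maps and its skeletal filtration has cofibrations $\|\sk_{n-1}\|\hookrightarrow\|\sk_n\|$ given by free cell attachments along $\partial\Delta^n\times X_n \hookrightarrow \Delta^n\times X_n$, so one induces on skeleta using the gluing lemma and passes to the colimit. Applying this to $N^\sigma M\to N^\sigma M'$ (resp. $N^{di}M\to N^{di}M'$) after edgewise subdivision — note $\sd_e$ preserves levelwise equivalences, being reindexing — gives that $B^\sigma f$ and $B^{di}f$ are weak equivalences of underlying spaces. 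For the fixed points, I use that thick realization of semi-simplicial spaces commutes with $\mathbb Z/2$-fixed points (proved in Proposition~\ref{fixedB11} by induction over the skeletal filtration, using that fixed points commute with pushouts along closed inclusions and with sequential colimits of closed inclusions), so $(B^\sigma f)^{\mathbb Z/2} = \| (\sd_e N^\sigma f)^{\mathbb Z/2}\| = \| N(\sym f)\|$ and $(B^{di}f)^{\mathbb Z/2} = \|N(\sym^{cy}f)\|$; these are the thick realizations of levelwise equivalences of the two-sided Bar constructions, hence weak equivalences. Therefore $B^\sigma f$ and $B^{di}f$ are equivalences of $\mathbb Z/2$-spaces in the sense defined just before the lemma.

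The main obstacle — really the only subtlety — is that we are working with semi-simplicial (no degeneracies) spaces and thick realization, so one cannot simply quote the usual proper-simplicial homotopy invariance. The resolution is exactly that thick realization is \emph{unconditionally} homotopical on levelwise equivalences: the price one pays for discarding degeneracies is compensated by the fact that the skeletal filtration of $\|X\|$ is always a filtration by (closed) cofibrations, so no properness assumption on $X$ is needed. This is also what makes the fixed-point computation go through, since all the maps in sight are closed inclusions and fixed points are left exact enough to commute with the relevant colimits.
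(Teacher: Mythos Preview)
Your proposal is correct and follows essentially the same approach as the paper's proof: reduce to levelwise equivalences on the subdivided nerves and their fixed-point Bar constructions (via Propositions~\ref{fixedB11} and~\ref{fixeddihedral}), then invoke that thick realization preserves levelwise equivalences and commutes with fixed points. The paper's argument is terser but structurally identical; your added remarks on why thick realization is unconditionally homotopical are accurate and fill in details the paper leaves implicit.
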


\begin{proof}
Non-equivariantly $Bf$ is an equivalence, since realizations of semi-simplicial spaces preserve levelwise equivalences.
Since realizations commute with fixed points it remains to show that $(\sd_e NM)^{\mathbb{Z}/2}\to (\sd_e NM')^{\mathbb{Z}/2}$ is a levelwise equivalence. By Proposition \ref{fixedB11} this is the map 
\[f^{\times n}\times f^{\mathbb{Z}/2}\colon M^{\times n}\times M^{\mathbb{Z}/2}\longrightarrow (M')^{\times n}\times (M')^{\mathbb{Z}/2}\]
which is an equivalence by assumption. A similar argument can be applied to $ B^{di}f$.
\end{proof}

We further analyze the functors $B^{\sigma}$ and $B^{di}$.
The following property will be crucial in the definition of the $L$-theoretic assembly map of \S\ref{secass}.
\begin{lemma}\label{funnymap}
Let $\pi$ be a well-pointed topological group with anti-involution $w=(-)^{-1}$ given by inversion. There is an equivariant map $\lambda\colon B\pi\to B^{\sigma}\pi$,
where $B\pi$ has the trivial involution, which is non-equivariantly homotopic to the identity. On fixed points, the composite
\[
B\pi\stackrel{\lambda}{\longrightarrow} (B^{\sigma}\pi)^{\mathbb{Z}/2}\stackrel{\iota}{\longrightarrow}  B\pi
\]
with the fixed points inclusion $\iota \colon (B^{\sigma}\pi)^{\mathbb{Z}/2}\to B\pi$ is homotopic to the identity. This exhibits $B\pi$ as a retract of $(B^{\sigma}\pi)^{\mathbb{Z}/2}$. If moreover $\pi$ is discrete, there is a further splitting
\[
(B^{\sigma}\pi)^{\Z/2}\simeq \coprod_{\{[g]\ |\ g^2=1\}}BZ_\pi\langle g\rangle,\]
where the coproduct runs through the conjugacy classes of the elements of $\pi$ of order $2$, and $Z_\pi\langle g\rangle$ is the centralizer of $g$ in $\pi$. Then $\lambda$ corresponds to the inclusion of the summand $g=1$.
\end{lemma}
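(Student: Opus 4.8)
The plan is to build $\lambda$ from the equivariant model of $B^{\sigma}\pi$ recorded in Example~\ref{classifyingbundles}: there the edgewise subdivision $\sd_e N^{\sigma}\pi$ is identified equivariantly with the nerve of $Cat(\mathcal{E}\mathbb{Z}/2,\pi)$, so that — using well-pointedness of $\pi$ together with Remark~\ref{thinvsthick} to pass between thick and thin realizations — there is an equivariant equivalence $B^{\sigma}\pi\simeq\Map(E\mathbb{Z}/2,B\pi)$, where $\mathbb{Z}/2$ acts through the free contractible space $E\mathbb{Z}/2$ and trivially on $B\pi$. I would then define $\lambda$ as the map induced by the equivariant projection $E\mathbb{Z}/2\to\ast$,
\[
\lambda\colon B\pi=\Map(\ast,B\pi)\longrightarrow\Map(E\mathbb{Z}/2,B\pi)\simeq B^{\sigma}\pi,
\]
which is equivariant since $E\mathbb{Z}/2\to\ast$ is and the source carries the trivial action. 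On fixed points, under the identification of Proposition~\ref{fixedB11}, this is the map $B\pi=B\underline{\pi}\to B\sym\pi=(B^{\sigma}\pi)^{\mathbb{Z}/2}$ induced by the inclusion of $\underline{\pi}$ (the one-object groupoid on $\pi$) into $\sym\pi$ as the full subgroupoid on the object $1\in\pi^{\mathbb{Z}/2}$, using that $\mathrm{Aut}_{\sym\pi}(1)=Z_\pi\langle 1\rangle=\pi$; concretely it is realized by a map of semi-simplicial sets $N_\bullet\pi\to\sd_e N^{\sigma}\pi$ inserting the constant string at the fixed object $1$, which lands in the $\mathbb{Z}/2$-fixed simplices and hence realizes to an honest equivariant map.

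For the non-equivariant statement I would observe that, forgetting the $\mathbb{Z}/2$-action, $E\mathbb{Z}/2\to\ast$ is a homotopy equivalence, so $\lambda$ is one; choosing a point $\ast\to E\mathbb{Z}/2$ produces a retraction $r\colon\Map(E\mathbb{Z}/2,B\pi)\to\Map(\ast,B\pi)=B\pi$ with $r\circ\lambda=\mathrm{id}_{B\pi}$, since $\ast\to E\mathbb{Z}/2\to\ast$ is the identity. This $r$ is exactly the underlying identification $B^{\sigma}\pi\simeq B\pi$ of Remark~\ref{thinvsthick} (the map $\gamma$ followed by the thick/thin comparison), so $\lambda$ is non-equivariantly homotopic to the identity.

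For the fixed points, since $E\mathbb{Z}/2$ is free and contractible and $B\pi$ has the trivial action, passing to $\mathbb{Z}/2$-fixed points gives $(B^{\sigma}\pi)^{\mathbb{Z}/2}=\Map_{\mathbb{Z}/2}(E\mathbb{Z}/2,B\pi)\simeq\Map(B\mathbb{Z}/2,B\pi)$. Under this identification the map $\iota$ — the fixed-points inclusion into $B^{\sigma}\pi$ followed by the underlying identification of the previous paragraph — is restriction along the basepoint $\ast\to B\mathbb{Z}/2$, while $\lambda$ restricted to fixed points is restriction along $B\mathbb{Z}/2\to\ast$; hence $\iota\circ\lambda=\mathrm{id}_{B\pi}$, exhibiting $B\pi$ as a retract of $(B^{\sigma}\pi)^{\mathbb{Z}/2}$. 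When $\pi$ is discrete, I would invoke the classical description of mapping spaces between Eilenberg--MacLane spaces $BG\to BH$ with $G$ finite and $H$ discrete: $\Map(B\mathbb{Z}/2,B\pi)$ is the classifying space of the groupoid of homomorphisms $\mathbb{Z}/2\to\pi$ with conjugations as morphisms, which decomposes as a disjoint union over conjugacy classes of homomorphisms $\rho$ — equivalently over conjugacy classes $[g]$ of elements with $g^2=1$ — with $\rho$-component $B(\mathrm{Aut}\,\rho)=BZ_\pi\langle g\rangle$. The constant maps $\Map(\ast,B\pi)$ correspond to the trivial homomorphism, i.e.\ $g=1$, whose component is $BZ_\pi\langle 1\rangle=B\pi$; so under this splitting $\lambda$ is the inclusion of the $g=1$ summand.

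I expect the main obstacle to be keeping the several identifications mutually compatible: one must check that the equivariant equivalence $\sd_e N^{\sigma}\pi\simeq\Map(E\mathbb{Z}/2,B\pi)$, together with the equivalences it induces on underlying spaces (via $\gamma$) and on $\mathbb{Z}/2$-fixed points (via Proposition~\ref{fixedB11}), are coherent enough that the single equivariant map $\lambda$ simultaneously realizes all three pictures above, and that the semi-simplicial model of $\lambda$ inserting the constant string at $1$ is genuinely semi-simplicial — routine but requiring care with the index reversals produced by the edgewise subdivision. Everything specific to the discrete case is concentrated in the identification $\Map(B\mathbb{Z}/2,B\pi)\simeq\coprod_{[g]}BZ_\pi\langle g\rangle$, which is precisely where finiteness of $\mathbb{Z}/2$ and discreteness of $\pi$ are used.
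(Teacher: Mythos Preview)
Your argument is correct, but it takes a different route from the paper's proof. The paper works entirely inside the simplicial model: it writes down the explicit degreewise formula
\[
\lambda(g_1,\dots,g_p)=(g_1,\dots,g_p,1,g_p^{-1},\dots,g_1^{-1})\in N^{\sigma}_{2p+1}\pi=(\sd_e N^{\sigma}\pi)_p,
\]
checks this is simplicial and equivariant, and produces the required homotopies by hand using the barycentric-coordinate homotopy $\tfrac{1}{2}(t,t)\simeq(t,0)$ on the realization. For the discrete case it reads off the decomposition directly from the category $\sym\pi$ of Proposition~\ref{fixedB11}: its objects are the elements of order two, its morphisms are conjugations, so its components are indexed by conjugacy classes $[g]$ with $g^2=1$ and each has automorphism group $Z_\pi\langle g\rangle$.

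Your approach instead transports everything through the mapping-space model $B^{\sigma}\pi\simeq\Map(E\mathbb{Z}/2,B\pi)$ of Example~\ref{classifyingbundles}, defining $\lambda$ by functoriality along $E\mathbb{Z}/2\to\ast$ and getting the homotopies for free from contractibility of $E\mathbb{Z}/2$; for discrete $\pi$ you appeal to the standard description of $\Map(B\mathbb{Z}/2,B\pi)$ as the classifying space of the groupoid of homomorphisms $\mathbb{Z}/2\to\pi$. This is more conceptual and makes the source of the splitting transparent, whereas the paper's argument is more self-contained and produces a point-set map directly in the model $B^{\sigma}\pi=\|\sd_e N^{\sigma}\pi\|$ without passing through an auxiliary equivalence. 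One small caveat: Example~\ref{classifyingbundles} is stated only for discrete $\pi$, so for general well-pointed topological $\pi$ you would need to justify the mapping-space equivalence separately (which is standard) or, as you indicate, fall back on the simplicial inclusion $\underline{\pi}\hookrightarrow\sym\pi$ at the object $1$ --- and that simplicial map, once written out under the identification of Proposition~\ref{fixedB11}, is exactly the paper's formula above.
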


\begin{proof}
By Remark \ref{thinvsthick}  we may work with the thin realization of the nerve of $\pi$.
We define a map $\lambda \colon N_p\pi\to (\sd_e N^{\sigma}\pi)_p=N^{\sigma}_{2p+1}\pi$ degreewise by
\[
\lambda(g_1,\dots,g_p)=(g_1,\dots,g_p,1,g^{-1}_p,\dots,g^{-1}_1).
\]
This map is clearly simplicial and equivariant, and it induces an equivariant map $\lambda \colon |N\pi|\to |\sd_e N^{\sigma}\pi|\cong |N^\sigma\pi|$ on realizations. This map sends
\[
[(g_1,\dots,g_p);(t_0,\dots,t_p)]\longmapsto[(g_1,\dots,g_p,1,g_{p}^{-1},\dots,g_{1}^{-1});\tfrac{1}{2}(t_0,\dots,t_p,t_p,\dots,t_0)].
\]
There is a homotopy $\frac{1}{2}(t,t)\simeq (t,0)$ that keeps the sum of the two components constant. This induces a homotopy between $\lambda$ and
\[
[(g_1,\dots,g_p,1,g_{p}^{-1},\dots,g_{1}^{-1});(t_0,\dots,t_p,0,\dots,0)]=[(g_1,\dots,g_p);(t_0,\dots,t_p)]=\id.
\]
The same homotopy defines a homotopy between $\iota\circ\lambda$ and the identity.

Now let us assume that $\pi$ is discrete.  The fixed points space $(B^{\sigma}\pi)^{\Z/2}$ is the classifying space of the category $\sym \pi$ of \ref{fixedB11}. The objects of this category are the elements of $\pi$ of order two. A morphism $g\to g'$ is an element $h$ of $\pi$ such that $g=h^{-1}g'h$. Each component of $\sym \pi$ is then represented by the conjugacy class of an element $g$ of order two, and the automorphism group of $g$ is precisely $Z_\pi\langle g\rangle$.

\end{proof}

In the same way as the cyclic nerve of a group-like monoid $G$ is a model for the free loop-space, the dihedral nerve is a $\mathbb{Z}/2$-equivariant model for the free loop space $\Lambda^\sigma B^\sigma G$, where $\Lambda^{\sigma}=\Map(S^{\sigma},-)$ is the free loop space of the sign-representation sphere $S^\sigma$. Establishing this equivalence becomes delicate when $G$ does not have a strict unit. Classically the map $B^{cy}G\to \Lambda BG$ is constructed from the $S^1$-action on $B^{cy}G$ induced by the cyclic structure, but this $S^1$-action is not well-defined on the thick realization. Let $M$ be a non-unital topological monoid with anti-involution, and let $M_+$ denote $M$ with a formally adjoined unit which is fixed by the anti-involution. 
Let us consider the diagram
\[
\xymatrix{
B^{di}M=\|\sd_e N^{di}M\|\ar[r]
&
 \|\sd_e N^{di}(M_+)\|\ar[r]^-\simeq
 &
 |\sd_e N^{di}(M_+)|\ar[r]^{\cong}&|N^{di}(M_+)|\ar[d]
\\
\hspace{2cm}\ \Lambda^\sigma B^{\sigma}M\ar@{=}[r]&\Lambda^\sigma \|\sd_e N^{\sigma}M\|\ar[r]&\Lambda^\sigma \|N^{\sigma}M\|& \Lambda^\sigma |N^{\sigma}(M_+)|\ar[l]_-{\cong}.
}
\]
The first map is induced by the inclusion $M\to M_+$. The second map is the canonical map to the thin geometric realization, which is an equivalence since the inclusion of the disjoint unit is a cofibration. The third map is the canonical homeomorphism $\gamma$ from Remark \ref{thinvsthick}. The vertical map is adjoint to the composite $S^{\sigma}\times |N^{di}(M_+)|\to |N^{di}(M_+)|\to  |N^{\sigma}(M_+)|$ of the circle action induced by the cyclic structure and the canonical projection. The next map is the isomorphism between $N(M_+)$ and the free simplicial space $E(NM)$ on the semi-simplicial space $NM$, followed by the isomorphism $|E(NM)|\cong \|NM\|$ (see Lemma \cite[1.8]{ERW}). The last map is again the map $\gamma$ from Remark \ref{thinvsthick}.


\begin{definition}\label{defq1}
We say that a non-unital topological monoid with anti-involution $M$ is quasi-unital if:
\begin{enumerate}[i)]
\item There is a unital well-pointed topological monoid $A$ and a map of non-unital monoids $f\colon M\xrightarrow{\sim} A$ which is an equivalence on underlying spaces,
\item There is a right $A$-space $B$ and a map of $M$-spaces $\phi\colon M^{\mathbb{Z}/2}\xrightarrow{\sim} B$ which is an equivalence on underlying spaces, where $M$ acts on $B$ via $f$,
\item There is a point $e\in B$ such that $e\cdot f(m)=\phi(w(m)m)$ for every $m\in M$, and $B$ is well-pointed at $e$.
\end{enumerate}
We say that $M$ is group-like if $\pi_0M$ is a group.
\end{definition}

When $M$ is unital, one can of course set $f$ and $\phi$ to be the identity map. The element $e$ plays the role of the unit as an element in the fixed points space $M^{\mathbb{Z}/2}$.

\begin{lemma}\label{lemmathicksub}
Let $M$ be a non-unital topological monoid with anti-involution which is quasi-unital. 
Then the map
\[\gamma\colon B^\sigma M= \|\sd_e N^{\sigma}M\| \stackrel{\simeq}{\longrightarrow}\|N^{\sigma}M\|\]
is a $\mathbb{Z}/2$-equivariant weak equivalence.
\end{lemma}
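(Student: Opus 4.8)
The plan is to check that $\gamma$ is a non-equivariant equivalence and an equivalence on $\mathbb{Z}/2$-fixed points separately, using the quasi-unitality hypothesis to supply, after a levelwise equivariant equivalence, a model that extends to a proper real simplicial space so that the argument of Remark \ref{thinvsthick} applies. Concretely, I would first handle the underlying non-equivariant statement: by Definition \ref{defq1}(i) there is a map of non-unital monoids $f\colon M\xrightarrow{\sim}A$ with $A$ unital and well-pointed, so $N^{\sigma}M\to N^{\sigma}A$ is a levelwise equivalence onto the semi-simplicial space underlying the genuine simplicial space $NA$, which is proper since $A$ is well-pointed. The diagram in Remark \ref{thinvsthick} then gives that $\gamma\colon\|\sd_e N^{\sigma}M\|\to\|N^{\sigma}M\|$ is an equivalence on underlying spaces, because $\gamma$ becomes the homeomorphism $|\sd_e NA|\cong|NA|$ once degeneracies are present.

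The heart of the argument is the fixed points. By Proposition \ref{fixedB11}, $(B^{\sigma}M)^{\mathbb{Z}/2}=\|(\sd_e N^{\sigma}M)^{\mathbb{Z}/2}\|$ is the realization of the two-sided Bar construction $N(M;M^{\mathbb{Z}/2})$ of the right action $n\cdot l=w(l)nl$. I would identify the fixed points of $\|N^{\sigma}M\|$ under the non-subdivided action of Remark \ref{thinvsthick} as the realization of a different semi-simplicial space, and then produce an explicit equivariant simplicial-level comparison whose fixed-point map is, levelwise, a map between these two Bar-type constructions. The role of the data in Definition \ref{defq1}(ii)--(iii) is exactly to rigidify this: the right $A$-space $B$ with $M$-equivalence $\phi\colon M^{\mathbb{Z}/2}\xrightarrow{\sim}B$ and basepoint $e\in B$ with $e\cdot f(m)=\phi(w(m)m)$ lets us replace $N(M;M^{\mathbb{Z}/2})$ up to levelwise equivalence by $N(A;B)$, which, because $A$ is unital and $e$ is a genuine point of $B$ compatible with the action, extends to a proper simplicial space with a well-defined $S^1$- (hence in particular the relevant) structure and degeneracies. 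With degeneracies present, the subdivision map $\gamma$ descends to a homeomorphism of thin realizations on fixed points as well, and Remark \ref{thinvsthick}'s commuting square upgrades this to the statement that $\gamma^{\mathbb{Z}/2}$ is an equivalence.

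Having both the underlying map and the fixed-point map shown to be weak equivalences, $\gamma$ is a $\mathbb{Z}/2$-equivariant weak equivalence in the sense fixed in the paper (isomorphisms on homotopy groups of underlying space and of fixed points), which is exactly the conclusion.

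\textbf{Main obstacle.} The delicate point is the fixed-point comparison: one must be careful that the basepoint $e$ supplied by quasi-unitality is genuinely compatible with \emph{both} the left and right actions appearing in the two-sided Bar construction $N(M^{\mathbb{Z}/2};M;M^{\mathbb{Z}/2})$-type description — note condition (iii) only directly asserts $e\cdot f(m)=\phi(w(m)m)$ on one side — so extending $N(M;M^{\mathbb{Z}/2})$ to a \emph{proper} simplicial space with the degeneracies needed for $\gamma$ to become a homeomorphism requires matching $e$ against the correct action and verifying well-pointedness at $e$ is preserved under the replacement $\phi$. This is precisely where Definition \ref{defq1}(iii) (and the well-pointedness clauses in (i)--(iii)) must be used carefully rather than formally; everything else is bookkeeping with the square from Remark \ref{thinvsthick}.
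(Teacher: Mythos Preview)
Your non-equivariant step is fine and matches the paper. The fixed-points argument, however, has a genuine gap.

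You propose to apply the diagram of Remark~\ref{thinvsthick} \emph{equivariantly}: replace $N^{\sigma}M$ by a proper real simplicial space and conclude that $\gamma$ descends to a homeomorphism on fixed points. But the quasi-unitality data do not furnish such an equivariant replacement. Definition~\ref{defq1} gives a unital monoid $A$ and a right $A$-space $B$ with $M^{\mathbb{Z}/2}\simeq B$, but $A$ carries no anti-involution, so there is no real simplicial space $N^{\sigma}A$ to map to. What you have is a replacement for the \emph{source} $\|\sd_e N^{\sigma}M\|^{\mathbb{Z}/2}\cong\|N(M;M^{\mathbb{Z}/2})\|\simeq\|N(A;B)\|$, but no description of the \emph{target} $\|N^{\sigma}M\|^{\mathbb{Z}/2}$, and the map $\gamma^{\mathbb{Z}/2}$ is not itself a subdivision map of any one semi-simplicial space. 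Your sentence ``the subdivision map $\gamma$ descends to a homeomorphism of thin realizations on fixed points'' therefore has no content: there is nothing being subdivided on the fixed-points side. (The aside about an $S^1$-structure and the two-sided Bar construction $N(M^{\mathbb{Z}/2};M;M^{\mathbb{Z}/2})$ belong to the dihedral nerve of Proposition~\ref{fixeddihedral}, not here.)

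The paper's idea for identifying the target is the step you are missing: pass through $M_+$, which \emph{is} a unital monoid with anti-involution. One factors $\gamma$ as
\[
\|\sd_e N^{\sigma}M\|\cong|E(\sd_e N^{\sigma}M)|\longrightarrow|\sd_e N^{\sigma}(M_+)|\xrightarrow{\ \gamma\ \cong\ }|N^{\sigma}(M_+)|\cong\|N^{\sigma}M\|,
\]
so that on fixed points the map becomes $\|N(\sym M)\|\to\|N(\sym_+M)\|$, i.e.\ the map of one-sided Bar constructions $N(M;M^{\mathbb{Z}/2})\to N(M;M^{\mathbb{Z}/2}_+)$ induced by adjoining a disjoint point $+$ with $+\cdot l=w(l)l$. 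Only \emph{now} do the data $(A,B,e)$ enter: they let you replace this by $N(A;B)\to N(A;B_+)$ (condition~(iii) is exactly what makes the last face map match), and since $A$ is unital and well-pointed these are honest simplicial spaces. The equivalence is then exhibited by an explicit simplicial retraction $B_+\to B$ sending $+\mapsto e$, with a simplicial homotopy via the morphism $e\to +$ in $\sym(B_+)$. So condition~(iii) is used not to build degeneracies for a subdivision argument, but to build the retraction onto $N(A;B)$.
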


\begin{proof}
Non-equivariantly, $\gamma$ is an equivalence by Remark \ref{thinvsthick}, since $NM\xrightarrow{\sim}NA$ is an equivalence and $NA$ is a proper simplicial space. 
In order to show that $\gamma$ is an equivalence on fixed points, we factor it as
\[
\|\sd_e N^\sigma M\|\cong |E(\sd_e N^\sigma M)|\longrightarrow  |\sd_e(N^\sigma M_+)|\stackrel{\gamma}{\cong}  |N^\sigma M_+|\cong \|N^\sigma M\|
\]
where the arrow is induced by the inclusion $M\to M_+$. We claim that this map is an equivalence on fixed points. By Proposition \ref{fixedB11} this is the geometric realization of the map of simplicial spaces
\[
|E(\sd N^\sigma M)|^{\mathbb{Z}/2}\cong |N(\sym M)_+|\longrightarrow | N\sym (M_+) |\cong |\sd_e(N^\sigma M_+)|^{\mathbb{Z}/2}
\]
where $(\sym M)_+$ is the category $\sym M$ with freely adjoined identities. We observe that the category $\sym (M_+)$ also has freely added identities. Thus by denoting $\sym_+M$ the category  $\sym (M_+)$ with the identities removed, we see that  $\sym_+M$ is a well-defined category and that $\sym (M_+)=(\sym_+M)_+$. Moreover $\sym M$ is the full subcategory of $\sym_+M$ on the objects in $M^{\mathbb{Z}/2}\subset M^{\mathbb{Z}/2}\amalg 1= Ob \sym_+M$. Therefore we need to show that the map
\[
\|N(\sym M)\|\cong |N(\sym M)_+|\longrightarrow | N\sym (M_+) |\cong \|N(\sym_+ M)\|
\] 
induced by the inclusion of non-unital topological categories $\iota \colon \sym M\to \sym_+M$ is a weak equivalence. The nerve of $\sym_+ M$ is the Bar construction of the right action of $M$ on the fixed points space with a disjoint basepoint $M^{\mathbb{Z}/2}_+$, where $M$ acts on $M^{\mathbb{Z}/2}$ as usual by $n\cdot l=w(l)nl$, and on the added basepoint by $+\cdot l=w(l)l$. 
The nerve of the map $\iota$ identifies under the isomorphism of Proposition \ref{fixedB11} with the canonical map
\[
N(M;M^{\mathbb{Z}/2})\longrightarrow N(M;M^{\mathbb{Z}/2}_+)
\]
induced by the $M$-equivariant inclusion $M^{\mathbb{Z}/2}\to M^{\mathbb{Z}/2}_+$.
Since $M$ is quasi-unital, the realization of this map is weakly equivalent to the thick realization of the map $\iota\colon N(A;B)\to N(A;B_+)$ induced by the inclusion $B\to B_+$, where $A$ acts on $+$ by $+\cdot a=e\cdot a$. The last condition of Definition \ref{defq1} guarantees that the map $(f,\phi)\colon N(M;M^{\mathbb{Z}/2}_+)\to N(A;B_+)$ is compatible with the last face map. Thus it is sufficient to show that $\iota\colon N(A;B)\to N(A;B_+)$ is an equivalence. Since $A$ is unital and well-pointed these semi-simplicial spaces admit degeneracies, and the thick realization of this map is weakly equivalent to its thin realization. Therefore we can exhibit a simplicial retraction $r\colon N(A;B_+)\to N(A;B)$ for $\iota$, and a simplicial homotopy between $\iota\circ r$ and the identity. The map $r$ is induced by the map of $A$-spaces $B_+\to B$ which is the identity on $B$ and that sends $+$ to $e$. The homotopy is induced by the morphism $(e,e)\colon e\to +$ in the topological category $\sym (B_+)$ whose nerve is $N(A;B_+)$. 
\end{proof}

\begin{lemma}\label{lemmafreeloop}
Let $M$ be a non-unital topological monoid with anti-involution which is quasi-unital and group-like. 
Then the map
\[B^{di}M=\|\sd_e N^{di}M\|\stackrel{\simeq}{\longrightarrow}\Lambda^\sigma \|N^{\sigma}M\|\]
is a $\mathbb{Z}/2$-equivariant weak equivalence.
\end{lemma}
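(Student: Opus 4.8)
The plan is to follow the classical proof that the cyclic bar construction of a group-like monoid models the free loop space, now carried out $\mathbb{Z}/2$-equivariantly: exhibit the map of the lemma, which I denote $\Phi\colon B^{di}M\to\Lambda^\sigma\|N^\sigma M\|$, as a map of homotopy fibration sequences over $\|N^\sigma M\|$ and compare the fibres. Since equivalences of $\mathbb{Z}/2$-spaces are detected on underlying spaces and on $\mathbb{Z}/2$-fixed points, I would check $\Phi$ on these two pieces separately.

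First I would set up the structure over $\|N^\sigma M\|$. The semi-simplicial projection $N^{di}M\to N^\sigma M$ deleting the zeroth coordinate is equivariant and natural, and induces an equivariant map $\epsilon\colon B^{di}M\to B^\sigma M$; composing with the $\mathbb{Z}/2$-equivalence $\gamma\colon B^\sigma M\xrightarrow{\simeq}\|N^\sigma M\|$ of Lemma~\ref{lemmathicksub} (this is where quasi-unitality enters) gives $\bar\epsilon\colon B^{di}M\to\|N^\sigma M\|$. On the target, evaluation at the fixed point $1\in S^\sigma$ is an equivariant fibration $\operatorname{ev}\colon\Lambda^\sigma\|N^\sigma M\|\to\|N^\sigma M\|$ whose equivariant fibre is the based $\sigma$-loop space $\Omega^\sigma\|N^\sigma M\|$. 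Unwinding the definition of $v$ — it is adjoint to the dihedral circle action followed by the projection $N^{di}(M_+)\to N^\sigma(M_+)$ — together with the naturality of $\gamma$ and of the projection and the identification $N^\sigma(M_+)=E(N^\sigma M)$ used in the zig-zag, one checks $\operatorname{ev}\circ\Phi\simeq\bar\epsilon$. Thus $\Phi$ is a map between the homotopy fibration sequences
\[
\operatorname{hofib}(\bar\epsilon)\to B^{di}M\xrightarrow{\ \bar\epsilon\ }\|N^\sigma M\|
\qquad\text{and}\qquad
\Omega^\sigma\|N^\sigma M\|\to\Lambda^\sigma\|N^\sigma M\|\xrightarrow{\ \operatorname{ev}\ }\|N^\sigma M\|
\]
which is the identity on the base, so by the five lemma applied to the long exact sequences of homotopy groups — of underlying spaces, and of fixed points — it suffices to show the induced map of fibres is a $\mathbb{Z}/2$-equivalence.

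Next I would identify the fibres. On underlying spaces $\operatorname{hofib}(\bar\epsilon)$ is the fibre of $B^{cy}M\to\|NM\|$; replacing $M$ by the equivalent unital well-pointed monoid $A$ of Definition~\ref{defq1} this is the standard quasi-fibration $B^{cy}A\to BA$ with fibre $A$, and the induced map into $\Omega^\sigma\|N^\sigma M\|$ underlying, which is $\Omega BA$, is the comparison $A\to\Omega BA$; this is an equivalence precisely because $M$, hence $A$, is group-like, which is where that hypothesis is used. On fixed points, Propositions~\ref{fixeddihedral} and~\ref{fixedB11} (together with the replacement $(M,M^{\mathbb{Z}/2})\rightsquigarrow(A,B)$ afforded by Definition~\ref{defq1}) identify $\bar\epsilon$ with the map of two-sided bar constructions $\|N(M^{\mathbb{Z}/2};M;M^{\mathbb{Z}/2})\|\to\|N(M;M^{\mathbb{Z}/2})\|$ collapsing one of the two copies of $M^{\mathbb{Z}/2}$, a quasi-fibration with fibre $M^{\mathbb{Z}/2}$, whereas $(\Omega^\sigma\|N^\sigma M\|)^{\mathbb{Z}/2}=\operatorname{hofib}\big((\|N^\sigma M\|)^{\mathbb{Z}/2}\to\|N^\sigma M\|\big)$ becomes, under $\gamma$, the homotopy fibre of the Borel-type quasi-fibration $(M^{\mathbb{Z}/2})_{hM}=\|N(M;M^{\mathbb{Z}/2})\|\to BM$ for the action $n\cdot l=w(l)nl$, hence also $\simeq M^{\mathbb{Z}/2}$; in both cases the comparison map is the canonical fibre inclusion, so it is an equivalence, and the proof concludes.

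The hard part will be the fixed-point bookkeeping in the first step: one must track the edgewise subdivision through Proposition~\ref{fixeddihedral} precisely enough to see that $\epsilon$ restricts on fixed points to the collapse of an $M^{\mathbb{Z}/2}$-end of the two-sided bar construction, and — because $M$ has no strict unit — one must verify that the detour through $M_+$ in the construction of $v$, and hence of $\Phi$, really is compatible with $\epsilon$ and $\operatorname{ev}$; this compatibility uses the full strength of Definition~\ref{defq1}, in particular the relation $e\cdot f(m)=\phi(w(m)m)$, in the same spirit as the proof of Lemma~\ref{lemmathicksub}. Note that the remaining maps in the zig-zag (through $M_+$) only need to be analysed as maps, not shown to be equivalences; the non-equivalence phenomenon of Remark~\ref{thinvsthick} does not obstruct the argument because it is the total composite $\Phi$, not each of its factors, that realizes the comparison map.
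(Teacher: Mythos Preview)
Your overall architecture---view the comparison map as a map of homotopy fibre sequences over $\|N^\sigma M\|$ and compare fibres---is exactly the paper's strategy, and the commuting square you set up (with $\operatorname{ev}\circ\Phi\simeq\bar\epsilon$ and $\gamma$ on the base) is the same diagram the paper draws. The technical execution, however, differs in two places.

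First, to show that $M\to B^{di}M\to B^\sigma M$ is an \emph{equivariant} fibre sequence the paper does not split into underlying and fixed-point pieces; it applies Segal's criterion for thick realisations (as in \cite[Thm.~2.12]{ERW}) directly, checking that the relevant squares of simplicial levels are equivariantly homotopy cartesian. The verification reduces to showing that left and right multiplication on $M$ and the conjugation action $w(m)(-)m$ on $M^{\mathbb{Z}/2}$ are weak equivalences---this is where group-likeness and quasi-unitality are actually consumed. Your route via the two-sided bar descriptions of Propositions~\ref{fixedB11} and~\ref{fixeddihedral} implicitly needs the same fact (to know the bar projections are quasi-fibrations), but you do not isolate it.

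Second, for the map on fibres the paper does not identify both sides with $M^{\mathbb{Z}/2}$ separately and then argue the induced map is ``the canonical fibre inclusion''; it invokes the \emph{equivariant group-completion theorem} (of \cite{Kristian}, see also \cite[\S6.2]{Thesis} and \cite{Stiennon}) to conclude directly that $M\to\Omega^\sigma\|N^\sigma M\|$ is a $\mathbb{Z}/2$-equivalence. Your fixed-point argument---matching the fibre of $\|N(M^{\mathbb{Z}/2};M;M^{\mathbb{Z}/2})\|\to\|N(M;M^{\mathbb{Z}/2})\|$ with that of $\|N(M;M^{\mathbb{Z}/2})\|\to BM$---is essentially reproving the fixed-points half of that theorem by hand, and the step where you declare the comparison on fibres to be an equivalence is exactly the content you would need to justify; it is not automatic that two fibres each abstractly equivalent to $M^{\mathbb{Z}/2}$ are joined by an equivalence. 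So your approach is sound in outline but leans on the same hard inputs without naming them; the paper's packaging via Segal's criterion plus the equivariant group-completion theorem is cleaner and avoids the delicate bookkeeping you anticipate in your last paragraph.
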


\begin{proof}
This map is the middle vertical map of a commutative diagram
\[
\xymatrix{
M\ar[r]\ar[d]^\simeq&\|\sd_e N^{di}M\|\ar[r]^p\ar[d]&\|\sd_e N^{\sigma}M\|\ar[d]^{\simeq}_\gamma
\\
\Omega^{\sigma}\|N^{\sigma}M\|\ar[r]&\Lambda^\sigma\|N^{\sigma}M\|\ar[r]_{\ev_0}&\|N^{\sigma}M\|
}
\]
where $\ev_0$ is the evaluation map, which is a fibration, and $p$ projects off the first coordinate in each simplicial level. The left vertical map is an equivalence by an equivariant version of the group-completion theorem of \cite{Kristian} (see also \cite[\S 6.2]{Thesis} and \cite[Thm. 4.0.5]{Stiennon}). 
We claim that when $M$ is group-like the top row is a fiber sequence of $\mathbb{Z}/2$-spaces, and this will end the proof. 

We start by observing that since $M$ is quasi-unital and group-like the maps
\[(-)\cdot m, m\cdot(-)\colon M\longrightarrow M\ \ \ \ \ \ \ \ \mbox{and} \ \ \ \ \ \ \ \ w(m)(-)m\colon M^{\mathbb{Z}/2}\longrightarrow M^{\mathbb{Z}/2}\]
are weak equivalences. Indeed if we let $m^{-1}$ denote an element of $M$ whose class in $\pi_0M$ is an inverse for the class of $m$, we see that the composites of $m\cdot (-)$ with $m^{-1}\cdot (-)\colon M\to M$ are homotopic to multiplication with an element $1\in M$ whose component is the unit of $\pi_0M$. Since $M$ is quasi-unital, there is a square
\[
\xymatrix@C=60pt@R=14pt{
M\ar[r]^-\simeq\ar[d]_-{1\cdot(-)}&A\ar[d]^{1\cdot(-)=\id}
\\
M\ar[r]_-{\simeq}& A
}
\]
that commutes up to homotopy. Thus $1\cdot(-)$ is a weak equivalence, and so is $m\cdot (-)$. A similar argument shows that $(-)\cdot m$ is a weak equivalence. Similarly, the compositions of $w(m)(-)m$ with $w(m^{-1})(-)m^{-1}\colon M^{\mathbb{Z}/2}\to M^{\mathbb{Z}/2}$ are homotopic to $w(1)(-)1$, and this is an equivalence since it compares to the action of the unit of $A$ under the equivalence $M^{\mathbb{Z}/2}\to B$.

In order to show that $M$ is the homotopy fiber of $p\colon \|\sd_e N^{di}M\|\to \|\sd_e N^{\sigma}M\|$ we use a criterion of Segal, as stated in Theorem \cite[2.12]{ERW}. The diagrams of $\mathbb{Z}/2$-spaces
\[
\xymatrix{
M\times M^{\times 2n+1}\ar[d]^{p}\ar[r]^{d_n}&M\times M^{\times 2n-1}\ar[d]^{p}
\\
M^{\times 2n+1}\ar[r]_{d_n}&M^{\times 2n-1}
}
\ \ \ \ \ \ \ \ \ \ \ \ \ \ \ \ \ \ 
\xymatrix{
M\times M^{\times 3}\ar[d]^{p}\ar[r]^{d_0}&M\times M\ar[d]^{p}
\\
M^{\times 3}\ar[r]_{d_0}&M
}
\]
are homotopy cartesian. The left-hand square is a strict pull-back and the map $p$ is a fibration. For the right-hand square, we see that the strict pull-back $P$ is isomorphic to $M\times M^{\times 3}$, but under this isomorphism the map from the top left corner
\[
M\times M^{\times 3}\longrightarrow M\times M^{\times 3}\cong P
\]
sends $(m_0,m_1,m_2,m_3)$ to $(m_3m_0m_1,m_1,m_2,m_3)$. Since left and right multiplications in $M$ are weak equivalences this map is a non-equivariant equivalence. On fixed points, this is isomorphic to the map
\[
M^{\mathbb{Z}/2}\times M\times M^{\mathbb{Z}/2} \longrightarrow M^{\mathbb{Z}/2}\times M\times M^{\mathbb{Z}/2}
\]
that sends $(m_0,m_1,m_2)$ to $(w(m_1)m_0m_1,m_1,m_2)$, and this is an equivalence since by the argument above $w(m_1)(-)m_1\colon M^{\mathbb{Z}/2}\to M^{\mathbb{Z}/2}$ is an equivalence. It follows by \cite[Lemma 2.11, Thm.2.12]{ERW} that the square
\[
\xymatrix{
M\times M=(\sd_e N^{di}M)_0\ar@<-7.5ex>[d]_-{p}\ar[r]&\|\sd_e N^{di}M\|\ar[d]^{p}
\\
M=(\sd_e N^{\sigma}M)_0\ar[r]&\|\sd_e N^{\sigma}M\|
}
\]
is homotopy cartesian. Thus the homotopy fibers  of the vertical maps are equivalent.
\end{proof}

The constructions $N^{\sigma}$ and $N^{di}$ extend to categories with duality. We will use this generalization occasionally, mostly in \S\ref{secdeloopings}.

\begin{remark}\label{B11forcat}\label{Bdyforcat}
We recall that a category with strict duality is a category (possibly without identities) $\mathscr{C}$ equipped with a functor $D\colon \mathscr{C}^{op}\to \mathscr{C}$ such that $D^2=\id$. If $\mathscr{C}$ has one object this is the same as a monoid with anti-involution. There is a levelwise involution on the nerve $N\mathscr{C}$ which is defined by
\[
(c_0\stackrel{f_1}{\to}c_1\to\dots\stackrel{f_n}{\to}c_n)\longmapsto (Dc_n\stackrel{Df_n}{\to}Dc_{n-1}\to\dots\stackrel{Df_1}{\to}c_0).
\]
We define $B^{\sigma}\mathscr{C}:=\|\sd_e N^{\sigma}\mathscr{C}\|$. There is a category $\sym \mathscr{C}$ whose objects are the morphisms $f\colon c\to Dc$ such that $Df=f$, and the morphisms $f\to f'$ are the maps $\gamma \colon c\to c'$ such that $f=D(\gamma)f'\gamma$. The considerations of Proposition \ref{fixedB11} extend to give an isomorphism
\[
\sd_e N^{\sigma}\mathscr{C}\cong N\sym \mathscr{C}.
\]
Similarly, there is a construction of the dihedral nerve of a category with strict duality.
An $n$-simplex of the cyclic nerve $N^{cy}\mathscr{C}$ is a string of composable morphisms
\[
c_n\stackrel{f_0}{\to}c_0\stackrel{f_1}{\to}c_1\stackrel{f_2}{\to} c_2\to \dots\to c_{n-1}\stackrel{f_n}{\to}c_n,
\]
and the levelwise involution of the dihedral nerve sends this string to
\[
Dc_0\stackrel{Df_0}{\to}Dc_n\stackrel{Df_n}{\to}Dc_{n-1}\stackrel{Df_{n-1}}{\to} Dc_{n-2}\to \dots\to Dc_{1}\stackrel{Df_1}{\to}Dc_0.
\]
We define $B^{di}\mathscr{C}:=\|\sd_e N^{di}\mathscr{C}\|$.
\end{remark}

\subsection{Ring spectra with anti-involution}\label{secRealSpectra}
Let $A$ be an orthogonal ring spectrum. An anti-involution on $A$ is a map of ring spectra $w\colon A^{op}\to A$ such that $w\circ w=\id$. Since the map $w$ is a strict involution, it gives the underlying orthogonal spectrum of $A$ the structure of an orthogonal $\mathbb{Z}/2$-spectrum (cf. \cite{Schwede}). We remark that the (genuine) fixed-points spectrum $A^{\mathbb{Z}/2}$ is no longer a ring spectrum. In this section we will explain how such an object generalizes the Hermitian Mackey functors of \S\ref{secMackey}, and we will define a spectral version of the category of Hermitian forms over $A$.

We let $I$ be B\"{o}kstedt's category of finite sets and injective maps. Its objects are the natural numbers (zero included), and a morphism $i\to j$ is an injective map $\{1,\dots,i\}\to\{1,\dots, j\}$. We recall that the spectrum $A$ induces a diagram $\Omega^{\bullet}A\colon I\to \Top$ (see e.g. \cite[\S 2.3]{SchlichtkrullUnits}) by sending an integer $i$ to the $i$-fold loop space $\Omega^iA$. We denote its homotopy colimit by
\[\Omega^{\infty}_IA:=\hocolim_I\Omega^{\bullet}A.\]
On the one hand the multiplication of $A$ endows $\Omega^{\infty}_IA$ with the structure of a topological monoid (see \cite[\S 2.2, 2.3]{SchlichtkrullUnits}). On the other hand, the category $I$ has an involution which is trivial on objects and that sends a morphism $\alpha\colon i\to j$ to
\[\overline{\alpha}(s)=j+1-\alpha(i+1-s).\]
The diagram  $\Omega^\bullet A$ has a $\mathbb{Z}/2$-structure in the sense of \cite[Def.1.1]{Gdiags}, defined by the maps
\[
\xymatrix@C=40pt{
\Omega^iA\ar[r]^{\Omega^{i}w}
&
\Omega^iA\ar[r]^{\Omega^{i}\chi_i}
&
\Omega^iA\ar[r]^{(-)\circ\chi_i}
&
\Omega^iA.
}
\]
Here $\chi_i\in\Sigma_i$ is the permutation the reverses the order $\{1,\dots,i\}$, applied both to the sphere $S^i$ and through the orthogonal spectrum structure of $A$. This induces a $\mathbb{Z}/2$-action on the space $\Omega^{\infty}_IA$. These two structures make $\Omega^{\infty}_IA$ into a topological monoid with anti-involution. In case $A$ is non-unital, $\Omega^{\infty}_IA$ is a non-unital topological monoid with anti-involution. We refer to \cite{THRmodels} for the details.

\begin{remark}\label{infiniteloopG}
Throughout the paper, we will make extensive use of the fact that, as a $\mathbb{Z}/2$-space, $\Omega^{\infty}_IA$ is equivalent to the genuine equivariant infinite loop space of $A$. There is a comparison map
\[
\iota_\ast\colon \hocolim_{n\in\mathbb{N}}\Omega^{n\rho+1}A_{n\rho+1}\longrightarrow\Omega^{\infty}_IA,
\]
where $\rho$ is the regular representation of $\mathbb{Z}/2$ and $A_V=Iso(\mathbb{R}^d,V)_+\wedge_{O(d)}A_d$ is the value of $A$ at a $d$-dimensional $\mathbb{Z}/2$-representation $V$. This map is induced by the inclusion $\iota\colon \mathbb{N}\to I$ that sends $n$ to $2n+1$ and the unique morphism $n\leq m$ to the equivariant injection 
\[\iota(n\leq m)(k):=k+m-n.\]
The failure of $\iota_\ast$ from being a non-equivariant equivalence is measured by the action of the monoid of self-injections of $\mathbb{N}$, and this action is homotopically trivial since $A$ is an orthogonal spectrum (see \cite[\S2.5]{SagSchCpletion}). A similar comparison exists equivariantly, and the comparison map is an equivariant equivalence since $A$ is an orthogonal $\mathbb{Z}/2$-spectrum. The details can be found in \cite{THRmodels}.
 \end{remark}

Since $\Omega^{\infty}_IA$ is a topological monoid with anti-involution, there is an action
\[
\Omega^{\infty}_IA\times (\Omega^{\infty}_IA)^{\mathbb{Z}/2}\longrightarrow (\Omega^{\infty}_IA)^{\mathbb{Z}/2}
\]
defined by $a\cdot b:=abw(a)$ where $w$ denotes the anti-involution of $\Omega^{\infty}_IA$.
We use this action to define a category of Hermitian forms over $A$ in a way analogous to the category of Hermitian forms over a Hermitian Mackey functor of \S\ref{secHermMackey}.

\begin{definition}
We let $M^{\vee}_n(A)$ be the (non-unital) ring spectrum
\[M^{\vee}_n(A)=\bigvee_{n\times n} A,\]
where the multiplication is defined by the maps $M^{\vee}_n(A_i)\wedge M^{\vee}_n(A_j)\to M^{\vee}_n(A_{i+j})$ that send $((k,l),a)\wedge ((k',l'),a')$, where $(k,l),(k',l')\in n\times n$ indicate the wedge component, to $((k,l'),a\cdot a')$ if $l=k'$, and to the basepoint otherwise. 
\end{definition}
The anti-involution $w\colon A^{op}\to A$ induces an anti-involution on $M^{\vee}_n(A)$, defined as the composite
\[M^{\vee}_n(A)^{op}=(\bigvee_{n\times n}A)^{op}
\stackrel{\tau}{\longrightarrow}\bigvee_{n\times n}A^{op}
\stackrel{\vee w}{\longrightarrow}\bigvee_{n\times n}A=M^{\vee}_n(A)\]
where $\tau$ is the automorphism of $n\times n$ which swaps the product factors.
We now let $\widehat{M}^{\vee}_n(A)$ be the non-unital topological monoid with anti-involution
\[\widehat{M}^{\vee}_n(A):=\Omega^{\infty}_IM^{\vee}_n(A).\]
We let $\widehat{GL}^{\vee}_n$ be the subspace of invertible components, defined as the pullback of non-unital topological monoids with anti-involution
\[
\xymatrix{
\widehat{GL}^{\vee}_n(A)\ar[r]\ar[d]&\widehat{M}^{\vee}_n(A)\ar[d]\\
GL_n(\pi_0 A)\ar[r]&M_n(\pi_0 A)\rlap{\ .}
}
\]
Here $\pi_0A$ is the ring of components of $A$ with the induced anti-involution, and $GL_n(\pi_0A)$ its ring of invertible $(n\times n)$-matrices. The involution on $GL_n(\pi_0A)$ is by entrywise involution and transposition.
The right vertical map is the composite
\[\Omega^{\infty}_IM^{\vee}_n(A)\stackrel{\sim}{\longrightarrow}\Omega^{\infty}_I\prod_{n\times n}A{\longrightarrow}M_n(\pi_0A)\]
which is both equivariant and multiplicative.

\begin{definition}
An $n$-dimensional Hermitian form on $A$ is an element of the fixed points space $\widehat{GL}^{\vee}_n(A)^{\mathbb{Z}/2}$. These form a category $\sym{\widehat{GL}^{\vee}_n(A)}$ as in Proposition \ref{fixedB11}, and we define
\[\Herm_A:=\coprod_{n\geq 0}\sym{\widehat{GL}^{\vee}_n(A)}.\]
\end{definition}

\begin{remark}
The anti-involution of $A$ induces a functor $D\colon \mathscr{M}^{op}_A\to \mathscr{M}_A$ on the category $\mathscr{M}_A$ of right module spectra. It is defined by the spectrum of module maps
\[D(P)=\Hom_A(P,A_w),\]
where $A_w$ is the spectrum $A$ equipped with the right $A$-module structure
\[A\wedge A\stackrel{\tau}{\longrightarrow}A\wedge A\xrightarrow{w\wedge\id}A\wedge A\stackrel{\mu}{\longrightarrow}A,\]
where $\tau$ is the symmetry isomorphisms and $\mu$ is the multiplication of $A$.
The ring spectrum of $(n\times n)$-matrices on $A$ is usually defined as the endomorphism spectrum $End(\bigvee_n A)$ of the sum of $n$-copies of $A$. Since $\Hom_A(A,P)$ is canonically isomorphic to $P$, there is an isomorphism of ring spectra $End(\bigvee_n A)\cong\prod_n\bigvee_n A$. The module $\bigvee_nA$ is homotopically self-dual, as the inclusion of wedges into products
\[\bigvee_n A\stackrel{\sim}{\longrightarrow}\prod_n A\cong D(\bigvee_nA)\]
is a natural equivalence. This defines a homotopy coherent involution on $End(\bigvee_n A)$, and one could define Hermitian forms as the homotopy fixed points of this action (this is essentially the approach of \cite{Spitzweck}).

The point of our construction is to refine this homotopy coherent action to a genuine equivariant homotopy type which incorporates the fixed-points spectrum of $A$ which, morally speaking, determines the notion of ``symmetry'' for the associated Hermitian forms.
The inclusion $M^{\vee}_n(A)\to End(\bigvee_n A)$ is a weak equivalence, it is coherently equivariant, and $M^{\vee}_n(A)$ has a strict involution which defines such a genuine homotopy type. The price we pay is that $M^{\vee}_n(A)$ is non-unital and it has only partially defined block sums (see \S\ref{KRspace}), but it will give rise to small and manageable models for the real $K$-theory spectrum. A different such model for the matrix ring has been provided in \cite{KroThesis}.
%
\end{remark}

The action $\Omega^{\infty}_IA\times (\Omega^{\infty}_IA)^{\mathbb{Z}/2}\to(\Omega^{\infty}_IA)^{\mathbb{Z}/2}$ induces an action of the multiplicative monoid $\pi_0A\cong \pi_0\Omega^{\infty}_IA$ on the homotopy group $\pi_0A^{\mathbb{Z}/2}\cong \pi_0(\Omega^{\infty}_IA)^{\mathbb{Z}/2}$ of the fixed-points spectrum. We recall that for every $\mathbb{Z}/2$-spectrum $A$, the Abelian groups $\pi_0A$ and $\pi_0A^{\mathbb{Z}/2}$ form a $\mathbb{Z}/2$-Mackey functor $\underline{\pi}_0A$.

\begin{proposition}\label{pi0Hermitian}
Let $A$ be a ring spectrum with anti-involution. The action of $\Omega^{\infty}_IA$ on $(\Omega^{\infty}_IA)^{\mathbb{Z}/2}$ defines the structure of a Hermitian Mackey functor on the $\mathbb{Z}/2$-Mackey functor $\underline{\pi}_0A$. Moreover, there is an isomorphism of Hermitian Mackey functors $\underline{\pi}_0 M^{\vee}_nA\cong M_n(\underline{\pi}_0A)$, where $M_n(\underline{\pi}_0A)$ is the Mackey functor of matrices of Definition \ref{MackeyMatrix}.
\end{proposition}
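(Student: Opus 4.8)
The plan is to read the entire Hermitian Mackey structure on $\underline{\pi}_0A$ off the topological monoid with anti-involution $M:=\Omega^{\infty}_IA$, using the $\mathbb{Z}/2$-equivalence $M\simeq\Omega^{\infty}_{\mathbb{Z}/2}A$ of Remark~\ref{infiniteloopG} to identify $\underline{\pi}_0A$ with the homotopy Mackey functor of the genuine $\mathbb{Z}/2$-spectrum $A$. Concretely: $\pi_0A=\pi_0M$ is a ring, with multiplication induced by that of $M$ and addition induced by the equivariant infinite-loop structure; $w$ is induced by the anti-involution of $M$; $\pi_0A(\ast)=\pi_0M^{\mathbb{Z}/2}$; the restriction $R$ is induced by the inclusion $M^{\mathbb{Z}/2}\hookrightarrow M$; and the transfer $T\colon\pi_0A\to\pi_0A(\ast)$ is $\pi_0^{\mathbb{Z}/2}$ of the action map $\nabla\colon\mathbb{Z}/2_+\wedge A\to A$ under the Wirthm\"uller isomorphism $\pi_0^{\mathbb{Z}/2}(\mathbb{Z}/2_+\wedge A)\cong\pi_0A$. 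The action of $\pi_0A$ on $\pi_0A(\ast)$ is induced on $\pi_0$ by $\alpha\colon M\times M^{\mathbb{Z}/2}\to M^{\mathbb{Z}/2}$, $\alpha(a,b)=abw(a)$, which lands in the fixed points because $w$ is an anti-involution fixing $b$, and which is additive in $b$ on $\pi_0$ by distributivity. With this dictionary, axioms~i) and~ii) of Definition~\ref{defHermMackey} are immediate: i) is the statement that $w$ is an anti-involution of $M$, read on $\pi_0$; ii) holds because $R\circ\alpha\colon M\times M^{\mathbb{Z}/2}\to M$ is literally the map $(a,b)\mapsto a\,R(b)\,w(a)$ of spaces.

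Axioms~iii) and~iv) see the transfer and so cannot be checked inside $M$. For axiom~iii), $a\cdot T(c)=T(acw(a))$, I would give a projection-formula argument: multiplication by $a$ on the left and $w(a)$ on the right assemble into a $\mathbb{Z}/2$-equivariant self-map of $\mathbb{Z}/2_+\wedge A$ whose effect on $\pi_0^{\mathbb{Z}/2}(\mathbb{Z}/2_+\wedge A)=\pi_0A$ is $c\mapsto acw(a)$, and associativity of the multiplication of $A$ makes it compatible with $\nabla$ and with $\alpha$; naturality of the transfer then identifies the composites $\pi_0A\xrightarrow{T}\pi_0A(\ast)\xrightarrow{a\cdot(-)}\pi_0A(\ast)$ and $\pi_0A\xrightarrow{c\mapsto acw(a)}\pi_0A\xrightarrow{T}\pi_0A(\ast)$. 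Axiom~iv), the polarization identity, is the crux. Before passing to fixed points, $(a+a')\cdot b-a\cdot b-a'\cdot b$ is represented by $a\,R(b)\,w(a')+a'\,R(b)\,w(a)=x+w(x)$ with $x:=a\,R(b)\,w(a')$; since $x+w(x)=R\,T(x)$ only records the \emph{restriction} of this cross-term, the real content is the equivariant refinement that the cross-term, as a class in $\pi_0A(\ast)$ itself, equals $T(x)$. I would prove this by exhibiting the bilinear additivity defect of $\alpha$ in its first variable as a genuine transfer, identifying the cross-term as a class in $\pi_0A(\ast)$ with $T(x)$ by a direct construction at the $\mathbb{Z}/2$-spectrum level; alternatively by reducing to a universal ring spectrum with anti-involution via naturality in $A$ and in $b$, or by invoking the space-level computation of \cite{THRmodels}.

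I expect axiom~iv) to be the main obstacle, together with the matching of the transfer data in the matrix statement below: it is precisely the quadratic-refinement identity and genuinely uses the stable, equivariant nature of $T$, so no manipulation inside $M$ or its fixed points can suffice — note that $R$ need not be injective (for $A=\mathbb{S}$ one has $\underline{\pi}_0\mathbb{S}=\mathbb{A}$ with $R\colon\mathbb{Z}\oplus\mathbb{Z}\to\mathbb{Z}$ far from injective), so knowing that $R$ of both sides agree is not enough. Everything else is bookkeeping.

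For the isomorphism $\underline{\pi}_0M^{\vee}_n(A)\cong M_n(\underline{\pi}_0A)$ I would argue summand-wise; note that $\pi_0M^{\vee}_n(A)$ is unital (the identity matrix), so the construction of the first part yields a Hermitian Mackey functor on $\underline{\pi}_0M^{\vee}_n(A)$. Since $\pi_*$ of a wedge of spectra is the direct sum of the $\pi_*$'s, on underlying homotopy $\pi_0M^{\vee}_n(A)=\pi_0(\bigvee_{n\times n}A)\cong\bigoplus_{n\times n}\pi_0A=M_n(\pi_0A)$, and the wedge-indexed multiplication and the (transpose plus entrywise $w$) anti-involution match matrix multiplication and conjugate transposition by construction. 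For the $\mathbb{Z}/2$-action on $\bigvee_{n\times n}A$, which swaps the summands $(k,l)\leftrightarrow(l,k)$ and acts by $w$, each diagonal summand contributes a copy of $A$ with the $w$-action and each off-diagonal orbit $\{(k,l),(l,k)\}$ with $k<l$ is $\mathbb{Z}/2$-equivariantly $\mathbb{Z}/2_+\wedge A$, so $M^{\vee}_n(A)\simeq\big(\bigvee_{i}A\big)\vee\bigvee_{k<l}(\mathbb{Z}/2_+\wedge A)$; applying $\pi_0^{\mathbb{Z}/2}$ and $\pi_0^{\mathbb{Z}/2}(\mathbb{Z}/2_+\wedge A)\cong\pi_0A$ reproduces $M_n(\underline{\pi}_0A)(\ast)=\big(\bigoplus_{i<j}\pi_0A\big)\oplus\big(\bigoplus_{i=j}\pi_0A(\ast)\big)$. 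Restriction and transfer are then computed orbit-wise: on each $\mathbb{Z}/2_+\wedge A$ summand $R$ is $x\mapsto(x,w(x))$ and $T$ is $(x,y)\mapsto x+w(y)$, matching $R(B)_{ij}=B_{ij}$, $R(B)_{ji}=w(B_{ij})$ and $T(A)_{ij}=A_{ij}+w(A_{ji})$ for $i<j$, while on the diagonal summands they reduce to the $n=1$ case. Finally the off-diagonal part of the action formula in Definition~\ref{MackeyMatrix} is forced by axiom~ii) for $\underline{\pi}_0M^{\vee}_n(A)$ together with injectivity of $R$ on the off-diagonal summands, and the diagonal part follows by expanding $A=\sum_{k,l}A_{kl}e_{kl}$, using axiom~iv), computing that the matrix unit $e_{kl}$ (the inclusion of the $(k,l)$ wedge summand) makes $(A_{kl}e_{kl})\cdot B$ contribute $A_{kl}\cdot B_{ll}$ to the $(k,k)$-slot, and collecting the transfer cross-terms — the same bookkeeping as in the proof of Lemma~\ref{MatMackeywelldef}.
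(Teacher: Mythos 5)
Your outline is correct in strategy and matches the paper's proof in essence: one must pass from the space-level monoid $M=\Omega^\infty_IA$ to the genuine stable picture, write down the action and the transfer on explicit representatives, and then read axioms iii) and iv) off that description; and you are right that iv) is the crux precisely because $R$ need not be injective (the $\underline{\pi}_0\mathbb{S}=\mathbb{A}$ example is exactly the right sanity check), so comparing restrictions of both sides cannot suffice. What your proposal leaves unexecuted is the one thing the paper actually makes concrete, namely the explicit representatives that make iii) and iv) immediate. The paper represents $a\cdot b$ on $\pi_0A^{\mathbb{Z}/2}$ by
\[
S^{(n+m)\rho}\cong S^n\wedge S^{m\rho}\wedge S^n\xrightarrow{\ f\wedge x\wedge(w\circ f)\ }A_n\wedge A_{m\rho}\wedge A_n\xrightarrow{\ \mu\ }A_{(n+m)\rho}
\]
(for $f\colon S^n\to A_n$ representing $a$, $x\colon S^{m\rho}\to A_{m\rho}$ representing $b$), and represents $T(g)$ by
\[
S^{m\rho}\xrightarrow{\ p\ }S^{2m}\vee S^{2m}\xrightarrow{\ g\vee(w\circ g)\ }A_{m\rho}\vee A_{m\rho}\xrightarrow{\ \nabla\ }A_{m\rho},
\]
where $p$ pinches off the fixed sphere $S^m\subset S^{m\rho}$. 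With these in hand, axiom iii) is naturality of the fold map and axiom iv) is literally the distributivity of smash over wedge applied to $(f\vee f')\wedge x\wedge(w\circ(f\vee f'))$ together with the pinch $p$. Your sketch (``exhibit the additivity defect as a genuine transfer by a direct construction at the $\mathbb{Z}/2$-spectrum level'') gestures at this but does not supply it; the construction itself is the content of the proof, so as written the verification of iv) remains a gap.

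Two smaller remarks. Your ``projection-formula'' argument for iii) is a legitimate variant: the equivariant self-map of $\mathbb{Z}/2_+\wedge A$ you describe is exactly what the paper's explicit formula produces after precomposing with the pinch, so you would arrive at the same computation. For the matrix statement, your orbit decomposition
\[
M_n^\vee(A)\simeq\Bigl(\bigvee_{i}A\Bigr)\vee\bigvee_{k<l}\bigl(\mathbb{Z}/2_+\wedge A\bigr)
\]
as a $\mathbb{Z}/2$-spectrum is a clean repackaging of what the paper does by mapping wedges into products, and your observation that $R$ is a split injection on each off-diagonal summand (so ii) forces the off-diagonal entries of the action, and iv) then propagates the diagonal entries) is a nice, slightly more structural way to identify the action than the paper's direct comparison with the formula of Definition~\ref{MackeyMatrix}. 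One small point to keep honest there: $M_n^\vee(A)$ is non-unital, so ``the construction of the first part yields a Hermitian Mackey functor on $\underline{\pi}_0M_n^\vee(A)$'' needs the observation (which the paper makes explicitly) that the weak equivalence $M_n^\vee(A)\to\prod_n\bigvee_nA$ exhibits $\pi_0M_n^\vee(A)\cong M_n(\pi_0A)$ as a unital ring even though the spectrum is not.
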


\begin{proof}
The multiplication of $\pi_0A$ clearly anti-commutes with the involution, since it does so for $\Omega^{\infty}_IA$. Similarly, the relation between the restriction and the action is satisfied, because the restriction $R\colon \pi_0A^{\mathbb{Z}/2}\to \pi_0A$ corresponds to the fixed points inclusion of $\Omega^{\infty}_IA$. In order to verify the other conditions we describe the action of $\pi_0A$. 

Let us represent an element of $\pi_0A$ by a map $f\colon S^n\to A_n$, and an element of $\pi_0A^{\mathbb{Z}/2}$ by an equivariant map $x\colon S^{m\rho}\to A_{m\rho}$. We recall that there is a canonical isomorphism $S^{n\rho}\cong S^n\wedge S^n$, where the action on $S^n\wedge S^n$ swaps the two smash factors.
The action is then the homotopy class of the map
\[
f\cdot x\colon S^{(n+m)\rho}\cong S^n\wedge S^{m\rho}\wedge S^n\xrightarrow{f\wedge x\wedge (w\circ f)} A_{n}\wedge A_{m\rho}\wedge A_n\stackrel{\mu}{\longrightarrow} A_{n+m\rho+n}\cong A_{(n+m)\rho}
\]
where the involution on $S^n\wedge S^{m\rho}\wedge S^n$ swaps the two $S^n$ smash factors and acts on $S^{m\rho}$, and on $A_{n}\wedge A_{m\rho}\wedge A_n$ it acts componentwise and swaps the two $A_n$ factors. The last map is the multiplication of $A$, and it is equivariant since the diagram
\[
\xymatrix@C=40pt{
A_n\wedge A_{m\rho}\wedge A_n\ar[d]_{\mu}\ar[r]^-{\tau_3}&A_n\wedge A_{m\rho}\wedge A_n\ar[r]^-{w\wedge w\wedge w}&A_n\wedge A_{m\rho}\wedge A_n\ar[d]^{\mu}
\\
A_{n+m\rho+n}\ar[r]_-w&A_{n+m\rho+n}\ar[r]_-{\chi_{n,n}}&A_{n+m\rho+n}
}
\]
commutes by the definition of a ring spectrum with anti-involution, where $\tau_3\in\Sigma_3$ reverses the order and $\chi_{n,n}\in\Sigma_{n+2m+n}$ is the permutation that swaps the first and last blocks of size $n$. The bottom map is by definition the action of $A_{(n+m)\rho}$.

The transfer of the class of a map $g\colon S^{2m}\to A_{2m}$ is defined as the class of the map
\[
T(g)\colon S^{m\rho}\xrightarrow{p}S^{2m}\vee S^{2m}\xrightarrow{g\vee (w\circ g)} A_{m\rho}\vee A_{m\rho}\stackrel{\nabla}{\longrightarrow} A_{m\rho}
\]
where the first map collapses the fixed points $S^m\subset S^{m\rho}$, the last map is the fold, and the involutions act on, and permute, the wedge summands.
The relations $f\cdot T(g)=T(fgw(f))$ and $(f+f')\cdot x=f\cdot x+f'\cdot x+T(fR(x)w(f'))$ for $f,f',g$ in $\pi_0A$ and $x$ in $\pi_0A^{\mathbb{Z}/2}$ are now an immediate consequence of the naturality of the fold map, and of the distributivity of the smash product of pointed spaces over the wedge sum.

Let us now consider the Mackey functor $\underline{\pi}_0 M^{\vee}_n(A)$.
Even though $M^{\vee}_n(A)$ is not unital, the map $M^{\vee}_n(A)\to \prod_n\bigvee_n A$ is an equivalence, and therefore $\pi_0M^{\vee}_n(A)\cong M_n(\pi_0 A)$ is a unital ring. Moreover the inclusion of indexed wedges into indexed products gives an equivalence
\[
(\Omega_IM^{\vee}_n(A))^{\mathbb{Z}/2}\stackrel{\simeq}{\longrightarrow}(\Omega_I(\prod_{n\times n}A))^{\mathbb{Z}/2}\cong (\prod_{n\times n}\Omega_IA)^{\mathbb{Z}/2}\cong(\prod_n(\Omega_IA)^{\mathbb{Z}/2})\times \prod_{1\leq i<j\leq n}(\Omega_IA).
\]
Under this equivalence the action of $\Omega_IM^{\vee}_n(A)$ on $(\Omega_IM^{\vee}_n(A))^{\mathbb{Z}/2}$ corresponds to the action of $\Omega_IM^{\vee}_n(A)$ on $(\Omega_I\prod_{n\times n}A)^{\mathbb{Z}/2}$ given by $abw(a)$, where the multiplications are the infinite loop spaces of the left and right actions
\[
M^{\vee}_n(A)\wedge (\prod_{n\times n}A)\longrightarrow \prod_{n\times n}A \ \ \ \ \ \ \ \ \ \ (\prod_{n\times n}A)\wedge M^{\vee}_n(A)\longrightarrow \prod_{n\times n}A
\]
defined by the standard matrix multiplication rules. This agrees with the action of Definition \ref{MackeyMatrix}.
\end{proof}

\subsection{The real $K$-theory $\mathbb{Z}/2$-space of a ring spectrum with anti-involution}\label{KRspace}

The goal of this section is to define a $\mathbb{Z}/2$-action on the $K$-theory space of a ring spectrum $A$ with anti-involution $w\colon A^{op}\to A$. We define this action by adapting the group-completion construction of the free $K$-theory space 
\[K(A)=\Omega B\coprod_nB\widehat{GL}_n(A),\]
where $\coprod_nB\widehat{GL}_n(A)$ is group-completed with respect to block-sum,
to the model for the equivariant matrix ring constructed in the previous section.

We recall from Lemma \ref{defB11} that the classifying space of a non-unital monoid with anti-involution $M$ inherits a natural $\mathbb{Z}/2$-action, which we denote by $B^{\sigma}M$. 
Thus the anti-involution on $\widehat{GL}^{\vee}_n(A)$ gives rise to a $\mathbb{Z}/2$-space $B^{\sigma}\widehat{GL}^{\vee}_n(A)$.  The space $\amalg_n B^{\sigma}\widehat{GL}^{\vee}_n(A)$ does not have a strict monoid structure, since the standard block-sum of matrices does not restrict to the matrix rings $M^{\vee}_n(A)$. We can however define a Bar construction using a technique similar to Segal's group completion of partial monoids. The block-sum operation on the ring spectra $M^{\vee}_n(A)$ is a collection of maps
\[\oplus\colon M^{\vee}_n(A)\vee M^{\vee}_k(A)\longrightarrow M^{\vee}_{n+k}(A)\]
induced by the inclusions $n\to n+k$ and $k\to n+k$. We observe that this map commutes with the anti-involutions. There is a simplicial $\mathbb{Z}/2$-space with $p$-simplices
\[
\coprod_{n_1,\dots, n_p}B^{\sigma}\Omega^{\infty}_I(M^{\vee}_{n_1}(A)\vee\dots\vee M^{\vee}_{n_p}(A)).\]
The face maps are induced by the block-sum maps, and the degeneracies are the summand inclusions. This results into a well-defined simplicial object since $B^{\sigma}\Omega^{\infty}_I$ is functorial with respect to maps of ring spectra with anti-involution.
This simplicial structure restricts to the $\mathbb{Z}/2$-spaces
\[
\coprod_{n_1,\dots,n_p} B^{\sigma} \widehat{GL}^{\vee}_{n_1,\dots,n_p}(A)
\]
where $\widehat{GL}^{\vee}_{n_1,\dots,n_p}(A)$ is defined as the pull-back of non-unital monoids with anti-involution
\[\xymatrix{
\widehat{GL}^{\vee}_{n_1,\dots,n_p}(A)\ar[r]\ar[d]&
\Omega^{\infty}_I(M^{\vee}_{n_1}(A)\vee\dots\vee M^{\vee}_{n_p}(A))\ar[d]\\
(GL_{n_1}(\pi_0A)\times\dots\times GL_{n_p}(\pi_0A))\ar@{^(->}[r]&
(M_{n_1}(\pi_0A)\times\dots\times M_{n_p}(\pi_0A))\rlap{\ .}
}
\]


\begin{definition}\label{defKRspace}
The free real $K$-theory space of a ring spectrum with anti-involution $A$ is the $\mathbb{Z}/2$-space defined as the loop space of the thick geometric realization
\[
\KR(A):=\Omega \|\coprod_{n_1,\dots,n_\bullet} B^{\sigma} \widehat{GL}^{\vee}_{n_1,\dots,n_\bullet}(A)\|.\]
\end{definition}

We observe that all of our constructions are homotopy invariant, and therefore the functor $\KR$ sends maps of ring spectra with anti-involution which are stable equivalences of underlying $\mathbb{Z}/2$-spectra to equivalences of $\mathbb{Z}/2$-spaces. We conclude this section by verifying that the underlying space of $\KR(A)$ has the right homotopy type.

\begin{proposition}
Let $A$ be a ring spectrum with anti-involution, and let us denote $A|_1$ the underlying ring spectrum of $A$ and $\KR(A)|_1$ the underlying space of $\KR(A)$. There is a weak equivalence
\[\KR(A)|_1\stackrel{\simeq}{\longrightarrow}K(A|_1).\]
\end{proposition}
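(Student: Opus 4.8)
The plan is to forget the $\mathbb{Z}/2$-action throughout and identify the resulting underlying simplicial space with the nerve of the topological monoid $\coprod_n B\widehat{GL}_n(A)$ under block-sum, whose looped bar construction is by definition $\K(A|_1)$.

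First I would pin down the non-equivariant homotopy type of the monoids $\widehat{GL}^{\vee}_{n_1,\dots,n_p}(A)$. The underlying orthogonal spectrum of $M^{\vee}_{n_1}(A)\vee\dots\vee M^{\vee}_{n_p}(A)$ is a finite wedge of copies of $A$, so the canonical map to the corresponding finite product is a stable equivalence, and this map is multiplicative since source and target both carry the block-diagonal multiplication given by matrix multiplication within each block. Applying $\Omega^{\infty}_I$, which preserves equivalences and commutes with finite products, and pulling back along $GL_{n_1}(\pi_0A)\times\dots\times GL_{n_p}(\pi_0A)$, one obtains a weak equivalence of non-unital topological monoids
\[
\widehat{GL}^{\vee}_{n_1,\dots,n_p}(A)\stackrel{\simeq}{\longrightarrow}\widehat{GL}_{n_1}(A)\times\dots\times\widehat{GL}_{n_p}(A),
\]
with target a unital, well-pointed monoid built from $M_{n_i}(\Omega^{\infty}_IA)$. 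By Remark \ref{thinvsthick} the left-hand side then has the non-equivariant homotopy type of a unital monoid, and $B^{\sigma}\widehat{GL}^{\vee}_{n_1,\dots,n_p}(A)|_1$ is naturally equivalent to $B\widehat{GL}_{n_1}(A)\times\dots\times B\widehat{GL}_{n_p}(A)$.

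Next I would assemble over $[p]\in\Delta$: the underlying simplicial space of $[p]\mapsto\coprod_{n_1,\dots,n_p}B^{\sigma}\widehat{GL}^{\vee}_{n_1,\dots,n_p}(A)$ is then naturally levelwise equivalent to $[p]\mapsto\big(\coprod_n B\widehat{GL}_n(A)\big)^{\times p}$, and under this identification the face maps (induced by the block-sum maps $M^{\vee}_n(A)\vee M^{\vee}_k(A)\to M^{\vee}_{n+k}(A)$ and by dropping an outer factor) and the degeneracies (the summand inclusions of the trivial ring spectrum $M^{\vee}_0(A)$) become exactly the structure maps of the nerve of the topological monoid $(\coprod_n B\widehat{GL}_n(A),\oplus)$, with $\oplus$ the block-sum of matrices and unit the point $B\widehat{GL}_0(A)$. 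Realizing this chain of natural levelwise equivalences and looping would then yield
\[
\KR(A)|_1\ \simeq\ \Omega\big\|N(\textstyle\coprod_n B\widehat{GL}_n(A))\big\|\ =\ \Omega B\big(\textstyle\coprod_n B\widehat{GL}_n(A)\big)\ =\ \K(A|_1),
\]
where I would note that $\coprod_n B\widehat{GL}_n(A)$ is well-pointed, so that its nerve is proper and thick and thin realizations agree.

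The main point requiring care is to keep all of these equivalences natural in the wedge-index, so that they are compatible with the block-sum face maps and therefore induce an equivalence of simplicial spaces, and not merely a levelwise one. This rests on $\Omega^{\infty}_I$ being lax monoidal and preserving equivalences and finite products, together with the homotopy invariance of $B^{\sigma}$ recorded after Definition \ref{defKRspace} and the non-equivariant comparison $B^{\sigma}M|_1\simeq BM$ from Remark \ref{thinvsthick}; everything else is formal.
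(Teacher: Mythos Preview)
Your proposal is correct and follows essentially the same approach as the paper: both replace the wedge matrix rings $M^{\vee}_n(A)$ by the product matrix rings $M_n(A)=\prod_n\bigvee_nA$ via the inclusion of wedges into products, observe that this equivalence is compatible with the block-sum maps (hence with the simplicial face maps), and conclude by restricting to invertible components and realizing. The paper's argument is a bit terser and does not spell out the $B^{\sigma}|_1\simeq B$ step or the degeneracies, but the content is the same.
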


\begin{proof}
For convenience, we drop the restriction notation. The inclusion of wedges into products defines an equivalence of ring spectra $M_n^{\vee}(A)\to M_n(A):=\prod_n\bigvee_n A$, and therefore an equivalence of monoids $\widehat{M}^{\vee}_n(A)\to \widehat{M}_n(A)$ on infinite loop spaces. This induces an equivalence of spaces
\[\coprod_n B\widehat{M}^{\vee}_n(A)\longrightarrow \coprod_nB\widehat{M}_n(A)\]
after taking the thick realization.
The block-sum maps of $M_n^{\vee}(A)$ and $M_n(A)$ are compatible, in the sense that the diagram
\[
\xymatrix{
M_n^{\vee}(A)\vee M_k^{\vee}(A)\ar[d]_\sim\ar[r]^-{\oplus}&
M_{n+k}^{\vee}(A)\ar[d]^{\sim}\\
M_n(A)\times M_k(A)\ar[r]_-{\oplus}&
M_{n+k}(A)
}
\]
commutes. It follows that the levelwise equivalences on the Bar constructions
\[\coprod_{n_1,\dots ,n_p}B\Omega^{\infty}_I(M^{\vee}_{n_1}(A)\vee\dots\vee M^{\vee}_{n_p}(A))\stackrel{\sim}{\longrightarrow}\coprod_{n_1,\dots ,n_p}(B\Omega^{\infty}_IM_{n_1}(A))\times\dots\times (B\Omega^{\infty}_IM_{n_p}(A))\]
commute with the face maps. After restricting to invertible components and taking thick geometric realizations this gives the equivalence $KR(A)|_1\simeq K(A|_1)$.
\end{proof}

In the construction of the trace map we will need to compare the dihedral nerve $B^{di} M$ and the free loop space $\Lambda^\sigma B^\sigma M$ for $M=\widehat{GL}^{\vee}_{n_1,\dots,n_p}(A)$. In order to apply Lemma \ref{lemmafreeloop} we will need the following.

\begin{proposition}\label{GLq1gpl}
Suppose that $A$ is levelwise well-pointed, and that the unit map $S^0\to A_0$ is an $h$-cofibration. Then the monoid with anti-involution $\widehat{GL}^{\vee}_{n_1,\dots,n_p}(A)$ is quasi-unital and group-like (see Definition \ref{defq1}).
\end{proposition}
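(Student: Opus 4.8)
The plan is to verify directly the three conditions of Definition \ref{defq1}, for the monoid with anti-involution $M:=\widehat{GL}^{\vee}_{n_1,\dots,n_p}(A)$, and then to observe that the group-likeness follows because $M$ is a union of path components of a monoid whose $\pi_0$ is a group. First I would produce the unital well-pointed replacement $A$ required by condition i). The natural candidate is the product-matrix monoid: replacing each $M^{\vee}_{n_i}(A)$ by $M_{n_i}(A)=\prod_{n_i}\bigvee_{n_i}A$, one obtains a genuinely unital ring spectrum with anti-involution, and the inclusion of wedges into products $M^{\vee}_{n_i}(A)\to M_{n_i}(A)$ is a levelwise equivariant equivalence (here I use that $A$ is levelwise well-pointed, so the relevant wedge-into-product inclusions are nonequivariant equivalences in each spectrum level, and that they are compatible with the strict involutions defined via $\tau$ and $\vee w$). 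Applying $\Omega^{\infty}_I(-)$ and passing to the invertible-components pullback then gives a map $f\colon M\xrightarrow{\sim}A'$ of non-unital monoids to a unital one, which is an equivalence on underlying spaces by Remark \ref{infiniteloopG} together with the fact that $\Omega^{\infty}_I$ preserves equivalences and pullbacks along the component maps to $GL$ of the discrete rings. That $A'$ is well-pointed as a topological monoid should follow from the hypothesis that $S^0\to A_0$ is an $h$-cofibration, which makes the relevant basepoint inclusions into the loop spaces $\Omega^j A'_j$ cofibrations and hence, after the homotopy colimit over $I$, gives a well-pointed monoid.

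Next I would supply the right $A'$-space $B$ and the map $\phi$ of condition ii). The natural choice is $B:=(A')^{\mathbb{Z}/2}$ — that is, the fixed points of the infinite loop space of the product-matrix model — with its right $A'$-action $b\cdot a=bw(a)$, together with the fixed-points inclusion. The map $\phi\colon M^{\mathbb{Z}/2}\to B$ is the fixed-points restriction of $f$, which is an equivalence on underlying spaces because $f$ is an equivariant equivalence of $\mathbb{Z}/2$-spaces (this is where the genuine equivariant structure of $A$, hence of $M^{\vee}$, matters: the wedge-into-product inclusion is an equivariant equivalence, as recorded in the proof of Proposition \ref{pi0Hermitian}). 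One checks $\phi$ is a map of $M$-spaces directly from the formula $a\cdot b=abw(a)$, which is literally transported along $f$ since $f$ is multiplicative and equivariant. For condition iii), the point $e\in B$ should be the image under $\phi$ of (a chosen point in the component of) the unit element of $M^{\mathbb{Z}/2}$; concretely, $e$ is the unit of the unital monoid $A'$, viewed in $B$. The identity $e\cdot f(m)=\phi(w(m)m)$ then reads $w(f(m))f(m)=f(w(m)m)$ in $A'$, which holds because $f$ is multiplicative and equivariant (so $f(w(m)m)=w(f(m))f(m)$). Well-pointedness of $B$ at $e$ again reduces to the $h$-cofibration hypothesis on the unit map of $A$, propagated through matrices, $\Omega^{\bullet}$, the $I$-colimit, and fixed points (fixed points of a finite group preserve $h$-cofibrations of well-pointed spaces).

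Finally, group-likeness: $M=\widehat{GL}^{\vee}_{n_1,\dots,n_p}(A)$ is by construction the pullback of $\Omega^{\infty}_I(M^{\vee}_{n_1}(A)\vee\dots\vee M^{\vee}_{n_p}(A))$ along the inclusion $\prod_i GL_{n_i}(\pi_0A)\hookrightarrow \prod_i M_{n_i}(\pi_0A)$, so $\pi_0 M=\prod_i GL_{n_i}(\pi_0A)$, which is a group; hence $\pi_0M$ is a group as required.

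The main obstacle I anticipate is not the algebra of the three conditions, which is essentially formal once the product-matrix model is in place, but verifying carefully that all the replacement maps are \emph{equivariant} equivalences and that well-pointedness is genuinely preserved at every stage — through the wedge-to-product comparison, through $\Omega^{\bullet}(-)$, through the Bökstedt homotopy colimit over $I$, and through $\mathbb{Z}/2$-fixed points. In particular, the delicate point is that $\phi$ be an equivalence on underlying spaces: this uses that $M^{\vee}_n(A)\to \prod_n\bigvee_n A$ is an equivalence of orthogonal $\mathbb{Z}/2$-spectra (not merely nonequivariantly), so that the induced map on genuine fixed-point infinite loop spaces — equivalently, by Remark \ref{infiniteloopG}, on $(\Omega^{\infty}_I(-))^{\mathbb{Z}/2}$ — is an equivalence; this is exactly the computation already carried out in the proof of Proposition \ref{pi0Hermitian}, which I would cite rather than repeat.
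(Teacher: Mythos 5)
Your high-level plan (replace $M^\vee_n(A)$ by the unital product-ring model, take wedge-to-product maps, read off $B$ and $\phi$ from fixed points) is the right strategy, and your group-likeness argument is exactly the paper's. But there is a genuine gap in the choice of $B$, and it is precisely the point where the paper's proof does real work.

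You set $B := (A')^{\mathbb{Z}/2}$ where $A'$ is built from $M_n(A) := \prod_n\bigvee_n A$, and you describe $M_n(A)$ as ``a genuinely unital ring spectrum with anti-involution.'' This is false: $\prod_n\bigvee_n A$ is a unital ring spectrum but carries \emph{no} strict anti-involution. The involution on $M^\vee_n(A) = \bigvee_{n\times n}A$ (entrywise $w$, then transpose of the $n\times n$ indexing) does not extend to $\prod_n\bigvee_n A$, because the conjugate transpose of a ``matrix with at most one nonzero entry in each column'' is not of that form. Consequently $(A')^{\mathbb{Z}/2}$ is simply undefined, and so is $w(a)$ for $a\in A'$ in your proposed action formula $b\cdot a = bw(a)$. (Your citation of Proposition \ref{pi0Hermitian} actually points at the fix but doesn't apply as you intend: the wedge-to-product comparison there lands in $\prod_{n\times n}A$, not in $\prod_n\bigvee_n A$.)

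The paper resolves this by decoupling the two roles that a unital ring spectrum with anti-involution would have played. The spectrum $\prod_{n\times n}A$ \emph{does} have a strict $\mathbb{Z}/2$-action (entrywise $w$ composed with transposition of $n\times n$), but it is not a ring spectrum. It is instead an $M_n(A)$-bimodule, and the proof manufactures a single \emph{right} $\Omega^\infty_IM_n(A)$-action on $\Omega^\infty_I\prod_{n\times n}A$ out of two commuting right actions: $x\cdot_1 m$ (right multiplication) and $x\cdot_2 m$ (what would be ``left multiplication by $w(m)$'' if $w$ existed on $M_n(A)$, but is defined directly as a separate action). The combination $x\cdot m := (x\cdot_1 m)\cdot_2 m$ satisfies the required equivariance, restricts to $B:=(\Omega^\infty_I\prod_{n\times n}A)^{\mathbb{Z}/2}$, and $\phi$ is the fixed-points wedge-to-product inclusion. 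The element $e$ is the image of the unit of $\Omega^\infty_IM_n(A)$ under the (non-equivariant) wedge-to-product map $\Omega^\infty_IM_n(A)\to\Omega^\infty_I\prod_{n\times n}A$. This two-action construction is exactly the step missing from your outline.

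A secondary issue: even granting an involution, the action $b\cdot a = bw(a)$ would not make $\phi$ a map of $M$-spaces. The $M$-action on $M^{\mathbb{Z}/2}$ is conjugation $n\cdot l = w(l)nl$, so the right $A'$-action on $B$ must be $b\cdot a = w(a)ba$; your later check of condition iii) in fact uses the conjugation action, contradicting the stated formula.
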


\begin{proof}
By definition $\widehat{GL}^{\vee}_{n_1,\dots,n_p}(A)$ is the monoid of invertible components of $\widehat{M}^{\vee}_{n_1,\dots,n_p}(A):=\Omega^{\infty}_I(M^{\vee}_{n_1}(A)\vee\dots\vee M^{\vee}_{n_p}(A))$, and it is therefore group-like. We show that $\widehat{M}^{\vee}_{n}(A)$ is quasi-unital, which implies that $\widehat{GL}^{\vee}_{n}(A)$ is quasi-unital by restricting to the invertible components. The proof for general $p$ is similar. The inclusion of wedges into products induces an equivalence of non-unital ring spectra
\[
f\colon M^{\vee}_{n}(A)\stackrel{\simeq}{\longrightarrow}M_n(A):=\prod_{n}\bigvee_nA
\]
where the multiplication on $M_nA$ is defined by representing an element in a given spectral degree by a matrix with at most one non-zero entry in each column, and apply the standard matrix multiplication. This induces an equivalence of non-unital topological monoids
\[
\widehat{M}^{\vee}_{n}(A)=\Omega^{\infty}_IM^{\vee}_{n}(A)\stackrel{\simeq}{\longrightarrow} \Omega^{\infty}_IM_n(A),
\]
where $ \Omega^{\infty}_IM_n(A)$ is unital and well-pointed.

The spectrum $\prod_{n\times n}A$ has a $\mathbb{Z}/2$-action, defined by applying the anti-involution entrywise and by composing with the involution of $n\times n$ that switches the factors. The inclusion of indexed wedges into indexed products induces an equivalence of equivariant spectra
\[
\phi\colon M^{\vee}_n(A)=\bigvee_{n\times n}A\stackrel{\simeq}{\longrightarrow} \prod_{n\times n}A,
\]
and therefore an equivalence on fixed points $(\Omega^{\infty}_IM^{\vee}_n(A))^{\mathbb{Z}/2}\xrightarrow{\simeq}(\Omega^{\infty}_I\prod_{n\times n}A)^{\mathbb{Z}/2}$.

The monoid $\prod_{n}\bigvee_nA$ acts on the right on the spectrum $\prod_{n\times n}A$ by right matrix multiplication. Precisely, the action is determined by the maps
\[
(\prod_{n\times n}A_l)\wedge (\prod_{n}\bigvee_nA_k)\longrightarrow \prod_{n\times n}A_{l+k}
\]
that send an element $\{b_{ij}\in A_l\}\wedge (1\leq h_1,\dots,h_n\leq n,a_1,\dots,a_n\in A_k)$ to the matrix with $(i,j)$-entry $\mu(b_{ih_j},a_{j})\in A_{l+k}$ where $\mu$ is the multiplication of $A$. 
There is a second right-action which is defined by left matrix multiplication via the conjugate transposed, namely by sending $\{b_{ij}\}\wedge (1\leq i_1,\dots,i_n\leq n,a_1,\dots,a_n)$ to the matrix with $(i,j)$-entry $\mu(w(a_i),b_{h_ij})$. 
These induce two commuting right actions of $\Omega^{\infty}_IM_n(A)$ on $\Omega^{\infty}_I(\prod_{n\times n}A)$, which we denote respectively by $xm:=x\cdot_1m$ and $w(m)x:=x\cdot_2m$. A straightforward argument shows that these actions satisfy $w(xm)=w(m)w(x)$, where $w$ denotes the involution on $\Omega^{\infty}_I(\prod_{n\times n}A)$. We regard $\Omega^{\infty}_I(\prod_{n\times n}A)$ as a right  $\Omega^{\infty}_IM_n(A)$-space via the action
\[
x\cdot m:=w(m)xm.
\]
We observe that this action restricts to the fixed points space $(\Omega^{\infty}_I\prod_{n\times n}A)^{\mathbb{Z}/2}$, and that the equivalence $(\Omega^{\infty}_IM^{\vee}_n(A))^{\mathbb{Z}/2}\to(\Omega^{\infty}_I\prod_{n\times n}A)^{\mathbb{Z}/2}$ is a map of $\Omega^{\infty}_IM^{\vee}_n(A)$-modules.

Finally, the unit of $\Omega^{\infty}_IM_n(A)$ is mapped to a fixed point under the non-equivariant map $\Omega^{\infty}_IM_n(A)\to \Omega^{\infty}_I\prod_{n\times n}A$ that includes wedges into products. We denote its image by $e$. Since $f$ and $\phi$ are inclusions of wedges into products the relation $e\cdot f(m):=w(f(m))ef(m)=\phi(w(m)m)$ is satisfied for every $m\in M$.
\end{proof}

%
%


\subsection{Connective equivariant deloopings of real $K$-theory}\label{secdeloopings}

We show that the real $K$-theory space of a ring spectrum with anti-involution defined in \S\ref{KRspace} is the equivariant infinite loop space of a (special) $\mathbb{Z}/2$-equivariant $\Gamma$-space. Our construction of these deloopings is an adaptation of Segal's construction (\cite{Segal} and \cite{ShimaShima}) for spectrally enriched symmetric monoidal categories, to a set-up where the symmetric monoidal structure is partially defined.

We start with an explicit definition of the $\mathbb{Z}/2$-$\Gamma$-space, and relate it to Segal's construction in the proof of Proposition \ref{Gamma}. We recall from \cite{ShimaNote} that a $G$-$\Gamma$-space, where $G$ is a finite group, is a functor $X\colon \Gamma^{op}\to \Top^{G}_\ast$ from the category $\Gamma^{op}$ which is a skeleton for the category of pointed finite sets and pointed maps, to the category of pointed $G$-spaces. This induces a symmetric $G$-spectrum whose $n$-th space is the value at the $n$-sphere of the left Kan-extension of $X$ to the category of finite pointed simplicial sets.

For every natural number $n$ and sequence of non-negative integers $\underline{a}=(a_1,\dots, a_n)$ we consider the collections of permutations $\alpha=\{\alpha_{S,T}\in \mathcal{S}_{\sum_{i\in S\amalg T}a_i}\}$ where the indices $S,T$ run through the pairs of disjoint subsets $S\amalg T\subset \{1,\dots,n\}$, and $ \mathcal{S}_k$ denotes the symmetric group. We require that these permutations satisfy the standard conditions of Segal's construction, see e.g. \cite[Def.2.3.1.1]{DGM}. We denote by $\langle\underline{a} \rangle$ the set of such collections $\alpha$ for the $n$-tuple $\underline{a}$. 

Given a ring spectrum with anti-involution $A$ we let ${\KR}(A)\colon \Gamma^{op}\to \Top^{\mathbb{Z}/2}_\ast$ be the functor that sends the pointed set $n_+=\{+,1,\dots, n\}$ to
\[{\KR}(A)_n:=\coprod_{\underline{a}}B^{\sigma}(\widetilde{\langle\underline{a}\rangle}\times\widehat{GL}^{\vee}_{a_1,\dots,a_n}(A))\]
where $\widehat{GL}^{\vee}_{a_1,\dots,a_n}(A)$ is defined in \S\ref{KRspace}, and $\widetilde{\langle\underline{a}\rangle}$ is the category with objects set $\langle\underline{a}\rangle$, and with a unique morphism between any pair of objects. The category $\widetilde{\langle\underline{a}\rangle}$ has a duality that is the identity on objects, and that sends the unique morphism $\alpha\to \beta$ to the unique morphism $\beta\to \alpha$. Thus $\widetilde{\langle\underline{a}\rangle}\times\widehat{GL}^{\vee}_{a_1,\dots,a_n}(A)$ is a non-unital topological category with duality, and $B^{\sigma}$ is the functor of Remark \ref{B11forcat}.

\begin{remark}
Since every object in $\widetilde{\langle\underline{a}\rangle}$ is both initial and final, the projection map \[\widetilde{\langle\underline{a}\rangle}\times\widehat{GL}^{\vee}_{a_1,\dots,a_n}(A)\longrightarrow\widehat{GL}^{\vee}_{a_1,\dots,a_n}(A)\] is an equivalence of topological categories. Moreover by the uniqueness of the morphisms of $\widetilde{\langle\underline{a}\rangle}$ we see that $\sym \widetilde{\langle\underline{a}\rangle}=\widetilde{\langle\underline{a}\rangle}$. Thus the projection map
\[
\sym (\widetilde{\langle\underline{a}\rangle}\times\widehat{GL}^{\vee}_{a_1,\dots,a_n}(A))\cong \sym \widetilde{\langle\underline{a}\rangle}\times\sym\widehat{GL}^{\vee}_{a_1,\dots,a_n}(A) \longrightarrow\sym\widehat{GL}^{\vee}_{a_1,\dots,a_n}(A)
\]
is also an equivalence of categories. Thus $B^{\sigma}(\widetilde{\langle\underline{a}\rangle}\times\widehat{GL}^{\vee}_{a_1,\dots,a_n}(A))$ and $B^{\sigma}\widehat{GL}^{\vee}_{a_1,\dots,a_n}(A)$ are equivariantly equivalent.
\end{remark}

The extra $\widetilde{\langle\underline{a}\rangle}$-coordinate is used for defining ${\KR}(A)$ on morphisms. Given a pointed map $f\colon n_+\to k_+$ and $\alpha\in\langle\underline{a}\rangle$ we let $f_\ast\underline{a}\in\mathbb{N}^{\times k}$ and $f_\ast\alpha \in \langle f_\ast\underline{a}\rangle$ denote respectively
\[
(f_\ast\underline{a})_i:=\sum_{j\in f^{-1}(i)}a_j\ \ \ \ \ \ \ \ \ \ \ \ (f_\ast\alpha)_{S,T}:=\alpha_{f^{-1}S,f^{-1}T}
\]
for every $1\leq i\leq k$ and $S\amalg T\subset \{1,\dots, k\}$. We define $f_\ast\colon {\KR}_n(A)\to {\KR}_k(A)$ by mapping the $\underline{a}$-summand to the $f_\ast\underline{a}$-summand by a map whose first component is $B^{\sigma}$ of the functor
\[
\widetilde{\langle\underline{a}\rangle}\times\widehat{GL}^{\vee}_{a_1,\dots,a_n}(A)\stackrel{}{\longrightarrow}\widetilde{\langle\underline{a}\rangle}\stackrel{f_\ast}{\longrightarrow} \widetilde{\langle f_\ast\underline{a}\rangle},
\]
where the first map is the projection.
The second component of this map is defined as follows. A pair of permutations $\alpha,\beta\in\langle\underline{a}\rangle$ gives rise to a morphism of monoids with anti-involution
\[
(\alpha,\beta)_\ast\colon \widehat{GL}^{\vee}_{a_1,\dots,a_n}(A)\longrightarrow \widehat{GL}^{\vee}_{f_\ast\underline{a}}(A).
\]
It is induced by the map of ring spectra with anti-involution obtained by wedging over $i\in \{1,\dots,k\}$ the maps
\[
(\alpha,\beta)_j\colon \bigvee_{j\in f^{-1}(i)}M^{\vee}_{a_j}(A)\longrightarrow M^{\vee}_{(f_\ast \underline{a})_i}(A)
\]
defined by sending $x\in M^{\vee}_{a_j}(A)$ to $
(\alpha,\beta)_j(x):=\beta_{(f^{-1}i)\backslash j,j}(0\oplus x)\alpha_{(f^{-1}i)\backslash j,j}^{-1}$,
where $0\oplus x$ is the value at $x$ of the block-sum map $
\oplus\colon M^{\vee}_{(f_\ast \underline{a})_i-a_j}(A)\vee M^{\vee}_{a_j}(A)\to M^{\vee}_{(f_\ast \underline{a})_i}(A)$ and $\alpha$ and $\beta$ are considered as permutation matrices.
 More explicitly, an element of $M^{\vee}_{a_j}(A_k)$ consists of a pair $(c,d)\in a_j\times a_j$ and a point $x\in A_k$. This is sent to
\[
(\alpha,\beta)_j(c,d,x)=(\beta_{(f^{-1}i)\backslash j,j}(\iota c),\alpha_{(f^{-1}i)\backslash j,j}(\iota d), x)
\]
where $\iota\colon a_j\to (f_\ast\underline{a})_j$ is the inclusion.
The second component of the map ${\KR}(f)$ is then induced by the functor 
\[\widetilde{\langle\underline{a}\rangle}\times \widehat{GL}^{\vee}_{a_1,\dots,a_n}(A)\to\widehat{GL}^{\vee}_{f_\ast\underline{a}}(A)\]
which is the projection on objects and that sends a morphism $(\alpha,\beta,x)$ in $\widetilde{\langle\underline{a}\rangle}\times \widehat{GL}^{\vee}_{a_1,\dots,a_n}(A)$ to $(\alpha,\beta)_\ast x$. Given a $\mathbb{Z}/2$-$\Gamma$-space $X$, we let $X_{S^1}$ be the first space of the associated $\mathbb{Z}/2$-spectrum, defined as the geometric realization $X_{S^1}=|n\mapsto X_{S^{1}_n}|$ where $S^{1}_n$ is the set of $n$-simplices of the simplicial circle $S^{1}_\bullet=\Delta^1/\partial$.

\begin{proposition}\label{Gamma}
Let $A$ be a ring spectrum with anti-involution.
The functor ${\KR}(A)$ is a special $\mathbb{Z}/2$-$\Gamma$-space in the sense of \cite{ShimaSpecial}. The $\mathbb{Z}/2$-space ${\KR}(A)_{S^{1}}$ is equivalent to the real $K$-theory $\mathbb{Z}/2$-space of \S\ref{defKRspace}. The underlying $\Gamma$-space of ${\KR}(A)$  is equivalent the $K$-theory $\Gamma$-space of $A$.
\end{proposition}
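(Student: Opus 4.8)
The plan is to recognise $\KR(A)$ as the $\mathbb{Z}/2$-equivariant refinement of Segal's $\Gamma$-category construction (\cite{Segal}, \cite{ShimaShima}, \cite{ShimaSpecial}) applied to the permutative topological category with duality underlying $(\coprod_{n}B^{\sigma}\widehat{GL}^{\vee}_{n}(A),\oplus)$, with the auxiliary factors $\widetilde{\langle\underline{a}\rangle}$ playing the role of Segal's device (as in \cite[Def.~2.3.1.1]{DGM}) that rigidifies the partially defined block sum into a strict functor. First, as observed in the Remark preceding the Proposition, for each $\underline{a}$ the projection $B^{\sigma}(\widetilde{\langle\underline{a}\rangle}\times\widehat{GL}^{\vee}_{a_{1},\dots,a_{n}}(A))\to B^{\sigma}\widehat{GL}^{\vee}_{a_{1},\dots,a_{n}}(A)$ is a $\mathbb{Z}/2$-equivalence, because $\widetilde{\langle\underline{a}\rangle}$ has a unique morphism between any two objects (hence a contractible nerve) and satisfies $\sym\widetilde{\langle\underline{a}\rangle}=\widetilde{\langle\underline{a}\rangle}$, so the fixed-point part is Proposition \ref{fixedB11}. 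These projections commute with the operators $f_{\ast}$ of the $\Gamma$-space only up to the contractible choices indexed by the auxiliary factors (the operators conjugate by the permutations recorded there); threading this coherence is precisely Segal's rectification, and it produces a chain of natural $\mathbb{Z}/2$-equivalences of $\mathbb{Z}/2$-$\Gamma$-spaces between $\KR(A)$ and the ``naive'' construction built from the partial block sum on $\coprod_{n}B^{\sigma}\widehat{GL}^{\vee}_{n}(A)$.

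Granting this, specialness reduces to showing that each Segal map $\KR(A)_{n}\to\KR(A)_{1}^{\times n}$ is a $\mathbb{Z}/2$-equivalence. After discarding the auxiliary factors and trivialising the inner automorphisms involved, its restriction to the $\underline{a}$-summand is the composite
\[
B^{\sigma}\widehat{GL}^{\vee}_{a_{1},\dots,a_{n}}(A)\longrightarrow B^{\sigma}\bigl(\textstyle\prod_{i}\widehat{GL}^{\vee}_{a_{i}}(A)\bigr)\longrightarrow\textstyle\prod_{i}B^{\sigma}\widehat{GL}^{\vee}_{a_{i}}(A),
\]
where the first arrow is $B^{\sigma}$ applied to the map $\widehat{GL}^{\vee}_{a_{1},\dots,a_{n}}(A)\to\prod_{i}\widehat{GL}^{\vee}_{a_{i}}(A)$ obtained by applying $\Omega^{\infty}_{I}$ to the stable equivalence $\bigvee_{i}M^{\vee}_{a_{i}}(A)\xrightarrow{\,\simeq\,}\prod_{i}M^{\vee}_{a_{i}}(A)$ (a finite wedge of spectra) and restricting to invertible components, and the second arrow is the canonical comparison map. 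The first map is a $\mathbb{Z}/2$-equivalence of non-unital monoids with anti-involution: non-equivariantly this is clear since $\Omega^{\infty}_{I}$ preserves stable equivalences, and on fixed points it is the equivalence $(\Omega^{\infty}_{I}M^{\vee}_{n}(A))^{\mathbb{Z}/2}\xrightarrow{\,\simeq\,}(\Omega^{\infty}_{I}\prod_{n\times n}A)^{\mathbb{Z}/2}$ used in the proofs of Propositions \ref{pi0Hermitian} and \ref{GLq1gpl}, which holds because $\Omega^{\infty}_{I}$ computes the genuine equivariant infinite loop space (Remark \ref{infiniteloopG}). The second map is an equivalence because it is a levelwise homeomorphism before realisation and geometric realisation commutes with finite products of compactly generated spaces, and on $\mathbb{Z}/2$-fixed points this follows from Proposition \ref{fixedB11} together with $\sym(M\times M')=\sym M\times\sym M'$. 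Since $B^{\sigma}$ sends $\mathbb{Z}/2$-equivalences of monoids with anti-involution to $\mathbb{Z}/2$-equivalences (\S\ref{secBar}), the Segal maps are $\mathbb{Z}/2$-equivalences and $\KR(A)$ is special.

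For the last two assertions one unwinds the simplicial $\mathbb{Z}/2$-space $[q]\mapsto\KR(A)(S^{1}_{q})$ whose realisation is $\KR(A)_{S^{1}}$. Since $S^{1}_{q}$ has $q$ non-basepoint elements we have $\KR(A)(S^{1}_{q})\cong\KR(A)_{q}$, with $\KR(A)(S^{1}_{0})$ a point, and tracing the operators of $S^{1}_{\bullet}=\Delta[1]/\partial\Delta[1]$ shows that the inner faces $d_{i}$ ($0<i<q$) block-sum the $i$-th and $(i+1)$-st blocks, the outer faces $d_{0},d_{q}$ delete the first, respectively last, block, and the degeneracies insert a zero block. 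Under the forgetting equivalences $B^{\sigma}(\widetilde{\langle\underline{a}\rangle}\times-)\to B^{\sigma}(-)$ these match, up to the contractible conjugation choices above, the face and degeneracy operators of the simplicial $\mathbb{Z}/2$-space $[q]\mapsto\coprod_{n_{1},\dots,n_{q}}B^{\sigma}\widehat{GL}^{\vee}_{n_{1},\dots,n_{q}}(A)$ of Definition \ref{defKRspace}; these simplicial spaces are proper, so thin and thick realisation agree and realisation commutes with $\mathbb{Z}/2$-fixed points. Hence $\KR(A)_{S^{1}}$ is $\mathbb{Z}/2$-equivalent to $\|\coprod_{n_{1},\dots,n_{\bullet}}B^{\sigma}\widehat{GL}^{\vee}_{n_{1},\dots,n_{\bullet}}(A)\|$, so, after looping once (equivalently, by the equivariant group-completion theorem used in the proof of Lemma \ref{lemmafreeloop}, applied to the partial monoid $\coprod_{n}B^{\sigma}\widehat{GL}^{\vee}_{n}(A)$), it is identified with the real $K$-theory $\mathbb{Z}/2$-space $\KR(A)$ of Definition \ref{defKRspace}. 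Forgetting the $\mathbb{Z}/2$-action, $B^{\sigma}$ becomes the ordinary bar construction on underlying homotopy types (Remark \ref{thinvsthick}, using that $\widehat{GL}^{\vee}_{n}(A)$ is equivalent to the unital monoid $\Omega^{\infty}_{I}M_{n}(A)$), so $\KR(A)$ becomes Segal's $\Gamma$-space of the permutative category $(\coprod_{n}B\widehat{GL}_{n}(A),\oplus)$ of finitely generated free $A$-modules, whose associated spectrum is the connective algebraic $K$-theory spectrum of $A$, in accordance with the identification $\KR(A)|_{1}\simeq K(A|_{1})$ established above.

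The step I expect to be the main obstacle is the first paragraph: carrying out Segal's rectification $\mathbb{Z}/2$-equivariantly, i.e.\ producing the chain of natural equivalences of $\mathbb{Z}/2$-$\Gamma$-spaces compatibly on underlying spaces and on $\mathbb{Z}/2$-fixed points. On fixed points one works with the two-sided bar constructions of Proposition \ref{fixedB11} in place of $B^{\sigma}$, and the only input beyond diagram-chasing is that $\sym$ carries the projections and the wedge-into-product maps appearing above to levelwise equivalences of nerves, which, as above, reduces to Remark \ref{infiniteloopG} and the fact that a finite wedge of spectra maps by an equivalence onto the corresponding product. Everything else is the standard special-$\Gamma$-space formalism of \cite{Segal}, \cite{ShimaShima}, \cite{ShimaSpecial}, transported to the $\mathbb{Z}/2$-equivariant setting of \cite{ShimaNote}.
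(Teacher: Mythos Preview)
Your overall strategy is sound and close in spirit to the paper's, but there is one genuine omission in your specialness argument. You reduce specialness to checking that the Segal maps $\KR(A)_n\to\KR(A)_1^{\times n}$ are $\mathbb{Z}/2$-equivalences for the involution coming from the duality alone, but Shimakawa's notion of special requires more: for \emph{every} group homomorphism $\sigma\colon\mathbb{Z}/2\to\Sigma_n$ (equivalently, every finite pointed $\mathbb{Z}/2$-set), the Segal map must be a $\mathbb{Z}/2$-equivalence when the target carries the $\sigma$-twisted action permuting the factors. Your wedge-into-product argument does adapt to this case---the map $\bigvee_i M^{\vee}_{a_i}(A)\to\prod_i M^{\vee}_{a_i}(A)$ remains an equivariant equivalence when $\mathbb{Z}/2$ permutes the index set---but you do not say so, and the decomposition over $\underline{a}$ must be handled with care since the twisted involution does not preserve individual summands unless $\underline{a}$ is $\sigma$-invariant.

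The paper's approach differs from yours mainly in how it handles the rectification you flag as the main obstacle. Rather than invoking an abstract equivariant Segal rectification, the paper introduces a spectrally enriched category with strict duality $\mathcal{F}^{\vee}_A[n]$ (a subcategory of Segal's $\mathcal{F}_A[n]$ whose morphisms are constrained to the wedge-model matrix rings and whose objects are pairs $\langle\underline{a},\alpha\rangle$ with $\alpha$ a family of \emph{permutations}) and identifies $\KR(A)_n$ directly with the classifying space of its invertible part. The crux is that a morphism $\{f_S\}_{S\subset\underline{n}}$ in $\mathcal{F}^{\vee}_A[n]$ is determined by a single component $f_j$ via $f_S=\beta_{S\setminus j,j}(0\oplus f_j)\alpha_{S\setminus j,j}^{-1}$; an explicit calculation verifies this formula is consistent, which simultaneously makes the functoriality in $n_+$ manifest and yields an explicit equivariant inverse $\mathcal{F}^{\vee}_A[n]\to(\mathcal{F}^{\vee}_A)^{\vee n}$ (sending $\{f_S\}$ to $f_{\{j\}}$) for the comparison map needed in the specialness check for arbitrary $\sigma$. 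This buys concreteness and sidesteps any appeal to a general equivariant rectification theorem. Your route is more conceptual and closer to how one would argue for a generic permutative $\mathbb{Z}/2$-category, but it leaves the hardest step as an acknowledged gap rather than carrying it out.
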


\begin{proof}
Let $\mathcal{F}_A$ be the spectrally enriched category whose objects are the non-negative integers and where the endomorphisms of $k$ consist of the matrix ring $\prod_k\bigvee_kA$. We recall that the Segal construction on $\mathcal{F}_A$ is the $\Gamma$-category enriched in symmetric spectra defined by sending $n_+$ to the category $\mathcal{F}_A[n]$. Its objects are the pairs $\langle \underline{a},\alpha \rangle$ where $\underline{a}=(a_1,\dots, a_n)$ is a collection of non-negative integers, and $\alpha$ is a collection of isomorphisms $\alpha=\{\alpha_{S,T}\colon \sum_{s\in S}a_s+\sum_{t\in T}a_t\to \sum_{i\in S\amalg T}a_i\}$ in the underlying category of $\mathcal{F}_A$ satisfying the conditions of \cite[Def. 2.3.1.1]{DGM}. The spectrum of morphisms $\langle \underline{a},\alpha \rangle\to\langle \underline{b},\beta \rangle$ is non-trivial only if $\underline{a}=\underline{b}$, and it is defined by the collection of elements $\{f_S\in M_{\sum_{s\in S}a_s}(A)\}_{S\subset \underline{n}}$ which satisfy $\beta_{S, T}(f_{S}\oplus f_{T})=f_{S\amalg T}\alpha_{S,T}$. 

There is an equivalence of spectral categories $\mathcal{F}^{\times n}_A\to \mathcal{F}_A[n]$ that sends $\underline{a}=(a_1,\dots, a_n)$ to $\langle \underline{a},\alpha \rangle$ where $\alpha_{S,T}$ is the permutation matrix of the  permutation of $S\amalg T$ that sends the order on $S\amalg T$ induced by the disjoint union of the orders of $S$ and $T$ to the order of  $S\amalg T$ as a subset of $n$ (the point is that $\mathcal{F}_A[n]$ is functorial in $n$ with respect to all maps of pointed sets, whereas $\mathcal{F}^{\times n}_A$ only for order-preserving maps).

Now let $\mathcal{F}^{\vee}_A$ be the equivalent subcategory (without identities) of $\mathcal{F}_A$ with all the objects, but where the endomorphisms of $a$ are the ring spectra $M_{a}^\vee(A)=\bigvee_{n\times n}A$. The space ${\KR}(A)_n$ is roughly the invertible components of the classifying space of the image of $(\mathcal{F}^{\vee}_A)^{\vee n}$ inside $\mathcal{F}_A[n]$. More precisely, there is a commutative square of spectrally enriched categories
\[
\xymatrix{
(\mathcal{F}^{\vee}_{A})^{\vee n}\ar[d]^{\sim}\ar[r]&\mathcal{F}^{\vee}_A[n]\ar[d]\\
\mathcal{F}_{A}^{\times n}\ar[r]_{\sim}&\mathcal{F}_A[n]
}
\]
where $\mathcal{F}^{\vee}_A[n]$ is defined as the subcategory of $\mathcal{F}_A[n]$ on the objects $\langle \underline{a},\alpha \rangle$ where $\alpha_{S,T}$ is a permutation representation, and where the morphisms $\{f_S\}_{S\subset \underline{n}}$ are such that there is a $j\in \underline{n}$ such that $f_S=0$ if $j\notin S$.
The top horizontal arrow is simply the restriction of the bottom horizontal one.

The spectral category $\mathcal{F}^{\vee}_A[n]$ has a strict duality, which is the identity on objects and the anti-involution on the matrix ring $M_{a}^\vee (A)$ on morphisms. We observe that a morphism $\{f_S\}_{S\subset \underline{n}}$ in $\mathcal{F}^{\vee}_A[n]$ is determined by the value $f_{j}$, since for every $S\subset n$ containing $j$ we have that
\[
f_{S}=f_{(S\backslash j)\amalg j}=\beta_{S\backslash j, j} (0_{S\backslash j}\oplus f_{j})\alpha_{S\backslash j, j}^{-1}.
\]
Moreover this equation determines the relation $\beta_{S, T}(f_{S}\oplus f_{T})=f_{S\amalg T}\alpha_{S,T}$, and it follows that the value at $n_+$ of the corresponding $\mathbb{Z}/2$-$\Gamma$-space is
\[
\coprod_{ \underline{a}}B^{\sigma}(\widetilde{\langle\underline{a}\rangle}\times \Omega^{\infty}_I(M^{\vee}_{a_1}(A)\vee\dots\vee M^{\vee}_{a_n}(A)))
.\]
Its invertible components are then by definition ${\KR}_n(A)$ and the functoriality in $\Gamma^{op}$ induced by the ambient category $\mathcal{F}_A[n]$ is the one described above. In particular ${\KR}_n(A)$ is functorial in $n$. The fact that  $f_{S}:=\beta_{S\backslash j, j} (0_{S\backslash j}\oplus f_{j})\alpha_{S\backslash j, j}^{-1}$ determines a well-defined morphism $\langle \underline{a},\alpha \rangle\to\langle \underline{a},\beta \rangle$ follows from the following calculation:
\begin{align*}
&\beta_{S,T}(f_S\oplus 0_T)\alpha_{S,T}^{-1}\\
&=\beta_{S,T}((\beta_{S\backslash j, j} (0_{S\backslash j}\oplus f_{j})\alpha_{S\backslash j, j}^{-1})\oplus 0_T)\alpha_{S,T}^{-1}
\\
&=
\beta_{S,T}(\beta_{S\backslash j, j}\amalg \id_T) (0_{S\backslash j}\oplus f_{j}\oplus 0_T)(\alpha_{S\backslash j, j}^{-1}\amalg \id_T)\alpha_{S,T}^{-1}
\\
&\stackrel{1)}{=}
\beta_{j,S\amalg T\backslash j}(\id_j\amalg \beta_{S\backslash j, T})(\tau_{S\backslash j,j}\amalg \id_T) (0_{S\backslash j}\oplus f_{j}\oplus 0_T)(\alpha_{S\backslash j, j}^{-1}\amalg \id_T)\alpha_{S,T}^{-1}
\\
&\stackrel{2)}{=}
\beta_{j,S\amalg T\backslash j}(\id_j\amalg \beta_{S\backslash j, T})(\tau_{S\backslash j,j}\amalg \id_T) (0_{S\backslash j}\oplus f_{j}\oplus 0_T)(\tau_{j,S\backslash j}\amalg \id_T)
(\id_j\amalg \alpha^{-1}_{S\backslash j,T})\alpha_{j,S\amalg T\backslash j}^{-1}
\\
&=
\beta_{j,S\amalg T\backslash j}(\id_j\amalg \beta_{S\backslash j, T}) (f_{j}\oplus 0_{S\amalg T\backslash j})
(\id_j\amalg \alpha^{-1}_{S\backslash j,T})\alpha_{j,S\amalg T\backslash j}^{-1}
\\
&=\beta_{j,S\amalg T\backslash j}(f_{j}\oplus 0_{S\amalg T\backslash j})
\alpha_{j,S\amalg T\backslash j}^{-1}
\\
&=\beta_{S\amalg T\backslash j,j}\tau_{j,S\amalg T\backslash j}(f_{j}\oplus 0_{S\amalg T\backslash j})\tau_{j,S\amalg T\backslash j}
\alpha_{S\amalg T\backslash j,j}^{-1}
\\
&=\beta_{S\amalg T\backslash j,j}(0_{S\amalg T\backslash j}\oplus f_j)\alpha_{S\amalg T\backslash j,j}^{-1}
\\
&=f_{S\amalg T},
\end{align*}
where $\tau_{S,T}\colon \sum_{s\in S}a_s+\sum_{t\in T}a_t\to \sum_{t\in T}a_t+\sum_{s\in S}a_s$ is the symmetry isomorphism of the symmetric monoidal structure.
From this description of the morphisms of $\mathcal{F}^{\vee}_A[n]$ one can easily see that the top horizontal map of the square above, and hence all of its maps, are equivalences of categories. Thus the $\Gamma$-space underlying ${\KR}(A)$ is equivalent to the $K$-theory of $A$.

We show that ${\KR}$ is a special  $\mathbb{Z}/2$-$\Gamma$-space. For every group homomorphism $\sigma\colon \mathbb{Z}/2\to \Sigma_n$ we need to show that the map
\[
\mathcal{F}^{\vee}_A[n]\longrightarrow (\mathcal{F}^{\vee}_A[1])^{\times n},
\]
whose $j$-component is induced by the map $n_+\to 1_+$ that sends $j$ to $1$ and the rest to the basepoint, is a $\mathbb{Z}/2$-equivariant equivalence. Here the involution is induced by $\sigma\colon \mathbb{Z}/2\to \Sigma_n$ through the functoriality in $n$. The square above provides an equivalence of spectrally enriched categories $\mathcal{F}_A^{\vee}\to \mathcal{F}_A^{\vee}[1]$. This functor is in fact an isomorphism on mapping spectra, and it is therefore an equivariant equivalence. We show that the top horizontal arrow of
\[
\xymatrix{
(\mathcal{F}^{\vee}_A)^{\vee n}\ar[r]\ar[d]_{\sim}&\mathcal{F}^{\vee}_A[n]\ar[d]
\\
(\mathcal{F}^{\vee}_A)^{\times n}\ar[r]_\sim&(\mathcal{F}^{\vee}_A[1])^{\times n}
}
\]
defined as the restriction of $\mathcal{F}^{\times n}_A\to \mathcal{F}_A[n]$ is an equivariant equivalence, which will finish the proof. It has an equivariant inverse $\mathcal{F}^{\vee}_A[n]\to (\mathcal{F}^{\vee}_A)^{\vee n}$ that sends an object $\langle \underline{a},\alpha\rangle$ to $\underline{a}$ and a morphism $\{f_S\}$ to $f_{\{j\}}$.
\end{proof}


\subsection{Pairings in real algebraic $K$-theory}\label{secpairings}

Let $A$ and $B$ be ring spectra with anti-involution. Their smash product $A\wedge B$ is a ring spectrum with anti-involution, where the multiplication is defined componentwise and the anti-involution is diagonal. There is a pairing in the homotopy category of $\Z/2$-spectra
\[
{\KR}(A)\wedge {\KR}(B)\longrightarrow {\KR}(A\wedge B)
\]
defined in a manner analogous to the analogous pairing in $K$-theory. This map does not quite lift to the category of $\Z/2$-spectra because of the failure of thick realizations to commute with products strictly, but this is the only obstruction. The pairing is defined as follows. The standard formula for the Kronecker product of matrices restricts to an isomorphism
\[
\otimes \colon M^{\vee}_{n}(X)\wedge M^{\vee}_{k}(Y)\cong \bigvee_{n\times n\times k\times k}(X\wedge Y)\longrightarrow \bigvee_{nk\times nk}(X\wedge Y)=M^{\vee}_{nk}(X\wedge Y)
\]
(where $X$ and $Y$ are either pointed spaces or spectra) which is determined by the isomorphism $n\times n\times k\times k\cong nk\times nk$ that sends $(i,j,m,l)$ to $((i-1)n+m,(j-1)n+l)$. This isomorphism is moreover $\Z/2$-equivariant.

Given $\underline{a}=(a_1,\dots,a_n)$, we denote $\widehat{M}^{\vee}_{\underline{a}}(A):=\Omega^{\infty}_I(M^{\vee}_{a_1}(A)\vee\dots\vee M^{\vee}_{a_n}(A))$, a topological monoid with anti-involution. Given $\underline{a}=(a_1,\dots,a_n)$ and $\underline{b}=(b_1,\dots,b_k)$, we let $\underline{a}\cdot \underline{b}$ be the $nk$-sequence of non-negative integers
\[\underline{a}\cdot \underline{b}:=(a_1b_1,a_1b_2,\dots a_1b_k,a_2b_1,\dots,a_2b_k,\dots,a_nb_1,\dots, a_nb_k).\]
The pairing above induces $\Z/2$-equivariant maps
\[
\xymatrix@C=0pt@R=15pt{
B^{\sigma}(\widetilde{\langle\underline{a}\rangle}\times\widehat{M}^{\vee}_{\underline{a}}(A))\times B^{\sigma}(\widetilde{\langle\underline{b}\rangle}\times\widehat{M}^{\vee}_{\underline{b}}(B))
\ar@{-->}[dddd]
&
B^{\sigma}(\widetilde{\langle\underline{a}\rangle}\times\widetilde{\langle\underline{b}\rangle}\times\widehat{M}^{\vee}_{\underline{a}}(A)\times \widehat{M}^{\vee}_{\underline{b}}(B))
\ar[l]_-{\simeq}\ar[d]
\\
&
\hspace{-1.5cm}\displaystyle B^{\sigma}(\widetilde{\langle\underline{a}\rangle}\times\widetilde{\langle\underline{b}\rangle}\times\hocolim_{I\times I}\Omega^{i+j}(M^{\vee}_{\underline{a}}(A_i)\wedge M^{\vee}_{\underline{b}}(B_j)))\ar[d]_-\cong
\\
&
\hspace{-2cm} \displaystyle B^{\sigma}(\widetilde{\langle\underline{a}\rangle\times\langle\underline{b}\rangle}\times\hocolim_{I\times I}\Omega^{i+j}(\bigvee_{(h,l)\in n\times k}(M^{\vee}_{a_h}(A_i)\wedge 
M^{\vee}_{b_l}(B_j))))\ar[d]_-{\cong}^-{\otimes}
\\
&
\hspace{-1.5cm}\displaystyle B^{\sigma}(\widetilde{\langle\underline{a}\rangle\times\langle\underline{b}\rangle}\times\hocolim_{I\times I}\Omega^{i+j}(\bigvee_{(h,l)\in n\times k}(M^{\vee}_{a_hb_l}(A_i\wedge 
B_j))))\ar[d]^-{\otimes\times \iota}
\\
\hspace{-1cm}\displaystyle B^{\sigma}(\widetilde{\langle\underline{a}\cdot \underline{b}\rangle}\times\widehat{M}^{\vee}_{\underline{a}\cdot \underline{b}}(A\wedge 
B))\stackrel{+_\ast}{\longleftarrow}
&
\hspace{-1.2cm}
\displaystyle B^{\sigma}(\widetilde{\langle\underline{a}\cdot \underline{b}\rangle}\times\hocolim_{I\times I}\Omega^{i+j}(\bigvee_{(h,l)\in n\times k}(M^{\vee}_{a_hb_l}((A\wedge 
B)_{i+j}))))
}
\]
where $\otimes\times \iota$ is induced by the product of the map $\otimes \colon \langle\underline{a}\rangle\times\langle\underline{b}\rangle\to \langle\underline{a}\cdot\underline{b}\rangle$ defined by taking the Kronecker product of permutations (i.e. the Kronecker product of the associated permutation matrices), and of the canonical map $A_i\wedge B_j\to (A\wedge B)_{i+j}$. By restricting on the invertible components of $\widehat{M}^{\vee}_{\underline{a}}(A)$ and $\widehat{M}^{\vee}_{\underline{b}}(A)$ this induces equivariant maps
\[
{\KR}(A)_n\wedge {\KR}(B)_k\stackrel{\simeq}{\longleftarrow}Z_{n,k}\longrightarrow {\KR}(A\wedge B)_{nk}
\]
where $Z_{n,k}=\coprod_{\underline{a},\underline{b}}B^{\sigma}(\widetilde{\langle\underline{a}\rangle}\times\widetilde{\langle\underline{b}\rangle}\times\widehat{GL}^{\vee}_{\underline{a}}(A)\times \widehat{GL}^{\vee}_{\underline{b}}(B))$, and this diagram is natural in both variables $n,k\in\Gamma^{op}$. This therefore induces a pairing in the homotopy category of $\Z/2$-spectra, and pairings
\[
\otimes\colon{\KR}(A)^{\Z/2}\wedge{\KR}(B)^{\Z/2}\longrightarrow {\KR}(A\wedge B)^{\Z/2}
\]
\[
\otimes\colon\Phi^{\Z/2}{\KR}(A)\wedge \Phi^{\Z/2}{\KR}(B)\longrightarrow \Phi^{\Z/2}{\KR}(A\wedge B)
\]
in the homotopy category of spectra.
Let $A$ be a ring spectrum with anti-involution and $\pi$ a well-pointed topological group. The corresponding group-algebra is the ring spectrum
\[A[\pi]:=A\wedge \pi_+\]
with the anti-involution defined diagonally from the anti-involution of $A$ and the inversion map of $\pi$. This is an $A$-algebra via the map
\[
A\wedge A[\pi]=A\wedge A\wedge \pi_+\xrightarrow{\mu\wedge \id}A\wedge\pi_+=A[\pi]
\]
where $\mu$ denotes the multiplication of $A$.
If moreover $A$ is a commutative ring spectrum with anti-involution, that is a commutative  $\Z/2$-equivariant orthogonal ring spectrum, the map $\mu\wedge \id$ is a
morphism of ring spectra with anti-involution. 
Thus one can compose the pairings above for $B=A[\pi]$ with the induced map ${\KR}(A\wedge A[\pi])\to {\KR}(A[\pi])$.
The associativity of the Kronecker product of matrices then gives the following.

\begin{proposition}\label{multiplication}
Let $A$ be a commutative  $\Z/2$-equivariant orthogonal ring spectrum and $\pi$ a topological group. The graded Abelian groups $\pi_\ast{\KR}(A)^{\Z/2}$ and $\pi_\ast\Phi^{\Z/2}{\KR}(A)$ are graded rings. The graded Abelian group $\pi_\ast{\KR}(A[\pi])^{\Z/2}$ is a graded $\pi_\ast{\KR}(A)^{\Z/2}$-module, and $\pi_\ast\Phi^{\Z/2}{\KR}(A[\pi])$ is a graded $\pi_\ast\Phi^{\Z/2}{\KR}(A)$-module. \qed
\end{proposition}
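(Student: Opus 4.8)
The plan is to construct the multiplication on $\KR(A)$ by composing the pairing of \S\ref{secpairings}
\[
\otimes\colon\KR(A)\wedge\KR(A)\longrightarrow\KR(A\wedge A)
\]
with the map $\KR(\mu)\colon\KR(A\wedge A)\to\KR(A)$ induced by the multiplication $\mu\colon A\wedge A\to A$. When $A$ is a commutative $\Z/2$-equivariant orthogonal ring spectrum the source $A\wedge A$ carries the diagonal (componentwise) anti-involution $w\wedge w$, and commutativity of $\mu$ is exactly the condition $w\circ\mu=\mu\circ(w\wedge w)$, so $\mu$ is a map of ring spectra with anti-involution and $\KR(\mu)$ is a well-defined equivariant map. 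Applying the same discussion to the fixed-point and geometric-fixed-point pairings already produced in \S\ref{secpairings} yields multiplications
\[
\KR(A)^{\Z/2}\wedge\KR(A)^{\Z/2}\longrightarrow\KR(A)^{\Z/2},\qquad \Phi^{\Z/2}\KR(A)\wedge\Phi^{\Z/2}\KR(A)\longrightarrow\Phi^{\Z/2}\KR(A)
\]
in the homotopy category of spectra.

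For associativity and unitality it is enough, by the naturality in $\Gamma^{op}$ recorded in the construction of \S\ref{secpairings}, to check the identities at the level of the zig-zags $Z_{n,k}$, where they reduce to two inputs. The Kronecker product $M^\vee_n(X)\wedge M^\vee_k(Y)\to M^\vee_{nk}(X\wedge Y)$ is strictly associative and compatible with the associativity and symmetry isomorphisms of the smash product, as one sees from the bijection $n\times n\times k\times k\cong nk\times nk$; together with the associativity of $\mu$ this makes $\KR(\mu)\circ(\otimes)$ associative in the homotopy category, so $\pi_\ast\KR(A)^{\Z/2}$ and $\pi_\ast\Phi^{\Z/2}\KR(A)$ become graded rings. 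The unit is represented by the $1$-dimensional identity form built from the unit map $S^0\to A_0$, equivalently the image of the generator under $\eta\colon\mathbb{S}\to\KR(A)$; since Kronecker product with the $1\times1$ identity matrix is the identity and $\mu$ is unital, this class is a two-sided unit, matching the identity $1\otimes B=B\otimes 1=B$ for forms. All of this survives $(-)^{\Z/2}$ and $\Phi^{\Z/2}$.

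For the module structures, recall that $A[\pi]=A\wedge\pi_+$ carries the diagonal anti-involution and that when $A$ is commutative the $A$-algebra structure map $\mu\wedge\id\colon A\wedge A[\pi]\to A[\pi]$ is a map of ring spectra with anti-involution, as noted in \S\ref{secpairings}. Hence $\KR(\mu\wedge\id)\colon\KR(A\wedge A[\pi])\to\KR(A[\pi])$ is defined, and composing it with the pairing $\KR(A)\wedge\KR(A[\pi])\to\KR(A\wedge A[\pi])$ gives
\[
\KR(A)\wedge\KR(A[\pi])\longrightarrow\KR(A[\pi]).
\]
The module axiom for this action follows, again via $Z_{n,k}$, from the associativity of the Kronecker product together with the fact that $\mu$ makes $A[\pi]$ an $A$-module (the relevant pentagon is the associativity pentagon of $\mu$). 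Applying $(-)^{\Z/2}$ and $\Phi^{\Z/2}$ and passing to homotopy groups exhibits $\pi_\ast\KR(A[\pi])^{\Z/2}$ as a graded module over $\pi_\ast\KR(A)^{\Z/2}$ and $\pi_\ast\Phi^{\Z/2}\KR(A[\pi])$ as a graded module over $\pi_\ast\Phi^{\Z/2}\KR(A)$.

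Since the proposition only concerns graded rings and graded modules, no higher coherence is required: we use nothing beyond associativity and unitality of maps in the homotopy category, which are inherited from the strictly associative, unital Kronecker product and from the associative, unital, commutative multiplication of $A$. I expect the only genuine obstacle to be the bookkeeping of the permutation-matrix coordinates $\widetilde{\langle\underline{a}\rangle}$ in $\KR(A)_n$ and the verification that the associator and unitor squares of $\Gamma$-spaces commute up to the canonical homotopies already present in the construction; this is a finite diagram chase in the spirit of Proposition \ref{multiMackey}, and I would organize it the same way.
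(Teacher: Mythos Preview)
Your proposal is correct and matches the paper's approach exactly: the paper does not give a separate proof (note the \qed in the statement), relying instead on the sentence immediately preceding the proposition, namely that the ring and module structures follow from the pairings of \S\ref{secpairings} composed with $\KR(\mu)$ and $\KR(\mu\wedge\id)$, with associativity inherited from the associativity of the Kronecker product of matrices. Your write-up simply spells out this one-line justification in more detail.
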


\subsection{The Hermitian $K$-theory of a ring spectrum with anti-involution}

Let $A$ be a ring spectrum with anti-involution.

\begin{definition}
The free Hermitian $K$-theory space of $A$ is the fixed points space
\[
\KH(A):=\KR(A)^{\mathbb{Z}/2}\cong\Omega B\big(\coprod_n \big(B^{\sigma}\widehat{GL}^{\vee}_n(A)\big)^{\mathbb{Z}/2}\big).
\]
The free Hermitian $K$-theory spectrum of $A$ is the spectrum ${\KH}(A)$ associated to the fixed points $\Gamma$-space
\[{\KH}(A)_n:={\KR}(A)^{\mathbb{Z}/2}_n\cong \coprod_{\underline{a}}(B^{\sigma}(\widetilde{\langle\underline{a}\rangle}\times\widehat{GL}^{\vee}_{a_1,\dots,a_n}(A)))^{\mathbb{Z}/2}.\]
\end{definition}

\begin{remark}\label{naiveKR}
The spectrum associated to the $\Gamma$-space ${\KH}(A)$ is the na\"{i}ve fixed-points spectrum of the $\mathbb{Z}/2$-spectrum associated to the $\Z/2$-$\Gamma$-space ${\KR}(A)$. Since ${\KR}(A)$ is special as an equivariant $\Gamma$-space (see Proposition \ref{Gamma}), the canonical map of spectra ${\KH}(A)\to {\KR}(A)^{\mathbb{Z}/2}$ is a stable equivalence, where ${\KR}(A)^{\mathbb{Z}/2}$ is the genuine fixed-points spectrum of ${\KR}(A)$.
\end{remark}

\begin{definition}
The free genuine $L$-theory spectrum of the ring spectrum with anti-involution $A$ is the geometric fixed-points spectrum $\Lg(A):=\Phi^{\mathbb{Z}/2}{\KR}(A)$.
\end{definition}

We analyze the $\Gamma$-space ${\KH}(A)$ and we interpret it as the Segal construction of a symmetric monoidal category of Hermitian forms on $A$.
We recall from Proposition \ref{fixedB11} that the the fixed points space of $B^{\sigma}\widehat{GL}^{\vee}_{a_1,\dots,a_n}(A)$ is homeomorphic to the classifying space of a topological category $\sym\widehat{GL}^{\vee}_{a_1,\dots,a_n}(A)$. Its space of objects is the space of invertible components of the fixed points space 
\[
\widehat{M}^{\vee}_{a_1,\dots,a_n}(A)^{\mathbb{Z}/2}:=(\Omega^{\infty}_I(M^{\vee}_{a_1}(A)\vee\dots\vee M^{\vee}_{a_n}(A)))^{\mathbb{Z}/2},\]
which is equivalent to the infinite loop space of the fixed-points spectrum $M^{\vee}_{a_1}(A)^{\mathbb{Z}/2}\times\dots\times M^{\vee}_{a_1}(A)^{\mathbb{Z}/2}$ (see Remark \ref{infiniteloopG}).  A morphism $l\colon m\to n$ of $\sym\widehat{GL}^{\vee}_{a_1,\dots,a_n}(A)$  is a homotopy invertible element of $\widehat{M}^{\vee}_{a_1,\dots,a_n}(A)$ which satisfies $m=w(l)nl$, where $w$ denotes the involution on $\widehat{M}^{\vee}_{a_1,\dots,a_n}(A)$. Thus we think of ${\KH}(A)$ as the Segal construction of a symmetric monoidal category
\[\Herm_{A}=\coprod_{n}\sym\widehat{GL}^{\vee}_{n}(A)\]
of spectral Hermitian forms on $A$.

\begin{proposition}\label{compBF}
Suppose that $R$ is a simplicial ring with an anti-involution $w\colon R^{op}\rightarrow R$. There is a weak equivalence between ${\KH}(HR)$ and the connective cover of the Hermitian $K$-theory spectrum ${_1\widetilde{L}(R)}$ of \cite{BF}. In particular if $R$ is discrete this is equivalent to the connective Hermitian $K$-theory of free $R$-modules of \cite{Karoubi}, if $2\in R$ is invertible.

If $R$ is moreover commutative, this equivalence induces a ring isomorphism on homotopy groups with respect to the multiplication of \ref{multiplication}.
\end{proposition}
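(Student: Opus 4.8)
The plan is to realize both sides as group completions of symmetric monoidal categories of Hermitian forms (equivalently, as the associated special $\Gamma$-spaces) and to compare these categories building block by building block. On our side, by the discussion preceding the statement, $\KH(HR)=\KR(HR)^{\mathbb{Z}/2}$ is the Segal construction of $\Herm_{HR}=\coprod_n\sym\widehat{GL}^{\vee}_n(HR)$, whose $n$-th classifying space is $(B^{\sigma}\widehat{GL}^{\vee}_n(HR))^{\mathbb{Z}/2}$. On the Burghelea--Fiedorowicz side, the connective cover of ${}_1\widetilde{L}(R)$ is the group completion of $\coprod_n B(\ast,GL_n(R),\mathrm{Herm}_n(R))$, where $\mathrm{Herm}_n(R)$ is the (realization of the) simplicial set of invertible symmetric matrices in $M_n(R)$ and $GL_n(R)$ acts by $\lambda\cdot B=w(\lambda)B\lambda$; this is the model recollected in Remark \ref{KHBar}, and is \cite[Rem.1.3]{BF} in the discrete case. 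So it suffices to produce a natural equivalence of the pairs $(\widehat{GL}^{\vee}_n(HR),\widehat{GL}^{\vee}_n(HR)^{\mathbb{Z}/2})$ and $(GL_n(R),\mathrm{Herm}_n(R))$ --- each viewed as a non-unital monoid together with its conjugation module --- compatibly with block sum, and then to feed this through the bar and group-completion constructions.

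For the building blocks, I would first observe that $HR$ is levelwise well-pointed with $S^0\to HR_0$ an $h$-cofibration, so Proposition \ref{GLq1gpl} shows $\widehat{M}^{\vee}_n(HR)$ and $\widehat{GL}^{\vee}_n(HR)$ are quasi-unital and group-like, and Lemma \ref{lemmathicksub} then identifies $B^{\sigma}\widehat{GL}^{\vee}_n(HR)$ with the classifying space of a unital monoid weakly equivalent to it. Non-equivariantly, the equivalence of ring spectra $M^{\vee}_n(HR)\xrightarrow{\simeq}\prod_n\bigvee_nHR\cong H(M_nR)$ from \S\ref{KRspace} (finite wedges and products of spectra agree) gives $\widehat{M}^{\vee}_n(HR)=\Omega^{\infty}_IM^{\vee}_n(HR)\simeq\Omega^{\infty}_IH(M_nR)\simeq|M_nR|$ as topological monoids, hence $\widehat{GL}^{\vee}_n(HR)\simeq GL_n(R)$. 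For the fixed points I would use Remark \ref{infiniteloopG} to identify $\widehat{M}^{\vee}_n(HR)$, as a $\mathbb{Z}/2$-space, with the genuine equivariant infinite loop space of the orthogonal $\mathbb{Z}/2$-spectrum $M^{\vee}_n(HR)$; by Proposition \ref{pi0Hermitian} its $\pi_0$ Mackey functor is $M_n(\underline{\pi}_0HR)=M_n(\underline{R})$, and since $HR$-with-strict-anti-involution has the genuine $\mathbb{Z}/2$-homotopy type of the Eilenberg--MacLane spectrum of that Mackey functor (the identification established in \cite{THRmodels}), its fixed-point spectrum is $H(M_n(\underline{R})(\ast))=H((M_nR)^{\mathbb{Z}/2})$. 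Hence $\widehat{M}^{\vee}_n(HR)^{\mathbb{Z}/2}\simeq|(M_nR)^{\mathbb{Z}/2}|$, compatibly with restriction, transfer and the action $n\cdot l=w(l)nl$; restricting to invertible components gives $\widehat{GL}^{\vee}_n(HR)^{\mathbb{Z}/2}\simeq\mathrm{Herm}_n(R)$, and by Proposition \ref{fixedB11} this promotes to an equivalence of $\sym\widehat{GL}^{\vee}_n(HR)$ with the Burghelea--Fiedorowicz category of $n$-dimensional Hermitian forms over $R$. (For $R$ discrete one can alternatively route through \S\ref{secHermMackey}: $\KH(HR)\simeq\KH(\underline{\pi}_0HR)=\KH(\underline{R})$, which is the Burghelea--Fiedorowicz model by Remark \ref{KHBar}.)

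These equivalences are natural and strictly compatible with the block-sum operations, hence assemble to levelwise equivalences of the simplicial $\mathbb{Z}/2$-spaces (equivalently, of the special $\Gamma$-spaces) computing the two group completions; realizing and looping once yields $\KH(HR)\simeq$ connective ${}_1\widetilde{L}(R)$. When $R$ is discrete with $1/2\in R$, Burghelea--Fiedorowicz identify the right-hand side with the connective Hermitian $K$-theory of free $R$-modules of \cite{Karoubi}, which gives the second claim. For the multiplicative statement, assume $R$ commutative; then $HR$ is a commutative $\mathbb{Z}/2$-equivariant orthogonal ring spectrum, and Proposition \ref{multiplication} equips $\pi_\ast\KH(HR)=\pi_\ast\KR(HR)^{\mathbb{Z}/2}$ with a graded ring structure built from the Kronecker pairings $M^{\vee}_n(HR)\wedge M^{\vee}_k(HR)\to M^{\vee}_{nk}(HR)$. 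Under the identification $\widehat{GL}^{\vee}_n(HR)\simeq GL_n(R)$ these pairings become the ordinary Kronecker product of matrices, i.e.\ the tensor product of Hermitian forms over $R$ used by Burghelea--Fiedorowicz, so the equivalence is a ring isomorphism on homotopy groups.

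The main obstacle is the equivariant half of the building-block comparison: pinning down the genuine $\mathbb{Z}/2$-equivariant homotopy type of the Eilenberg--MacLane spectrum carrying a strict anti-involution, and in particular checking that Bökstedt's $\Omega^{\infty}_I$ of $M^{\vee}_n(HR)$ has fixed points with the correct $\pi_0=(M_nR)^{\mathbb{Z}/2}$ and restriction and transfer as in $M_n(\underline{R})$ --- rather than, say, homotopy fixed points --- which is precisely the content imported from \cite{THRmodels}. A secondary technical point, also handled there and already used in \S\ref{KRspace}, is that our model for $\KR$ is built from wedges $M^{\vee}_{n_1}(HR)\vee\cdots\vee M^{\vee}_{n_p}(HR)$ rather than products, so one must know that passing to $\Omega^{\infty}_I$ and then to the thick geometric realization erases the difference before comparing with the genuinely simplicial constructions of Burghelea--Fiedorowicz; the remaining bookkeeping --- matching our semi-simplicial, edgewise-subdivided bar constructions with their simplicial ones --- is routine and proceeds as in Remark \ref{thinvsthick}.
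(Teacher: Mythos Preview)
Your proposal is correct and follows essentially the same route as the paper: both arguments compare the building blocks via the wedge-to-product map $M^{\vee}_n(HR)=\bigvee_{n\times n}HR\to\prod_{n\times n}HR\cong HM_n(R)$ and then pass to the associated $\Gamma$-spaces. The only real difference is in how the fixed points are handled. You treat the non-equivariant and equivariant comparisons separately, computing the $\underline{\pi}_0$ Mackey functor and then invoking an Eilenberg--MacLane identification from \cite{THRmodels} to pin down $(\widehat{M}^{\vee}_n(HR))^{\mathbb{Z}/2}$. The paper instead observes directly that the wedge-to-product map is itself a map of ring spectra with anti-involution which is a stable equivalence of $\mathbb{Z}/2$-spectra (indexed wedges into indexed products over the $\mathbb{Z}/2$-set $n\times n$); this single observation gives the equivariant equivalence of $B^{\sigma}$-constructions, and hence of their fixed points, in one stroke. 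Your detour through Mackey functors is not wrong, but it obscures the fact that the comparison map you already wrote down non-equivariantly is equivariant and does all the work.
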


\begin{proof}
The inclusion of wedges into products defines a map of ring spectra with anti-involution
\[M^{\vee}_n(HR)=\bigvee_{n\times n}HR\longrightarrow \prod_{n\times n}HR\cong HM_n(R),\]
where $M_n(R)=\bigoplus_{n\times n}R$ is the ring of $n\times n$-matrices with entries in $R$.
On the underlying $\mathbb{Z}/2$-spectra, this is an inclusion of indexed wedges into indexed products and it is therefore a stable equivalence. On the level of $\Gamma$-categories this shows that the composite
\[
\mathcal{F}^{\vee}_{HR}[n]\longrightarrow \mathcal{F}^{}_{HR}[n]\longrightarrow H\mathcal{F}^{}_{R}[n]
\]
is an equivalence, where $\mathcal{F}_{R}[n]$ is Segal's construction of the symmetric monoidal category of free $R$-modules $(\mathcal{F}_R,\oplus)$, with the duality induced by conjugate transposition of matrices (we observe that the middle term $ \mathcal{F}^{}_{HR}[n]$ does not have a duality). 
At the level of $\Gamma$-spaces this induces an equivalence
\[(B^{\sigma}\Omega^{\infty}H\mathcal{F}^{\vee}_{HR}[n])^{\mathbb{Z}/2}\stackrel{\sim}{\longrightarrow}
(B^{\sigma}\Omega^{\infty}H\mathcal{F}^{}_{R}[n])^{\mathbb{Z}/2}\stackrel{\sim}{\longleftarrow}(B^{\sigma}\mathcal{F}^{}_{R}[n])^{\mathbb{Z}/2}\cong B\sym(\mathcal{F}_{R}[n]),
\]
that restricted to invertible components gives an equivalence
\[
{\KH}_n(R)\simeq B\sym(i\mathcal{F}_{R}[n]).
\]
Moreover there is a functor of $\Gamma$-categories  $(\sym i\mathcal{F}_{R})[n]\to\sym(i\mathcal{F}_{R}[n])$, and since both categories are equivalent to $\sym i\mathcal{F}_{R}^{\times n}$ it is an equivalence. Finally, $\sym i\mathcal{F}_{R}$ is the category of Hermitian forms over the simplicial ring $R$ of \cite{BF}.

When $R$ is commutative, both ring structures on ${\KH}(R)$ and ${\KH}(HR)$ are defined from the Kronecker product of matrices, and by inspection the equivalence above is multiplicative.
\end{proof}

We relate the Hermitian $K$-theory of ring spectra with the Hermitian $K$-theory of Hermitian Mackey functors defined in \S\ref{secHermMackey}.

\begin{proposition}\label{KRHM} Let $HL$ be a ring spectrum with anti-involution whose underlying $\mathbb{Z}/2$-spectrum is the Eilenberg-MacLane spectrum of a Mackey functor $L$.
There is a stable equivalence of $\Gamma$-spaces
\[
{\KH}(HL)\stackrel{\sim}{\longrightarrow}{\KH}(L)
\]
induced by the projection maps $\Omega^{\infty}_IHL\to L(\mathbb{Z}/2)$ and  $(\Omega^{\infty}_IHL)^{\mathbb{Z}/2}\to L(\ast)$  onto $\pi_0$, 
where the Hermitian structure of $L$ is induced by the multiplication of $HL$ as in Proposition \ref{pi0Hermitian}.

If moreover $HL$ is commutative, this equivalence is multiplicative with respect to the ring structures of \ref{multiMackey} and \ref{multiplication}.
\end{proposition}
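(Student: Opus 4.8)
The strategy is to show that the $\pi_0$-projection is a level-wise weak equivalence of $\Gamma$-spaces, by reducing everything to the fact that taking $\pi_0$ of the infinite loop space of an Eilenberg--MacLane $\mathbb{Z}/2$-spectrum is a weak equivalence. First I would record the homotopical input: for every multi-index $\underline{a}=(a_1,\dots,a_n)$ the $\mathbb{Z}/2$-spectrum $M^{\vee}_{a_1}(HL)\vee\dots\vee M^{\vee}_{a_n}(HL)$ has homotopy Mackey functors concentrated in degree $0$, where by Proposition \ref{pi0Hermitian} the value in degree $0$ is $\prod_i M_{a_i}(L)$. The underlying statement is clear since a finite wedge of copies of $HL(\mathbb{Z}/2)$ is a finite product; on genuine fixed points one uses that $(HL)^{\mathbb{Z}/2}\simeq HL(\ast)$ because $HL$ is the Eilenberg--MacLane spectrum of a Mackey functor, together with standard facts about genuine fixed points of finite wedges with a permutation-and-involution action. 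Hence, by Remark \ref{infiniteloopG}, the projections onto $\pi_0$
\[
\widehat{M}^{\vee}_{a_1,\dots,a_n}(HL)\longrightarrow \prod_i M_{a_i}(L(\mathbb{Z}/2)),\qquad \widehat{M}^{\vee}_{a_1,\dots,a_n}(HL)^{\mathbb{Z}/2}\longrightarrow \prod_i M_{a_i}(L)(\ast)
\]
are weak equivalences onto discrete spaces, and restrict to weak equivalences on invertible components $\widehat{GL}^{\vee}_{a_1,\dots,a_n}(HL)\xrightarrow{\simeq}\prod_i GL_{a_i}(L(\mathbb{Z}/2))$ and $\widehat{GL}^{\vee}_{a_1,\dots,a_n}(HL)^{\mathbb{Z}/2}\xrightarrow{\simeq}\prod_i GL_{a_i}(L)(\ast)$, the target of the latter being the tuples of Hermitian forms on the $M_{a_i}(L)$.

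Next I would assemble these into a comparison of $\Gamma$-spaces. The projection maps are maps of non-unital topological monoids with anti-involution, and they are compatible with the block-sum maps $M^{\vee}_n(HL)\vee M^{\vee}_k(HL)\to M^{\vee}_{n+k}(HL)$, with the permutation actions, and hence with the whole simplicial and $\Gamma^{op}$-structure defining ${\KR}$ and ${\KH}$. They therefore assemble to a map of $\mathbb{Z}/2$-$\Gamma$-spaces from ${\KR}(HL)$ to the $\mathbb{Z}/2$-$\Gamma$-space built from the discrete matrix data, whose fixed-point $\Gamma$-space is level-wise $\coprod_{\underline{a}}B(\widetilde{\langle\underline{a}\rangle}\times\sym(\prod_i M_{a_i}(L)))$. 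By the same rewriting of Segal's construction used in the proof of Proposition \ref{Gamma} (coherent choices of block-sum permutation matrices, recorded by $\widetilde{\langle\underline{a}\rangle}$), this is the $\Gamma$-space ${\KH}(L)$ of Definition \ref{defKHMackey}. Since the nerve of a category of the form $\sym M$ is in each simplicial degree the product of its space of objects with copies of its space of morphisms, the weak equivalences of the previous paragraph give a level-wise weak equivalence on nerves, hence on classifying spaces, hence a level-wise weak equivalence of $\Gamma$-spaces ${\KH}(HL)\to {\KH}(L)$, which induces a stable equivalence of the associated spectra.

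For the multiplicative statement, when $HL$ is a commutative $\mathbb{Z}/2$-equivariant orthogonal ring spectrum, $L=\underline{\pi}_0HL$ is a Tambara functor with underlying ring $\pi_0HL$, fixed-points ring $\pi_0(HL)^{\mathbb{Z}/2}$, and norm the multiplicative transfer, and the Hermitian structure of Proposition \ref{pi0Hermitian} is the one attached to this Tambara functor in Example \ref{exTambara}. Both the product on ${\KH}(HL)$ of \S\ref{secpairings} and the product on ${\KH}(L)$ of Proposition \ref{multiMackey} are given by the Kronecker product of matrices, with off-diagonal entries assembled from restrictions and diagonal entries taken in the fixed-points ring; since the $\pi_0$-projection carries the componentwise multiplication of $M^{\vee}_{\bullet}(HL)$ and the multiplication of the fixed-points ring spectrum $(HL)^{\mathbb{Z}/2}$ to the multiplications of $L(\mathbb{Z}/2)$ and $L(\ast)$, a direct comparison of the two Kronecker formulas shows the equivalence is multiplicative, exactly as in the last step of Proposition \ref{compBF}.

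The main obstacle is the bookkeeping in the comparison step: identifying the small, partially-defined model $\coprod_{\underline{a}}(B^{\sigma}(\widetilde{\langle\underline{a}\rangle}\times\widehat{GL}^{\vee}_{a_1,\dots,a_n}(HL)))^{\mathbb{Z}/2}$ with Segal's $\Gamma$-space of $(i\Herm_L,\oplus)$, and verifying naturality of the $\pi_0$-projection for all of $\Gamma^{op}$ (block sums and permutations). The homotopical input and the multiplicativity check are comparatively routine.
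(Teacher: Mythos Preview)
Your proposal is correct and follows essentially the same route as the paper: you use that each $M^{\vee}_{a_1}(HL)\vee\dots\vee M^{\vee}_{a_n}(HL)$ is an Eilenberg--MacLane spectrum of the product Mackey functor $\prod_i M_{a_i}(L)$ so that the $\pi_0$-projection is a weak equivalence on both the monoid and its fixed points, deduce an equivalence of the categories $\sym\widehat{GL}^{\vee}_{\underline{a}}(HL)\simeq\prod_i i\Herm_{M_{a_i}(L)}$, and then assemble these into a level-wise equivalence of $\Gamma$-spaces by the same bookkeeping as in Propositions~\ref{Gamma} and~\ref{compBF}. Your treatment of the multiplicative statement via direct comparison of Kronecker products is likewise what the paper does (by reference to the last step of Proposition~\ref{compBF}); you have simply written out more of the details than the paper's terse proof.
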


\begin{proof}
We recall that since $\Omega^{\infty}_IHL$ is a topological monoid with anti-involution, there is an action
\[
\Omega^{\infty}_IHL\times (\Omega^{\infty}_IHL)^{\mathbb{Z}/2}\longrightarrow (\Omega^{\infty}_IHL)^{\mathbb{Z}/2}
\]
defined by sending $(m,n)$ to $mnw(m)$ where $w$ is the involution on $\Omega^{\infty}_IHL$. The Hermitian structure on $\underline{\pi}_0HL$ is defined by taking $\pi_0$ of this map. We also recall from Proposition \ref{pi0Hermitian} that $M^{\vee}_n(HL)$ is a model for the Eilenberg-MacLane spectrum of the Hermitian Mackey functor of matrices $M_n(L)$ of Definition \ref{MackeyMatrix}. Thus the projections onto $\pi_0$ define an equivalence of topological categories
\[
\coprod_{n}\sym \widehat{GL}^{\vee}_{n}(HL)\stackrel{\sim}{\longrightarrow}i\Herm_{L}
\]
onto the category of Hermitian forms on $M$ and isomorphisms.
At the level of $\Gamma$-spaces this gives an equivalence
\[{\KH}(HL)_n\cong
\coprod_{\underline{a}}B(\langle\underline{a}\rangle\times \sym\widehat{GL}^{\vee}_{a_1,\dots,a_n}(HL))\stackrel{\sim}{\longrightarrow}i\Herm_M[n]\]
onto the Segal $\Gamma$-category associated to $(i\Herm_L,\oplus)$, by the same argument of Proposition \ref{compBF}.
\end{proof}

\begin{proposition}\label{Lgeom} 
Let $R$ be a discrete ring with anti-involution. There is a natural isomorphism
\[
\Lqsub{\ast\geq 0}(R)[\tfrac{1}{2}]\cong \Lgsub{\ast}(R)[\tfrac{1}{2}]
\]
after inverting $2$, where $\Lqsub{\ast\geq 0}$ are the quadratic $L$-groups. Under this isomorphism, the splitting $\KH_\ast(R)[\tfrac 12]\cong (\K_\ast(R)[\frac 12])^{\Z/2}\oplus\Lgsub{\ast}(R)[\frac12]$ of the isotropy separation sequence agrees with the splitting $\KH_\ast(R)[\tfrac 12]\cong (\K_\ast(R)[\frac 12])^{\Z/2}\oplus\Lqsub{\ast\geq 0}(R)[\frac12]$ of \cite[Prop.6.2]{BF}.
\end{proposition}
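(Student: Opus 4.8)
The plan is to combine the standard splitting of the isotropy separation sequence after inverting $2$ with the decomposition of \cite[Prop.~6.2]{BF}, transported through the equivalence $\KH(HR)\simeq{}_{1}\widetilde{L}(R)$ of Proposition \ref{compBF}. For any genuine $\Z/2$-spectrum $X$ there is a cofiber sequence of fixed-point spectra $X_{h\Z/2}\to X^{\Z/2}\to(\widetilde{E\Z/2}\wedge X)^{\Z/2}=\Phi^{\Z/2}X$, obtained by taking genuine $\Z/2$-fixed points of $E\Z/2_+\wedge X\to X\to\widetilde{E\Z/2}\wedge X$, the left-hand term being identified by the Adams isomorphism and the first map being the transfer. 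Inverting $2$, the two primitive idempotents of $A(\Z/2)[\tfrac12]\cong\Z[\tfrac12]\times\Z[\tfrac12]$ split $X[\tfrac12]$ as $(E\Z/2_+\wedge X)[\tfrac12]\vee(\widetilde{E\Z/2}\wedge X)[\tfrac12]$, so on fixed points the sequence splits: $X^{\Z/2}[\tfrac12]\simeq X_{h\Z/2}[\tfrac12]\vee\Phi^{\Z/2}X[\tfrac12]$, with $X_{h\Z/2}[\tfrac12]$ the image of the transfer and $\Phi^{\Z/2}X[\tfrac12]$ its complement. Applying this to $X=\KR(R)=\KR(HR)$, whose underlying spectrum is $\K(R)$ with the duality involution $w$ (by the equivalence $\KR(A)|_1\simeq\K(A|_1)$) and whose $\Z/2$-fixed points are $\KH(R)$, and using that the $\Z/2$-Tate spectrum vanishes after inverting $2$ so that $\pi_\ast X_{h\Z/2}[\tfrac12]=(\K_\ast(R)[\tfrac12])^{\Z/2}$, one recovers the splitting
\[
\KH_\ast(R)[\tfrac12]\cong(\K_\ast(R)[\tfrac12])^{\Z/2}\oplus\Lgsub{\ast}(R)[\tfrac12]
\]
of the statement, with the first summand the image of the transfer.

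It then remains to match this with \cite[Prop.~6.2]{BF}, which splits $\KH_\ast(R)[\tfrac12]\cong(\K_\ast(R)[\tfrac12])^{\Z/2}\oplus\Lqsub{\ast\geq 0}(R)[\tfrac12]$ with the first summand the image of the hyperbolic map $H\colon\K(R)\to\KH(R)$. Under the equivalence of Proposition \ref{compBF} the restriction $\KR(R)^{\Z/2}\to\KR(R)|_1$ is the functor forgetting the Hermitian form, call it $F$, and --- this is the point requiring care --- the transfer $\K(R)\to\KH(R)$ is the hyperbolic functor $M\mapsto(M\oplus M^\ast,\mathrm{hyp})$. Granting this, the ``free'' Burnside idempotent $\tfrac12\,\tr\circ\mathrm{res}$ is precisely Burghelea--Fiedorowicz's idempotent $\tfrac12\,H\circ F$ (idempotent after inverting $2$ because $(HF)^2=H(FH)F=H(1+w)F=2\,HF$, using $FH=1+w$ on $\K_\ast(R)$ and $H(M^\ast)\cong H(M)$). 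Hence the two direct-sum decompositions of $\KH_\ast(R)[\tfrac12]$ literally coincide, their common $(\K_\ast(R)[\tfrac12])^{\Z/2}$-summand being the image of $H=\tr$, and their complements are therefore identified, giving the natural isomorphism $\Lgsub{\ast}(R)[\tfrac12]\cong\Lqsub{\ast\geq 0}(R)[\tfrac12]$. Naturality in $R$ is immediate, since the isotropy separation sequence, the Burnside idempotents, the comparison of Proposition \ref{compBF}, and the forgetful and hyperbolic maps are all natural.

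The main obstacle is exactly the identification of the equivariant transfer of $\KR(R)$ with the hyperbolic map: the relation $\mathrm{res}\circ\tr=1+w$ holds for every genuine $\Z/2$-spectrum, but it does not by itself pin down $\tr$, so a separate argument is needed. I expect it to be checked on the symmetric monoidal category of Hermitian forms modeling the special $\Z/2$-$\Gamma$-space $\KR(A)$ --- concretely, that the structure map realizing the equivariant transfer is induced by $M\mapsto M\oplus M^\ast$ with its hyperbolic form --- or else imported from the real $K$-theory of categories with duality of \cite{IbLars}, where the hyperbolic functor serves as the transfer by construction. Everything else is the standard ``invert $2$'' behaviour of genuine $\Z/2$-spectra together with the already-established comparison of Proposition \ref{compBF}.
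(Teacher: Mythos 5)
Your outline closely parallels the paper's: split the isotropy separation sequence after inverting $2$ via the Burnside idempotents, identify the equivariant transfer of $\KR(R)$ with the hyperbolic map, and then match against \cite[Prop.~6.2]{BF}. The gap lies in the final matching step. You describe the \cite{BF} decomposition as having ``first summand the image of the hyperbolic map $H$'' and you take its projection idempotent to be $\tfrac12 HF$. That is not how \cite{BF} set it up, and it is not how this paper recalls it: their decomposition is the eigenspace decomposition for the sign-change involution $\sigma$ on $\KH$ (induced by $(M,B)\mapsto(M,-B)$), with summands $(\KH_\ast)^s$ and $(\KH_\ast)^a$ projected out by $\tfrac{1+\sigma}{2}$, and only afterwards are these identified with $(\K_\ast)^s$ and with the Witt groups $W_{\ast\geq0}$. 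To make your argument that ``the two direct-sum decompositions literally coincide'' go through, you would need the additional identity $HF\simeq 1+\sigma$ on $\KH(R)[\tfrac12]$ --- equivalently, a natural isometry $(M,B)\oplus(M,-B)\cong H(M)$, which does hold over $R[\tfrac12]$ by the polarization change of coordinates --- but you never state it, and $\sigma$ does not appear in your proof at all. (You correctly flag that ``transfer $=$ hyperbolic'' needs a separate check; the paper asserts that point without elaboration as well. You also silently replace $W_{\ast\geq 0}[\tfrac12]$ by $\Lqsub{\ast\geq 0}[\tfrac12]$, a step the paper covers by citation.)

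The paper's own proof deliberately avoids the idempotent identity. Rather than showing $\tfrac12 HF=\tfrac{1+\sigma}{2}$, it shows the change of basis between the two decompositions of $\KH_\ast(R)[\tfrac12]$ is \emph{triangular}: it verifies $q\circ\tfrac{H}{2}=0$ (the hyperbolic image dies under $\KH\to\Lg$, because the hyperbolic map is the Mackey transfer and geometric fixed points kill transfers) and, separately, $r\circ\iota=0$ (the $\sigma$-antisymmetric eigenspace $(\KH_\ast)^a$ dies under the restriction $\KH\to\K^{h\Z/2}$, because the forgetful map is $\sigma$-invariant). Triangularity with isomorphic diagonal blocks then forces the two splittings to agree. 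Your route, once the $HF=1+\sigma$ identity is supplied, actually gives the stronger conclusion that the two idempotents coincide on the nose, which is arguably cleaner; but as written the crucial comparison with \cite{BF}'s $\sigma$-eigenspace decomposition is missing, so the proof is incomplete.
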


\begin{proof} The isotropy separation sequence for the $\Z/2$-spectrum ${\KR}(R)$ splits away from $2$, giving short-exact sequences
\[
\xymatrix{
0\ar[r]&\pi_n(\K(R)[\frac 12])_{h\Z/2})\ar[r]^-{\overline{H}}& \KH_n(R)[\frac12]\ar[r]^-{q}&\Lgsub{n}(R)[\frac12]\ar[r]&0
}
\]
for every $n\geq 0$, where $\overline{H}$ is induced by the hyperbolic map $H\colon {\K}(R)\to {\KH}(R)$. Since $\pi_n(({\K}(R)[\frac 12])_{h\Z/2})$ is isomorphic to the orbits of the $\Z/2$-action on $\K_n(R)[\frac 12]$, the cokernel of the map $\overline{H}$ is isomorphic to the cokernel of $H\colon \K_n(R)\to \KH_n(R)$, and these are by definition the Witt groups $W_n(R)$. These agree with the $L$-groups after inverting $2$, see e.g. \cite[Thm 3.2.6]{jl}, or combine \cite[\S9]{RanickiIntro} and \cite[4.10]{Schlichting}.

%
Burghelea and Fiedorowicz show in \cite[Prop.6.2]{BF} that there is a natural isomorphism on homotopy groups
\[
\KH_\ast(R)[\tfrac{1}{2}]\cong(\KH_\ast(R)[\tfrac{1}{2}])^s\oplus(\KH_\ast(R)[\tfrac{1}{2}])^a\cong (\K_\ast(R)[\tfrac{1}{2}])^s\oplus W_{\ast\geq 0}(R)[\tfrac{1}{2}]
\]
where the superscripts $^s$ at $^a$ on $\KH_\ast$ denote respectively the $1$ and $(-1)$-eigenspaces of the $\mathbb{Z}/2$-action on $\KH$ defined by taking the opposite sign of the entries of the matrix of a Hermitian form. Here $(\K_\ast(R)[\frac{1}{2}])^s$ is the $1$-eigenspace, that is the fixed points, of the involution on homotopy groups induced by the involution on ${\KR}$.
To show that the splittings agree, we show that the composite isomorphism
\[
(\K_\ast(R)[\tfrac{1}{2}])^s\oplus(\KH_\ast(R)[\tfrac{1}{2}])^a\xrightarrow{\tfrac{H}{2}+\iota} \KH_\ast(R)[\tfrac{1}{2}]\xrightarrow{(r,q)}\pi_\ast({\KR}(R)[\tfrac{1}{2}])^{\mathbb{Z}/2}\oplus \Lgsub{\ast}(R)[\tfrac{1}{2}]
\]
is block-diagonal, where $r\colon {\KH}(R)[\frac 12]\to ({\K}(R)[\frac 12])^{h\Z/2}\simeq  ({\K}(R)[\frac 12])_{h\Z/2}$ is the map from fixed points to homotopy fixed points, which is the natural splitting of the isotropy separation sequence away from $2$.
 The first map is the restriction of half the hyperbolic map $H\colon {\K}\to {\KH}$ on the first summand, and the inclusion $\iota$ on the second summand. Thus we need to show that the composites $q\circ \frac{H}{2}$ and $R\circ\iota$ are null. The hyperbolic map is the transfer on the Mackey structure of ${\KR}_\ast$, and therefore $q\circ H=0$.
For the composite $R\circ \iota$, we observe that the restriction map $r\colon\KH_\ast(R)[\frac{1}{2}]\to \pi_\ast({\KR}(R)[\frac{1}{2}])^{\mathbb{Z}/2}$ is induced by the functor that sends a Hermitian form to its underlying module. It is therefore invariant under the involution on ${\KH}$, and it sends the $-1$-eigenspace $(\KH_\ast(R)[\frac{1}{2}])^a$ to zero.
\end{proof}

%
%

\subsection{The assembly map of real algebraic $K$-theory}\label{secass}

We define an assembly map for the real $K$-theory functor in the same spirit as Loday's definition of \cite{jl}, using the multiplicative pairing of \S\ref{secpairings}.

Let $A$ be a ring spectrum with anti-involution and $\pi$ a well-pointed topological group. The corresponding group-algebra is the ring spectrum
\[A[\pi]:=A\wedge \pi_+\]
with the anti-involution defined diagonally from the anti-involution of $A$ and the inversion map of $\pi$.

\begin{remark}\label{gpring}
Suppose that $R$ is a discrete ring with anti-involution $w$. Then the inclusion of indexed wedges into indexed products defines an equivalence of ring spectra with anti-involution
\[
(HR)[\pi]=HR\wedge \pi_+=\bigvee_\pi HR\stackrel{\simeq}{\longrightarrow}\bigoplus_\pi HR\cong H(R[\pi])
\]
where the anti-involution on the group-ring $R[\pi]$ sends $r\cdot g$ to $w(r)\cdot g^{-1}$, for all $r\in R$ and $g\in \pi$. More generally, the fixed-points spectrum of $A[\pi]$ decomposes as
\[
(A[\pi])^{\mathbb{Z}/2}\stackrel{\simeq}{\longrightarrow}(\bigoplus_{\pi}A)^{\mathbb{Z}/2}\cong (\bigoplus_{\pi^{\mathbb{Z}/2}}A^{\mathbb{Z}/2})\times(\bigoplus_{\pi^{free}/\mathbb{Z}/2}A),
\]
and an argument analogous to Proposition \ref{pi0Hermitian} shows that the Mackey functor of components $\underline{\pi}_0(A[\pi])$ is isomorphic to the group-Mackey functor $(\underline{\pi}_0A)[\pi]$ of Example \ref{groupMackey}.
\end{remark}


There is a morphism of $\mathbb{Z}/2$-$\Gamma$-spaces $\gamma\colon\mathbb{S}\wedge B^\sigma\pi_+\to {\KR}(\mathbb{S}[\pi])$, which is adjoint to the map of $\Z/2$-spaces
\[
B^\sigma\pi \hookrightarrow B^{\sigma} \widehat{GL}^{\vee}_{1}(\mathbb{S}[\pi]) \hookrightarrow \coprod_{n} B^{\sigma} \widehat{GL}^{\vee}_{n}(\mathbb{S}[\pi])={\KR}(\mathbb{S}[\pi])_1
\]
where the first map is induced by the canonical map $\pi\to \hocolim_{I}\Omega^i(S^i\wedge \pi_+)$ which includes at the object $i=0$.

\begin{definition}\label{defassKR} Let $A$ be a ring spectrum with anti-involution, and $\pi$ a topological group.
The assembly map of the real $K$-theory of $A[\pi]$ is the map in the homotopy category of $\mathbb{Z}/2$-spectra
\[
{\KR}(A)\wedge B^{\sigma}\pi_+\xrightarrow{\id\wedge\gamma}{\KR}(A)\wedge  {\KR}(\mathbb{S}[\pi])\stackrel{\otimes}{\longrightarrow} {\KR}(A\wedge \mathbb{S}[\pi])
\cong {\KR}(A[\pi]),
\]
where $\otimes$ is the pairing of \S\ref{secpairings}. If $A$ is commutative, this is a map of ${\KR}(A)$-modules for the module structures of \ref{multiplication}. 
\end{definition}

We now explain how to extract from this map an assembly map for Hermitian $K$-theory and $L$-theory by taking fixed-points spectra. We recall that there are natural transformations
\[
X^{\mathbb{Z}/2}\wedge K^{\mathbb{Z}/2}\stackrel{}{\longrightarrow} (X\wedge K)^{\mathbb{Z}/2}
\ \ \ \ \ \ \mbox{and}\ \ \ \ \ \ 
(\Phi^{\mathbb{Z}/2}X)\wedge K^{\mathbb{Z}/2}\stackrel{\simeq}{\longrightarrow} \Phi^{\mathbb{Z}/2}(X\wedge K)
\]
for any $\mathbb{Z}/2$-spectrum $X$ and pointed $\mathbb{Z}/2$-space $K$, where the first map is generally not an equivalence. Thus by applying fixed points and geometric fixed points to the $\KR$-assembly we obtain maps
\[
{\KH}(A)\wedge (B^{\sigma}\pi)^{\mathbb{Z}/2}_+\stackrel{}{\longrightarrow} ({\KR}(A)\wedge B^{\sigma}\pi_+)^{\mathbb{Z}/2}\longrightarrow {\KH}(A[\pi])
\]
\[
\Lg(A)\wedge (B^{\sigma}\pi)^{\mathbb{Z}/2}_+\stackrel{\simeq}{\longrightarrow} \Phi^{\mathbb{Z}/2}({\KR}(A)\wedge B^{\sigma}\pi_+)\longrightarrow \Lg(A[\pi]).
\]
We recall from Lemma \ref{funnymap} that there is an equivariant map $B\pi\to B^{\sigma}\pi$, which induces a summand inclusion $B\pi\to (B^{\sigma}\pi)^{\mathbb{Z}/2}$ on fixed points. By precomposing with this map we obtain the following.

\begin{definition}\label{defassKHL}
The assembly map in Hermitian $K$-theory and genuine $L$-theory of $A[\pi]$ are respectively the maps
\[
{\KH}(A)\wedge B\pi_+\stackrel{}{\longrightarrow}{\KH}(A)\wedge (B^{\sigma}\pi)^{\mathbb{Z}/2}_+\longrightarrow {\KH}(A[\pi]),
\]
\[
\Lg(A)\wedge B\pi_+\longrightarrow\Lg(A)\wedge (B^{\sigma}\pi)^{\mathbb{Z}/2}_+\longrightarrow\Lg(A[\pi]).
\]
If $A$ is commutative, these are maps of ${\KH}(A)$-modules and $\Lg(A)$-modules, respectively.
\end{definition}

\begin{proposition}\label{compfulass}
Let $R$ be a discrete ring with anti-involution. Then the assembly map for the Hermitian $K$-theory of $(HR)[\pi]$ described above agrees with the connective assembly for the Hermitian $K$-theory of $R[\pi]$ of \cite{jl}, under the equivalence of Proposition \ref{compBF} and the equivalence $(HR)[\pi]\to H(R[\pi])$ of Remark \ref{gpring}. It follows that the rationalized assembly maps of $\Lg(R[\pi])$ and $\Lq(R[\pi])$ agree as well.
\end{proposition}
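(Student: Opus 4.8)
The plan is to reduce both assembly maps to pairings of $\Gamma$-spaces of Hermitian forms and to check that, after the identifications of Propositions~\ref{Gamma}, \ref{compBF}, \ref{KRHM} and Remark~\ref{gpring}, they are induced by the same pairing of permutative categories. First I would recall from \cite{jl} that the connective Hermitian assembly map for $R[\pi]$ is induced by the pairing sending an $n$-dimensional Hermitian form $(R^n,B)$ over $R$ and a loop $g\in\pi$ to the form over $R[\pi]=R\otimes_{\mathbb{Z}}\mathbb{Z}[\pi]$ obtained from $B$ by base change along $R\hookrightarrow R[\pi]$, equipped with the isometry $\mathrm{diag}(g,\dots,g)\in GL_n(R[\pi])$; equivalently it is the composite $\KH(R)\wedge B\pi_+\to\KH(R)\wedge\KH(\mathbb{S}[\pi])\xrightarrow{\otimes}\KH(R[\pi])$ of the Hermitian $K$-theory pairing with the map classifying the $\pi$-worth of isometries $g\cdot(-)$ of the unit form over $\mathbb{S}[\pi]$. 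Note that $\mathrm{diag}(g,\dots,g)$ is indeed an isometry of the base-changed form, since $w(\mathrm{diag}(g))=\mathrm{diag}(g^{-1})$ and the central copy of $R$ commutes with $g$ inside $R[\pi]$.

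Next I would unwind the map of Definition~\ref{defassKHL}. By Definition~\ref{defassKR} the $\KR$-assembly factors through the rank-one inclusion $B^{\sigma}\pi\hookrightarrow B^{\sigma}\widehat{GL}^{\vee}_1(\mathbb{S}[\pi])\hookrightarrow\KR(\mathbb{S}[\pi])_1$ followed by the Kronecker pairing $\otimes$ of \S\ref{secpairings}, so on fixed points and after precomposing with the map $B\pi\to(B^{\sigma}\pi)^{\mathbb{Z}/2}$ of Lemma~\ref{funnymap} it is the composite
\[
\KH(HR)\wedge B\pi_+\longrightarrow\KH(HR)\wedge\KH(\mathbb{S}[\pi])\xrightarrow{\ \otimes\ }\KH(HR\wedge\mathbb{S}[\pi])\cong\KH(H(R[\pi]))\simeq\KH(R[\pi]),
\]
where by Lemma~\ref{funnymap} the first map is the inclusion of $B\pi$ as the $g=1$ summand $BZ_{\pi}\langle1\rangle$ of $\sym\widehat{GL}^{\vee}_1(\mathbb{S}[\pi])$, i.e.\ the unit object $1\in\widehat{GL}^{\vee}_1(\mathbb{S}[\pi])^{\mathbb{Z}/2}$ together with its $\pi$-worth of automorphisms $g\colon1\to1$ (which are isometries since $w(g)\cdot1\cdot g=g^{-1}g=1$). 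By the explicit formulas of \S\ref{secpairings}, together with Propositions~\ref{pi0Hermitian} and \ref{KRHM} and the identification $\underline{\pi}_0(A[\pi])\cong(\underline{\pi}_0A)[\pi]$ of Remark~\ref{gpring}, the Kronecker product of an $n$-dimensional Hermitian form $B$ over $HR$ with this rank-one object is $B$ base-changed to $R[\pi]$, and the identity isometry $I_n$ of $B$ Kronecker the automorphism $g$ is $I_n\otimes(g)=\mathrm{diag}(g,\dots,g)$. This is exactly the form and isometry of Loday's construction, and the identification respects block sums and the maps in $\Gamma^{op}$.

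The one point needing care is that the pairing of \S\ref{secpairings} is a priori only a map in the homotopy category of $\mathbb{Z}/2$-spectra, defined through a zig-zag involving intermediate homotopy colimits over $I\times I$ and the maps $\otimes\times\iota$ and $+_\ast$, whereas Loday's assembly is a strict map of $\Gamma$-spaces from a pairing of permutative categories. I would therefore carry out the comparison through the zig-zag of natural equivalences supplied by Propositions~\ref{Gamma}, \ref{compBF} and \ref{KRHM}, using throughout that every map involved is built from the strictly associative Kronecker product of matrices, which under $\underline{\pi}_0$ matches the tensor product of free modules over $R\otimes_{\mathbb{Z}}\mathbb{Z}[\pi]$. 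This coherence bookkeeping is the main obstacle, but it involves no idea beyond those already used in the proofs of Propositions~\ref{Gamma} and \ref{compBF}.

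For the final claim about $\Lg$ and $\Lq$: applying $\Phi^{\mathbb{Z}/2}$ and rationalizing, the first part identifies the rationalized $\KH$-assemblies, so it suffices to see that these assemblies are compatible with the splittings that cut out $L$-theory. By Proposition~\ref{Lgeom} the rational isotropy separation sequence for $\KR$ splits compatibly with the Burghelea--Fiedorowicz splitting $\KH_\ast(R)[\tfrac12]\cong(\K_\ast(R)[\tfrac12])^{\Z/2}\oplus\Lqsub{\ast\geq0}(R)[\tfrac12]$ of \cite[Prop.6.2]{BF}. Since the assembly maps are defined through the pairing $\otimes$, they are compatible with the hyperbolic map $\K\to\KH$ and the forgetful map $\KH\to\K^{h\Z/2}$ (the $\KH$-assembly restricts along the former to the hyperbolic of the $\K$-assembly and commutes with the latter), hence it preserves these direct sum decompositions. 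The map it induces on the $L$-theory summand is on one side the $\Lg$-assembly of Definition~\ref{defassKHL} and on the other the connective quadratic $L$-theory assembly of \cite{jl}, so the two agree after rationalization.
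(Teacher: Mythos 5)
Your comparison of the Hermitian $K$-theory assemblies (Part 1) follows essentially the same path as the paper: unwind both pairings to the Kronecker product of matrices, and check agreement under the zig-zags of Propositions~\ref{Gamma}, \ref{compBF}, \ref{KRHM} and Remark~\ref{gpring}. That part is fine.

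The gap is in the passage from Hermitian $K$-theory to $L$-theory. You write that the $\KH$-assemblies are ``compatible with the splittings that cut out $L$-theory'' and that the induced map on the $L$-theory summand ``is on one side the $\Lg$-assembly and on the other the connective quadratic $L$-theory assembly of \cite{jl}, so the two agree.'' But what the isotropy separation sequence (and Proposition~\ref{Lgeom}) actually identifies, in a way compatible with assembly, is the summand $W_{\ast\geq 0}(R)[\tfrac12]$ of \emph{Witt groups}: by naturality of the cofiber sequence $\K(R)_{h\Z/2}\to\KH(R)\to\Phi^{\Z/2}\KR(R)$ the $\Lg$-assembly is the cofiber of the $\K$- and $\KH$-assemblies under the hyperbolic map, and this is the assembly in Witt theory. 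The quadratic $L$-theory assembly of \cite{jl} is a separate construction; the isomorphism $W_{\ast\geq0}(R)[\tfrac12]\cong\Lqsub{\ast\geq 0}(R)[\tfrac12]$ is not produced by the BF splitting, and it is not formal that it intertwines the two assemblies. The paper explicitly flags this: it states that the agreement of the Witt-theory and quadratic $L$-theory assemblies away from $2$ is ``well believed by the experts'' but that no reference could be found, and then supplies an actual argument. That argument uses $\beta$-periodicity to reduce to degrees $0$--$3$, chooses a $\mathbb{Q}[\beta]$-module isomorphism $\phi^\pi\colon\Lqsub{\ast\geq0}(\Z[\pi])\otimes\mathbb{Q}\to W_{\ast\geq0}(\Z[\pi])\otimes\mathbb{Q}$, reduces the comparison of the two pre-assemblies $\mathbb{S}\wedge B\pi_+\to\Lq(\Z[\pi])$ and $\mathbb{S}\wedge B\pi_+\to W(\Z[\pi])$ to a comparison of the corresponding group homomorphisms $\delta_L,\delta_W\colon\pi\to L_1$, and invokes \cite[9.11]{RanickiIntro} to choose $\phi^\pi_1$ with $\phi^\pi_1\delta_L=\delta_W$. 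Your proof asserts this compatibility rather than proving it, so as written it does not close the argument for the second sentence of the statement.
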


\begin{proof}
By inspection, one sees that the pairing of \S\ref{secpairings} agrees with Loday's pairing under the equivalences of  \ref{compBF} and \ref{gpring}, and so do the maps $\mathbb{S}\wedge B\pi_+\to {\KH}(\mathbb{S}[\pi])$ and $\mathbb{S}\wedge B\pi_+\to {\KH}(\mathbb{Z}[\pi])$. More precisely, the diagram
\[
\xymatrix{
{\KH}(HR)\wedge B\pi_+\ar[r]^-{\id\wedge \gamma}\ar[d]_-{\simeq}
&
{\KH}(HR)\wedge{\KH}(\mathbb{S}[\pi])\ar[r]^-{\otimes}\ar[d]
&
{\KH}(HR\wedge\mathbb{S}[\pi])\ar[r]^-{\cong}\ar[d]
&
{\KH}((HR)[\pi])\ar[d]_-{\simeq}
\\
{\KH}(R)\wedge B\pi_+\ar[r]^-{\id\wedge \gamma_{\mathbb{Z}}}
&
{\KH}(R)\wedge{\KH}(\mathbb{Z}[\pi])\ar[r]^-{\otimes_{\mathbb{Z}}}&{\KH}(R\otimes_{\mathbb{Z}}\mathbb{Z}[\pi])\ar[r]^-{\cong}
&
{\KH}(R[\pi])
}
\]
commutes in the homotopy category of spectra, where $\gamma_{\mathbb{Z}}$ and $\otimes_{\Z}$ are from \cite[Chap. III]{jl}.
The top row is precisely the restriction of the assembly of Definition \ref{defassKR} to the $\Gamma$-space of fixed points, which compares to the assembly of  Definition \ref{defassKHL} by the diagram
\[
\xymatrix{
{\KH}(HR)\wedge B\pi_+\ar@{=}[r]\ar[d]_\sim
&
{\KH}(HR)\wedge B\pi_+\ar[r]\ar[d]
&
{\KH}(HR[\pi])\ar[d]^\sim
\\
{\KR}(HR)^{\mathbb{Z}/2}\wedge B\pi_+\ar[r]\ar[d]
&
({\KR}(HR)\wedge B\pi_+)^{\mathbb{Z}/2}\ar[r]\ar[d]
&
{\KR}(HR[\pi])^{\mathbb{Z}/2}
\\
{\KR}(HR)^{\mathbb{Z}/2}\wedge (B^{\sigma}\pi)^{\mathbb{Z}/2}_+\ar[r]
&
({\KR}(HR)\wedge B^{\sigma}\pi_+)^{\mathbb{Z}/2}\rlap{\ .}\ar[ur]
}
\]
The map from the first to the second row is the transformation from na\"{i}ve to genuine fixed-points spectra, and therefore the upper rectangle commutes.
The bottom left square commutes by naturality of the transformation $X^{\mathbb{Z}/2}\wedge K^{\mathbb{Z}/2}\to (X\wedge K)^{\mathbb{Z}/2}$. This shows that the assemblies in Hermitian $K$-theory agree.

By naturality of the isotropy separation sequence, the assembly map of $\Phi^{\mathbb{Z}/2}{\KR}(A)$ is the cofiber of the assembly maps for $\K(A)_{h\Z/2}$ and $\KH(A)$ under the hyperbolic map. It is then clear from \ref{Lgeom} that this agrees with the assembly map in Witt theory after inverting $2$. It is well believed by the experts that the assembly maps in Witt theory and quadratic $L$-theory agree away from $2$, but we were unfortunately unable to track down a reference (see e.g. \cite[8.2]{BF}, diagram $(5)$ and footnote $(^8)$). We prove that the (connective) assemblies agree rationally, at least for the ring of integers. Our argument is far from optimal, but is sufficient for our applications.

By periodicity, the Witt groups and the quadratic $L$-groups are rationally modules over the Laurent polynomial algebra $\mathbb{Q}[\beta,\beta^{-1}]$, where $\beta$ is of degree $4$ (see resp. \cite[4.10]{Karoubi} and \cite[Appendix B]{RanickiTopMan}). Let us choose isomorphisms of $\mathbb{Q}[\beta]$-modules $\phi\colon \Lqsub{\ast\geq 0}(\Z)\otimes \mathbb{Q}\cong\mathbb{Q}[\beta]\cong W_{\ast\geq0}(\Z)\otimes\mathbb{Q}$.
Then any choice of isomorphisms $\phi^{\pi}_i\colon \Lqsub{i}(\Z[\pi])\otimes \mathbb{Q}\cong W_{i}(\Z[\pi])\otimes \mathbb{Q}$ for $i=0,1,2,3$ determines an isomorphism of $\mathbb{Q}[\beta]$-modules  
\[\phi^\pi\colon \Lqsub{\ast\geq 0}(\Z[\pi])\otimes \mathbb{Q}\stackrel{\cong}{\longrightarrow} W_{\ast\geq 0}(\Z[\pi])\otimes\mathbb Q,\] 
which is given in degree $i=4k+l$, for $k>0$ and $l=0,1,2,3$, by $\phi^{\pi}_i=\beta^k\phi^{\pi}_l\beta^{-k}$. Since this is an isomorphism of $\mathbb{Q}[\beta]$-modules the right square of the diagram defining the assemblies
\[
\xymatrix@C=40pt{
\Lqsub{\ast\geq 0}(\Z)\otimes H_\ast(B\pi;\mathbb{Q})\ar[r]^-{\cong\otimes \gamma_L}\ar[d]^{\phi\otimes\id}
&
\mathbb{Q}[\beta]\otimes \Lqsub{\ast\geq 0}(\Z[\pi])\ar[r]\ar[d]^{\id\otimes\phi^\pi}&\Lqsub{\ast\geq 0}(\Z[\pi])\otimes \mathbb{Q}\ar[d]^{\phi_\pi}
\\
W_{\ast\geq 0}(\Z)\otimes H_\ast(B\pi;\mathbb{Q})\ar[r]^-{\cong\otimes \gamma_W}&\mathbb{Q}[\beta]\otimes W_{\ast\geq 0}(\Z[\pi])\ar[r]&W_{\ast\geq 0}(\Z[\pi])\otimes \mathbb{Q}
}
\]
commutes. It is therefore sufficient to show that $\phi^\pi\gamma_L=\gamma_W$. The map $\gamma_W$ is the composite $\mathbb{S}\wedge B\pi_+\to \KH(\Z[\pi])\to W(\Z[\pi])$, and the map $\gamma_L\colon \mathbb{S}\wedge B\pi_+\to L(\Z[\pi])$ is the ``pre-assembly'' of \cite[Appendix B]{RanickiTopMan}. Since $\pi$ is discrete, these maps are determined by the corresponding group homomorphisms $\delta_L\colon\pi\to L_1(\Z[\pi])$ and $\delta_W\colon \pi\to W_1(\Z[\pi])$. By the comparison of \cite[9.11]{RanickiIntro}, one can see that the isomorphism $\phi^{\pi}_1$ can be chosen so that $\phi^{\pi}_1\delta_L=\delta_W$.
Since the homotopy category of rational spectra is equivalent to graded $\mathbb{Q}$-vector spaces, the isomorphism $\phi_\pi$ can be realized as a zig-zag of maps of rational spectra. Since $\phi^{\pi}_1\delta_L=\delta_W$, the diagram
\[
\xymatrix@C=50pt@R=15pt{
B\pi_+\ar[r]\ar[dr]&\Omega^{\infty}(\Lq(\Z[\pi])\otimes\mathbb{Q})\ar[d]^{\Omega^\infty\phi^\pi}\\
&\Omega^{\infty}(W(\Z[\pi])\otimes\mathbb{Q})
}
\]
commutes in the homotopy category of spaces. It follows that $(\phi^\pi\otimes\mathbb{Q})\gamma_L=\gamma_W$ commutes in the homotopy category of spectra, which concludes the proof.
\end{proof}

The unit of the ring spectrum with anti-involution $A$ induces a map of $\mathbb{Z}/2$-spectra 
\[\eta\colon
\mathbb{S}\longrightarrow \KR(A).\]
It is adjoint to the map of $\mathbb{Z}/2$-spaces $
S^0\to \coprod_{k\geq 0}B^{\sigma}\widehat{GL}^{\vee}_{k}(A)$
that sends the basepoint of $S^0$ to the unique point in the component $k=0$, and the non-basepoint of $S^0$ to the point in the component $k=1$ determined by the $0$-simplex of $(\sd_e N^{\sigma}\widehat{GL}^{\vee}_{1}(A))_0=\widehat{GL}^{\vee}_{1}(A)$ defined by the unit map $S^{0}\to A_0$ of $A$.

\begin{definition}\label{defresassKR} The restricted assembly map of the real $K$-theory of $A[\pi]$ is the map of $\mathbb{Z}/2$-spectra
\[
\mathbb{S}\wedge B^{\sigma}\pi_+\xrightarrow{\eta\wedge \id}{\KR}(A)\wedge B^{\sigma}\pi_+\longrightarrow {\KR}(A[\pi]).
\]
\end{definition}

The geometric fixed points of the map $\eta\colon \mathbb{S}\to \KR(A)$ provide a map 
$\mathbb{S}\cong \Phi^{\mathbb{Z}/2}\mathbb{S}\to \Phi^{\mathbb{Z}/2}{\KR}(A)=\Lg(A)$, which immediately leads to the corresponding restricted assembly in genuine $L$-theory. In Hermitian $K$-theory however, the Tom Dieck splitting
provides a map 
\[
\mathbb{S}\vee\mathbb{S}\longrightarrow \mathbb{S}\vee\mathbb{S}_{h\mathbb{Z}/2}\stackrel{\simeq}{\longrightarrow}\mathbb{S}^{\mathbb{Z}/2}\longrightarrow {\KH}(A)
\]
from two summands of the sphere spectrum.

\begin{definition}\label{defresass}
The restricted assembly maps of the Hermitian $K$-theory and genuine $L$-theory of $A[\pi]$ are respectively the maps of spectra
\[
(\mathbb{S}\vee\mathbb{S})\wedge B\pi_+\longrightarrow {\KH}(A)\wedge B\pi_+\longrightarrow {\KH}(A[\pi])
\]
\[
\mathbb{S}\wedge B\pi_+\longrightarrow\Lg(A)\wedge B\pi_+\longrightarrow \Lg(A[\pi]).
\]
\end{definition}

The restricted assembly map of the Hermitian $K$-theory of $\mathbb{Z}$ is usually defined on homotopy groups by the composite 
\[ 
\mathcal{A}^{0}_{\mathbb{Z}[\pi]}\colon \KH_0(\mathbb{Z})\otimes\pi_\ast (\mathbb{S}\wedge B\pi_+)\!\hookrightarrow \KH_\ast(\mathbb{Z})\otimes\pi_\ast (\mathbb{S}\wedge B\pi_+)\rightarrow \pi_\ast({\KH}(\mathbb{Z})\wedge B\pi_+)\xrightarrow{\mathcal{A}_{\mathbb{Z}[\pi]}}\KH_\ast(\mathbb{Z}[\pi])
\]
where the first map includes ${\KH}_0(\mathbb{Z})$ into ${\KH}_\ast(\mathbb{Z})$ as the degree zero summand. Both $\pi_0(\mathbb{S}\vee\mathbb{S})$ and  ${\KH}_0(\mathbb{Z})$ are isomorphic to $\mathbb{Z}\oplus \mathbb{Z}$, and the unit map $\eta\colon\mathbb{S}\to {\KR}(\mathbb{Z})$ provides such an isomorphism. The following compares the resulting assemblies.

\begin{proposition}\label{compresass}
The map $\mathbb{S}\vee\mathbb{S}\to {\KH}(\mathbb{Z})$ sends the two generators in $\pi_0$ respectively to the hyperbolic form $\left(\begin{smallmatrix}0&1\\1&0\end{smallmatrix}\right)$ and to the unit form $\langle 1\rangle$.
It follows that the
restricted assembly 
\[(\mathbb{S}\vee\mathbb{S})\wedge B\pi_+\longrightarrow {\KH}(\mathbb{Z}[\pi])\]
agrees on homotopy groups with the restricted assembly $\mathcal{A}^{0}_{\mathbb{Z}[\pi]}$ upon identifying $\pi_0(\mathbb{S}\vee\mathbb{S})$ and ${\KH}_0(\mathbb{Z})$ by the isomorphism
\[
\pi_0(\mathbb{S}\vee\mathbb{S})\stackrel{\eta}{\longrightarrow}{\KH}_0(\mathbb{Z})\xrightarrow{
\left(\begin{smallmatrix}1&1\\0&1
\end{smallmatrix}\right)
}{\KH}_0(\mathbb{Z}).
\]
\end{proposition}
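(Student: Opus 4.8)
The plan is to compute the composite
$\mathbb{S}\vee\mathbb{S}\to\mathbb{S}\vee\mathbb{S}_{h\Z/2}\xrightarrow{\ \simeq\ }\mathbb{S}^{\Z/2}\xrightarrow{\ \eta^{\Z/2}\ }\KH(\mathbb{Z})$ on $\pi_0$ using the Burnside‐ring description of the source of $\eta^{\Z/2}$. Recall that $\pi_0\mathbb{S}^{\Z/2}=\mathbb{A}(\Z/2)=\mathbb{Z}\{[\Z/2/\Z/2]\}\oplus\mathbb{Z}\{[\Z/2]\}$, that the Tom Dieck splitting identifies the unit summand $\mathbb{S}\hookrightarrow\mathbb{S}^{\Z/2}$ with the inclusion of $\mathbb{Z}\{[\Z/2/\Z/2]\}$, and that the orbit summand $\mathbb{S}_{h\Z/2}\to\mathbb{S}^{\Z/2}$ is on $\pi_0$ the transfer $\pi_0\mathbb{S}\to\pi_0\mathbb{S}^{\Z/2}$, $1\mapsto[\Z/2]=\operatorname{tr}(1)$. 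Since $\mathbb{S}\to\mathbb{S}_{h\Z/2}$ is a $\pi_0$‐isomorphism, the first arrow in the composite is a $\pi_0$‐isomorphism, so the two generators of $\pi_0(\mathbb{S}\vee\mathbb{S})$ map to $[\Z/2]$ and $[\Z/2/\Z/2]$ (in the order fixed by the definition of the restricted assembly).

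First I would evaluate $\eta^{\Z/2}$ on these two Burnside generators. The map $\eta\colon\mathbb{S}\to\KR(\mathbb{Z})$ is the unit of the ring spectrum $\KR(\mathbb{Z})$ (Proposition~\ref{multiplication}, $\mathbb{Z}$ being commutative), so $\eta^{\Z/2}$ is unital and sends the Burnside unit $[\Z/2/\Z/2]$ to the unit of $\KH_0(\mathbb{Z})$, namely the class of the $1$‐dimensional form $\langle 1\rangle$ — as is also clear from the explicit description of $\eta$, which sends the non‑basepoint of $S^0$ to the $k=1$ component represented by the unit form. On the other hand $\eta$ is a map of $\Z/2$‑spectra, hence induces a map of Mackey functors $\underline{\pi}_0\mathbb{S}\to\underline{\pi}_0\KR(\mathbb{Z})$ and so commutes with transfers; writing $[\Z/2]=\operatorname{tr}(1)$ and $\eta(1)=[\mathbb{Z}]\in\K_0(\mathbb{Z})$ gives $\eta^{\Z/2}([\Z/2])=\operatorname{tr}_{\KR}([\mathbb{Z}])$. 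By the identification of the transfer on the Mackey structure of $\KR_\ast$ with the hyperbolic map (used in the proof of Proposition~\ref{Lgeom}) this equals $H([\mathbb{Z}])=\left(\begin{smallmatrix}0&1\\1&0\end{smallmatrix}\right)$. Combining the last two displays with the Tom Dieck identification of the summands yields the first assertion.

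For the second assertion I would argue by naturality. The restricted assembly $(\mathbb{S}\vee\mathbb{S})\wedge B\pi_+\to\KH(\mathbb{Z}[\pi])$ is obtained by smashing the Tom Dieck map $\mathbb{S}\vee\mathbb{S}\to\KH(\mathbb{Z})$ with $B\pi_+$ and composing with the map $\gamma$ and the module structure coming from the pairing of \S\ref{secpairings}; on homotopy groups it therefore equals the composite of $\eta\otimes\operatorname{id}\colon\pi_0(\mathbb{S}\vee\mathbb{S})\otimes\pi_\ast(\mathbb{S}\wedge B\pi_+)\to\KH_0(\mathbb{Z})\otimes\pi_\ast(\mathbb{S}\wedge B\pi_+)$ with the full assembly $\mathcal{A}_{\mathbb{Z}[\pi]}$, which by Proposition~\ref{compfulass} agrees with Loday's classical assembly under the equivalences of \ref{compBF} and \ref{gpring}. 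The classical restricted assembly $\mathcal{A}^0_{\mathbb{Z}[\pi]}$ is the same composite but uses the classical normalisation of the coefficient group $\KH_0(\mathbb{Z})$; the only discrepancy is thus the identification of $\KH_0(\mathbb{Z})$. By the first part the Tom Dieck map produces the generators $(H,\langle 1\rangle)$, and the elementary matrix $\left(\begin{smallmatrix}1&1\\0&1\end{smallmatrix}\right)$ is exactly the change of basis relating these to the generators implicit in $\mathcal{A}^0_{\mathbb{Z}[\pi]}$, encoding the relation $H=\langle 1\rangle\perp\langle -1\rangle$ in $\KH_0(\mathbb{Z})$. This gives the stated comparison.

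The main obstacle will be the verification underpinning the first part: that $\eta^{\Z/2}$ genuinely intertwines the stable transfer of the sphere spectrum (whose $\pi_0$ hits $[\Z/2]\in\mathbb{A}(\Z/2)$) with the hyperbolic map in Hermitian $K$‑theory, and that the Tom Dieck summand $\mathbb{S}_{h\Z/2}\to\mathbb{S}^{\Z/2}$ really is that transfer on the level of the point‑set models used here. This is a compatibility among the stable transfer, the transfer of the Mackey functor $\underline{\pi}_0\KR(\mathbb{Z})$, and the hyperbolic functor on the categorical level; it holds because $\eta$ is a map of genuine $\Z/2$‑spectra, but assembling it from the concrete $B^\sigma\widehat{GL}^\vee_n$‑models and the $\Gamma$‑space of \S\ref{secdeloopings} requires care. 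Once that compatibility is in place, the remaining bookkeeping — the ordering of the wedge summands and the precise change of basis on $\KH_0(\mathbb{Z})$ — is routine.
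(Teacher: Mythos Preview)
Your argument is correct and matches the paper's proof in substance. Both identify the two wedge summands via the Tom Dieck splitting, send the unit summand to $\langle 1\rangle$ by unitality of $\eta$, send the orbit summand to the hyperbolic form via the transfer/hyperbolic identification, and then invoke Proposition~\ref{compfulass} for the assembly comparison.

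The only real difference is packaging: the paper organises the first part as a map between the isotropy separation sequences of $\mathbb{S}$ and $\KR(\mathbb{Z})$,
\[
\xymatrix{
0\ar[r]&\pi_0\mathbb{S}_{h\mathbb{Z}/2}\ar[r]\ar[d]^{\cong}&\pi_0\mathbb{S}^{\mathbb{Z}/2}\ar[r]\ar[d]&\pi_0\Phi^{\mathbb{Z}/2}\mathbb{S}\ar[r]\ar[d]^{\cong}&0\\
\dots\ar[r]&\pi_0\KR(\mathbb{Z})_{h\mathbb{Z}/2}\ar[r]&\pi_0\KH(\mathbb{Z})\ar[r]&\pi_0\Phi^{\mathbb{Z}/2}\KR(\mathbb{Z})\ar[r]&0
}
\]
with the outer verticals being the ring units for $\K(\mathbb{Z})$ and $W(\mathbb{Z})$, whereas you phrase the same thing in Burnside-ring language, sending $[\mathbb{Z}/2/\mathbb{Z}/2]$ and $[\mathbb{Z}/2]=\operatorname{tr}(1)$ through the Mackey structure of $\eta$. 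The advantage of the paper's framing is that the compatibility you flag as the ``main obstacle'' --- that the stable transfer on $\mathbb{S}$ corresponds to the hyperbolic map on $\KR$ --- is immediate from the naturality of the isotropy separation sequence together with the identification of $\K\to\KH$ as the bottom-left horizontal map, so no separate point-set verification is needed. Your route via the Mackey transfer is equally valid but makes that step look harder than it is.
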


\begin{proof}
The isotropy separation sequences for the $\mathbb{Z}/2$-spectra $\mathbb{S}$ and ${\KR}(\mathbb{Z})$ give a commutative diagram
\[\xymatrix{
0\ar[r]&\pi_0\mathbb{S}_{h\mathbb{Z}/2}\ar[r]\ar[d]^{\cong}&\pi_0\mathbb{S}^{\mathbb{Z}/2}\ar[r]\ar[d]^{}&\pi_0\Phi^{\mathbb{Z}/2}\mathbb{S}\cong \pi_0\mathbb{S}\ar[r]\ar[d]^{\cong}\ar@/_1pc/[l]_-{}&0
\\
\dots\ar[r]&\pi_0\KR(\mathbb{Z})_{h\mathbb{Z}/2}\ar[r]&\pi_0\KH(\mathbb{Z})\ar[r]&\pi_0\Phi^{\mathbb{Z}/2}\KR(\mathbb{Z})\cong W_0(\mathbb{Z})\ar[r]&0
}
\]
with exact rows. The outer vertical maps are the units of the ring structures on $\K(\mathbb{Z})$ and $W(\mathbb{Z})$ respectively. The splitting of the upper sequence is the Tom Dieck splitting. It follows that the bottom sequence splits as well, and that the middle map is an isomorphism. The diagram
\[\xymatrix{
\pi_0\mathbb{S}\ar[d]\ar[r]^-{\cong}&\pi_0\mathbb{S}_{h\mathbb{Z}/2}\ar[r]\ar[d]^{\cong}&\pi_0\mathbb{S}^{\mathbb{Z}/2}\ar[d]^{\cong}
\\
\pi_0\K(\mathbb{Z})\ar[r]_-{\cong}&\pi_0\KR(\mathbb{Z})_{h\mathbb{Z}/2}\ar[r]&\pi_0\KH(\mathbb{Z})
}
\]
commutes, and the composite of the two lower maps takes the isomorphism class of a free $\mathbb{Z}$-module to its hyperbolic form. Moreover, the composite $\pi_0\mathbb{S} \to \pi_0\mathbb{S}^{\mathbb{Z}/2}\to \pi_0\KH(\mathbb{Z})$ is the unit of the ring structure of $\pi_0\KH(\mathbb{Z})$, and it takes the generator to $\langle 1\rangle$.
The identification of the restricted assemblies now follows from Proposition \ref{compfulass}.
\end{proof}


\section{The real trace map} \label{secfour}

\subsection{Real topological Hochschild homology}\label{secTHR}

We recollect some of the constructions of real topological Hochschild homology of \cite{IbLars} (see also \cite{Thesis}, \cite{Amalie}, \cite{THRmodels}). Let $A$ be a ring spectrum with anti-involution, possibly non-unital. The real topological Hochschild homology of $A$ is a genuine $\mathbb{Z}/2$-spectrum ${\THR}(A)$. It is determined by a strict $\mathbb{Z}/2$-action on the B\"{o}kstedt model for topological Hochschild homology ${\THH}(A)$ of the underlying ring spectrum. We recall its construction.

Let $I$ be the category of finite sets and injective maps. For any non-negative integer $k$ there is a functor $\Omega^{\bullet} A\colon I^{\times k+1}\to Sp$ that sends $\underline{i}=(i_0,i_1,\dots,i_k)$ to the spectrum
\[
\Omega^{i_0+i_1+\dots+i_k}(\mathbb{S}\wedge A_{i_0}\wedge A_{i_1}\wedge\dots\wedge A_{i_k}).
\]
Its homotopy colimit constitutes the $k$-simplices of a semi-simplicial orthogonal spectrum
\[
{\THH}_k(A):=\hocolim_{\underline{i}\in I^{\times k+1}}\Omega^{i_0+i_1+\dots+i_k}(\mathbb{S}\wedge A_{i_0}\wedge A_{i_1}\wedge\dots\wedge A_{i_k}),
\]
see e.g. \cite[Def.4.2.2.1]{DGM}. The involution on $I$ described in \S\ref{secRealSpectra} induces an involution on $I^{\times k+1}$, by sending $(i_0,i_1,\dots,i_k)$ to $(i_0,i_k,\dots,i_1)$ (it is the $k$-simplices of the dihedral Bar construction on $I$ with respect to the disjoint union).  The diagram $\Omega^\bullet A$ admits a $\mathbb{Z}/2$-structure in the sense of \cite[Def.1.1]{Gdiags}, defined by conjugating a loop with the maps
\[\xymatrix{
S^{i_0+i_1+\dots+i_k}\ar[rrr]^-{\chi_{i_0}\wedge\chi_{i_1}\wedge\dots\wedge \chi_k}&&&
S^{i_0+i_1+\dots+i_k}\ar[rr]^{\id_{i_0}\wedge \chi_{k}}&&S^{i_0+i_k+\dots+i_1}
}
\]
and
\[\xymatrix{
A_{i_0}\wedge A_{i_1}\wedge\dots\wedge A_{i_k}\ar[rr]^-{\chi_{i_0}\wedge\chi_{i_1}\wedge\dots\wedge \chi_k}&&
A_{i_0}\wedge A_{i_1}\wedge\dots\wedge A_{i_k}\ar[rr]^{\id_{i_0}\wedge \chi_{k}}&&A_{i_0}\wedge A_{i_k}\wedge\dots\wedge A_{i_1}\ar[d]^-{w\wedge\dots\wedge w}
\\
&&&&
A_{i_0}\wedge A_{i_k}\wedge\dots\wedge A_{i_1}
}
\]
where $\chi_j\in\Sigma_j$ is the permutation that reverses the order of $\{1,\dots,j\}$.
Thus the homotopy colimit ${\THH}_k(A)$ inherits a $\mathbb{Z}/2$-action which induces a semi-simplicial map ${\THH}_{\bullet}(A)^{op}\to {\THH}_\bullet(A)$. This therefore forms a real semi-simplicial spectrum ${\THH}_{\bullet}(A)$.

\begin{definition}[\cite{IbLars}]
The real topological Hochschild homology of $A$ is the $\mathbb{Z}/2$-spectrum ${\THR}(A)$ defined as the thick geometric realization of the semi-simplicial $\mathbb{Z}/2$-spectrum $\sd_e{\THH}_{\bullet}(A)$, where $\sd_e$ is Segal's subdivision.
\end{definition}

\begin{remark}
When $A$ is unital, levelwise well-pointed, and the unit $S^0\to A_0$ is an $h$-cofibration, the real semi-simplicial spectrum ${\THH}_{\bullet}(A)$ is in fact simplicial, and the map ${\THR}(A)\to |{\THH}_{\bullet}(A)|$ is a stable equivalence of $\mathbb{Z}/2$-spectra. In order to consider the circle action induced by the cyclic structure one should work under this extra assumptions with the thin realization.
\end{remark}

\begin{lemma}\label{fibrantTHR}
Suppose that for every $n\geq 0$ the space $A_{n}$ is $(n-1)$-connected, and that the fixed points space $A^{\mathbb{Z}/2}_{n\rho}$ is $(n-1)$-connected, where $\rho$ is the regular representation of $\mathbb{Z}/2$.
Then the $\mathbb{Z}/2$-spectrum ${\THR}(A)$ is an equivariant $\Omega$-spectrum. In particular the map
\[\|\sd_e\hocolim_{\underline{i}\in I^{\times k+1}}\Omega^{i_0+i_1+\dots+i_k}(A_{i_0}\wedge A_{i_1}\wedge\dots\wedge A_{i_k})\|\longrightarrow \Omega^{\infty\mathbb{Z}/2}{\THR}(A)
\]
is an equivalence, where $\Omega^{\infty\mathbb{Z}/2}$ denotes the genuine equivariant infinite loop space functor.
\end{lemma}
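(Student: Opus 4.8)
The plan is to verify directly that $\THR(A)$ satisfies the genuine equivariant $\Omega$-spectrum condition, and then deduce the displayed equivalence formally. Recall that an orthogonal $\mathbb{Z}/2$-spectrum $X$ is an equivariant $\Omega$-spectrum precisely when, for every inclusion of $\mathbb{Z}/2$-representations $V\subseteq W$ and every subgroup $H\leq\mathbb{Z}/2$, the adjoint structure map $X(V)^H\to\Omega^{W-V}(X(W))^H$ is a weak equivalence. Since $\mathbb{Z}/2$ has only the trivial and sign irreducibles, it suffices to treat the two cases $W=V\oplus 1$ and $W=V\oplus\sigma$, each on underlying spaces ($H=e$) and on genuine fixed points ($H=\mathbb{Z}/2$). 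Granting this, the ``in particular'' clause is automatic: for a genuine equivariant $\Omega$-spectrum $X$ one has $\Omega^{\infty\mathbb{Z}/2}X\simeq X(0)\simeq\operatorname*{hocolim}_n\Omega^{n\rho}X(n\rho)$, and since the $n\rho$-th level of $\THR(A)$ is by construction $\|\sd_e\hocolim_{\underline i}\Omega^{i_0+\dots+i_k}(S^{n\rho}\wedge A_{i_0}\wedge\dots\wedge A_{i_k})\|$, commuting the loops $\Omega^{n\rho}$ past the homotopy colimit and the realization (which is exactly what the $\Omega$-spectrum property encodes, in a range of connectivity growing with $n$) identifies the homotopy colimit in the statement with $\Omega^{\infty\mathbb{Z}/2}\THR(A)$.

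First I would handle the underlying spectrum, whose value at $\mathbb{R}^m$ is the B\"okstedt model $\|\sd_e\THH_\bullet(A)\|$ with $S^m$ smashed in. The hypothesis that $A_n$ is $(n-1)$-connected is precisely B\"okstedt's connectivity assumption, so the classical approximation lemma (as in \cite[\S2.2, \S4.2]{DGM}) applies: the homotopy colimit $\hocolim_{\underline i\in I^{\times k+1}}\Omega^{i_0+\dots+i_k}(S^m\wedge A_{i_0}\wedge\dots\wedge A_{i_k})$ agrees, up to a map whose connectivity tends to infinity, with $\Omega^m$ of the same homotopy colimit carrying an extra sphere coordinate. Assembling this over the thick realization is unproblematic because the connectivity hypothesis makes $\THH_\bullet(A)$ a proper, good semi-simplicial spectrum, so its thick realization computes the correct homotopy type. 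This shows the underlying spectrum is an $\Omega$-spectrum, which covers the $H=e$ cases for both irreducibles as well as the $H=\mathbb{Z}/2$ case for $W=V\oplus 1$ (looping by a trivial summand commutes with fixed points).

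The heart of the argument is the $\Omega^\sigma$-structure map on genuine fixed points. By Proposition \ref{fixeddihedral} and the discussion of $\sd_e$ around it, the $\mathbb{Z}/2$-fixed points of the $n\rho$-level of $\THR(A)$ are modeled by a realization of two-sided B\"okstedt-style homotopy colimits in which the ``outer'' vertices contribute fixed-point spaces $A^{\mathbb{Z}/2}_{j\rho}$ and the ``inner'' vertices contribute ordinary spaces $A_j$, looped down by $\Omega^{j\rho}$; thus the relevant connectivity input is exactly the two hypotheses ``$A^{\mathbb{Z}/2}_{n\rho}$ is $(n-1)$-connected'' and ``$A_n$ is $(n-1)$-connected''. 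The plan is to run B\"okstedt's approximation argument equivariantly on this model, showing that the equivariant homotopy colimit over the subdivided indexing category commutes with $\Omega^\sigma$ (equivalently $\Omega^\rho$) up to a map whose connectivity grows with $n$, by invoking the equivariant B\"okstedt lemma and the identification of $\Omega^{\infty\mathbb{Z}/2}$ with the $I$-indexed model set up in Remark \ref{infiniteloopG} and in \cite{THRmodels}. Passing to the realization then yields the $\Omega^\sigma$-structure-map equivalence on fixed points, and together with the previous paragraph this establishes that $\THR(A)$ is an equivariant $\Omega$-spectrum.

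The main obstacle I expect is precisely this equivariant approximation step: one must control the connectivity of the diagrams appearing in $\THR(A)(n\rho)^{\mathbb{Z}/2}$ \emph{simultaneously} on underlying spaces and on $\mathbb{Z}/2$-fixed points, so that the homotopy colimits there commute with $\Omega^\sigma$ in an increasing range — this is the equivariant refinement of the B\"okstedt lemma. A further technical point is that $\THH_\bullet(A)$ is only semi-simplicial, so one must navigate the realization-versus-subdivision and properness subtleties of \S\ref{secBar} (the thick realization must still compute the right equivariant homotopy type on fixed points); under the stated connectivity hypotheses these issues are benign, but they need to be addressed. Everything else — the reduction to the two irreducibles and the derivation of the displayed equivalence — is formal once the $\Omega$-spectrum property is in hand.
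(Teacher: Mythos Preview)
Your overall strategy is sound and close to the paper's, but there is one genuine logical slip and one place where you are vaguer than you can afford to be.

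\textbf{The slip.} You claim that the underlying $\Omega$-spectrum property ``covers\ldots the $H=\mathbb{Z}/2$ case for $W=V\oplus 1$ (looping by a trivial summand commutes with fixed points).'' The parenthetical is true---$(\Omega^1 X)^{\mathbb{Z}/2}\cong\Omega^1(X^{\mathbb{Z}/2})$---but it only identifies the \emph{target} of the structure map on fixed points; it says nothing about the map $X(V)^{\mathbb{Z}/2}\to\Omega^1 X(V\oplus 1)^{\mathbb{Z}/2}$ being an equivalence. Knowing that $X(V)\to\Omega^1 X(V\oplus 1)$ is an equivalence on underlying spaces does not imply it on fixed points. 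So this case is not free; it needs the same kind of connectivity argument as the $\sigma$-case, now run on the fixed-points model. The paper avoids this case-splitting entirely by treating the $\Omega^\rho$-structure map in one go (since $\rho=1\oplus\sigma$, this subsumes both irreducibles simultaneously), which is cleaner.

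\textbf{Where the paper is more concrete.} Your ``equivariant B\"okstedt approximation'' step is the crux, and you flag it as such, but the paper actually executes it by a two-step reduction you do not name. First, it invokes semi-stability (from \cite{THRmodels}) to replace the $I^{\times 2k+2}$-indexed homotopy colimit by the sequential colimit
\[
\hocolim_{n\in\mathbb{N}}\Omega^{n\rho\otimes(k\rho+2)}\bigl(S^V\wedge A_{n\rho}\wedge A_{n\rho}^{\wedge 2k+1}\bigr).
\]
Second, on this model the $\Omega^\rho$-structure map is the colimit of $\Omega^{n\rho\otimes(k\rho+2)}$ applied to the unit $\eta_n\colon X_n\to\Omega^\rho\Sigma^\rho X_n$, and one feeds the connectivity hypotheses on $A_n$ and $A_{n\rho}^{\mathbb{Z}/2}$ into the \emph{equivariant Freudenthal suspension theorem} to see that the connectivity of $\Omega^{n\rho\otimes(k\rho+2)}\eta_n$ diverges with $n$ (both non-equivariantly and on fixed points). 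A separate explicit connectivity count shows the prespectrum levels are equivariantly connected, which is what lets you commute $\Omega^\rho$ past the thick realization (via \cite[Lemma 2.4]{Wittvect}). Your sketch gestures at all of this but does not isolate Freudenthal as the analytic input, and your use of Proposition \ref{fixeddihedral} to model the fixed points is not how the paper proceeds---the semi-stability reduction to the $\mathbb{N}$-indexed model is what makes the connectivity bookkeeping tractable.
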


\begin{example}
\begin{enumerate}[i)]
\item Any suspension spectrum satisfies the hypotheses of the lemma. Indeed $(\mathbb{S}\wedge X)_n=S^n\wedge X$ is $(n+\conn X)$-connected non-equivariantly, and $\conn X\geq -1$. Similarly, the fixed points $(\mathbb{S}\wedge X)^{\mathbb{Z}/2}_{n\rho}=S^{n}\wedge X^{\mathbb{Z}/2}$ are $(n+\conn X^{\mathbb{Z}/2})$-connected.
\item Eilenberg-MacLane spectra of Abelian groups with $\mathbb{Z}/2$-action satisfy this condition as well, see e.g. \cite[Prop. A.1.1]{Gcalc}.
\end{enumerate}
\end{example}

\begin{proof}[Proof of \ref{fibrantTHR}]
We need to show that for every $\mathbb{Z}/2$-representation $V$, the adjoint structure map
\[
\|\hocolim_{\underline{i}\in I^{\times 2k+2}}\Omega^{\underline{i}}(S^V\wedge A_{i_0}\wedge A_{i_1}\wedge\dots\wedge A_{i_{2k+1}})\|\stackrel{\sigma}{\longrightarrow} \Omega^{\rho}\|\hocolim_{\underline{i}\in I^{\times 2k+2}}\Omega^{\underline{i}}(S^\rho\wedge S^V\wedge A_{i_0}\wedge A_{i_1}\wedge\dots\wedge A_{i_{2k+1}})\|
\]
is an equivalence. It is shown in \cite{THRmodels} using a semi-stability argument that there is an equivalence
\[\hocolim_{n\in\mathbb{N}}\Omega^{n\rho\otimes(k\rho+2)}(S^\rho\wedge S^V\wedge A_{n\rho}\wedge A_{n\rho}^{\wedge 2k+1})\stackrel{\simeq}{\longrightarrow}\hocolim_{\underline{i}\in I^{\times 2k+2}}\Omega^{\underline{i}}(S^\rho\wedge S^V\wedge A_{i_0}\wedge A_{i_1}\wedge\dots\wedge A_{i_{2k+1}})\]
where the involution on $2k+1$ reverses the order. The source of this map is equivariantly connected for every $k$, by our connectivity assumption. Indeed, it is non-equivariantly
\begin{align*}
(2+\dim V)+(2k+2)(\conn A_{n\rho})+2k+1-\dim (n\rho\otimes(k\rho+2))=\\
(2+\dim V)+(2k+2)(2n-1)+2k+1-2n(2k+2)=\\
(1+\dim V)+(2k+2)(2n)-2n(2k+2)=(1+\dim V)
\end{align*}
connected, and its connectivity on fixed points is the minimum between $1+\dim V$ and
\begin{align*}
\dim(\rho+V)^{\mathbb{Z}/2}+\conn(A_{n\rho}^{\mathbb{Z}/2}\wedge A_{n\rho}^{\wedge k}\wedge A_{n\rho}^{\mathbb{Z}/2})-\dim (2n\rho+nk\rho\otimes\rho)^{\mathbb{Z}/2}=
\\
1+\dim V^{\mathbb{Z}/2}+(n-1)+k(2n-1)+(n-1)+k+1-(2n+2nk)
=\dim V^{\mathbb{Z}/2}\geq 0.
\end{align*}
By \cite[Lemma 2.4]{Wittvect} we can therefore commute realization and loops, and the map $\sigma$ above is equivalent to the realization of the semi-simplicial map
\[
\hocolim_{\underline{i}\in I^{\times 2k+2}}\Omega^{\underline{i}}(S^V\wedge A_{i_0}\wedge A_{i_1}\wedge\dots\wedge A_{i_{2k+1}})\stackrel{\sigma_k}{\longrightarrow} \Omega^{\rho}\hocolim_{\underline{i}\in I^{\times 2k+2}}\Omega^{\underline{i}}(S^\rho\wedge S^V\wedge A_{i_0}\wedge A_{i_1}\wedge\dots\wedge A_{i_{2k+1}}).
\]
We show that $\sigma_k$ is an equivariant equivalence for all $k\geq 0$. Again by \cite{THRmodels} this map is equivalent to the map
\[
\hocolim_{n\in\mathbb{N}}\Omega^{n\rho\otimes(k\rho+2)}(S^V\wedge A_{n\rho}\wedge A_{n_\rho}^{\wedge 2k+1})\stackrel{}{\longrightarrow}\hocolim_{n\in\mathbb{N}}\Omega^{n\rho\otimes(k\rho+2)}\Omega^\rho(S^{\rho}\wedge S^V\wedge A_{n\rho}\wedge A_{n_\rho}^{\wedge 2k+1}),
\]
which is the homotopy colimit of $\Omega^{n\rho\otimes(k\rho+2)}$ applied to the unit $\eta_n\colon X_n\to \Omega^{\rho}(S^\rho\wedge X_n)$ of the loop-suspension adjunction, where $X_n:=S^V\wedge A_{n\rho}\wedge A_{n_\rho}^{\wedge 2k+1}$. By the equivariant Freudenthal suspension Theorem $\eta_n$ is non-equivariantly roughly
\[
2\conn X_n=2(\dim V+(2k+2)(2n-1)+2k+1)=2(\dim V+4n(k+1)-1)
\]
connected, and its connectivity on fixed points is roughly the minimum of $\conn X_n=\dim V+4n(k+1)-1$ and
\begin{align*}
2\conn X^{\mathbb{Z}/2}_n=
2(\dim V^{\mathbb{Z}/2}+2(n-1)+k(2n-1)+k+1).
\end{align*}
Thus on fixed points $\eta_n$ is roughly $(\dim V+4n(k+1))$-connected. It follows that $\Omega^{n\rho\otimes(k\rho+2)}\eta_n$ is non-equivariantly approximately
\begin{align*}
c_n:=2(\dim V+4n(k+1))-\dim(n\rho\otimes(k\rho+2))=\\
2\dim V+8n(k+1)-2n(2k+2)=2\dim V+4n(k+1)
\end{align*}
connected.
Its connectivity on fixed points is the minimum of $2\dim V+4n(k+1)$ and
\[
(\dim V+4n(k+1))-\dim(n\rho\otimes(k\rho+2))^{\mathbb{Z}/2}\!=\!(\dim V+4n(k+1))-(2n+2nk)=\!\dim V+2n(k+1),
\]
which is $d_n:=\dim V+2n(k+1)$. Since both $c_n$ and $d_n$ diverge with $n$ for every $k\leq 0$, the maps $\Omega^{n\rho\otimes(k\rho+2)}\eta_n$ induce an equivalence on homotopy colimits.
\end{proof}

\begin{remark}\label{THRGamma}
Under the connectivity assumptions of Lemma \ref{fibrantTHR} the $\mathbb{Z}/2$-spectrum ${\THR}(A)$ arises as the $\mathbb{Z}/2$-spectrum of a  $\mathbb{Z}/2$-$\Gamma$-space with value at the pointed set $n_+=\{+,1,\dots,n\}$ the $\mathbb{Z}/2$-space
\[
{\THR}(A)_n:=\|\sd_e\hocolim_{\underline{i}\in I^{\times k+1}}\Omega^{i_0+i_1+\dots+i_k}(A_{i_0}\wedge A_{i_1}\wedge\dots\wedge A_{i_k}\wedge{n_+})\|.
\]
Indeed, the value of the associated spectrum at a sphere $S^n$ is the geometric realization
\[{\THR}(A)_{S^n}:=|[p]\mapsto {\THR}(A)_{S_{p}^n}|\cong \|\sd_e \hocolim_{\underline{i}\in I^{\times k+1}}|\Omega^{i_0+i_1+\dots+i_k}(A_{i_0}\wedge A_{i_1}\wedge\dots\wedge A_{i_k}\wedge{S_{p}^n})|\|,\]
and under our connectivity assumptions the canonical map
\[
|\Omega^{i_0+i_1+\dots+i_k}(A_{i_0}\wedge A_{i_1}\wedge\dots\wedge A_{i_k}\wedge{S_{p}^n})|\stackrel{\sim}{\longrightarrow}\Omega^{i_0+i_1+\dots+i_k}(A_{i_0}\wedge A_{i_1}\wedge\dots\wedge A_{i_k}\wedge{|S_{p}^n|})
\]
is an equivariant equivalence with respect to the action of the stabilizer group of $(i_0,\dots,i_k)\in I^{\times k+1}$ (see \cite[Lemma 2.4]{Wittvect}). It follows from \cite[Cor.2.22]{Gdiags} that the map on homotopy colimits is an equivariant equivalence.
\end{remark}

\begin{lemma}\label{THRrat}
The real topological Hochschild homology functor ${\THR}$ commutes with rationalizations on ring spectra with anti-involution with flat underlying orthogonal $\mathbb{Z}/2$-spectrum.
\end{lemma}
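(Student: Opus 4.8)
The plan is to reduce the statement level-by-level along the bar-construction defining $\THR$, and at each level reduce to the statement that the homotopy colimit over $I^{\times k+1}$ of the Bökstedt functor commutes with rationalization on $\mathbb{Z}/2$-spectra, which in turn follows from the fact that smash products of flat $\mathbb{Z}/2$-spectra distribute over the (filtered) homotopy colimits and wedge sums that compute rationalization. Concretely, recall that rationalization of an orthogonal $\mathbb{Z}/2$-spectrum $A$ can be modeled as a filtered homotopy colimit $A_{\mathbb{Q}} \simeq \hocolim\big(A \xrightarrow{2} A \xrightarrow{3} A \xrightarrow{4} \cdots\big)$ (or, if one prefers, by smashing with the rational sphere, which is a filtered homotopy colimit of finite wedges of spheres). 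Both descriptions exhibit $(-)_{\mathbb{Q}}$ as a filtered homotopy colimit along maps of $\mathbb{Z}/2$-spectra.

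First I would fix $k$ and analyze the $k$-simplices
\[
\THH_k(A) = \hocolim_{\underline{i}\in I^{\times k+1}} \Omega^{i_0+\dots+i_k}\big(\mathbb{S}\wedge A_{i_0}\wedge \dots \wedge A_{i_k}\big).
\]
Since $A$ has flat underlying orthogonal $\mathbb{Z}/2$-spectrum, the $(k+1)$-fold smash power $A^{\wedge k+1}$ (with the dihedral $\mathbb{Z}/2$-action) is again flat, and smashing with a flat spectrum preserves the filtered homotopy colimit computing rationalization; hence there is a natural equivalence $(A_{i_0}\wedge\dots\wedge A_{i_k})_{\mathbb{Q}} \simeq (A_{\mathbb{Q}})_{i_0}\wedge\dots\wedge(A_{\mathbb{Q}})_{i_k}$ compatibly in $\underline{i}\in I^{\times k+1}$ and $\mathbb{Z}/2$-equivariantly. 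Then I would invoke that finite loops $\Omega^{i_0+\dots+i_k}(-)$ and the (filtered) homotopy colimit over $I^{\times k+1}$ both commute with the filtered homotopy colimit defining $(-)_{\mathbb{Q}}$ — for loops this is the standard fact that $\Omega^n$ commutes with filtered homotopy colimits of spectra (here with respect to the stabilizer-group actions, cf.\ the use of \cite[Cor.~2.22]{Gdiags}/\cite[Lemma 2.4]{Wittvect} in Remark \ref{THRGamma}), and for the homotopy colimit over $I^{\times k+1}$ it is the interchange of two homotopy colimits. This gives a natural $\mathbb{Z}/2$-equivariant equivalence $\THH_k(A)_{\mathbb{Q}} \simeq \THH_k(A_{\mathbb{Q}})$ for every $k$, compatible with the semi-simplicial structure maps and with the real (levelwise involution) structure, hence an equivalence of real semi-simplicial $\mathbb{Z}/2$-spectra $\THH_\bullet(A)_{\mathbb{Q}} \simeq \THH_\bullet(A_{\mathbb{Q}})$.

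Finally I would pass to Segal's subdivision $\sd_e$ and the thick geometric realization. Subdivision only reindexes levels, so it preserves the levelwise equivalence; and the thick geometric realization $\|-\|$ commutes with the filtered homotopy colimit computing rationalization (realization of semi-simplicial spectra is a homotopy colimit, and homotopy colimits commute with each other), so realizing the levelwise equivalence yields $\THR(A)_{\mathbb{Q}} = \|\sd_e \THH_\bullet(A)\|_{\mathbb{Q}} \simeq \|\sd_e \THH_\bullet(A_{\mathbb{Q}})\| = \THR(A_{\mathbb{Q}})$ as $\mathbb{Z}/2$-spectra, naturally in $A$. The main obstacle I anticipate is bookkeeping the equivariance carefully: one must check that the chosen model for rationalization is compatible with the dihedral $\mathbb{Z}/2$-action on $A^{\wedge k+1}$ and with the $\mathbb{Z}/2$-diagram structure on $\Omega^\bullet A$ over $I^{\times k+1}$ — i.e.\ that the equivalence $(A^{\wedge k+1})_{\mathbb{Q}}\simeq (A_{\mathbb{Q}})^{\wedge k+1}$ is one of $\mathbb{Z}/2$-spectra and not merely of underlying spectra — and that one may commute $\Omega^{\underline i}$ past the rationalization homotopy colimit equivariantly with respect to the relevant stabilizer subgroups. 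Flatness of the underlying $\mathbb{Z}/2$-spectrum is exactly what makes the smash-product step behave well, both non-equivariantly and on geometric fixed points, so that the resulting map is a genuine $\mathbb{Z}/2$-equivalence and not just an underlying equivalence.
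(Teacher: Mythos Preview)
Your overall strategy---reduce to a levelwise statement and then commute rationalization past the pieces of the construction---is reasonable in spirit, but there is a genuine gap in the key step. You write that flatness gives an equivalence $(A_{i_0}\wedge\dots\wedge A_{i_k})_{\mathbb{Q}} \simeq (A_{\mathbb{Q}})_{i_0}\wedge\dots\wedge(A_{\mathbb{Q}})_{i_k}$, but this is not well-posed: the $A_{i_j}$ are \emph{spaces} (the evaluations of the orthogonal spectrum $A$), and spectrum-level rationalization $A_{\mathbb{Q}}=A\wedge\mathbb{S}_{\mathbb{Q}}$ does not simply rationalize each level space. Concretely, $(A\wedge\mathbb{S}_{\mathbb{Q}})_i$ is a coequalizer built from all $A_j\wedge(\mathbb{S}_{\mathbb{Q}})_{i-j}$, not $(A_i)_{\mathbb{Q}}$. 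So the passage from ``smash with flat spectra commutes with filtered colimits'' to the displayed equivalence of B\"okstedt terms does not go through as stated, and the rest of your argument sits on this step.

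The paper avoids this entirely by first invoking (under the flatness hypothesis, citing \cite{THRmodels}) the equivalence between the B\"okstedt model and the dihedral Bar construction $\THR(A)\simeq |A^{\wedge\bullet+1}|$. This is where flatness is actually spent. Once one is in the smash-power model, the simplicial levels are honest iterated smash products of \emph{spectra}, and the statement becomes
\[
A^{\wedge k+1}\wedge\mathbb{S}_{\mathbb{Q}}\ \simeq\ (A\wedge\mathbb{S}_{\mathbb{Q}})^{\wedge k+1}
\]
as $\mathbb{Z}/2$-spectra. The paper proves this by a direct idempotence argument: for any finite pointed $\mathbb{Z}/2$-set $K_+$, the map $\mathbb{S}_{\mathbb{Q}}\to\bigwedge_{K_+}\mathbb{S}_{\mathbb{Q}}$ is an equivalence, checked non-equivariantly and on geometric fixed points using that both $\mathbb{S}_{\mathbb{Q}}$ and $\Phi^{\mathbb{Z}/2}\mathbb{S}_{\mathbb{Q}}$ are $H\mathbb{Q}$. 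This handles the equivariance concern you flag at the end in one stroke, and bypasses any need to commute $\Omega^{\underline{i}}$ or the $I^{\times k+1}$-colimit past rationalization. If you want to repair your argument, the cleanest fix is exactly this: replace the B\"okstedt model by the smash-power model first, and then run your filtered-colimit reasoning at the level of $A^{\wedge k+1}$ rather than at the level of the spaces $A_{i_j}$.
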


\begin{proof}
Under this cofibrancy condition the spectrum ${\THR}(A)$ is naturally equivalent to the dihedral Bar construction of $A$ with respect to the smash product. This result is a generalization of Theorem \cite[4.2.8]{Shipley} and \cite{PatchkSagave}, and a proof can be found in \cite{THRmodels}.

Let $\mathbb{S}_{\mathbb{Q}}$ be a flat model for the rational $\mathbb{Z}/2$-equivariant sphere spectrum. We notice that if $K_+$ is any finite pointed $\mathbb{Z}/2$-set, the map
\[
\mathbb{S}_{\mathbb{Q}}\cong \mathbb{S}_{\mathbb{Q}}\wedge\bigwedge_K \mathbb{S}\stackrel{\sim}{\longrightarrow}\bigwedge_{K_+}\mathbb{S}_{\mathbb{Q}}
\]
given by the $K$-fold smash product of the unit maps of $\mathbb{S}_{\mathbb{Q}}$ smashed with $\mathbb{S}_{\mathbb{Q}}$ is an equivalence. Non-equivariantly this is clear since $\mathbb{S}_{\mathbb{Q}}\simeq H\mathbb{Q}$ is idempotent. On geometric fixed points this is the map
\[
\Phi^{\mathbb{Z}/2}\mathbb{S}_{\mathbb{Q}}\wedge\bigwedge_{[k]\in K/(\mathbb{Z}/2)}\Phi^{(\mathbb{Z}/2)_k}\mathbb{S}\cong \Phi^{\mathbb{Z}/2}\mathbb{S}_{\mathbb{Q}}\wedge\bigwedge_{[k]\in K/(\mathbb{Z}/2)} \mathbb{S}\stackrel{}{\longrightarrow}\Phi^{\mathbb{Z}/2}\mathbb{S}_{\mathbb{Q}}\wedge\bigwedge_{K/(\mathbb{Z}/2)}\Phi^{(\mathbb{Z}/2)_k}\mathbb{S}_{\mathbb{Q}}
\]
which is the smash of the identity with the $K/(\mathbb{Z}/2)$-fold smash of the unit maps of $\Phi^{(\mathbb{Z}/2)_k}\mathbb{S}_{\mathbb{Q}}$, where $(\mathbb{Z}/2)_k$ is the stabilizer group of $k\in K$. Since the geometric fixed points $\Phi^{\mathbb{Z}/2}\mathbb{S}_{\mathbb{Q}}$ are also equivalent to $H\mathbb{Q}$ they are idempotent, and the map is an equivalence.
Thus we have constructed natural equivalences
\[
{\THR}(A)\wedge \mathbb{S}_{\mathbb{Q}}\simeq |A^{\wedge \bullet+1}\wedge \mathbb{S}_{\mathbb{Q}}|\stackrel{\sim}{\longrightarrow} |A^{\wedge \bullet+1}\wedge \mathbb{S}_{\mathbb{Q}}^{\wedge \bullet+1}|\cong |(A\wedge \mathbb{S}_{\mathbb{Q}})^{\wedge \bullet+1}|\simeq {\THR}(A\wedge \mathbb{S}_{\mathbb{Q}}).
\]
\end{proof}

The real topological Hochschild homology spectrum also supports an assembly map. Given a ring spectrum with anti-involution $A$ and a well-pointed topological group $\pi$, it is a map
\[
{\THR}(A)\wedge B^{di}\pi_+\longrightarrow {\THR}(A[\pi])
\]
which is defined as the geometric realization of the map
\[
(\hocolim_{\underline{i}\in I^{\times k+1}}\Omega^{\underline{i}}(A_{i_0}\wedge \dots\wedge A_{i_{k}}\wedge{n_+}))\wedge \pi^{\times k+1}_+\longrightarrow\hocolim_{\underline{i}\in I^{\times k+1}}\Omega^{\underline{i}}(A_{i_0}\wedge \pi_+\wedge \dots\wedge A_{i_{k}}\wedge\pi_+\wedge{n_+})
\]
that commutes the smash-product with the homotopy colimit and the loops. It is shown in \cite{THRmodels} that this map is in fact an equivalence. When $A=\mathbb{S}$ is the sphere, there is a unit map $\mathbb{S}\to {\THR}(\mathbb{S})$ which is defined by the map into the homotopy colimit from the object $\underline{i}=0$.

\begin{proposition}[\cite{Amalie}]\label{THRass}
The composite $\mathbb{S}\wedge B^{di}\pi_+\to {\THR}(\mathbb{S})\wedge B^{di}\pi_+\to {\THR}(\mathbb{S}[\pi])$ is an equivalence.
\end{proposition}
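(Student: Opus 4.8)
The plan is to reduce the statement to the sphere and then to a standard computation. Since, as recalled above, the assembly map ${\THR}(A)\wedge B^{di}\pi_+\to {\THR}(A[\pi])$ is an equivalence for every ring spectrum with anti-involution $A$ (from \cite{THRmodels}), taking $A=\mathbb{S}$ shows that the second map in the composite $\mathbb{S}\wedge B^{di}\pi_+\xrightarrow{\eta\wedge\id}{\THR}(\mathbb{S})\wedge B^{di}\pi_+\to {\THR}(\mathbb{S}[\pi])$ is already an equivalence of $\mathbb{Z}/2$-spectra. Thus the whole problem reduces to showing that the unit map $\eta\colon \mathbb{S}\to{\THR}(\mathbb{S})$ (the map into the homotopy colimit from the object $\underline{i}=0$) is an equivalence of $\mathbb{Z}/2$-spectra: smashing such an $\eta$ with the well-pointed $\mathbb{Z}/2$-space $B^{di}\pi_+$ then makes the first map an equivalence too, hence also the composite.

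To prove $\eta\colon\mathbb{S}\to{\THR}(\mathbb{S})$ is an equivalence I would first observe that $\mathbb{S}$ has flat underlying orthogonal $\mathbb{Z}/2$-spectrum, so by the comparison theorem invoked in the proof of Lemma \ref{THRrat} there is a natural equivalence between ${\THR}(\mathbb{S})$ and the dihedral Bar construction $\|[p]\mapsto \mathbb{S}^{\wedge p+1}\|$ with respect to the smash product, carrying its dihedral $\mathbb{Z}/2$-action. All face and degeneracy maps of this simplicial $\mathbb{Z}/2$-spectrum are assembled from the multiplication $\mathbb{S}\wedge\mathbb{S}\to\mathbb{S}$ and the unit, which are equivalences; hence the simplicial object is levelwise equivalent, via its inclusion of $0$-simplices, to the constant one on $\mathbb{S}$. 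The one thing to check is the action: the dihedral involution on $\mathbb{S}^{\wedge p+1}$ reverses the smash factors and applies the anti-involution $w_{\mathbb{S}}=\id$ on each, and since the symmetry of the smash product restricted to the monoidal unit $\mathbb{S}$ is coherently homotopic to the identity, this involution is equivariantly equivalent to the identity on the constant diagram. It follows that $\|[p]\mapsto\mathbb{S}^{\wedge p+1}\|\simeq\mathbb{S}$ with trivial action, and the map induced by the $0$-simplices---which is exactly $\eta$---is this equivalence.

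As an alternative that avoids the flatness input, I would instead work with the Bökstedt model directly: by Lemma \ref{fibrantTHR}, whose connectivity hypotheses are satisfied by any suspension spectrum and in particular by $\mathbb{S}$, the spectrum ${\THR}(\mathbb{S})$ is an equivariant $\Omega$-spectrum, so it suffices to check that in each subdivided semi-simplicial degree the homotopy colimit $\hocolim_{\underline{i}\in I^{\times 2k+2}}\Omega^{\underline{i}}(\mathbb{S}\wedge S^{i_0}\wedge\dots\wedge S^{i_{2k+1}})$ is $\mathbb{Z}/2$-equivariantly equivalent to $\mathbb{S}$ through the object $\underline{i}=0$, compatibly with the face maps. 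Levelwise this homotopy colimit is $\hocolim_{\underline{i}}\Omega^{|\underline{i}|}(\mathbb{S}\wedge S^{|\underline{i}|})$, which stabilizes to $\mathbb{S}$ because $\Omega\Sigma$ is the identity on spectra and $I^{\times 2k+2}$ has contractible nerve, with the $\mathbb{Z}/2$-action again being reversal of smash factors together with $w_{\mathbb{S}}=\id$, hence equivariantly trivial; realizing over the semi-simplicial direction, which involves only the multiplication of $\mathbb{S}$, gives ${\THR}(\mathbb{S})\simeq\mathbb{S}$.

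The hard part is not the underlying statement ${\THH}(\mathbb{S})\simeq\mathbb{S}$, which is classical, but the genuine-equivariant bookkeeping: one must verify that every reversal-of-factors permutation and the edgewise-subdivision involution act trivially up to coherent homotopy on the ambient models for $\mathbb{S}$, and that the chosen equivalences can be made compatible with the (semi-)simplicial structure maps so that they descend to the geometric realization and, in the end, recover the specific unit map $\eta$ appearing in the statement. Once this coherence is pinned down, the identification $\eta\colon\mathbb{S}\xrightarrow{\simeq}{\THR}(\mathbb{S})$ and the proposition follow formally.
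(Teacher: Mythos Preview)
The paper does not actually prove this proposition: it is stated with a citation to \cite{Amalie} and no argument is given in the text. So there is no in-paper proof to compare against. Your reduction is exactly the one the surrounding discussion sets up: the sentence immediately preceding the proposition records that the assembly ${\THR}(A)\wedge B^{di}\pi_+\to{\THR}(A[\pi])$ is an equivalence for all $A$ (from \cite{THRmodels}), so only the unit $\eta\colon\mathbb{S}\to{\THR}(\mathbb{S})$ remains, and both of your sketches for that step are sound in outline.

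One sharpening worth making in your first approach: you say the symmetry on $\mathbb{S}\wedge\mathbb{S}$ is ``coherently homotopic to the identity'', but in fact it \emph{is} the identity. For the unit object $I$ of any symmetric monoidal category the braiding $\tau_{I,I}$ equals $\id_{I\otimes I}$, since $\lambda_I=\rho_I$ and $\lambda_I\circ\tau_{I,I}=\rho_I$. So under the strict identification $\mathbb{S}^{\wedge p+1}\cong\mathbb{S}$ the dihedral involution is literally trivial and all the simplicial structure maps are identities; no coherence chase is needed at this stage. The genuine bookkeeping you flag only enters when one passes through a flat replacement of $\mathbb{S}$ (to invoke the comparison of ${\THR}$ with the smash-product Bar construction from \cite{THRmodels}) or through the B\"okstedt model, where the strict identities become equivalences that one must check are compatible with the subdivided semi-simplicial structure and with the $\mathbb{Z}/2$-structure on $I^{\times 2k+2}$. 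Your acknowledgment of this residual difficulty is accurate, and modulo filling in those compatibilities the argument is complete.
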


\subsection{The definition of the real trace map}\label{sectrace}

We adapt the construction of the trace of \cite{BHM} to define a natural map of $\mathbb{Z}/2$-$\Gamma$-spaces
\[
\tr\colon {\KR}(A)\longrightarrow {\THR}(A)
\]
for every ring spectrum with anti-involution $A$ that satisfies the connectivity hypothesis of Lemma \ref{fibrantTHR}. The $\mathbb{Z}/2$-$\Gamma$-space ${\THR}(A)$ models the real topological Hochschild spectrum of $A$ (see Remark \ref{THRGamma}). At an object $n_+\in\Gamma^{op}$ the trace is defined as the composite
\[
\xymatrix@R=12pt{
\displaystyle {\KR}(A)_n=\coprod_{\underline{a}}B^{\sigma}(\widetilde{\langle \underline{a}\rangle}\times \widehat{GL}_{a_1,\dots,a_{n}}^\vee(A)) \ar[r]^-{c}
&
\displaystyle\coprod_{\underline{a}}\Lambda^{\sigma}\|N^{\sigma}(\widetilde{\langle \underline{a}\rangle}\times \widehat{GL}_{a_1,\dots,a_{n}}^\vee(A))\|
\\
\displaystyle \coprod_{\underline{a}}B_{\wedge}^{di}(\widetilde{\langle \underline{a}\rangle}\times \Omega^{\infty}_I(M^{\vee}_{a_1}(A)\vee\dots\vee M^{\vee}_{a_{n}}(A)))
&
\displaystyle \coprod_{\underline{a}}B^{di}(\widetilde{\langle \underline{a}\rangle}\times \widehat{GL}_{a_1,\dots,a_{n}}^\vee(A)) \ar[l]\ar[u(.7)]^-{\simeq} 
\\
\displaystyle \coprod_{ \underline{a}}(B^{di}\widetilde{\langle \underline{a}\rangle})\times\THR(M^{\vee}_{a_1}(A)\vee\dots\vee M^{\vee}_{a_{n}}(A))\ar@{<-}[u(.7)]\ar[r]
&{\THR}(A)_n \rlap{\ .}
}
\]
All the maps except for the last one leave the $\langle \underline{a}\rangle$-coordinate untouched. The first map is the composition of the equivalence $B^{\sigma}(\widetilde{\langle \underline{a}\rangle}\times \widehat{GL}_{a_1,\dots,a_{n}}^\vee(A))\to \|N^{\sigma}(\widetilde{\langle \underline{a}\rangle}\times \widehat{GL}_{a_1,\dots,a_{n}}^\vee(A))\|$ of Lemma \ref{lemmathicksub} and the inclusion of constant loops. We recall that $\Lambda^{\sigma}=\Map(S^{\sigma},-)$ is the free loop space with respect to the sign representation.
The second map is the map of Lemma \ref{lemmafreeloop}, and it is an equivalence because $\widehat{GL}_{a_1,\dots,a_{n}}^\vee(A)$ is quasi-unital and group-like, by Proposition \ref{GLq1gpl}. The third map includes the invertible components and projects the products onto the smash products, where $B^{di}_\wedge$ denotes the dihedral Bar construction with respect to the smash product of spaces. The fourth map commutes the smash products and the loops. The fifth map projects off the $\widetilde{\langle\underline{a}\rangle}$-component, and on the $\THR$ factor it is induced by the maps of spaces
\[
(M^{\vee}_{a_1}(A_{i_0})\vee\dots\vee M^{\vee}_{a_{n}}(A_{i_0}))\wedge\dots\wedge (M^{\vee}_{a_1}(A_{i_k})\vee\dots\vee M^{\vee}_{a_{n}}(A_{i_k}))\longrightarrow A_{i_0}\wedge\dots\wedge A_{i_k}\wedge n_+
\]
defined as follows. An element of $M^{\vee}_{a_1}(A_{i})\vee\dots\vee M^{\vee}_{a_{n}}(A_{i})$ is an integer $1\leq j\leq n$, a pair $(c,d)\in a_j\times a_j$, and an element $x\in A_{i}$. The map above is then defined by
\[
(j_0,c_0,d_0,x_0)\wedge \dots\wedge  (j_k,c_k,d_k,x_k)\longmapsto \!\left\{
\begin{array}{ll}
x_0\wedge \dots\wedge  x_k\wedge j_0& \mbox{if}\ j_0=\dots=j_k\ \mbox{and}\\
& d_0=c_1, d_1=c_2,\dots, d_{k-1}=c_k,
\\
& d_k=c_0
\\
\\
\ast&\mbox{otherwise}.
\end{array}
\right.
\]
This map remembers the entries of a string of matrices precisely when they are all composable, and it sends the rest to the basepoint. The underlying map is analogous to the trace map of \cite[\S1.6.17]{ringfctrs}, and it is a weak homotopy inverse for the map induced by the inclusion $A\to M^{\vee}_a(A)$ into the $1\times 1$-component. Although we won't use this here, it is also an equivariant equivalence (see \cite{THRmodels}).

Let us denote by $\tr^{cy}$ the composite
\[
\tr^{cy}\colon {\KR}^{cy}(A):=\coprod_{\underline{a}}B^{di}(\widetilde{\langle \underline{a}\rangle}\times \widehat{GL}_{a_1,\dots,a_{n}}^\vee(A))\longrightarrow {\THR}(A).
\]
All the spaces above extend to $\mathbb{Z}/2$-$\Gamma$-spaces by a construction similar to the definition of the $\Gamma$-structure on ${\KR}$. It is immediate to verify that the map $c$ and the upper-pointing equivalence are maps of $\mathbb{Z}/2$-$\Gamma$-spaces. We verify that $\tr^{cy}$ is compatible with the $\Gamma$-structure.

\begin{proposition}
The map $\tr^{cy}\colon  {\KR}^{cy}(A)\to  {\THR}(A)$ is a well-defined map of $\mathbb{Z}/2$-$\Gamma$-spaces.
\end{proposition}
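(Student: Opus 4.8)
The statement asks for two things: that for each $n_+\in\Gamma^{op}$ the map $\tr^{cy}\colon {\KR}^{cy}(A)_n\to{\THR}(A)_n$ is $\mathbb{Z}/2$-equivariant, and that it is natural in $\Gamma^{op}$. Equivariance will be routine and is handled exactly as for the maps $c$ and $B^{di}\simeq\Lambda^\sigma\|N^\sigma\|$ discussed above: every space appearing in the composite defining $\tr^{cy}$ is built from $A$ by block sums and diagonal anti-involutions of matrix ring spectra, indexed wedges and products, B\"{o}kstedt homotopy colimits over $I^{\times\bullet+1}$, and the dihedral Bar construction, and the anti-involution of $A$ enters diagonally in all of them; in particular the matrix-trace map $(M^{\vee}_{a_1}(A_{i_0})\vee\cdots)\wedge\cdots\wedge(M^{\vee}_{a_1}(A_{i_k})\vee\cdots)\to A_{i_0}\wedge\cdots\wedge A_{i_k}\wedge n_+$ commutes with conjugate transposition because the composability condition ``$d_l=c_{l+1}$'' defining it is invariant under cyclic reversal. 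So the content lies in naturality in $\Gamma^{op}$, and the plan is to check this one constituent map of the composite at a time.

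The first four maps of the composite --- the equivalence of Lemma \ref{lemmathicksub} followed by the inclusion of constant loops, the equivalence of Lemma \ref{lemmafreeloop}, the inclusion of invertible components together with the projection of cartesian products onto smash products, and the commutation of smash products with the B\"{o}kstedt homotopy colimits --- are manifestly $\Gamma^{op}$-natural: none of them alters the indexing data $\underline a$ and $\widetilde{\langle\underline a\rangle}$, whose functoriality in $n_+$ (reindex $\underline a\mapsto f_\ast\underline a$, act on $\widehat{GL}^\vee$ by the block-sum-and-conjugate maps $(\alpha,\beta)_\ast$, and act on $\widetilde{\langle\underline a\rangle}$ by $f_\ast$) is the structure put on ${\KR}(A)$ before Proposition \ref{Gamma} and is functorial by the calculation carried out in its proof. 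Thus everything reduces to the last map, which projects off the $B^{di}\widetilde{\langle\underline a\rangle}$-coordinate and applies the matrix-trace on $\THR(M^{\vee}_{a_1}(A)\vee\cdots\vee M^{\vee}_{a_n}(A))$. For this I would unwind the two $\Gamma$-structures: on ${\THR}(A)$ the map $f$ acts as $\id\wedge f$ on the smash factor $n_+$ (Remark \ref{THRGamma}); on the source it sends the $\underline a$-summand to the $f_\ast\underline a$-summand, via $B^{di}(f_\ast)$ on the $\widetilde{\langle\underline a\rangle}$-coordinate and via $\THR$ of the wedge of block-sum-and-conjugate maps $\bigvee_{j\in f^{-1}(i)}M^{\vee}_{a_j}(A)\to M^{\vee}_{(f_\ast\underline a)_i}(A)$ on the $\THR$-coordinate. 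Two observations then close the argument. First, the matrix-trace records only the common summand index $j_0$ and the entries $x_0\wedge\cdots\wedge x_k$ of a cyclically composable string, and kills every other string. Second, the block-sum-and-conjugate map carries the $j$-th wedge summand into a single block of the target matrix ring, bijectively in each matrix index, and carries distinct summands with a common $f$-image into \emph{disjoint} blocks of that target; hence a composable string is taken to a composable string with common index $f(j_0)$ and unchanged entries, while a string with entries in different summands is taken to a string straddling disjoint blocks, which can never be cyclically composable. Comparing surviving data, ``$\id\wedge f$ after matrix-trace'' and ``matrix-trace after reindexing'' both return $x_0\wedge\cdots\wedge x_k\wedge f(j_0)$ on a composable string and the basepoint otherwise, so the last map is $\Gamma^{op}$-natural.

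The step I expect to be the main obstacle is precisely this last verification, i.e.\ matching the block-sum combinatorics of the $\Gamma$-structure on ${\KR}^{cy}$ with the elementary $\id\wedge f$ on ${\THR}$. The delicate point is the bookkeeping of the permutations recorded by $\widetilde{\langle\underline a\rangle}$: one has to see that neither these permutations nor the Segal coherence relations they satisfy disturb the composability condition defining the trace --- it is exactly the compatibility of the block sums with the $\alpha_{S,T}$ (used already in the proof of Proposition \ref{Gamma}) that guarantees cyclic composability is preserved. Once this is set up, the identification of the two composites is a direct, if somewhat lengthy, inspection entirely parallel to the classical fact that the B\"{o}kstedt--Hsiang--Madsen trace is a map of $\Gamma$-spaces; the $\mathbb{Z}/2$-equivariant refinement adds nothing beyond the diagonal-action observations of the first paragraph.
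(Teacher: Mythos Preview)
Your proposal is essentially the same approach as the paper's, and it is correct. Both reduce to the final ``matrix-trace'' map and verify that, for a pointed map $f\colon n_+\to k_+$, the two ways around the square (trace then $\id\wedge f$, versus $(\alpha,\beta)_\ast$-reindexing then trace) agree.

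One small comment on presentation. Your phrase ``distinct summands with a common $f$-image go into disjoint blocks'' is the correct conclusion, but it is not quite immediate: the maps $(\beta,\alpha)_{j_0}$ and $(\alpha,\beta)_{j_1}$ involve \emph{different} permutations on rows and columns, so one cannot simply say ``block-diagonal after a single permutation''. What one must check is that the image of $a_{j_0}$ under $\beta_{(f^{-1}i)\setminus j_0,\,j_0}\circ\iota_0$ is disjoint from the image of $a_{j_1}$ under $\beta_{(f^{-1}i)\setminus j_1,\,j_1}\circ\iota_1$ when $j_0\neq j_1$, and similarly for $\alpha$. The paper makes exactly this explicit (in simplicial degree $p=1$): assuming the image is cyclically composable, it applies the Segal coherence identity
\[
\alpha_{(f^{-1}i)\setminus j_1,\,j_1}\circ(\alpha_{j_0,\,(f^{-1}i)\setminus\{j_0,j_1\}}\amalg\id_{a_{j_1}})
=\alpha_{(f^{-1}i)\setminus j_0,\,j_0}\circ(\id_{a_{j_0}}\amalg\alpha_{(f^{-1}i)\setminus\{j_0,j_1\},\,j_1})
\]
to an element of the $a_{j_0}$-block and reads off $j_0=j_1$ and $c_0=d_1$ (and symmetrically $d_0=c_1$). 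This is precisely the content of your ``disjoint blocks'' assertion, and you correctly flag in your final paragraph that it rests on the coherence relations from the proof of Proposition~\ref{Gamma}; the paper just writes the computation out.
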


\begin{proof}
Let $f\colon n_+\to k_+$ be a pointed map. We need to verify that for every collection of non-negative integers $\underline{a}=(a_1,\dots,a_n)$ the square
\[
\xymatrix{
B^{di}(\widetilde{\langle \underline{a}\rangle}\times \widehat{GL}_{a_1,\dots,a_{n}}^\vee(A))\ar[r]^-{\tr^{cy}}\ar[d]^{f_\ast}
& 
\displaystyle\|\hocolim_{\underline{i}\in I^{\times p+1}}\Omega^{i_0+i_1+\dots+i_p}(A_{i_0}\wedge A_{i_1}\wedge\dots\wedge A_{i_p}\wedge n_+)\|\ar[d]^{f_\ast}
\\
B^{di}(\widetilde{\langle f_\ast\underline{a}\rangle}\times \widehat{GL}_{f_\ast\underline{a}}^\vee(A))\ar[r]^-{\tr^{cy}}
& 
\displaystyle\|\hocolim_{\underline{i}\in I^{\times p+1}}\Omega^{i_0+i_1+\dots+i_p}(A_{i_0}\wedge A_{i_1}\wedge\dots\wedge A_{i_p}\wedge k_+)\|
}
\]
commutes. We prove that this diagram commutes in simplicial degree $p=1$, the argument for higher $p$ is similar. A $1$-simplex of the upper left corner consists of two pairs of families of permutations $(\beta,\alpha)$ and $(\alpha,\beta)$, where $\alpha,\beta\in \langle\underline{a}\rangle$, and a pair of elements $x,y\in \Omega^{\infty}_I(M_{a_1}^{\vee}(A)\vee\dots\vee M^{\vee}_{a_{n}}(A))$ belonging to an invertible component. For a fixed pair $(\alpha,\beta)$, we need to show that the square
\[
\xymatrix{
(M_{a_1}^{\vee}(A_{i_0})\vee\dots\vee M^{\vee}_{a_{n}}(A_{i_0}))\wedge(M_{a_1}^{\vee}(A_{i_1})\vee\dots\vee M^{\vee}_{a_{n}}(A_{i_1}))\ar[r]\ar[d]_{(\vee_{j=1}^k(\beta,\alpha)_j)\wedge(\vee_{j=1}^k(\alpha,\beta)_j)}
&
A_{i_0}\wedge A_{i_1}\wedge n_+\ar[d]^{\id\wedge\id\wedge f}
\\
(M_{a_1}^{\vee}(A_{i_0})\vee\dots\vee M^{\vee}_{a_{n}}(A_{i_0}))\wedge(M_{a_1}^{\vee}(A_{i_1})\vee\dots\vee M^{\vee}_{a_{n}}(A_{i_1}))\ar[r]
&
A_{i_0}\wedge A_{i_1}\wedge k_+
}
\]
commutes, where $(\alpha,\beta)_j$ are the maps defined in \S\ref{secdeloopings} and the horizontal maps are defined at the beginning of the section. The upper composite takes $(j_0,(c_0,d_0),x_0)\wedge (j_1,(c_1,d_1),x_1)$, where $1\leq j_l\leq n$, $(c_l,d_l)\in a_{j_l}\times a_{j_l}$ and $x\in A_{i_l}$ for $l=0,1$, to
\[
x_0\wedge  x_1\wedge f(j_0)\ \ \  \mbox{if}\ \ \ \ j_0=j_1\ \ \  \mbox{and}\ \ \   d_0=c_1, \ d_1=c_0,
\]
and to the basepoint otherwise. The lower composite takes it to
\[
\begin{array}{ll}x_0\wedge  x_1\wedge f(j_0)\ \ \ \mbox{if}\ f(j_0)=f(j_1)\ \ \ \mbox{and} &
\beta_{f^{-1}f(j_1)\backslash j_0,j_0}(\iota_0 d_0)=\beta_{f^{-1}f( j_1)\backslash j_1,j_1}(\iota_1 c_1),\\
&\alpha_{f^{-1}f(j_1)\backslash j_1,j_1}(\iota_1 d_1)
=\alpha_{f^{-1}f( j_0)\backslash j_0,j_0}(\iota_0 c_0)
,\end{array}
\]
where $\iota_0\colon a_{j_0}\to \amalg_{i\in f^{-1}f(j_0)}a_i$ is the inclusion, and similarly for $\iota_1$. We need to show that these conditions are equivalent. Clearly the first condition implies the second one.

Suppose that the second condition holds, and set $i:=f(j_0)=f(j_1)$.
By construction, the family of permutations $\alpha$ satisfies the condition
\[
\alpha_{(f^{-1}i)\backslash j_1,j_1}\circ (\alpha_{j_0,(f^{-1}i)\backslash \{j_0,j_1\}}\amalg \id_{a_{j_1}})
=\alpha_{(f^{-1}i)\backslash j_0,j_0}\circ (\id_{a_{j_0}}\amalg \alpha_{
(f^{-1}i)\backslash \{j_0,j_1\},j_1
}).
\]
By evaluating this expression at $\iota_0c_0$ we obtain that
\[
\alpha_{(f^{-1}i)\backslash j_1,j_1}\circ (\alpha_{j_0,(f^{-1}i)\backslash \{j_0,j_1\}}\amalg \id_{a_{j_1}})(\iota_0c_0)
=\alpha_{(f^{-1}i)\backslash j_0,j_0}(\iota_0c_0)=\alpha_{(f^{-1}i)\backslash j_1,j_1}(\iota_1d_1).
\]
Since $\alpha_{(f^{-1}i)\backslash j_1,j_1}$ is invertible we must have that
\[
(\alpha_{j_0,(f^{-1}i)\backslash \{j_0,j_1\}}\amalg \id_{a_{j_1}})(\iota_0c_0)
=\iota_1d_1,
\]
but since the left-hand map is the identity on $a_{j_1}$ and $\iota_1$ includes in $a_{j_1}$ we must have that $j_0=j_1$ and $c_0=d_1$. A similar argument shows that $d_0=c_1$.
\end{proof}


\subsection{The trace splits the restricted assembly map}\label{secsplitting}

Let $A$ be a ring spectrum with anti-involution and $\pi$ a well-pointed topological group. We recall that the restricted assembly map of ${\KR}$ is a map of $\mathbb{Z}/2$-spectra
\[
\mathcal{A}^0\colon \mathbb{S}\wedge B^{\sigma}\pi_+\xrightarrow{\eta\wedge\id}{\KR}(A)\wedge B^{\sigma}\pi_+ \longrightarrow {\KR}(A[\pi]),
\]
see Definition \ref{defresassKR}. We let $p\colon B^{di}\pi\longrightarrow B^{\sigma}\pi$ denote the projection.

\begin{theorem}\label{asssplitKR}
The restricted assembly map for the sphere spectrum $\mathcal{A}^0\colon \mathbb{S}\wedge B^{\sigma}\pi_+\to {\KR}(\mathbb{S}[\pi])$ is a split monomorphism in the homotopy category of $\mathbb{Z}/2$-spectra. A natural retraction is provided my the map
\[
{\KR}(\mathbb{S}[\pi])\stackrel{\tr}{\longrightarrow}\THR(\mathbb{S}[\pi])\stackrel{\simeq}{\longleftarrow}\mathbb{S}\wedge B^{di}\pi_+\stackrel{p}{\longrightarrow}\mathbb{S}\wedge B^{\sigma}\pi_+.
\] 
\end{theorem}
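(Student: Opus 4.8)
The strategy is to show that the composite
\[
\mathbb{S}\wedge B^{\sigma}\pi_+ \xrightarrow{\mathcal{A}^0} {\KR}(\mathbb{S}[\pi]) \xrightarrow{\tr} \THR(\mathbb{S}[\pi]) \xleftarrow{\simeq} \mathbb{S}\wedge B^{di}\pi_+ \xrightarrow{p} \mathbb{S}\wedge B^{\sigma}\pi_+
\]
is the identity in the homotopy category of $\mathbb{Z}/2$-spectra. Since the middle equivalence $\mathbb{S}\wedge B^{di}\pi_+\xrightarrow{\simeq}\THR(\mathbb{S}[\pi])$ is Proposition~\ref{THRass}, it suffices to identify the composite $\mathbb{S}\wedge B^{\sigma}\pi_+ \xrightarrow{\mathcal{A}^0}{\KR}(\mathbb{S}[\pi])\xrightarrow{\tr}\THR(\mathbb{S}[\pi])$ with the map $\mathbb{S}\wedge B^{\sigma}\pi_+\xleftarrow{p}\mathbb{S}\wedge B^{di}\pi_+\xrightarrow{\simeq}\THR(\mathbb{S}[\pi])$, i.e.\ with the $\THR$-assembly precomposed with the section of $p$ that is built into $\mathcal{A}^0$. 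Concretely, I would first unravel both $\mathcal{A}^0$ and the $\THR$-assembly $\mathbb{S}\wedge B^{di}\pi_+\to \THR(\mathbb{S})\wedge B^{di}\pi_+\to \THR(\mathbb{S}[\pi])$ from \S\ref{secTHR} at the level of the defining (semi)simplicial $\mathbb{Z}/2$-diagrams, so that everything is expressed in terms of the real and dihedral nerves of $\widehat{GL}^{\vee}_1(\mathbb{S}[\pi])$ and the trace map $\tr^{cy}$ of \S\ref{sectrace}.

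\textbf{Key steps.} First I would reduce to the $1\times 1$ case: the restricted assembly $\mathcal{A}^0$ factors through the component $\widehat{GL}^{\vee}_1(\mathbb{S}[\pi])$ of ${\KR}(\mathbb{S}[\pi])_1$ by its very definition (Definition~\ref{defresassKR} together with the description of $\gamma\colon \mathbb{S}\wedge B^{\sigma}\pi_+\to {\KR}(\mathbb{S}[\pi])$ just before Definition~\ref{defassKR}), so I only need to understand $\tr$ restricted to this component. Second, using the factorization of $\tr$ through $\tr^{cy}$ and the diagram defining $\tr$ in \S\ref{sectrace}, I would show that on the image of $B^{\sigma}\pi\hookrightarrow B^{\sigma}\widehat{GL}^{\vee}_1(\mathbb{S}[\pi])$, the map $c$ (inclusion of constant loops composed with $\gamma$ of Lemma~\ref{lemmathicksub}) followed by the equivalence of Lemma~\ref{lemmafreeloop} sends the simplex coming from $(g_1,\dots,g_p)\in \pi^{\times p}$, via the canonical map $\pi\to\hocolim_I\Omega^i(S^i\wedge\pi_+)$ at $i=0$, to the dihedral simplex $(1,g_1,\dots,g_p)$ up to the cyclic/dihedral structure — i.e.\ exactly the image of the section $B^{\sigma}\pi\to B^{di}\pi$ adjoint to the map $\lambda$ of Lemma~\ref{funnymap} (with the $1$ in the zeroth ``loop'' slot). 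Third, I would check that the trace map on the matrix entries, which in the $1\times 1$ case is essentially the identity on $A=\mathbb{S}[\pi]$ smashed with the identity on the group coordinates, carries this dihedral simplex to the corresponding simplex of the $\THR(\mathbb{S})\wedge B^{di}\pi_+\to\THR(\mathbb{S}[\pi])$ assembly — this is where one uses that the trace map of \S\ref{sectrace} ``remembers the entries of a string of matrices precisely when they are all composable'', and that for $1\times 1$ matrices over $\mathbb{S}[\pi]$ composability is automatic. Fourth, I would identify the resulting composite $\mathbb{S}\wedge B^{\sigma}\pi_+\to\THR(\mathbb{S}[\pi])\simeq\mathbb{S}\wedge B^{di}\pi_+\xrightarrow{p}\mathbb{S}\wedge B^{\sigma}\pi_+$ with the identity by observing that $p$ collapses the zeroth loop coordinate, which we inserted as the basepoint $1$, and on the remaining coordinates we have the identity of $B^{\sigma}\pi$; naturality in $\pi$ is immediate since every map in sight is natural.

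\textbf{The main obstacle.} The delicate point is bookkeeping the various models and the maps $\gamma$ between thick and thin realizations, edgewise subdivisions, and the free-loop comparison of Lemma~\ref{lemmafreeloop}, all at the equivariant level: I must make sure that the unital-model issues (the monoids $\widehat{GL}^{\vee}_1(\mathbb{S}[\pi])$ are only quasi-unital, so the honest $S^1$-action and the identification $B^{di}M\simeq\Lambda^{\sigma}B^{\sigma}M$ require Lemma~\ref{lemmafreeloop}) are handled so that the ``insert a $1$ in the zeroth slot'' section is genuinely the homotopy section of $p$ hidden inside $\gamma$ of Lemma~\ref{funnymap}. In other words, the real work is not any single computation but checking that the chain of equivalences in the definition of $\tr$ composes, on the relevant sub-object, to the geometrically obvious map; once the $1\times 1$ reduction is in place this becomes a finite and essentially formal verification, but it must be done with care on fixed points as well as underlying spectra, since the whole point is that the retraction holds $\mathbb{Z}/2$-equivariantly. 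The non-equivariant statement is the splitting of \cite{WJR}, which can be invoked as a sanity check on the underlying map.
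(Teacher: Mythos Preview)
Your strategy is essentially the paper's: reduce to the $k=1$ summand via the map $\pi\to\widehat{GL}^{\vee}_1(\mathbb{S}[\pi])$, use that $\tr^{cy}$ is the identity there, and show the resulting composite $\mathbb{S}\wedge B^{\sigma}\pi_+\to\mathbb{S}\wedge B^{\sigma}\pi_+$ is the identity. The paper builds the same commutative diagram you are sketching, with the dashed comparison maps induced by $\pi\to\widehat{GL}^{\vee}_1(\mathbb{S}[\pi])$ on both $B^{di}$ and $\Lambda^{\sigma}\|N^{\sigma}(-)\|$.

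Where your plan runs into trouble is step two: you try to describe the composite $l^{-1}\circ c\colon B^{\sigma}\pi\to B^{di}\pi$ explicitly as ``insert $1$ in the zeroth slot, i.e.\ $(g_1,\dots,g_p)\mapsto(1,g_1,\dots,g_p)$''. This assignment is \emph{not} a simplicial map (check $d_0$), and $l$ only has an inverse in the homotopy category, so there is no honest simplex-level formula for $l^{-1}$ to verify. Your reference to the $\lambda$ of Lemma~\ref{funnymap} is also misplaced: that $\lambda$ is a map $B\pi\to B^{\sigma}\pi$, not a section of $p\colon B^{di}\pi\to B^{\sigma}\pi$.

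The paper sidesteps this entirely with a one-line trick: instead of computing $l^{-1}\circ c$, it observes that $p=\ev\circ l$, where $\ev\colon\Lambda^{\sigma}\|N^{\sigma}\pi\|\to\|N^{\sigma}\pi\|$ is the evaluation map. Once the diagram shows that $\tr\circ\mathcal{A}^0$ agrees in the homotopy category with $l^{-1}\circ c$ (via the commutativity of the squares you describe), the retraction follows formally from
\[
p\circ l^{-1}\circ c=\ev\circ l\circ l^{-1}\circ c=\ev\circ c=\id,
\]
since evaluating a constant loop returns the point. No explicit homotopy inverse of $l$ is needed, and the argument is automatically equivariant because $c$, $l$, and $\ev$ all are. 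This is the missing idea in your step four; once you have it, steps two and three collapse to the naturality squares you already identified.
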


\begin{proof}
We define dashed arrows that makes the diagram of $\mathbb{Z}/2$-$\Gamma$-spaces
\[
\xymatrix{
n_+\wedge B^{\sigma}\pi_+\ar[r]^{\mathcal{A}^0}\ar[d]_-{c}
&
{\KR}_n(\mathbb{S}[\pi])\ar[r]^-{c}
&
\coprod_{\underline{a}}\Lambda^{\sigma}\|N^{\sigma}(\widetilde{\langle\underline{a}\rangle}\times\widehat{GL}^{\vee}_{a_1,\dots,a_n}(\mathbb{S}[\pi]))\|
\\
n_+\wedge(\Lambda^\sigma \|N^{\sigma}\pi\|)_+\ar@{-->}[urr]&&
\coprod_{\underline{a}}B^{di}(\widetilde{\langle\underline{a}\rangle}\times\widehat{GL}^{\vee}_{a_1,\dots,a_n}(\mathbb{S}[\pi]))\ar[u]_\simeq^{l}\ar[d]^{\tr^{cy}}
\\
n_+\wedge B^{di}\pi_+\ar[rr]_-{\simeq}\ar@{-->}[urr]\ar[u]^-{\simeq}_{l}
&
&
{\THR}_n(\mathbb{S}[\pi])
}
\]
commute. Here the bottom map is the equivalence of Proposition \ref{THRass}, the map $l$ is the equivalence of Lemma \ref{lemmafreeloop}, and the vertical map $c$ is induced by the composite
\[
B^{\sigma}\pi\longrightarrow \Lambda^\sigma B^{\sigma}\pi\stackrel{\simeq}{\longrightarrow} \Lambda^\sigma \|N^{\sigma}\pi\|
\]
of the inclusion of constant loops and the canonical equivalence of Lemma \ref{lemmathicksub}.
This shows that in the homotopy category the map ${\KR}(\mathbb{S}[\pi])\stackrel{\tr}{\to}\THR(\mathbb{S}[\pi])\simeq\mathbb{S}\wedge B^{di}\pi_+$ equals the composition of $c$ and the inverse of $l$. Since $p=\ev\circ l$, where the evaluation map $\ev\colon \Lambda^\sigma \|N^{\sigma}\pi\|\to \|N^\sigma\pi\|$ splits $c$, we have that $p$ splits $l^{-1}\circ c$, and this will conclude the proof.

The lower dashed map is defined as the adjoint of the map of $\mathbb{Z}/2$-spaces
\[
B^{di}\pi_+\longrightarrow \coprod_{k\geq 0}B^{di}\widehat{GL}^{\vee}_{k}(\mathbb{S}[\pi])
\]
that sends $+$ to zero, and that includes $B^{di}\pi$ in the $k=1$ summand by the dihedral nerve of the map of monoids with anti-involution $\pi\to\widehat{GL}^{\vee}_{1}(\mathbb{S}[\pi])$. The upper dashed map is adjoint to the map
of $\mathbb{Z}/2$-spaces
\[
(\Lambda^\sigma \|N^{\sigma}\pi\|)_+\longrightarrow \coprod_{k\geq 0}\Lambda^\sigma \|N^{\sigma}\widehat{GL}^{\vee}_{k}(\mathbb{S}[\pi])\|,
\]
induced by the same map $\pi\to\widehat{GL}^{\vee}_{1}(\mathbb{S}[\pi])$.
The bottom right triangle commutes since the trace map leaves the $k=1$ summand essentially untouched. The middle rectangle commutes by naturality of the map $l$.
The upper left triangle of the diagram commutes by construction, since both $\mathcal{A}^0$ and the upper dashed map are induced by $\pi\to\widehat{GL}^{\vee}_{1}(\mathbb{S}[\pi])$.
\end{proof}

\begin{corollary} Let $\pi$ be a topological group, which is cofibrant as a $\Z/2$-space. The fixed-points spectrum ${\KH}(\mathbb{S}[\pi])$ splits off a copy of 
\[(\mathbb{S}\wedge B^{\sigma}\pi_+)^{\Z/2}\simeq \mathbb{S}\wedge ((B\pi\times\mathbb{RP}^\infty)\amalg (B^{\sigma}\pi)^{\Z/2})_+.\]
If $\pi$ is discrete, the second term decomposes further as $(B^{\sigma}\pi)^{\Z/2}\cong \coprod_{\{[g]\ |\ g^2=1\}}BZ_\pi\langle g\rangle$ by \ref{funnymap}.
\end{corollary}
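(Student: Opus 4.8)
The plan is to deduce the splitting by applying the genuine fixed-point functor to the retraction produced in Theorem~\ref{asssplitKR}, and then to identify the resulting summand by the Tom Dieck splitting together with Lemma~\ref{funnymap}. By Theorem~\ref{asssplitKR} the restricted assembly $\mathcal{A}^0\colon \mathbb{S}\wedge B^{\sigma}\pi_+\to {\KR}(\mathbb{S}[\pi])$ admits a natural retraction in the homotopy category of $\mathbb{Z}/2$-spectra. The genuine $\mathbb{Z}/2$-fixed-points functor is a well-defined functor on the homotopy category of $\mathbb{Z}/2$-spectra, and functors preserve split monomorphisms; hence $(\mathcal{A}^0)^{\mathbb{Z}/2}\colon (\mathbb{S}\wedge B^{\sigma}\pi_+)^{\mathbb{Z}/2}\to {\KR}(\mathbb{S}[\pi])^{\mathbb{Z}/2}$ is again split. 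By Remark~\ref{naiveKR} the target is stably equivalent to ${\KH}(\mathbb{S}[\pi])$, so ${\KH}(\mathbb{S}[\pi])$ splits off a copy of $(\mathbb{S}\wedge B^{\sigma}\pi_+)^{\mathbb{Z}/2}$.

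It remains to identify this summand. The hypothesis that $\pi$ is cofibrant as a $\mathbb{Z}/2$-space ensures that $B^{\sigma}\pi=\|\sd_e N^{\sigma}\pi\|$ has the equivariant homotopy type of a $\mathbb{Z}/2$-CW complex, so that the suspension $\mathbb{Z}/2$-spectrum $\mathbb{S}\wedge B^{\sigma}\pi_+$ is cofibrant and the Tom Dieck splitting applies:
\[
(\mathbb{S}\wedge B^{\sigma}\pi_+)^{\mathbb{Z}/2}\;\simeq\;\mathbb{S}\wedge\big((B^{\sigma}\pi)_{h\mathbb{Z}/2}\amalg (B^{\sigma}\pi)^{\mathbb{Z}/2}\big)_+ .
\]
To compute the homotopy-orbit summand, recall from Lemma~\ref{funnymap} the equivariant map $\lambda\colon B\pi\to B^{\sigma}\pi$ out of $B\pi$ with trivial involution, which is a non-equivariant weak equivalence; since $(-)_{h\mathbb{Z}/2}$ sends non-equivariant equivalences of cofibrant $\mathbb{Z}/2$-spaces to equivalences, we get $(B^{\sigma}\pi)_{h\mathbb{Z}/2}\simeq (B\pi)_{h\mathbb{Z}/2}=B\pi\times B\mathbb{Z}/2=B\pi\times\mathbb{RP}^{\infty}$. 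For the fixed-point summand, Lemma~\ref{funnymap} identifies $(B^{\sigma}\pi)^{\mathbb{Z}/2}$ with $\coprod_{\{[g]\mid g^2=1\}}BZ_{\pi}\langle g\rangle$ when $\pi$ is discrete. Combining these identifications yields the stated decomposition.

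The homotopical substance is all contained in Theorem~\ref{asssplitKR}; the only work left is the point-set bookkeeping of the second paragraph, and this is also where I expect the main (purely technical) obstacle: one must check that $B^{\sigma}\pi$ is suitably cofibrant — for instance that the thick realization of the levelwise cofibrant real semisimplicial space $\sd_e N^{\sigma}\pi$ is a cofibrant $\mathbb{Z}/2$-space, which in turn uses that finite products of cofibrant $\mathbb{Z}/2$-spaces are cofibrant — so that the genuine fixed points of $\mathbb{S}\wedge B^{\sigma}\pi_+$ are indeed computed by the Tom Dieck splitting in the chosen model of orthogonal $\mathbb{Z}/2$-spectra. Once this is in place, everything else is formal.
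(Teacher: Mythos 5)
Your proof is correct and follows essentially the same route as the paper: apply genuine fixed points to the split mono of Theorem~\ref{asssplitKR}, invoke the (Segal--)Tom Dieck splitting for $(\mathbb{S}\wedge B^{\sigma}\pi_+)^{\mathbb{Z}/2}$, and use Lemma~\ref{funnymap} to identify both the homotopy-orbit summand as $B\pi\times\mathbb{RP}^{\infty}$ and, for discrete $\pi$, the fixed-point summand as the disjoint union of classifying spaces of centralizers. The only difference is that you spell out the appeal to Remark~\ref{naiveKR} and the cofibrancy bookkeeping, which the paper leaves implicit.
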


\begin{proof}
The splitting follows immediately from Theorem \ref{asssplitKR}. By the Segal-Tom Dieck splitting there is a natural decomposition
\[
(\mathbb{S}\wedge B^{\sigma}\pi_+)^{\Z/2}\simeq \mathbb{S}\wedge ((B^{\sigma}\pi)_{h\Z/2}\amalg (B^{\sigma}\pi)^{\Z/2})_+.
\]
We recall by \ref{funnymap} that there is an equivariant map $B\pi\to B^{\sigma}\pi$ which is a non-equivariant equivalence, where $B\pi$ has the trivial involution. This gives an equivalence 
\[(B^{\sigma}\pi)_{h\Z/2}\stackrel{\simeq}{\longleftarrow}(B\pi)_{h\Z/2}=B\pi\times  \mathbb{RP}^\infty.\qedhere\]
\end{proof}

\begin{corollary}\label{resasssplit}
The restricted assembly maps of the Hermitian $K$-theory and genuine $L$-theory of the spherical group-ring of \ref{defresass}
\[\mathbb{S}\wedge (B\pi\amalg B\pi)_+\longrightarrow {\KH}(\mathbb{S}[\pi])\]
 \[ \mathbb{S}\wedge B\pi_+\longrightarrow \Lg(\mathbb{S}[\pi])\]
are naturally split monomorphism in the homotopy category of spectra.
\end{corollary}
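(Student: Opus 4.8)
The plan is to derive both statements from Theorem \ref{asssplitKR} by passing to (geometric) fixed points. Since any functor carries a retraction to a retraction, applying $\Phi^{\mathbb{Z}/2}$ and $(-)^{\mathbb{Z}/2}$ to the split monomorphism $\mathcal{A}^0\colon \mathbb{S}\wedge B^\sigma\pi_+\to {\KR}(\mathbb{S}[\pi])$ of $\mathbb{Z}/2$-spectra yields split monomorphisms of spectra, with natural retractions given by $\Phi^{\mathbb{Z}/2}$, respectively $(-)^{\mathbb{Z}/2}$, of the composite ${\KR}(\mathbb{S}[\pi])\xrightarrow{\tr}\THR(\mathbb{S}[\pi])\simeq \mathbb{S}\wedge B^{di}\pi_+\xrightarrow{p}\mathbb{S}\wedge B^\sigma\pi_+$. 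It then remains to identify the restricted assemblies of Definition \ref{defresass} with $\Phi^{\mathbb{Z}/2}\mathcal{A}^0$, respectively $(\mathcal{A}^0)^{\mathbb{Z}/2}$, precomposed with suitable split monomorphisms built from Lemma \ref{funnymap} and the Tom Dieck splitting.

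For genuine $L$-theory this is essentially formal. Geometric fixed points commute with smashing against $\mathbb{Z}/2$-spaces, so $\Phi^{\mathbb{Z}/2}(\mathbb{S}\wedge B^\sigma\pi_+)\simeq \mathbb{S}\wedge (B^\sigma\pi)^{\mathbb{Z}/2}_+$; using this identification, the factorization $\mathcal{A}^0=(\KR\text{-assembly})\circ(\eta\wedge\id)$, and the naturality of $\Phi^{\mathbb{Z}/2}X\wedge K^{\mathbb{Z}/2}\to\Phi^{\mathbb{Z}/2}(X\wedge K)$, the map $\Phi^{\mathbb{Z}/2}\mathcal{A}^0$ is precisely the map $\mathbb{S}\wedge (B^\sigma\pi)^{\mathbb{Z}/2}_+\to\Lg(\mathbb{S}[\pi])$ underlying the genuine $L$-theory assembly of Definition \ref{defassKHL} before precomposition with $\lambda$. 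Thus the restricted $L$-theory assembly is $\Phi^{\mathbb{Z}/2}\mathcal{A}^0$ precomposed with $\mathbb{S}\wedge\lambda_+\colon \mathbb{S}\wedge B\pi_+\to \mathbb{S}\wedge (B^\sigma\pi)^{\mathbb{Z}/2}_+$, which is split by $\mathbb{S}\wedge\iota_+$ since $\iota\circ\lambda\simeq\id$ (Lemma \ref{funnymap}). A composite of split monomorphisms is split, and everything is natural in $\pi$.

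For Hermitian $K$-theory, $(-)^{\mathbb{Z}/2}$ applied to $\mathcal{A}^0$ gives a split monomorphism $(\mathbb{S}\wedge B^\sigma\pi_+)^{\mathbb{Z}/2}\to {\KH}(\mathbb{S}[\pi])$. Using the Tom Dieck splittings $\mathbb{S}^{\mathbb{Z}/2}\simeq \mathbb{S}\vee \mathbb{S}_{h\mathbb{Z}/2}$ and $(\mathbb{S}\wedge B^\sigma\pi_+)^{\mathbb{Z}/2}\simeq \mathbb{S}\wedge (B^\sigma\pi)^{\mathbb{Z}/2}_+\vee \mathbb{S}\wedge (B^\sigma\pi)_{h\mathbb{Z}/2,+}$, together with the equivalence $(B^\sigma\pi)_{h\mathbb{Z}/2}\simeq B\pi\times\mathbb{RP}^\infty$ from the proof of the preceding corollary, the composite
\[
j\colon \mathbb{S}\wedge(B\pi\amalg B\pi)_+=(\mathbb{S}\vee\mathbb{S})\wedge B\pi_+\xrightarrow{\mathrm{TD}\wedge\id}\mathbb{S}^{\mathbb{Z}/2}\wedge B\pi_+\xrightarrow{\id\wedge\lambda_+}\mathbb{S}^{\mathbb{Z}/2}\wedge (B^\sigma\pi)^{\mathbb{Z}/2}_+\longrightarrow (\mathbb{S}\wedge B^\sigma\pi_+)^{\mathbb{Z}/2}
\]
is block diagonal with respect to these decompositions: on the geometric-fixed-point block it is $\mathbb{S}\wedge\lambda_+$, split by $\mathbb{S}\wedge\iota_+$; on the homotopy-orbit block it is, after the above identification, the stabilization of the basepoint inclusion $B\pi\hookrightarrow B\pi\times\mathbb{RP}^\infty$, split by the projection. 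Hence $j$ is a split monomorphism, and a diagram chase with the naturality of $X^{\mathbb{Z}/2}\wedge K^{\mathbb{Z}/2}\to(X\wedge K)^{\mathbb{Z}/2}$ in $X$ (for $\eta\colon \mathbb{S}\to {\KR}(\mathbb{S})$) and in $K$ (for $\lambda_+$), modeled on the last commutative diagram in the proof of Proposition \ref{compfulass}, shows that the restricted Hermitian $K$-theory assembly of Definition \ref{defresass} equals $(\mathcal{A}^0)^{\mathbb{Z}/2}\circ j$ --- a composite of split monomorphisms, natural in $\pi$.

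The conceptual input is concentrated in Theorem \ref{asssplitKR} and Lemma \ref{funnymap}; the remaining work, and the only real obstacle, is the bookkeeping needed to match the restricted assemblies of Definition \ref{defresass} --- built by smashing $\KH(\mathbb{S})$ or $\Lg(\mathbb{S})$ with $B\pi_+$, precomposing with Tom Dieck splittings of the sphere, and passing to fixed points last --- with the fixed points of the single equivariant map $\mathcal{A}^0$. This is a matter of threading the naturality squares for the transformations $X^{\mathbb{Z}/2}\wedge K^{\mathbb{Z}/2}\to(X\wedge K)^{\mathbb{Z}/2}$ and $\Phi^{\mathbb{Z}/2}X\wedge K^{\mathbb{Z}/2}\to\Phi^{\mathbb{Z}/2}(X\wedge K)$, exactly the diagrams used in the proofs of Propositions \ref{compfulass} and \ref{compresass}; in the Hermitian case one must additionally check that the geometric-fixed-point summand of $\mathbb{S}^{\mathbb{Z}/2}$ (the unit-form generator of $\KH_0$, cf. Proposition \ref{compresass}) lands in the geometric-fixed-point block of $(\mathbb{S}\wedge B^\sigma\pi_+)^{\mathbb{Z}/2}$ while the $\mathbb{S}_{h\mathbb{Z}/2}$ summand (the hyperbolic-form generator) lands in the homotopy-orbit block, so that $j$ is visibly block diagonal.
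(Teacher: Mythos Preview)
Your proposal is correct and follows essentially the same approach as the paper: apply $(-)^{\mathbb{Z}/2}$ and $\Phi^{\mathbb{Z}/2}$ to the equivariant splitting of Theorem \ref{asssplitKR}, then precompose with the maps from $B\pi$ (via $\lambda$ and the Tom Dieck splitting) and check that these are themselves split using Lemma \ref{funnymap} and the projection $B\pi\times\mathbb{RP}^\infty\to B\pi$. The paper's proof is terser about the bookkeeping you flag in your last paragraph --- it simply asserts the factorizations --- while you are more explicit about the naturality squares needed to match Definition \ref{defresass} with the fixed points of $\mathcal{A}^0$; but the argument is the same.
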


\begin{proof}
We start by proving the claim for $L$-theory. 
By Theorem \ref{asssplitKR} the second map in the composite
\[
\xymatrix{\mathbb{S}\wedge B\pi_+\stackrel{\lambda}{\longrightarrow}\mathbb{S}\wedge (B^{\sigma}\pi)^{\mathbb{Z}/2}_+\ar[r]^-{\simeq}
&
\Phi^{\mathbb{Z}/2}(\mathbb{S}\wedge B^{\sigma}\pi_+)\ar[r]& \Lg(\mathbb{S}[\pi])\ar@/_1.5pc/[l]
}
\]
splits. Thus it is sufficient to show that the first map splits, and a retraction is provided by the inclusion of fixed points $\iota\colon(B^{\sigma}\pi^{\mathbb{Z}/2})\to B\pi$, by Lemma \ref{funnymap}. Similarly, the restricted assembly of the Hermitian $K$-theory is the composite
\[
\xymatrix{\mathbb{S}\wedge (B\pi\amalg B\pi)_+\to\mathbb{S}\wedge ((B^{\sigma}\pi)^{\mathbb{Z}/2}\amalg (B^{\sigma}\pi)_{h\mathbb{Z}/2})_+\ar[r]^-{\simeq}
&
(\mathbb{S}\wedge B^{\sigma}\pi_+)^{\mathbb{Z}/2}\ar[r]& {\KH}(\mathbb{S}[\pi])\ar@/_1.5pc/[l],
}
\]
and it is sufficient to show that the first map is a split monomorphism. The first summand is again split by the inclusion of fixed points. The second summand is split by the projection map
\[
(B^{\sigma}\pi)_{h\mathbb{Z}/2}\stackrel{\simeq}{\longleftarrow} (B\pi)_{h\mathbb{Z}/2}\cong B\pi\times \mathbb{RP}^\infty\longrightarrow B\pi.\qedhere
\]
\end{proof}

We remark that the same argument of the proof of Corollary \ref{resasssplit} shows that the restricted assembly map of any ring spectrum with anti-involution which is rationally equivalent to the sphere spectrum splits rationally. We will be particularly interested in the case where $A=H\mathbb{A}[\tfrac{1}{2}]$ is the Eilenberg MacLane spectrum of the Burnside Mackey functor $\mathbb{A}[\tfrac{1}{2}]$ with $2$ inverted. In particular for $L$-theory we obtain the following.

\begin{corollary}\label{resasssplitLBurn}
Let $A$ be a flat ring spectrum with anti-involution, such that the unit map $\mathbb{S}\to A$ is a rational equivalence of underlying $\mathbb{Z}/2$-spectra.
Then the rationalized restricted assembly map in $L$-theory
 \[\mathcal{A}^0\colon H\mathbb{Q}\wedge B\pi_+\simeq (\mathbb{S}\wedge B\pi_+)\otimes\mathbb{Q}\longrightarrow \Lg(A[\pi])\otimes\mathbb{Q}\]
is naturally split by the map 
\[ T\colon \Lg(A[\pi])\stackrel{\tr}{\longrightarrow} \Phi^{\mathbb{Z}/2}{\THR}(A[\pi])\stackrel{\simeq_\mathbb{Q}}{\longleftarrow}\mathbb{S}\wedge (B^{di}\pi)^{\mathbb{Z}/2}_+\stackrel{p}{\longrightarrow} \mathbb{S}\wedge (B^{\sigma}\pi)^{\mathbb{Z}/2}_+\stackrel{\ref{funnymap}}{\longrightarrow} \mathbb{S}\wedge B\pi_+ .\]
\end{corollary}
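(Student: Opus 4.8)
The plan is to reduce to Theorem~\ref{asssplitKR} by applying geometric fixed points, exactly as in the proof of Corollary~\ref{resasssplit}, and then to transport the resulting splitting along the unit $u\colon\mathbb{S}\to A$, using that $u$ becomes an equivalence after rationalizing $\THR$.

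First I would settle the case $A=\mathbb{S}$. By Theorem~\ref{asssplitKR} the composite $Q=p\circ\theta^{-1}\circ\tr$ retracts $\mathcal{A}^0\colon\mathbb{S}\wedge B^{\sigma}\pi_+\to\KR(\mathbb{S}[\pi])$ in the homotopy category of $\mathbb{Z}/2$-spectra, where $\theta\colon\mathbb{S}\wedge B^{di}\pi_+\xrightarrow{\simeq}\THR(\mathbb{S}[\pi])$ is the equivalence of Proposition~\ref{THRass}. Applying the exact symmetric monoidal functor $\Phi^{\mathbb{Z}/2}$ and using the natural equivalences $\Phi^{\mathbb{Z}/2}(X\wedge K)\simeq(\Phi^{\mathbb{Z}/2}X)\wedge K^{\mathbb{Z}/2}$ together with $\Phi^{\mathbb{Z}/2}(\mathbb{S}\wedge L_+)\simeq\mathbb{S}\wedge L^{\mathbb{Z}/2}_+$ for $L=B^{\sigma}\pi,B^{di}\pi$, one identifies $\Phi^{\mathbb{Z}/2}\mathcal{A}^0$, after precomposition with $\lambda\colon B\pi\to(B^{\sigma}\pi)^{\mathbb{Z}/2}$, with the restricted $L$-theoretic assembly of $\mathbb{S}[\pi]$ of Definition~\ref{defresass}, and $\Phi^{\mathbb{Z}/2}Q$, after postcomposition with the fixed-point inclusion $\iota$, with $T$ (for $A=\mathbb{S}$ the equivalence $\simeq_{\mathbb{Q}}$ in $T$ is $\Phi^{\mathbb{Z}/2}\theta$, an integral equivalence). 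Since $\iota\circ\lambda\simeq\id_{B\pi}$ by Lemma~\ref{funnymap} and $Q\circ\mathcal{A}^0\simeq\id$, it follows that $T\circ\mathcal{A}^0\simeq\id_{\mathbb{S}\wedge B\pi_+}$, integrally in this case.

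For general flat $A$, let $u\colon\mathbb{S}\to A$ be the unit. The restricted $L$-assembly, the trace and the $\THR$-assembly are all natural in the ring spectrum with anti-involution, so $\mathcal{A}^0_A\simeq\Lg(u[\pi])\circ\mathcal{A}^0_{\mathbb{S}}$, while $\tr_A\circ\Lg(u[\pi])\simeq\Phi^{\mathbb{Z}/2}\THR(u[\pi])\circ\tr_{\mathbb{S}}$, and the equivalence $\mathrm{equiv}_A\colon\mathbb{S}\wedge(B^{di}\pi)^{\mathbb{Z}/2}_+\xrightarrow{\simeq_{\mathbb{Q}}}\Phi^{\mathbb{Z}/2}\THR(A[\pi])$ entering $T$ is identified, by naturality of the $\THR$-assembly and the fact that it specialises to $\theta$ at $\mathbb{S}$, with $\Phi^{\mathbb{Z}/2}\THR(u[\pi])\circ\Phi^{\mathbb{Z}/2}\theta$. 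It then remains to check that $\Phi^{\mathbb{Z}/2}\THR(u[\pi])$ is a rational equivalence: since $\mathbb{S}$ and $A$ are flat, Lemma~\ref{THRrat} identifies $\THR(u)\otimes\mathbb{Q}$ with $\THR$ of the equivalence $\mathbb{S}\otimes\mathbb{Q}\simeq A\otimes\mathbb{Q}$ supplied by the hypothesis on $u$, so $\THR(u)$ is a rational equivalence, and $\Phi^{\mathbb{Z}/2}$ preserves rational equivalences because it is monoidal with $\Phi^{\mathbb{Z}/2}\mathbb{S}_{\mathbb{Q}}\simeq H\mathbb{Q}$ idempotent, as in the proof of Lemma~\ref{THRrat}. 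Combining these, $T_A\circ\Lg(u[\pi])\simeq T_{\mathbb{S}}$ rationally, hence $T_A\circ\mathcal{A}^0_A\simeq T_{\mathbb{S}}\circ\mathcal{A}^0_{\mathbb{S}}\simeq\id$ after rationalization; naturality in $\pi$ is inherited from Theorem~\ref{asssplitKR} and Lemma~\ref{funnymap}.

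I expect the only genuine subtlety to be organizational: since none of the maps assembled into $T$ (in particular $\tr$) is individually an equivalence, one cannot argue by inverting, but must genuinely transport the retraction of Theorem~\ref{asssplitKR}, which forces one to pin down that the identification of $\Phi^{\mathbb{Z}/2}\THR(A[\pi])\otimes\mathbb{Q}$ with $\mathbb{S}\wedge(B^{di}\pi)^{\mathbb{Z}/2}_+\otimes\mathbb{Q}$ used to define $T$ matches, through $\Phi^{\mathbb{Z}/2}\THR(u[\pi])$, the one coming from Proposition~\ref{THRass} for $\mathbb{S}$. Everything else is a diagram chase built from results already established.
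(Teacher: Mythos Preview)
Your proposal is correct and follows essentially the same route the paper indicates: the paper does not give a separate proof of this corollary but only remarks that ``the same argument of the proof of Corollary~\ref{resasssplit} shows that the restricted assembly map of any ring spectrum with anti-involution which is rationally equivalent to the sphere spectrum splits rationally.'' Your write-up is a faithful and more detailed unpacking of that sentence---apply $\Phi^{\Z/2}$ to Theorem~\ref{asssplitKR}, use Lemma~\ref{funnymap} for the $\lambda/\iota$ part, and transport along $u\colon\mathbb{S}\to A$ using naturality and Lemma~\ref{THRrat}.

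One small slip: where you write ``$\THR(u)$ is a rational equivalence'' you actually need $\THR(u[\pi])$ to be one, since it is $\Phi^{\Z/2}\THR(u[\pi])$ that must be inverted. Either apply Lemma~\ref{THRrat} directly to $u[\pi]$ (noting $A[\pi]=A\wedge\pi_+$ is flat when $A$ is), or use the $\THR$-assembly equivalence $\THR(A[\pi])\simeq\THR(A)\wedge B^{di}\pi_+$ from \S\ref{secTHR} to reduce to $\THR(u)$ smashed with $B^{di}\pi_+$.
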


\section{Application to the Novikov conjecture}\label{secmain}\label{sectwo}

Let $\pi$ be a discrete group and $\Lq(\mathbb{Z}[\pi])$ the quadratic $L$-theory spectrum of the corresponding integral group-ring. The assembly map of quadratic $L$-theory is a map of spectra
\[
\mathcal{A}_{\mathbb{Z}[\pi]}\colon\Lq(\mathbb{Z})\wedge B\pi_+\longrightarrow \Lq(\mathbb{Z}[\pi]).
\]
The Novikov conjecture for the discrete group $\pi$ is equivalent to the injectivity on rational homotopy groups of the map $\mathcal{A}_{\mathbb{Z}[\pi]}$. Rationally, $\Lq(\mathbb{Z})$ is a Laurent polynomial algebra on one generator $\beta$ of degree $4$. Thus on rational homotopy groups the assembly map above takes the form
\[
\mathcal{A}_{\mathbb{Z}[\pi]}\colon \mathbb{Q}[\beta,\beta^{-1}]\otimes H_\ast(B\pi;\mathbb{Q})\longrightarrow \Lqsub{\ast}(\mathbb{Z}[\pi])\otimes \mathbb{Q}.
\]

\begin{remark}\label{remconnective}
Since the assembly map is a map of $\mathbb{Q}[\beta,\beta^{-1}]$-modules, it is sufficient to show that $\mathcal{A}_{\mathbb{Z}[\pi]}$ does not annihilate the polynomials with non-zero constant term
\[
\underline{x}=1\otimes x_n+\beta\otimes x_{n-4}+\dots+\beta^{k}\otimes x_{n-4k}\ \ \ \in \ \ (\mathbb{Q}[\beta]\otimes H_\ast(B\pi;\mathbb{Q}))_n
\]
where $k\geq 0$, and $x_n\neq 0$, for every $n\geq 0$.
Indeed, any degree $j$ element $\underline{y}$ of $\mathbb{Q}[\beta,\beta^{-1}]\otimes H_\ast(B\pi;\mathbb{Q})$ can be written as $\underline{y}=\beta^{-l}\underline{x}$ where $\underline{x}$ is of the form above and $l$ is the lowest power with non-zero coefficient of  $\underline{y}$. Then $\mathcal{A}_{\mathbb{Z}[\pi]}(\underline{y})=\beta^{-l}\mathcal{A}_{\mathbb{Z}[\pi]}(\underline{x})$ is non-zero if and only if $\mathcal{A}_{\mathbb{Z}[\pi]}(\underline{x})$ is non-zero. In particular, we can restrict our attention to the connective cover of the assembly map.
\end{remark}

We let $\mathbb{A}_{\frac{1}{2}}$ be the Burnside Tambara functor with $2$ inverted, and $H\mathbb{A}_{\frac{1}{2}}$ a cofibrant strictly commutative orthogonal $\mathbb{Z}/2$-equivariant ring spectrum model for its Eilenberg-MacLane spectrum (for the existence, see e.g. \cite[Thm.5.1]{Ullman}). We let $d\colon\mathbb{A}_{\frac 12}[\pi]\to \underline{\mathbb{Z}}_{\frac 12}[\pi]$ be the rank map, as defined in Example \ref{dimonKH}.

\begin{theorem}\label{main}
For any discrete group $\pi$, there is a lift $\overline{\mathcal{A}}_{\mathbb{Z}[\pi]}$ of the assembly map for the integral group-ring
\[
\xymatrix@C=75pt{
&& \Lgsub{\ast}(\mathbb{A}_{\frac 12}[\pi])\otimes\mathbb{Q}\ar[d]^d
\\
\mathbb{Q}[\beta]\otimes H_\ast (B\pi;\mathbb{Q})\ar[r]_-{\mathcal{A}_{\mathbb{Z}[\pi]}}\ar[urr]^-{\overline{\mathcal{A}}_{\mathbb{Z}[\pi]}}&&\llap{$ \Lqsub{\ast\geq 0}(\mathbb{Z}[\pi])\otimes\mathbb{Q}\cong$}   \Lgsub{\ast}(\mathbb{Z}_{\frac 12}[\pi])\otimes\mathbb{Q}
}
\]
which does not annihilate the polynomials with non-zero constant term. Thus the Novikov conjecture holds for $\pi$ if and only if the image of $\overline{\mathcal{A}}_{\mathbb{Z}[\pi]}$ intersects the kernel of $d$ trivially.
\end{theorem}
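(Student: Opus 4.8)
The plan is to construct $\overline{\mathcal{A}}_{\mathbb{Z}[\pi]}$ out of the genuine $L$-theory assembly of the Burnside group-ring and then to evaluate it by means of the real trace. First I would set up the lift. Since $H\mathbb{A}_{\frac 12}$ is a commutative $\mathbb{Z}/2$-equivariant ring spectrum, Proposition \ref{multiplication} makes $\Lg(\mathbb{A}_{\frac 12})$ a ring spectrum and the assembly of Definition \ref{defassKHL} a map of $\Lg(\mathbb{A}_{\frac 12})$-modules $\mathcal{A}_{\mathbb{A}_{\frac 12}[\pi]}\colon\Lg(\mathbb{A}_{\frac 12})\wedge B\pi_+\to\Lg(\mathbb{A}_{\frac 12}[\pi])$. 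The multiplicative, non-unital map $\tfrac T2\colon\underline{\mathbb{Z}}_{\frac 12}\to\mathbb{A}_{\frac 12}$ of Remark \ref{remsmult} induces $s:=\Lg(\tfrac T2)$, and since $d\circ\tfrac T2=\mathrm{id}$ we obtain $d\circ s=\mathrm{id}$; because $\Lgsub{\ast}(\mathbb{A}_{\frac 12})\otimes\mathbb{Q}$ is a unital ring, the class $s(\beta)$ determines a unital ring homomorphism $s'\colon\mathbb{Q}[\beta]\cong\Lgsub{\ast}(\underline{\mathbb{Z}}_{\frac 12})\otimes\mathbb{Q}\to\Lgsub{\ast}(\mathbb{A}_{\frac 12})\otimes\mathbb{Q}$ which agrees with $s$ in positive degrees and still satisfies $d\circ s'=\mathrm{id}$. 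I would then define $\overline{\mathcal{A}}_{\mathbb{Z}[\pi]}$ to be the rationalization of $\mathcal{A}_{\mathbb{A}_{\frac 12}[\pi]}$ precomposed with $s'$ in the ring variable. Commutativity of the triangle follows from naturality of the assembly in the (unital) ring map $d$ together with $d\circ s'=\mathrm{id}$, once one identifies $\Lgsub{\ast}(\underline{\mathbb{Z}}_{\frac 12}[\pi])\otimes\mathbb{Q}$ with $\Lqsub{\ast\geq 0}(\mathbb{Z}[\pi])\otimes\mathbb{Q}$ and the genuine $L$-theory assembly of $\underline{\mathbb{Z}}_{\frac 12}[\pi]$ with $\mathcal{A}_{\mathbb{Z}[\pi]}$ by Propositions \ref{Lgeom} and \ref{compfulass}.

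Next I would prove that $\overline{\mathcal{A}}_{\mathbb{Z}[\pi]}$ does not annihilate polynomials with non-zero constant term. Fix $\underline{x}=1\otimes x_n+\beta\otimes x_{n-4}+\dots+\beta^{k}\otimes x_{n-4k}$ with $x_n\neq 0$. Writing $\mathcal{A}^0$ for the rationalized restricted assembly, i.e.\ the value of $\mathcal{A}_{\mathbb{A}_{\frac 12}[\pi]}$ on the unit, we have $\overline{\mathcal{A}}_{\mathbb{Z}[\pi]}(\underline{x})=\mathcal{A}^0(x_n)+\sum_{i\geq 1}\mathcal{A}_{\mathbb{A}_{\frac 12}[\pi]}(s(\beta^i)\otimes x_{n-4i})$, and by naturality of the assembly in $\tfrac T2$ each summand with $i\geq 1$ equals $s_\pi\big(\mathcal{A}_{\underline{\mathbb{Z}}_{\frac 12}[\pi]}(\beta^i\otimes x_{n-4i})\big)$, where $s_\pi:=\Lg(\tfrac T2)\colon\Lg(\underline{\mathbb{Z}}_{\frac 12}[\pi])\to\Lg(\mathbb{A}_{\frac 12}[\pi])$. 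I would then apply the splitting $T$ of Corollary \ref{resasssplitLBurn}, which factors through the trace $\tr\colon\Lg(\mathbb{A}_{\frac 12}[\pi])\to\Phi^{\mathbb{Z}/2}\THR(\mathbb{A}_{\frac 12}[\pi])$. On the constant term, $T\mathcal{A}^0(x_n)=x_n$ because $T$ splits $\mathcal{A}^0$ (Corollary \ref{resasssplitLBurn}, which rests on Theorem \ref{splitKRintro}). On each higher term, naturality of the trace along $\tfrac T2$ shows that $T s_\pi$ factors through $\Phi^{\mathbb{Z}/2}\THR_\ast(\underline{\mathbb{Z}}_{\frac 12}[\pi])\otimes\mathbb{Q}\cong\Phi^{\mathbb{Z}/2}\THR_\ast(H(\mathbb{Z}[\tfrac 12][\pi]))\otimes\mathbb{Q}$ (via Remark \ref{gpring} and Lemma \ref{THRrat}), which vanishes because $\mathbb{Z}[\tfrac 12][\pi]$ is a discrete ring. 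Hence $T\overline{\mathcal{A}}_{\mathbb{Z}[\pi]}(\underline{x})=x_n\neq 0$, so $\overline{\mathcal{A}}_{\mathbb{Z}[\pi]}(\underline{x})\neq 0$.

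Finally I would deduce the Novikov reformulation from these two facts. By Remark \ref{remconnective}, the Novikov conjecture for $\pi$ is equivalent to $\mathcal{A}_{\mathbb{Z}[\pi]}$ annihilating no $\underline{x}\in\mathbb{Q}[\beta]\otimes H_\ast(B\pi;\mathbb{Q})$ with non-zero constant term. If the image of $\overline{\mathcal{A}}_{\mathbb{Z}[\pi]}$ meets $\ker d$ trivially, then for such $\underline{x}$ the element $\overline{\mathcal{A}}_{\mathbb{Z}[\pi]}(\underline{x})$ is non-zero and not in $\ker d$, so $\mathcal{A}_{\mathbb{Z}[\pi]}(\underline{x})=d(\overline{\mathcal{A}}_{\mathbb{Z}[\pi]}(\underline{x}))\neq 0$, and Novikov follows. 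Conversely, if Novikov holds and $v=\overline{\mathcal{A}}_{\mathbb{Z}[\pi]}(\underline{x})$ lies in $\ker d$, then $\mathcal{A}_{\mathbb{Z}[\pi]}(\underline{x})=d(v)=0$ forces $\underline{x}=0$ by injectivity of $\mathcal{A}_{\mathbb{Z}[\pi]}$, hence $v=0$; so the intersection is trivial.

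The part I expect to be the main obstacle is the bookkeeping forced by the non-unital map $\tfrac T2$: one must have naturality of both the assembly and the real trace along non-unital multiplicative maps of ring spectra with anti-involution, and one must verify that rationally $\overline{\mathcal{A}}_{\mathbb{Z}[\pi]}(\underline{x})$ really splits as a constant term detected by $T$ plus higher-order terms that become trivial once pushed along $\tfrac T2$ to the discrete ring $\mathbb{Z}[\tfrac 12][\pi]$, where rational $\Phi^{\mathbb{Z}/2}\THR$ vanishes. Getting the normalization $T\circ\mathcal{A}^0=\mathrm{id}$ exactly right, and making the identifications of Propositions \ref{Lgeom} and \ref{compfulass} compatible with all the assembly maps in play, is where most of the care will be needed.
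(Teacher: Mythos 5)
Your construction of the lift $\overline{\mathcal{A}}_{\mathbb{Z}[\pi]}$ via a multiplicative section $s'$ (the paper's $u$) and the verification that $d\circ\overline{\mathcal{A}}_{\mathbb{Z}[\pi]}=\mathcal{A}_{\mathbb{Z}[\pi]}$ are essentially the paper's argument, as is the final deduction of the Novikov reformulation from Remark \ref{remconnective}. The problem is in your proof that $\overline{\mathcal{A}}_{\mathbb{Z}[\pi]}$ does not kill polynomials with non-zero constant term.

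You set $s:=\Lg(\tfrac{T}{2})$ and $s_\pi:=\Lg(\tfrac{T}{2})$ and then invoke naturality of both the assembly map and the real trace along $\tfrac{T}{2}$. But $\tfrac{T}{2}\colon\underline{\mathbb{Z}}_{\frac 12}\to\mathbb{A}_{\frac 12}$ is a morphism of Hermitian Mackey functors that is not a morphism of Tambara functors; Remark \ref{remsmult} only produces a map $\KH(\underline{\mathbb{Z}}_{\frac 12}[\pi])\to\KH(\mathbb{A}_{\frac 12}[\pi])$ from the functoriality of the categories of Hermitian forms, and this does \emph{not} upgrade to a map of $\mathbb{Z}/2$-spectra $\KR(H\underline{\mathbb{Z}}_{\frac 12}[\pi])\to\KR(H\mathbb{A}_{\frac 12}[\pi])$. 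Hence $\Lg(\tfrac{T}{2})$ is simply undefined, and there is no trace map $\tr$ fitting into a naturality square with it. The paper flags exactly this concern in the proof of Lemma \ref{dsurj} (``$\overline{s}_\pi$ does not a priori come from a map of real $K$-theory spectra'') and constructs $s_\pi$ indirectly by mapping the isotropy separation sequences to each other, a construction that yields a section of $d$ on $\Lg$ but no naturality whatsoever of $\THR$ or of the trace along $\tfrac{T}{2}$. Both steps in your argument --- identifying the higher-order summands with $s_\pi\mathcal{A}_{\underline{\mathbb{Z}}_{\frac 12}[\pi]}(\beta^i\otimes x_{n-4i})$, and factoring $Ts_\pi$ through $\Phi^{\mathbb{Z}/2}\THR(\underline{\mathbb{Z}}_{\frac 12}[\pi])\otimes\mathbb{Q}=0$ --- therefore rest on squares that have no visible reason to commute. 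The heuristic is a reasonable one (the introduction even remarks that the element $s(1)$ is killed by the trace), but it is not a proof with the tools developed in the paper.

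The paper's route through the higher-order terms is completely different and deliberately avoids any naturality in the ring variable: one uses naturality of $T$ and $\overline{\mathcal{A}}$ in the group $\pi$ only. Applying the Kan--Thurston theorem gives a group $\pi_{\langle n-1\rangle}$ and a homomorphism $\lambda\colon\pi_{\langle n-1\rangle}\to\pi$ inducing a homology isomorphism onto the $(n-1)$-skeleton of $B\pi$. Since the coefficients $x_{n-4i}$ with $i\geq 1$ lift along $\lambda$, naturality in $\pi$ pushes the higher-order part of $T\overline{\mathcal{A}}_{\mathbb{Z}[\pi]}(\underline{x})$ through $H_n(B\pi_{\langle n-1\rangle};\mathbb{Q})=0$. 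This uses only the naturality of the trace and of the assembly in ring spectra with anti-involution and in $\pi$, both of which are proved, and requires no realization of $\tfrac{T}{2}$ and no input from the rational vanishing of $\Phi^{\mathbb{Z}/2}\THR$ of discrete rings. If you wish to rescue your approach, the missing ingredient is a map of $\mathbb{Z}/2$-equivariant ring spectra with anti-involution realizing $\tfrac{T}{2}$, and the paper gives reasons to doubt that such a map exists.
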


The way the lift $\overline{\mathcal{A}}_{\mathbb{Z}[\pi]}$ is constructed allows us to further reduce the Novikov conjecture to a statement about the algebraic structure of $\Lg_\ast(\mathbb{A}_{\frac 12}[\pi])\otimes\mathbb{Q}$, that does not involve the map $d$. After inverting $2$ the morphism of Hermitian Mackey functors $d\colon \mathbb{A}_{\frac 12}\to \underline{\mathbb{Z}}_{\frac 12}$ splits (although not as a Tambara functor). This induces a map
\[
s_\pi\colon \Lg(\mathbb{Z}_{\frac 12}[\pi]) \longrightarrow\Lg(\mathbb{A}_{\frac 12}[\pi])
\]
which is a section for $d$ (see Lemma \ref{dsurj}). For the trivial group $\pi=1$ this provides an additive inclusion $s=s_1\colon\mathbb{Q}[\beta]\to \Lgsub{\ast}(\mathbb{A}_{\frac 12}[\pi])\otimes\mathbb{Q}$. In particular $s(1)$ defines an element in the ring $\Lgsub{0}(\mathbb{A}_{\frac 12})$, which acts on $\Lgsub{\ast}(\mathbb{A}_{\frac 12}[\pi])$ by \ref{multiplication}.

\begin{corollary}\label{maincor}
Every element $\underline{x}\in\mathbb{Q}[\beta]\otimes H_\ast (B\pi;\mathbb{Q})$ satisfies the identity
\[
s_\pi\mathcal{A}_{\mathbb{Z}[\pi]}(\underline{x})=s(1)\cdot \overline{\mathcal{A}}_{\mathbb{Z}[\pi]}(\underline{x}) \ \ \ \ \in \ \ \Lgsub{\ast}(\mathbb{A}_{\frac 12}[\pi])\otimes\mathbb{Q},
\]
where $s_\pi$ is injective and $\overline{\mathcal{A}}_{\mathbb{Z}[\pi]}(\underline{x})$ is non-zero when $\underline{x}$ has non-zero constant term.
It follows that the Novikov conjecture holds for $\pi$ if and only if multiplication by $s(1)$
\[
s(1)\colon \Lgsub{\ast}(\mathbb{A}_{\frac 12}[\pi])\longrightarrow \Lgsub{\ast}(\mathbb{A}_{\frac 12}[\pi])
\]
is injective on the image of $\overline{\mathcal{A}}_{\mathbb{Z}[\pi]}$.
\end{corollary}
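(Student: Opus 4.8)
The plan is to deduce the corollary from the construction of $\overline{\mathcal{A}}_{\mathbb{Z}[\pi]}$ used in the proof of Theorem~\ref{main}, together with the elementary algebra of a section of $d$.

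\emph{Setting up the section.} First I would fix the section: by Lemma~\ref{dsurj} the rank map $d\colon\mathbb{A}_{\frac 12}\to\underline{\mathbb{Z}}_{\frac 12}$ admits a section $s\colon\underline{\mathbb{Z}}_{\frac 12}\to\mathbb{A}_{\frac 12}$ in Hermitian Mackey functors, namely $\tfrac{T}{2}$ of Remark~\ref{remsmult}; it is multiplicative but not unital on fixed points, and on Eilenberg--MacLane spectra it is a map of (associative) ring spectra with anti-involution. Functoriality of the group-Mackey functor in $L$ (Remark~\ref{functorialitygroupring}), of $\KR$, and of $\Lg=\Phi^{\Z/2}\KR$ then yields $s_\pi\colon\Lg(\underline{\mathbb{Z}}_{\frac 12}[\pi])\to\Lg(\mathbb{A}_{\frac 12}[\pi])$ with $d\circ s_\pi=\id$, so $s_\pi$ is a split monomorphism; since $s$ is a ring map it is moreover $s$-semilinear for the module structures of Proposition~\ref{multiplication}, i.e.\ $s_\pi(c\cdot v)=s(c)\cdot s_\pi(v)$. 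Rationally and for $\pi=1$, $s$ carries the unit of $\mathbb{Q}[\beta]\cong\Lgsub{\ast}(\underline{\mathbb{Z}}_{\frac 12})\otimes\mathbb{Q}$ (Proposition~\ref{Lgeom}) to $a=s(1)\in\Lgsub{0}(\mathbb{A}_{\frac 12})\otimes\mathbb{Q}$, with $d(a)=1$. Finally I would recall that $\overline{\mathcal{A}}_{\mathbb{Z}[\pi]}$ is built from the restricted assembly $\mathcal{A}^0\colon\mathbb{S}\wedge B\pi_+\to\Lg(\mathbb{A}_{\frac 12}[\pi])$ and the class $s(\beta)\in\Lgsub{4}(\mathbb{A}_{\frac 12})\otimes\mathbb{Q}$ by $\overline{\mathcal{A}}_{\mathbb{Z}[\pi]}(\beta^k\otimes x)=s(\beta)^k\cdot\mathcal{A}^0(1\otimes x)$; applying $d$ and using $d(s(\beta))=\beta$ shows it lifts $\mathcal{A}_{\mathbb{Z}[\pi]}$.

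\emph{The identity.} The core step is the displayed formula $s_\pi\mathcal{A}_{\mathbb{Z}[\pi]}(\underline x)=a\cdot\overline{\mathcal{A}}_{\mathbb{Z}[\pi]}(\underline x)$. Since the quadratic assembly is a map of $\Lgsub{\ast}(\underline{\mathbb{Z}}_{\frac 12})\otimes\mathbb{Q}$-modules, $\mathcal{A}_{\mathbb{Z}[\pi]}(\beta^k\otimes x)=\beta^k\cdot\mathcal{A}^0_{\underline{\mathbb{Z}}_{\frac 12}}(1\otimes x)$, so by $s$-semilinearity of $s_\pi$ and $s(\beta^k)=s(\beta)^k$ we get $s_\pi\mathcal{A}_{\mathbb{Z}[\pi]}(\beta^k\otimes x)=s(\beta)^k\cdot s_\pi\bigl(\mathcal{A}^0_{\underline{\mathbb{Z}}_{\frac 12}}(1\otimes x)\bigr)$. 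The key observation is that $\mathcal{A}^0$ is the full assembly precomposed with the unit $\eta\colon\mathbb{S}\to\Lg(-)$, and $s$ fails to commute with $\eta$: it sends the unit of $\Lgsub{0}(\underline{\mathbb{Z}}_{\frac 12})$ to $a$, not to the unit of $\Lgsub{0}(\mathbb{A}_{\frac 12})$. Naturality of $\mathcal{A}$ then gives $s_\pi\mathcal{A}^0_{\underline{\mathbb{Z}}_{\frac 12}}(1\otimes x)=a\cdot\mathcal{A}^0_{\mathbb{A}_{\frac 12}}(1\otimes x)$, and pulling the central element $a$ out of the sum over $k$ yields $s_\pi\mathcal{A}_{\mathbb{Z}[\pi]}(\underline x)=a\cdot\overline{\mathcal{A}}_{\mathbb{Z}[\pi]}(\underline x)$. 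The main obstacle will be the bookkeeping behind this step: one must verify the $s$-semilinearity of $s_\pi$ and the relation $s\circ\eta_{\underline{\mathbb{Z}}_{\frac 12}}=a\cdot\eta_{\mathbb{A}_{\frac 12}}$ at the spectrum level — both reduce to the naturality of the Kronecker pairing of \S\ref{secpairings} and of the unit map under the ring map $s\colon H\underline{\mathbb{Z}}_{\frac 12}\to H\mathbb{A}_{\frac 12}$ — and one must match the constructions of \S\ref{secHermMackey}--\ref{secpairings}, phrased for Hermitian Mackey functors, with those for $H\underline{\mathbb{Z}}_{\frac 12}$ and $H\mathbb{A}_{\frac 12}$ via Proposition~\ref{KRHM}, exactly as in Propositions~\ref{compBF} and~\ref{compfulass}. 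It is precisely the non-unitality of $s$ that produces the factor $a$ instead of the equality $s_\pi\mathcal{A}_{\mathbb{Z}[\pi]}=\overline{\mathcal{A}}_{\mathbb{Z}[\pi]}$, which would render the criterion of Theorem~\ref{main} vacuous.

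\emph{Conclusion.} That $s_\pi$ is injective and that $\overline{\mathcal{A}}_{\mathbb{Z}[\pi]}(\underline x)\neq0$ for $\underline x$ of nonzero constant term are recorded above and in Theorem~\ref{main}, so the equivalence with Novikov is then formal. By injectivity of $s_\pi$, $\mathcal{A}_{\mathbb{Z}[\pi]}(\underline x)=0$ if and only if $a\cdot\overline{\mathcal{A}}_{\mathbb{Z}[\pi]}(\underline x)=0$. If the Novikov conjecture holds — equivalently, by Remark~\ref{remconnective}, $\mathcal{A}_{\mathbb{Z}[\pi]}$ is injective on $\mathbb{Q}[\beta]\otimes H_\ast(B\pi;\mathbb{Q})$ — and $a\cdot v=0$ for $v=\overline{\mathcal{A}}_{\mathbb{Z}[\pi]}(\underline x)\in\operatorname{im}\overline{\mathcal{A}}_{\mathbb{Z}[\pi]}$, then $\mathcal{A}_{\mathbb{Z}[\pi]}(\underline x)=0$, hence $\underline x=0$ and $v=0$; so multiplication by $a$ is injective on $\operatorname{im}\overline{\mathcal{A}}_{\mathbb{Z}[\pi]}$. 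Conversely, if multiplication by $a$ is injective there and some $\underline x$ with nonzero constant term has $\mathcal{A}_{\mathbb{Z}[\pi]}(\underline x)=0$, then $a\cdot\overline{\mathcal{A}}_{\mathbb{Z}[\pi]}(\underline x)=0$ forces $\overline{\mathcal{A}}_{\mathbb{Z}[\pi]}(\underline x)=0$, contradicting Theorem~\ref{main}; thus $\mathcal{A}_{\mathbb{Z}[\pi]}$ does not annihilate such polynomials and the Novikov conjecture holds by Remark~\ref{remconnective}.
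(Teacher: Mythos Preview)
Your overall strategy matches the paper's: both arguments rest on the fact that $s$ is multiplicative but not unital, so that $s=s(1)\cdot u$ (in your language, $s$-semilinearity together with $s_\pi\circ\eta_{\underline{\mathbb{Z}}_{\frac12}}=s(1)\cdot\eta_{\mathbb{A}_{\frac12}}$), and both deduce the displayed identity from this plus the $\Lg_\ast(\mathbb{A}_{\frac12})$-linearity of the assembly. The conclusion is then drawn identically.

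There is, however, a genuine gap in your set-up. You assert that $\tfrac{T}{2}$ is realized on Eilenberg--MacLane spectra as a map of (associative) ring spectra with anti-involution, and then invoke functoriality of $\KR$ and $\Lg=\Phi^{\Z/2}\KR$ to produce $s_\pi$ and its semilinearity. The paper explicitly warns against this in Lemma~\ref{dsurj}: since $\tfrac{T}{2}$ is not a Tambara map (indeed not unital on fixed points), it does not lift to a map of $\Z/2$-equivariant ring spectra, and hence ``$\overline{s}_\pi$ does not a priori come from a map of real $K$-theory spectra.'' Consequently you cannot obtain $s_\pi$ by $\KR$-functoriality, nor can you read off the semilinearity or the relation $s_\pi\mathcal{A}^0_{\underline{\mathbb{Z}}_{\frac12}}=s(1)\cdot\mathcal{A}^0_{\mathbb{A}_{\frac12}}$ from naturality of a spectrum-level ring map.

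The paper's fix is exactly what Lemma~\ref{dsurj} does: $\tfrac{T}{2}$ \emph{is} a map of Hermitian Mackey functors, so it induces $\overline{s}_\pi$ on $\KH$ (via the category of forms), and one then passes to $\Lg$ by comparing isotropy separation sequences. The commuting square $s_\pi\mathcal{A}_{\mathbb{Z}[\pi]}=\mathcal{A}_{\mathbb{A}[\pi]}\circ(s\otimes\id)$ and the multiplicativity of $s$ on $\Lg_\ast$ are then obtained at the $\KH$ level (using Remark~\ref{remsmult}) and pushed through the cofiber. Once you replace your spectrum-level claim by this argument, the rest of your proof is correct and coincides with the paper's computation $s_\pi\mathcal{A}_{\mathbb{Z}[\pi]}=\mathcal{A}_{\mathbb{A}[\pi]}\circ(s\otimes\id)=\mathcal{A}_{\mathbb{A}[\pi]}\circ((s(1)\cdot u)\otimes\id)=s(1)\cdot\overline{\mathcal{A}}_{\mathbb{Z}[\pi]}$.
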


\begin{proof}[Proof of \ref{main}] We first note that the isomorphism $\Lgsub{\ast}(\mathbb{Z}_{\frac{1}{2}}[\pi])\otimes\mathbb{Q}\cong \Lqsub{\ast\geq 0}(\mathbb{Z}[\pi])\otimes\mathbb{Q}$ is a consequence of Proposition \ref{Lgeom} and the fact that the map $\mathbb{Z}[\pi]\to \mathbb{Z}_{\frac{1}{2}}[\pi]$ induces an isomorphism $\Lqsub{\ast}(\mathbb{Z}[\pi])[\frac{1}{2}]\cong \Lqsub{\ast}(\mathbb{Z}_{\frac{1}{2}}[\pi])[\frac{1}{2}]$ on the quadratic $L$-groups. The latter is an immediate consequence of \cite[Prop.~3.9]{BF} (see also \cite[Thm.3.2.6]{jl}). It follows from \ref{compfulass} that the assemblies agree under this isomorphism.

The additive section $s\colon \mathbb{Q}[\beta]\to\Lgsub{\ast}(\mathbb{A}_{\frac 12})\otimes\mathbb{Q}$ of Lemma \ref{dsurj} prescribes a lift $s(\beta)$ of the polynomial generator $\beta$. Since $\Lgsub{\ast}(\mathbb{A}_{\frac 12})$ is a ring, $s(\beta)$ defines a multiplicative section $u\colon \mathbb{Q}[\beta]\to\Lgsub{\ast}(\mathbb{A}_{\frac 12})\otimes\mathbb{Q}$. We define $\overline{\mathcal{A}}_{\mathbb{Z}[\pi]}$ to be the composite
\[
\overline{\mathcal{A}}_{\mathbb{Z}[\pi]}\colon \mathbb{Q}[\beta]\otimes H_\ast(B\pi;\mathbb{Q})\xrightarrow{u\otimes \id} \Lgsub{\ast}(\mathbb{A}_{\frac 12})\otimes H_\ast(B\pi;\mathbb{Q})\xrightarrow{\mathcal{A}_{\mathbb{A}[\pi]}} \Lgsub{\ast}(\mathbb{A}_{\frac 12}[\pi])\otimes\mathbb{Q}
\]
where $\mathcal{A}_{\mathbb{A}[\pi]}$ is the assembly of \ref{defassKHL}. Since $d\mathcal{A}_{\mathbb{A}[\pi]}=\mathcal{A}_{\mathbb{Z}[\pi]}(d\otimes \id)$ and $u$ is a section for $d$, it follows that $d\overline{\mathcal{A}}_{\mathbb{Z}[\pi]}=\mathcal{A}_{\mathbb{Z}[\pi]}$. 

Now let us show that $\overline{\mathcal{A}}_{\mathbb{Z}[\pi]}$ does not annihilate the polynomials with non-zero constant term, that is that for every
\[
\underline{x}=1\otimes x_n+\beta\otimes x_{n-4}+\dots+\beta^{k}\otimes x_{n-4k}\ \ \ \in \ \ (\mathbb{Q}[\beta]\otimes H_\ast(B\pi;\mathbb{Q}))_n,
\]
with $k\geq 0$, and $x_n\neq 0$, we have that $\overline{\mathcal{A}}_{\mathbb{Z}[\pi]}(\underline{x})$ is non-zero. We let 
\[
T\colon \Lgsub{\ast}(\mathbb{A}_{\frac 12}[\pi])\otimes\mathbb{Q}\longrightarrow H_\ast(B\pi;\mathbb{Q})
\]
be the map of Corollary \ref{resasssplitLBurn}, which defines a retraction for the restricted assembly map $\mathcal{A}_{\mathbb{A}[\pi]}^0$. 
In order to show that $\overline{\mathcal{A}}_{\mathbb{Z}[\pi]}(\underline{x})$ is non-zero it is sufficient to show that $T\overline{\mathcal{A}}_{\mathbb{Z}[\pi]}(\underline{x})$ is non-zero in $H_\ast(B\pi;\mathbb{Q})$. We write
\[
T\overline{\mathcal{A}}_{\mathbb{Z}[\pi]}(\underline{x})=T\overline{\mathcal{A}}_{\mathbb{Z}[\pi]}(1\otimes x_n)+T\overline{\mathcal{A}}_{\mathbb{Z}[\pi]}(\beta\otimes x_{n-4}+\dots+\beta^{k}\otimes x_{n-4k}).
\]
We remark that since the section $u$ is multiplicative, the restriction of $\overline{\mathcal{A}}_{\mathbb{Z}[\pi]}$ to the summand $(\mathbb{Q}\cdot 1)\otimes H_\ast(B\pi;\mathbb{Q})$ agrees with the restricted assembly map $\mathcal{A}_{\mathbb{A}[\pi]}^0$.
Since $T$ splits $\mathcal{A}^{0}_{\mathbb{A}[\pi]}$ by Corollary \ref{resasssplitLBurn}, it follows that 
\[T\overline{\mathcal{A}}_{\mathbb{Z}[\pi]}(1\otimes x_n)=T\mathcal{A}_{\mathbb{A}[\pi]}^0(x_n)=x_n\neq 0.\]
 It is therefore sufficient to show that $T\overline{\mathcal{A}}_{\mathbb{Z}[\pi]}(\beta\otimes x_{n-4}+\dots+\beta^{k}\otimes x_{n-4k})$ is zero.
We prove this by invoking the naturality of $T$ and $\overline{\mathcal{A}}_{\mathbb{Z}[\pi]}$ with respect to group homomorphisms.
By the Kan-Thurston theorem there is a group $\pi_{\langle n-1\rangle}$ and a map $B\pi_{\langle n-1\rangle}\to (B\pi)^{(n-1)}$ to the  $(n-1)$-skeleton of $B\pi$ which is a homology isomorphism. Taking the first homotopy group of the composite $B\pi_{\langle n-1\rangle}\to (B\pi)^{(n-1)}\hookrightarrow B\pi$ gives a group homomorphism $\lambda\colon \pi_{\langle n-1\rangle}\to \pi$. In order to emphasize the naturality of our transformations in the group $\pi$ we add a superscript to our notations.
By naturality, there is a commutative diagram
\[
\xymatrix@C=20pt{
(\mathbb{Q}[\beta]\otimes H_\ast(B\pi_{\langle n-1\rangle};\mathbb{Q}))_n\ar[rr]^-{\overline{\mathcal{A}}_{\mathbb{Z}[\pi_{\langle n-1\rangle}]}}\ar[d]^{}_{\id\otimes \lambda} & &\Lgsub{n}(\mathbb{Z}[\pi_{\langle n-1\rangle}])\ar[d]_{\lambda}\ar[rrr]^(0.45){T^{\pi_{\langle n-1\rangle}}} & & &  H_n(B\pi_{\langle n-1\rangle};\mathbb{Q}) = 0\ar[d]_{\lambda}\\
(\mathbb{Q}[\beta]\otimes H_\ast(B\pi;\mathbb{Q}))_n\ar[rr]^{\overline{\mathcal{A}}_{\mathbb{Z}[\pi]}} & &\Lgsub{n}(\mathbb{Z}[\pi])\ar[rrr]^{T^\pi} & & & H_n(B\pi;\mathbb{Q})\rlap{\ .}
}
\]
Since $\lambda\colon H_j((B\pi)^{(n-1)};\mathbb{Q})\cong H_j(B\pi_{\langle n-1\rangle};\mathbb{Q})\to H_j(B\pi;\mathbb{Q})$ is surjective for $j<n$, we can write
\[
\beta\otimes x_{n-4}+\dots+\beta^{k}\otimes x_{n-4k}=\beta\otimes \lambda(y_{n-4})+\dots+\beta^{k}\otimes \lambda(y_{n-4k})
\]
for some coefficients $y_j\in H_j(B\pi_{\langle n-1\rangle};\mathbb{Q})$. By the commutativity of the diagram above we see that
\[
T^\pi\overline{\mathcal{A}}_{\mathbb{Z}[\pi]}(\beta\otimes x_{n-4}+\dots+\beta^{k}\otimes x_{n-4k})=\lambda \underbrace{T^{\pi_{\langle n-1\rangle}}\overline{\mathcal{A}}_{\mathbb{Z}[\pi_{\langle n-1\rangle}]}(\beta\otimes y_{n-4}+\dots+\beta^{k}\otimes y_{n-4k})}_{0}
\] 
must vanish.

Now suppose that the kernel of $d$ intersects the image of $\overline{\mathcal{A}}_{\mathbb{Z}[\pi]}$ trivially, and let $\underline{x}\in \mathbb{Q}[\beta]\otimes H_\ast(B\pi;\mathbb{Q})$ be a polynomial with non-zero constant term. By the previous argument $\overline{\mathcal{A}}_{\mathbb{Z}[\pi]}(\underline{x})$ is non-zero, and therefore it cannot belong to the kernel of $d$. That is $d\overline{\mathcal{A}}_{\mathbb{Z}[\pi]}(\underline{x})=\mathcal{A}_{\mathbb{Z}[\pi]}(\underline{x})$ is non-zero. By Remark \ref{remconnective} the Novikov conjecture holds for $\pi$.
Conversely, if $\mathcal{A}_{\mathbb{Z}[\pi]}$ is injective $d$ must be injective on the image of $\overline{\mathcal{A}}_{\mathbb{Z}[\pi]}$.
\end{proof}

\begin{lemma}\label{dsurj}
The rank map $d\colon \Lg(\mathbb{A}_{\frac{1}{2}}[\pi])\to \Lg(\mathbb{Z}_{\frac{1}{2}}[\pi])$ admits a section $s_\pi$ in the homotopy category, which is natural in $\pi$ with respect to group homomorphisms. When $\pi=1$ is the trivial group, this section $s\colon \Lg(\mathbb{Z}_{\frac{1}{2}})\to \Lg(\mathbb{A}_{\frac{1}{2}})$ is multiplicative, but not unital, on rational homotopy groups.
\end{lemma}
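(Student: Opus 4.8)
The plan is to build the section out of the non-unital morphism of Hermitian Mackey functors $\tfrac{T}{2}\colon\underline{\mathbb{Z}}_{\frac12}\to\mathbb{A}_{\frac12}$ of Remark \ref{remsmult}, combined with the splitting of the isotropy separation sequence after inverting $2$. First I would check that $\tfrac{T}{2}$ extends to a morphism of Hermitian Mackey functors $\tfrac{T}{2}[\pi]\colon\underline{\mathbb{Z}}_{\frac12}[\pi]\to\mathbb{A}_{\frac12}[\pi]$ by the functoriality of $L\mapsto L[\pi]$ in $L$ (Remark \ref{functorialitygroupring}); since both $\underline{\mathbb{Z}}_{\frac12}$ and $\mathbb{A}_{\frac12}$ have trivial involution $w$, this group-Mackey construction is moreover functorial in $\pi$, so $\tfrac{T}{2}[\pi]$ is natural in $\pi$. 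The composite $d\circ\tfrac{T}{2}$ is the identity already at the level of Hermitian Mackey functors — it is the identity on underlying rings, and on fixed points $(0,\tfrac12)$ followed by $(a,b)\mapsto a+2b$ is the identity — hence $d\circ\tfrac{T}{2}[\pi]=(d\circ\tfrac{T}{2})[\pi]=\mathrm{id}$. As in Remark \ref{remsmult}, $\tfrac{T}{2}[\pi]$ is multiplicative but not unital on its fixed-points data.

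Next, by Remark \ref{remsmult} the morphism $\tfrac{T}{2}[\pi]$ induces a map of spectra $(\tfrac{T}{2})_*\colon\KH(\underline{\mathbb{Z}}_{\frac12}[\pi])\to\KH(\mathbb{A}_{\frac12}[\pi])$, natural in $\pi$, which under Propositions \ref{compBF} and \ref{KRHM} is a map $\KH(\mathbb{Z}_{\frac12}[\pi])\to\KH(\mathbb{A}_{\frac12}[\pi])$ with $d_*\circ(\tfrac{T}{2})_*=\mathrm{id}$; for $\pi=1$ it is a map of non-unital ring spectra. Note that $\tfrac{T}{2}$ is \emph{not} unital, so it does not lift to a map of ring spectra with anti-involution and does not directly induce anything on $\KR$ or on $\Lg$ — Hermitian $K$-theory is as far as one gets at this stage. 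To descend to $\Lg$, I would then invoke the splitting of the isotropy separation sequence $\K(A)_{h\mathbb{Z}/2}\to\KR(A)^{\mathbb{Z}/2}\to\Lg(A)$ after inverting $2$, exactly as in the proof of Proposition \ref{Lgeom}: the idempotent $e=\tfrac12[\mathbb{Z}/2]\in\pi_0\mathbb{S}^{\mathbb{Z}/2}[\tfrac12]=\mathbb{A}(\ast)[\tfrac12]$ acts on the $\mathbb{S}^{\mathbb{Z}/2}$-module $\KR(A)^{\mathbb{Z}/2}[\tfrac12]$, the $(1-e)$-summand being $\Lg(A)[\tfrac12]$ and the $e$-summand $\K(A)_{h\mathbb{Z}/2}[\tfrac12]$; write $j_A$ and $p_A$ for the resulting natural inclusion and projection. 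The map $d_*$ respects this decomposition automatically, being the fixed points of a genuine map of $\mathbb{Z}/2$-spectra; the point that needs an argument is that $(\tfrac{T}{2})_*$, which does not know the genuine equivariant structure, also commutes with $e$. This I expect to be the main obstacle, and it should be handled as follows: on $\KH$ the element $[\mathbb{Z}/2]$ acts as the transfer of the Mackey functor $\underline{\pi}_*\KR$ composed with its restriction, i.e.\ as hyperbolic $\circ$ forgetful; the forgetful (underlying $K$-theory) map induced by $\tfrac{T}{2}$ is the identity because $\tfrac{T}{2}$ is the identity on underlying rings, and the hyperbolic map is natural for arbitrary — not necessarily unital — morphisms of Hermitian Mackey functors (it is defined by the fixed standard matrices with entries $0,1$, which $\tfrac{T}{2}$ preserves). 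Hence $(\tfrac{T}{2})_*\circ e=e\circ(\tfrac{T}{2})_*$ and $(\tfrac{T}{2})_*$ respects the $(e,1-e)$-splittings.

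Finally I would set
\[
s_\pi:=p_{\mathbb{A}_{\frac12}[\pi]}\circ(\tfrac{T}{2})_*\circ j_{\mathbb{Z}_{\frac12}[\pi]}\colon\ \Lg(\mathbb{Z}_{\frac12}[\pi])\big[\tfrac12\big]\longrightarrow\Lg(\mathbb{A}_{\frac12}[\pi])\big[\tfrac12\big],
\]
and compute $d\circ s_\pi=p_{\mathbb{Z}}\circ d_*\circ(\tfrac{T}{2})_*\circ j_{\mathbb{Z}}=p_{\mathbb{Z}}\circ j_{\mathbb{Z}}=\mathrm{id}$, using naturality of the splitting with respect to $d$ and $d_*\circ(\tfrac{T}{2})_*=\mathrm{id}$; naturality of $s_\pi$ in $\pi$ is inherited from all the ingredients. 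For $\pi=1$: $j_A$ and $p_A$ are multiplications by the central idempotents $1-e$, hence (non-unitally) multiplicative when $A$ is commutative, and $(\tfrac{T}{2})_*$ is multiplicative but not unital, so rationally $s=s_1\colon\Lgsub{\ast}(\mathbb{Z}_{\frac12})\otimes\mathbb{Q}\cong\mathbb{Q}[\beta]\to\Lgsub{\ast}(\mathbb{A}_{\frac12})\otimes\mathbb{Q}$ is a non-unital ring homomorphism, the failure of unitality visible on $\pi_0$ since $s(1)$ is the class of $(0,\tfrac12)$ rather than of the unit form. A secondary point of care, flagged above, is that the isotropy splitting is available only after inverting $2$, so $s_\pi$ is literally constructed on the $2$-inverted spectra; this is harmless, since $\mathbb{A}_{\frac12}$ and $\mathbb{Z}_{\frac12}$ already carry $2$-inverted coefficients and every subsequent use of the lemma is rational.
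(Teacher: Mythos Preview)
Your proposal is correct and follows essentially the same route as the paper: both start from the section $\tfrac{T}{2}\colon\underline{\mathbb{Z}}_{\frac12}\to\mathbb{A}_{\frac12}$ of Hermitian Mackey functors, extend it to the group-Mackey functors via the functoriality of Remark~\ref{functorialitygroupring}, pass to a section $\overline{s}_\pi$ on $\KH$, and then descend to $\Lg$ using the isotropy separation sequence; the multiplicativity for $\pi=1$ is in both cases deduced from Remark~\ref{remsmult} together with the multiplicativity of the projection $\phi\colon\KH\to\Lg$.

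The only difference is cosmetic: the paper obtains $s_\pi$ as the induced map on cofibers in the diagram
\[
\xymatrix{
\K(\mathbb{Z}_{\frac{1}{2}}[\pi])_{h\Z/2}\ar[r]^-{H}\ar@{=}[d]
&
{\KH}(\mathbb{Z}_{\frac{1}{2}}[\pi])\ar[d]^-{\overline{s}_\pi}
\ar[r]
&
\Lg(\mathbb{Z}_{\frac{1}{2}}[\pi])\ar@{-->}[d]
\\
\K(\mathbb{A}_{\frac{1}{2}}[\pi])_{h\Z/2}\ar[r]^-{H}
&
{\KH}(\mathbb{A}_{\frac{1}{2}}[\pi])
\ar[r]
&
\Lg(\mathbb{A}_{\frac{1}{2}}[\pi])
}
\]
whereas you phrase the same compatibility (commutation of $\overline{s}_\pi$ with the hyperbolic map, using that $\tfrac{T}{2}$ is the identity on underlying rings) as commutation with the idempotent $e=\tfrac12[\mathbb{Z}/2]$ and then restrict to the $(1-e)$-summand. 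Your formulation has the small advantage of making the naturality in $\pi$ and the multiplicativity for $\pi=1$ transparent, since $s_\pi=p\circ\overline{s}_\pi\circ j$ is a composite of natural and multiplicative maps; the paper's cofiber argument is slightly more economical but then has to unpack the formula $s_n(x)=\phi_n\overline{s}_n(y)$ to verify multiplicativity.
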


\begin{proof}
The map of Hermitian Mackey functors $d\colon\mathbb{A}_{\frac{1}{2}}\to \underline{\mathbb{Z}}_{\frac{1}{2}}$ splits. A section $\frac{T}{2}\colon \underline{\mathbb{Z}}_{\frac{1}{2}}\to \mathbb{A}_{\frac{1}{2}}$ is defined by the identity on the underlying ring, and by the map
\[
(0,\frac{1}{2})\colon \mathbb{Z}_{\frac{1}{2}}\longrightarrow  \mathbb{Z}_{\frac{1}{2}}\oplus  \mathbb{Z}_{\frac{1}{2}}
\]
on fixed points. We remark that $\frac{T}{2}$ is not a map of Tambara functors, since $(0,\frac{1}{2})$ is not unital with respect to the ring structure of the Burnside ring $ \mathbb{Z}_{\frac{1}{2}}\oplus  \mathbb{Z}_{\frac{1}{2}}$.
However, it is a morphism of Hermitian Mackey functors, and it extends to a morphism of Hermitian Mackey functors $ \underline{\mathbb{Z}}_{\frac{1}{2}}[\pi]\to \mathbb{A}_{\frac{1}{2}}[\pi]$.
This induces a section for $d$ in Hermitian $K$-theory
\[
\overline{s}_\pi\colon {\KH}(\mathbb{Z}_{\frac{1}{2}}[\pi])\longrightarrow {\KH}(\mathbb{A}_{\frac{1}{2}}[\pi]).
\]
Since $\frac{T}{2}$ is not a map of Tambara functors it cannot be realized as a map of $\mathbb{Z}/2$-equivariant commutative ring spectra $H\mathbb{Z}_{\frac{1}{2}}\to H\mathbb{A}_{\frac{1}{2}}$, and thus $\overline{s}_\pi$ does not a priori come from a map of real $K$-theory spectra. It is therefore not immediately clear if $\overline{s}_\pi$ induces a map on geometric fixed-points spectra.

The isotropy separation sequences for ${\KR}(\mathbb{A}_{\frac{1}{2}}[\pi])$ and ${\KR}(\mathbb{Z}_{\frac{1}{2}}[\pi])$ are compared by a commutative diagram
\[
\xymatrix{
\K(\mathbb{Z}_{\frac{1}{2}}[\pi])_{h\Z/2}\ar[r]^-{H}\ar@{=}[d]
&
{\KH}(\mathbb{Z}_{\frac{1}{2}}[\pi])\ar[d]^-{\overline{s}_\pi}
\ar[r]^-{\phi}
&
\Lg(\mathbb{Z}_{\frac{1}{2}}[\pi])\ar@{-->}[d]^-{s_\pi}
\\
\K(\mathbb{A}_{\frac{1}{2}}[\pi])_{h\Z/2}\ar[r]^-{H}
&
{\KH}(\mathbb{A}_{\frac{1}{2}}[\pi])
\ar[r]^-{\phi}
&
\Lg(\mathbb{A}_{\frac{1}{2}}[\pi])
}
\] 
where the rows are cofiber sequences, and $H$ is the map induced by the hyperbolic functor from the category of finitely generated free $\Z_{\frac{1}{2}}[\pi]$-modules to the category of Hermitian forms, respectively over $\Z_{\frac{1}{2}}[\pi]$ and over $\mathbb{A}_{\frac{1}{2}}[\pi]$. The left-hand square commutes at the level of categories of modules and forms, and therefore the dotted arrow $s_\pi$ exists.

When $\pi$ is trivial, the maps $\phi$ are maps of rings, and $\overline{s}$ is multiplicative by \ref{remsmult}. On rational homotopy groups (or in fact away from $2$) the isotropy separation sequence splits giving short exact sequences
\[
\xymatrix{
\pi_n\K(\mathbb{Z}_{\frac{1}{2}})_{h\Z/2}\otimes\mathbb{Q}\ar@{>->}[r]^-{H}\ar@{=}[d]
&
{\KH}_n(\mathbb{Z}_{\frac{1}{2}})\otimes\mathbb{Q}\ar[d]^-{\overline{s}_n}
\ar@{->>}[r]^-{\phi_n}
&
\Lgsub{n}(\mathbb{Z}_{\frac{1}{2}})\otimes\mathbb{Q}\ar[d]^-{s_n}
\\
\pi_n\K(\mathbb{A}_{\frac{1}{2}})_{h\Z/2}\otimes\mathbb{Q}\ar@{>->}[r]^-{H}
&
{\KH}_n(\mathbb{A}_{\frac{1}{2}})\otimes\mathbb{Q}
\ar@{->>}[r]^-{\phi_n}
&
\Lgsub{n}(\mathbb{A}_{\frac{1}{2}})\otimes\mathbb{Q}
}
\] 
for every $n\geq 0$. Then $s_n$ is given by $s_n(x)=\phi_n\overline{s}_n(y)$ for some $y\in {\KH}_n(\mathbb{Z}_{\frac{1}{2}})\otimes\mathbb{Q}$ such that $\phi_n(y)=x$, and it does not depend of such choice. Since $\phi$ is multiplicative, if $\phi_n(y)=x$ and $\phi_m(y')=x'$ we have that $\phi_{n+m}(yy')=xx'$. Thus 
\[
s_{n+m}(xx')=\phi_{n+m}\overline{s}_{n+m}(yy')=(\phi_n\overline{s}_n(y))(\phi_m\overline{s}_m(y'))=s_n(x)s_m(x'),
\]
proving that $s$ is multiplicative on rational homotopy groups.
\end{proof}

\begin{proof}[Proof of \ref{maincor}]
Let $s_\pi$ denote the natural section of Lemma \ref{dsurj}. This fits into a diagram
\[
\xymatrix@C=75pt{
\Lgsub{\ast}(\mathbb{A}_{\frac 12})\otimes H_\ast (B\pi;\mathbb{Q})
 \ar[rr]^-{\mathcal{A}_{\mathbb{A}[\pi]}}
&&
\Lgsub{\ast}(\mathbb{A}_{\frac 12}[\pi])\otimes\mathbb{Q}
\\
\mathbb{Q}[\beta]\otimes H_\ast (B\pi;\mathbb{Q})\ar[r]_-{\mathcal{A}_{\mathbb{Z}[\pi]}}\ar@<-1.5ex>[u]_-{s\otimes \id}\ar@<1.5ex>[u]^-{u\otimes\id}
&&
\llap{$\Lqsub{\ast\geq 0}(\mathbb{Z}[\pi])\otimes\mathbb{Q}\cong$} \Lgsub{\ast}(\mathbb{Z}_{\frac 12}[\pi])\otimes\mathbb{Q}\ar[u]_-{s_\pi}
}
\]
where the right rectangle commutes. We observe that $s$ does not commute with the unit maps of ${\KR}_\ast(\mathbb{A}_{\frac 12})$ and ${\KR}_\ast(\mathbb{Z}_{\frac 12})$, and therefore that $s_\pi$ does not commute with the restricted assembly maps. The map $s$ is however multiplicative by Lemma \ref{dsurj}, and since $u$ sends by definition $\beta$ to $s(\beta)$ we have that $s=s(1)\cdot u$. Since $\mathcal{A}_{\mathbb{A}[\pi]}$ is a map of $\Lgsub{\ast}(\mathbb{A}_{\frac 12})$-modules, we have that for every non-zero $\underline{x}\in \mathbb{Q}[\beta]\otimes H_\ast (B\pi;\mathbb{Q})$
\begin{align*}
s_\pi\mathcal{A}_{\mathbb{Z}[\pi]}(\underline{x})&=\mathcal{A}_{\mathbb{A}[\pi]}(s\otimes\id)(\underline{x})=\mathcal{A}_{\mathbb{A}[\pi]}((s(1)\cdot u)\otimes \id)(\underline{x})=\\
&=s(1)\cdot (\mathcal{A}_{\mathbb{A}[\pi]}(u\otimes\id)(\underline{x}))=s(1)\cdot \overline{\mathcal{A}}_{\mathbb{Z}[\pi]}(\underline{x}).
\end{align*}
If the Novikov conjecture holds, the left-hand term must be non-zero since $s_\pi$ is injective. Thus $s(1)$ must act injectively on the image of $\overline{\mathcal{A}}_{\mathbb{Z}[\pi]}$. Conversely, if $s(1)$ acts injectively on the image of $\overline{\mathcal{A}}_{\mathbb{Z}[\pi]}$ the right-hand term must be non-zero when $\underline{x}$ is a polynomial with non-zero constant term, since $\overline{\mathcal{A}}_{\mathbb{Z}[\pi]}(\underline{x})$ is non-zero by Theorem \ref{main}. It follows that $\mathcal{A}_{\mathbb{Z}[\pi]}(\underline{x})$ is non-zero, and this implies the Novikov conjecture by Remark \ref{remconnective}.
\end{proof}

\bibliographystyle{amsalpha}
\bibliography{bib}

\providecommand{\bysame}{\leavevmode\hbox to3em{\hrulefill}\thinspace}
\providecommand{\MR}{\relax\ifhmode\unskip\space\fi MR }
\providecommand{\MRhref}[2]{%
  \href{http://www.ams.org/mathscinet-getitem?mr=#1}{#2}
}
\providecommand{\href}[2]{#2}
\begin{thebibliography}{DMPPR17}

\bibitem[BF84]{BF}
D.~Burghelea and Z.~Fiedorowicz, \emph{Hermitian algebraic {$K$}-theory of
  topological spaces}, Algebraic {$K$}-theory, number theory, geometry and
  analysis ({B}ielefeld, 1982), Lecture Notes in Math., vol. 1046, Springer,
  Berlin, 1984, pp.~32--46. \MR{750675}

\bibitem[BHM89]{BHMNovikov}
M.~B{\"o}kstedt, W.~C. Hsiang, and I.~Madsen, \emph{The cyclotomic trace and
  the {$K$}-theoretic analogue of {N}ovikov's conjecture}, Proc. Nat. Acad.
  Sci. U.S.A. \textbf{86} (1989), no.~22, 8607--8609. \MR{1023810 (91e:57051)}

\bibitem[BHM93]{BHM}
\bysame, \emph{The cyclotomic trace and algebraic {$K$}-theory of spaces},
  Invent. Math. \textbf{111} (1993), no.~3, 465--539. \MR{1202133 (94g:55011)}

\bibitem[Cn93]{gc}
Guillermo Corti\~nas, \emph{{$L$}-theory and dihedral homology. {II}}, Topology
  Appl. \textbf{51} (1993), no.~1, 53--69. \MR{1229500}

\bibitem[DGM13]{DGM}
Bj{\o}rn~Ian Dundas, Thomas~G. Goodwillie, and Randy McCarthy, \emph{The local
  structure of algebraic {K}-theory}, Algebra and Applications, vol.~18,
  Springer-Verlag London Ltd., London, 2013. \MR{3013261}

\bibitem[DM96]{ringfctrs}
Bj{\o}rn~Ian Dundas and Randy McCarthy, \emph{Topological {H}ochschild homology
  of ring functors and exact categories}, J. Pure Appl. Algebra \textbf{109}
  (1996), no.~3, 231--294. \MR{1388700 (97i:19001)}

\bibitem[DM16]{Gdiags}
Emanuele Dotto and Kristian Moi, \emph{Homotopy theory of {$G$}-diagrams and
  equivariant excision}, Algebr. Geom. Topol. \textbf{16} (2016), no.~1,
  325--395. \MR{3470703}

\bibitem[DMPPR17]{THRmodels}
E.~Dotto, K.~Moi, I.~Patchkoria, and S.~Precht-Reeh, \emph{Real topological
  {H}ochschild homology}, To Appear, 2017.

\bibitem[Dot12]{Thesis}
Emanuele Dotto, \emph{Stable {R}eal {$K$}-theory and {R}eal topological
  {H}ochschild homology}, Ph.D. thesis, University of Copenhagen, 2012,
  arXiv:1212.4310.

\bibitem[Dot16]{Gcalc}
Emanuele Dotto, \emph{Equivariant calculus of functors and
  $\mathbb{Z}/2$-analyticity of {R}eal algebraic {$K$}-theory}, Journal of the
  Institute of Mathematics of Jussieu \textbf{15(4)} (2016), pp. 829--883.

\bibitem[ERW17]{ERW}
Johannes Ebert and Oscar Randal-Williams, \emph{Semi-simplicial spaces},
  arXiv:1705.03774, 2017.

\bibitem[GMM15]{MonaModels}
B.~J. Guillou, J.~P. May, and M.~Merling, \emph{Categorical models for
  equivariant classifying spaces}, arXiv: 1505.07562v1, 2015.

\bibitem[Hg16]{Amalie}
Amalie H\o~genhaven, \emph{Real topological cyclic homology of spherical group
  rings}, arXiv: 1611.01204, 2016.

\bibitem[HM97]{Wittvect}
Lars Hesselholt and Ib~Madsen, \emph{On the {$K$}-theory of finite algebras
  over {W}itt vectors of perfect fields}, Topology \textbf{36} (1997), no.~1,
  29--101. \MR{1410465 (97i:19002)}

\bibitem[HM15]{IbLars}
\bysame, \emph{Real algebraic {$K$}-theory},
  http://www.math.ku.dk/~larsh/papers/s05/, 2015.

\bibitem[Kar73]{Karoubi}
Max Karoubi, \emph{P\'eriodicit\'e de la {$K$}-th\'eorie hermitienne},
  Algebraic {$K$}-theory, {III}: {H}ermitian {$K$}-theory and geometric
  applications ({P}roc. {C}onf., {B}attelle {M}emorial {I}nst., {S}eattle,
  {W}ash., 1972), Springer, Berlin, 1973, pp.~301--411. Lecture Notes in Math.,
  Vol. 343. \MR{0382400 (52 \#3284)}

\bibitem[KL05]{KL}
Matthias Kreck and Wolfgang L{\"u}ck, \emph{The {N}ovikov conjecture},
  Oberwolfach Seminars, vol.~33, Birkh\"auser Verlag, Basel, 2005, Geometry and
  algebra. \MR{2117411 (2005i:19003)}

\bibitem[Kro05]{KroThesis}
Tore~August Kro, \emph{Involutions on $\mathbb{S}[\omega m]$}, Ph.D. thesis,
  University of Oslo, 2005, arXiv:0510221.

\bibitem[Lod76]{jl}
Jean-Louis Loday, \emph{{$K$}-th\'eorie alg\'ebrique et repr\'esentations de
  groupes}, Ann. Sci. \'Ecole Norm. Sup. (4) \textbf{9} (1976), no.~3,
  309--377. \MR{0447373}

\bibitem[Lod87]{LodayDihedral}
\bysame, \emph{Homologies di\'edrale et quaternionique}, Adv. in Math.
  \textbf{66} (1987), no.~2, 119--148. \MR{917736}

\bibitem[Lur13]{Lurie}
Jacob Lurie, \emph{Course on algebraic {L}-theory and surgery},
  http://www.math.harvard.edu/~lurie/287x.html, 2013.

\bibitem[May90]{MayBdls}
J.~P. May, \emph{Some remarks on equivariant bundles and classifying spaces},
  Ast\'erisque (1990), no.~191, 7, 239--253, International Conference on
  Homotopy Theory (Marseille-Luminy, 1988). \MR{1098973}

\bibitem[Moi13]{Kristian}
Kristian Moi, \emph{Equivariant loops on classifying spaces}, arXiv: 1303.4528,
  2013.

\bibitem[Nov68]{Novikov}
S.~P. Novikov, \emph{Pontrjagin classes, the fundamental group and some
  problems of stable algebra}, Amer. {M}ath. {S}oc. {T}ranslations {S}er. 2,
  {V}ol. 70: 31 {I}nvited {A}ddresses (8 in {A}bstract) at the {I}nternat.
  {C}ongr. {M}ath. ({M}oscow, 1966), Amer. Math. Soc., Providence, R.I., 1968,
  pp.~172--179. \MR{0231401}

\bibitem[PS16]{PatchkSagave}
Irakli Patchkoria and Steffen Sagave, \emph{Topological {H}ochschild homology
  and the cyclic bar construction in symmetric spectra}, Proc. Amer. Math. Soc.
  \textbf{144} (2016), no.~9, 4099--4106. \MR{3513565}

\bibitem[Qui73]{Quillen}
Daniel Quillen, \emph{Higher algebraic {$K$}-theory. {I}}, Algebraic
  {$K$}-theory, {I}: {H}igher {$K$}-theories ({P}roc. {C}onf., {B}attelle
  {M}emorial {I}nst., {S}eattle, {W}ash., 1972), Springer, Berlin, 1973,
  pp.~85--147. Lecture Notes in Math., Vol. 341. \MR{0338129 (49 \#2895)}

\bibitem[Ran92]{RanickiTopMan}
A.~A. Ranicki, \emph{Algebraic {$L$}-theory and topological manifolds},
  Cambridge Tracts in Mathematics, vol. 102, Cambridge University Press,
  Cambridge, 1992. \MR{1211640}

\bibitem[Ran01]{RanickiIntro}
Andrew Ranicki, \emph{An introduction to algebraic surgery}, Surveys on surgery
  theory, {V}ol. 2, Ann. of Math. Stud., vol. 149, Princeton Univ. Press,
  Princeton, NJ, 2001, pp.~81--163. \MR{1818773}

\bibitem[Sch04]{SchlichtkrullUnits}
Christian Schlichtkrull, \emph{Units of ring spectra and their traces in
  algebraic {$K$}-theory}, Geom. Topol. \textbf{8} (2004), 645--673.
  \MR{2057776}

\bibitem[Sch10]{Schlichting}
Marco Schlichting, \emph{Hermitian {$K$}-theory of exact categories}, J.
  K-Theory \textbf{5} (2010), no.~1, 105--165. \MR{2600285 (2011b:19007)}

\bibitem[Sch13]{Schwede}
Stefan Schwede, \emph{Lectures on equivariant stable homotopy theory}, 2013.

\bibitem[Sch17]{SchlDer}
Marco Schlichting, \emph{Hermitian {$K$}-theory, derived equivalences and
  {K}aroubi's fundamental theorem}, J. Pure Appl. Algebra \textbf{221} (2017),
  no.~7, 1729--1844. \MR{3614976}

\bibitem[Seg74]{Segal}
Graeme Segal, \emph{Categories and cohomology theories}, Topology \textbf{13}
  (1974), 293--312. \MR{0353298}

\bibitem[Shi89]{ShimaSpecial}
Kazuhisa Shimakawa, \emph{Infinite loop {$G$}-spaces associated to monoidal
  {$G$}-graded categories}, Publ. Res. Inst. Math. Sci. \textbf{25} (1989),
  no.~2, 239--262. \MR{1003787}

\bibitem[Shi91]{ShimaNote}
\bysame, \emph{A note on {$\Gamma_G$}-spaces}, Osaka J. Math. \textbf{28}
  (1991), no.~2, 223--228. \MR{1132161}

\bibitem[Shi00]{Shipley}
Brooke Shipley, \emph{Symmetric spectra and topological {H}ochschild homology},
  $K$-Theory \textbf{19} (2000), no.~2, 155--183. \MR{1740756}

\bibitem[Spi16]{Spitzweck}
Markus Spitzweck, \emph{A {G}rothendieck-{W}itt space for stable infinity
  categories with duality}, arXiv: 1610.10044, 2016.

\bibitem[SS79]{ShimaShima}
Nobuo Shimada and Kazuhisa Shimakawa, \emph{Delooping symmetric monoidal
  categories}, Hiroshima Math. J. \textbf{9} (1979), no.~3, 627--645.
  \MR{549667}

\bibitem[SS13]{SagSchCpletion}
Steffen Sagave and Christian Schlichtkrull, \emph{Group completion and units in
  {$I$}-spaces}, Algebr. Geom. Topol. \textbf{13} (2013), no.~2, 625--686.
  \MR{3044590}

\bibitem[Sti13]{Stiennon}
Nisan Stiennon, \emph{The moduli space of real curves and a
  $\mathbb{Z}/2$-equivariant madsen-weiss theorem}, Ph.D. thesis, Stanford,
  2013.

\bibitem[Str12]{Strickland}
Niel Strickland, \emph{Tambara functors}, arXiv: 1205.2516, 2012.

\bibitem[Tam93]{Tambara}
D.~Tambara, \emph{On multiplicative transfer}, Comm. Algebra \textbf{21}
  (1993), no.~4, 1393--1420. \MR{1209937}

\bibitem[Ull13]{Ullman}
John Ullman, \emph{Tambara functors and commutative ring spectra},
  arXiv:1304.4912, 2013.

\bibitem[WJR13]{WJR}
Friedhelm Waldhausen, Bj{\o}rn Jahren, and John Rognes, \emph{Spaces of {PL}
  manifolds and categories of simple maps}, Annals of Mathematics Studies, vol.
  186, Princeton University Press, Princeton, NJ, 2013. \MR{3202834}

\bibitem[WW14]{WW3}
Michael~S. Weiss and Bruce~E. Williams, \emph{Automorphisms of manifolds and
  algebraic {$K$}-theory: {P}art {III}}, Mem. Amer. Math. Soc. \textbf{231}
  (2014), no.~1084, vi+110. \MR{3235548}

\end{thebibliography}

\end{document}